\def\RR{{\mathbb{R}}}
\def\CC{{\mathbb{C}}}
\numberwithin{equation}{section}
\newcommand{\dom}{\mathop{\rm dom} }
\newcommand{\interior}{\mathop{\rm int} }
\newcommand{\ri}{\mathop{\rm ri} }
\newcommand{\diag}{\mathop{\rm diag} }
\newcommand{\trace}{\mathop{\rm tr} }
\newtheorem{Thm}{Theorem}[section]
\newtheorem{Prop}[Thm]{Proposition}
\newtheorem{Lem}[Thm]{Lemma}
\theoremstyle{definition}
\newtheorem{Rem}[Thm]{Remark}
\newtheorem{Ex}[Thm]{Example}
\title{Convex analysis on Hadamard spaces \\ and scaling problems}
\author{Hiroshi HIRAI \\
Graduate School of Mathematics,   \\
Nagoya University, Nagoya, 464-8602, Japan.\\
\texttt{\normalsize hirai.hiroshi@math.nagoya-u.ac.jp}
}
\begin{document}
\maketitle
\begin{abstract}
		In this paper, we address the bounded/unbounded determination
	of geodesically convex optimization on Hadamard spaces.
	In Euclidean convex optimization, 
	the recession function is a basic tool to study the unboundedness, 
	and provides the domain of the Legendre-Fenchel conjugate of the objective function.
	In a Hadamard space, the asymptotic slope function (Kapovich, Leeb, and Millson 2009), 
	which is a function on the boundary at infinity, 
	plays a role of the recession function.
	We extend this notion by means of convex analysis and optimization, 
	and develop a convex analysis foundation for the unbounded determination 
	of geodesically convex optimization on Hadamard spaces, 
    particularly on symmetric spaces of nonpositive curvature.
	We explain how our developed theory is applied to operator scaling 
	and related optimization on group orbits, which are our motivation.
\end{abstract}

Keywords: Convex analysis, Hadamard space, CAT(0) space, recession function, 
Legendre-Fenchel conjugate, symmetric space, Euclidean building, matrix and operator scaling, null-cone membership, moment polytope, submodular function, Busemann function
\section{Introduction}
{\em Hadamard spaces} are complete geodesic metric spaces having nonpositive curvature.
In such a space, a geodesic connecting any two points is uniquely determined.
A function on a Hadamard space 
is called {\em (geodesically) convex} if it is convex along any geodesics. 
The theory of convex optimization on Hadamard spaces is a promising direction of research, 
though it has just started and is still undeveloped; see e.g., \cite{BacakBook}. 
Since the influential paper~\cite{GGOW} by Garg, Gurvits, Oliveira, and Wigderson
on {\em operator scaling}~\cite{Gurvits04}, 
apparently unrelated problems in diverse fields of mathematical sciences have been formulated and partially/completely solved 
via geodesically convex optimization on Riemannian manifolds; see \cite{AGLOW,BGOWW_tensor0,BFGOWW,Franks18,GGOW_GAFA,HNW23} and references therein.
These manifolds are, in fact, 
{\em Hadamard manifolds} (Riemannian manifolds that are Hadamard spaces), 
more specifically, {\em symmetric spaces of nonpositive curvature}.
%

In these problems, as well as finding near-optimal solutions, 
deciding boundedness of the optimization problem,   
\begin{equation}
\mbox{inf}_{x \in X} \ f(x) > - \infty \mbox{ or } = - \infty
\end{equation}
becomes an important issue. Examples are the approximate scalability in operator scaling
and its invariant theoretic generalizations (null-cone membership, moment polytope membership); see the above references.

The present paper addresses this bounded/unbounded determination 
by means of convex analysis.
For explaining our approach, let us recall the Euclidean situation.
In Euclidean convex optimization, 
 {\em recession functions} (also called {\em asymptotic functions}) are a basic tool to study the boundedness property; see
\cite[Section 3.2]{FundamentalsConvexAnalysis} and \cite[Section 8]{Rockafellar}.
For a convex function $f: \RR^n \to \RR$, 
the recession function $f^{\infty}:\RR^n \to \RR \cup \{\infty\}$ 
is defined by
\begin{equation}\label{eqn:Euclidean_recession}
f^{\infty}(u) := \lim_{t \to \infty}f(x+tu)/t \quad (u \in \RR^n),
\end{equation}
where $f^\infty(u)$ is independent of $x \in \RR^n$.
The recession function $f^{\infty}$ is a positively homogeneous convex function,
and links with {\em Legendre-Fenchel duality} as follows.
Recall the {\em Legendre-Fenchel conjugate} $f^*:\RR^n \to \RR^n \cup \{\infty\}$ of $f$, 
which is defined by $f^*(p) := \sup_{x \in \RR^n} \langle p,x \rangle - f(x)$.
 Its domain $\dom f^* :=  \{p \in \RR^n \mid f^{*}(p) < \infty\}$ 
 is precisely the set of vectors $p \in \RR^n$ for which $\inf_{x \in \RR^n} f(x) - \langle p,x\rangle$
 is bounded below. Then, the recession function $f^{\infty}$ 
 equals the  {\em support function} of the domain $\dom f^*$. 
 Namely,  
 $f^{\infty}(u) = \sup_{p \in \dom f^*} \langle u, p \rangle$ holds,  and the closure $\overline{\dom f^*}$ of $\dom f^*$ equals 
\begin{equation}\label{eqn:Euclidean_B(f^infty)}
B(f^{\infty}) := \{ p \in \RR^{n} \mid  \langle u, p \rangle \leq f^{\infty}(u) \ (u \in \RR^n) \}.
\end{equation}
See \cite[Example 2.4.6]{FundamentalsConvexAnalysis} and \cite[Theorem 13.3]{Rockafellar}. In particular, 
$f^{\infty}$ provides 
an inequality description of $\overline{\dom f^*}$. 

Suppose further that $f$ is smooth. 
The image $\nabla f(\RR^n)$ of gradient map $x \mapsto \nabla f(x)$ 
is precisely the set of vectors $p$ 
for which the infimum of $\inf_{x \in \RR^n} f(x) - \langle p,x\rangle$ is attained.
Then, the gradient space $\nabla f(\RR^n)$ contains 
the relative interior 
$\ri B(f^{\infty}) $ of $B(f^{\infty}) = \overline{\dom f^*}$~\cite[Corollary 26.4.1]{Rockafellar}.
Thus, the following relation holds:
\begin{equation}\label{eqn:relation_Euclidean}
\ri B(f^{\infty}) \subseteq \nabla f(\RR^n) \subseteq \dom f^* \subseteq  B(f^{\infty}).
\end{equation}
Particularly, $\overline{\nabla f(\RR^n)} = \overline{\dom f^*} = B(f^{\infty})$ holds.
If $\dom f^*$ is closed, then the following conditions are equivalent for $p \in \RR^n$: 
\begin{itemize}
	\item[(a)] $\inf_{x \in \RR^n} \|\nabla f(x) -p\|=0$. 
	\item[(b)] $-f^*(p) =\inf_{x \in \RR^n} f(x) - \langle p, x \rangle > -\infty$.
	\item[(c)] $p \in B(f^{\infty})$. 
\end{itemize} 

This equivalence plays fundamental roles 
in 
{\em matrix scaling}~\cite{Sinkhorn64}---the origin of operator scaling 
and related group orbits optimization.
Given an $n \times n$ nonnegative matrix $A =(a_{ij})$ and positive vectors $r,c \in \RR^n$ 
with the same sum $\sum r_i = \sum_i c_i = l$, 
the matrix scaling problem is to ask positive diagonal matrices $R,C$ 
such that $((RAC)^{\top}{\bf 1}, RAC{\bf 1}) = (r,c)$, where ${\bf 1}$ denotes the all-ones vector. 
The matrix $A$ is said to be {\em scalable} if there exist such $R,C$, 
and {\em approximately scalable} if there exist $R,C$ 
such that $((RAC)^{\top}{\bf 1}, RAC{\bf 1})$ is arbitrary close to $(r,c)$.
In fact, the approximate scalability is written as the condition (a) for convex function
$f_A(s,t) := l \log \sum_{i,j} e^{s_i} a_{ij} e^{t_j}$ and vector $p=(r,c)$.
The recession function $f^{\infty}_A$ is given by
$f^{\infty}_A(u,v) = l\max\{ u_i+v_j \mid i,j: a_{ij} \neq 0\}$.
The condition (c) is written as a network flow LP and efficiently verified. 
The combinatorial characterization of the approximate scalability by Rothblum and Schneider~\cite{RothblumSchneider} was proved via
$f^{\infty}$ in this way. 
From (b) and the fact that $\dom f^*_A$ is closed, approximate scaling $RAC$ 
is obtained by solving convex optimization $\inf_{s,t} f_A(s,t)- \langle r,s \rangle - \langle c,t\rangle$.
{\em Sinkhorn algorithm}~\cite{Sinkhorn64} is viewed 
as an alternating minimization algorithm 
for this problem.

The goal of this paper is to establish analogues of
the equivalence of (a), (b), and (c) 
for geodesically convex optimization on Hadamard spaces, and to provide a convex analysis foundation to the above mentioned problems.
We particularly focus on an analogous notion of the recession function 
on a Hadamard space. 
In fact, such a notion was already introduced by 
Kapovich, Leeb, and Millson~\cite{KLM2009JDG} (see also \cite[Section 5.4]{Woodward}), who defined 
the {\em asymptotic slope} of a convex function via a geodesic analogue 
of (\ref{eqn:Euclidean_recession}).
By utilizing the asymptotic slope,
they studied the boundedness (semistability) 
for a class of convex functions related to a generalization of Horn's problem.
Our presented theory reformulates and extends some of their arguments 
from viewpoints of convex analysis and optimization.

The structure and results of this paper are outlined as follows. 
In Section~\ref{sec:Hadamard}, 
we provide necessary backgrounds on a Hadamard space $X$,  particularly, the {\em boundary $X^{\infty}$ at infinity} and its {\em Euclidean cone} $CX^{\infty}$. 
 The cone $CX^{\infty}$ of the boundary is also a Hadamard space, and 
 plays a role of the ``dual space" of $X$.
 With a point $p \in CX^{\infty}$, we associate the {\em Busemann function} $b_p$ 
 as a correspondent of a linear function, 
 and regard the boundary $CX^{\infty}$ as the space of Busemann functions.
 This viewpoint leads to the notion of the {\em asymptotic Legendre-Fenchel conjugate} $f^*$ of $f$, which is a function on $CX^{\infty}$ defined by $p \mapsto f^*(p) := \sup_{x \in X} -b_p(x) - f(x)$.
 The asymptotic slope is a function on $X^{\infty}$. 
 We extend it, in a homogeneous way, 
 to introduce the recession function $f^{\infty}$ on $CX^{\infty}$. 
  We also define the associated subset $B(f^{\infty})$ 
 via an analogue of (\ref{eqn:Euclidean_B(f^infty)}).
 We show that $f^{\infty}$ 
 is positively homogeneous convex on $CX^{\infty}$  
 and that $p \in B(f^{\infty})$ is a necessary condition for $f^{*}(p) < \infty$, 
 i.e., $\dom f^* \subseteq B(f^{\infty})$.
 
 We then move to smooth convex optimization on a Hadamard manifold $M$. 
 The boundary cone $CM^{\infty}$ is identified with the tangent space $T_x$ at any point $x$, 
 though $CM^{\infty}$ has a different topology from the usual one $T_x \simeq \RR^n$, 
 and is not a manifold in general.
 We define the {\em asymptotic gradient} 
 $\nabla^{\infty}f(x)$ as the image of the gradient $\nabla f(x)$ in $CM^{\infty}$.
 This notion fits into our purpose: 
 We verify a weaker analogue of (\ref{eqn:relation_Euclidean}) 
 in which $\nabla$ is replaced by $\nabla^{\infty}$ 
 and $\ri$ is placed by the interior.
 We also verify an analogue of the equivalence of (a) and (c).
  
 We further restrict our study to 
 {\em symmetric spaces of nonpositive curvature}---a representative class 
 of Hadamard manifolds having rich group symmetry.
 The boundary $CM^{\infty}$ has a polyhedral cone complex structure, 
 known as a {\em Euclidean building}. 
 We utilize the symmetry property and building structure to 
 show that the conjugate $f^*$ is convex on each cone ({\em Weyl chamber}).
 We also provide some fundamental results on $\dom f^*$ and $B(f^{\infty})$.
 Then we present detailed calculations and specializations of several concepts 
 for the symmetric space  $P_n = GL(n,\CC)/U(n)$ of positive definite matrices.
 The boundary $CP_n^{\infty}$ is viewed as 
 the order complex of flags of vector subspaces, which 
 naturally links {\em submodular functions} 
 on the lattice of vector subspaces and their {\em Lov\'asz extension}~\cite{HamadaHirai,HH16L-convex}. Via the Lov\'asz extension,  
 submodular functions correspond to convex functions on $CP_n^{\infty}$ that are linear 
 on each cone.
 For a convex function $f$ on $M$ inducing a submodular function at infinity, called an {\em asymptotically submodular} function,  
 $B(f^{\infty})$ is an analogue of the {\em base polyhedron}
  and the membership of $B(f^{\infty})$ becomes 
  discrete convex optimization (i.e., {\em submodular function minimization}) 
  over the modular lattice of all vector subspaces of $\CC^n$. 
  
 In Section~\ref{sec:scaling}, we explain how the developed theory is applied 
 to operator scaling and related optimization on group orbits.
 These problems are viewed as convex optimization on symmetric spaces of nonpositive curvature. 
 We take up the {\em operator scaling with marginals} by Franks~\cite{Franks18}.
 We compute the recession function, from which Franks' characterization of 
 the approximate triangular scalability is naturally deduced.

 We finally consider the null-cone and moment polytope membership for a linear action of a reductive group $G$, formulated by 
 B\"urgisser, Franks, Garg, Oliveira, Walter, and Wigderson~\cite{BFGOWW}.
 These are convex optimization of the {\em Kempf-Ness function} $f_v$
 over symmetric space $M = G/K$.
 In this setting, asymptotic gradients give rise to 
 the {\em moment map} and {\em moment polytope}, 
 for which $f_v^{\infty}$ gives an inequality description.
 We establish the link with Hilbert-Mumford criterion and Kempf-Ness theorem, and 
 verify from our approach shifting trick and convexity theorem 
 for the moment polytope.
 
 The main implication of this paper is that
 finding a one-parameter subgroup in the Hilbert-Mumford criterion  
 and membership of the moment polytope (after randomization) 
 reduce to minimization of recession functions $f_v^{\infty}$ 
 over Euclidean building $CM^{\infty}$. 
 This is a far-reaching generalization of the approach
 by Hamada and Hirai~\cite{HamadaHirai}
 for the {\em noncommutative-rank} computation 
 ($=$ the null-cone problem of the left-right action), 
 in which their algorithm is now viewed as minimizing $f_v^{\infty}$.
 The current approach (e.g.,\cite{BFGOWW,HNW23})
 for these problems is mainly based on  
 smooth convex optimization (b) on $G/K$. 
 We hope that our results will motivate to 
 develop nonsmooth convex optimization techniques 
 on non-manifold Hadamard spaces, for verifying~(c).


\section{Convex analysis on Hadamard spaces}\label{sec:Hadamard}

\subsection{Hadamard spaces}\label{subsec:Hadamard}
Here we introduce Hadamard spaces; see \cite{BacakBook,BallmanBook,BrHa} for details.
Let $X$ be a metric space with distance function $d$.
A {\em path} in $X$ is a continuous map from interval 
$[0,l] \subseteq \RR$ to $X$, where $l \geq 0$.  
We say that a path $c:[0,l] \to X$ connects $c(0)$ and $c(l)$.
A path $c:[0,l] \to X$ is said to be {\em geodesic} 
if $d(c(s), c(t)) = |s-t|$ for every $s,t \in [0,l]$.
A {\em geodesic metric space} is a metric space $X$ in which
any pair of two points is connected by a geodesic path.

Suppose that $X$ is a geodesic metric space.
Let $x,y,z \in X$. A {\em geodesic triangle} 
of $x,y,z$ is the union of geodesic paths connecting $x,y$, $x,z$, and $y,z$.
The {\em comparison triangle} is the triangle in Euclidean plane $\RR^2$ with vertices $\overline{x}, \overline{y}, \overline{z}$ such that  $d(x,y) = \|\overline{x} - \overline{y}\|_2$, 
$d(y,z) = \|\overline{y} - \overline{z}\|_2$ and $d(z,x) = \|\overline{z} - \overline{x}\|_2$.
In the geodesic triangle, 
suppose that points $x,y$ and $x,z$ are connected by geodesic paths $c$ 
and $c'$, respectively, 
with $c(0) = c'(0) = x$. 
For $t,t' \in [0,1]$, 
let $p := c(t d(x,y))$ and $q := c'( t' d(x,z))$. 
Let $\overline{p} := (1-t) \overline{x} + t \overline{y}$ and 
$\overline{q} := (1-t') \overline{x} + t' \overline{z}$ be the corresponding points in $\RR^2$.
Then the {\em CAT(0)-inequality} is given by
\begin{equation}\label{eqn:CAT(0)}
d(p,q) \leq \| \overline{p} - \overline{q} \|_2.
\end{equation}
A geodesic metric space $X$ is said to be {\em CAT(0)} 
if the CAT(0) inequality (\ref{eqn:CAT(0)})
holds for every choice of a geodesic triangle and $t \in [0,1]$.
There are several ways of defining CAT(0) spaces.
A useful one is the following: $X$ is CAT(0) if
for every triple 
of points $x,y,z \in X$, geodesic path $c$ with $c(0) = x$ and $c(d(x,y)) = y$, 
and $t \in [0, 1]$, it holds
\begin{equation}\label{eqn:CAT(0)_2}
d(p_t, z)^2 \leq (1-t) d(x,z)^2 + t d(y,z)^2 - t(1-t) d(x,y)^2, 
\end{equation}
where $p_t = c(t d(x,y))$. 
Notice that 
the RHS equals the squared comparison distance 
between $\overline{p_t} = (1-t) \overline{x} + t \overline{y}$ and $\overline{z}$.

It is known \cite[II.1.4]{BrHa} that CAT(0) spaces are {\em uniquely geodesic}, i.e., a geodesic path connecting any two points is unique.
In this case, for points $x,y \in X$, let $[x,y]$ denote the image of the unique geodesic path connecting $x,y$.
For $t \in [0,1]$, 
let $(1-t) x + t y$ denote the point $z \in [x,y]$ 
with $d(x,z)/d(x,y) = t$.

A {\em Hadamard space} is a CAT(0) space that is complete as a metric space. 
Let $X$ be a Hadamard space. A subset $S \subseteq X$ is called {\em convex} 
if $x,y \in X$ implies $[x,y] \subseteq X$.
The smallest convex set including a given $S \subseteq X$ is called the {\em convex hull} of $S$.
A function $f: X \to \RR \cup \{\infty\}$ is said to be {\em convex}
if for all $x,y \in X, t \in [0,1]$
it satisfies 
\begin{equation}\label{eqn:convexity}
(1-t) f(x) + t f(y) \geq f((1-t) x + t y).
\end{equation}
$f$ is called {\em strictly convex} if $<$ holds
in (\ref{eqn:convexity}) for any $t \in (0,1)$. 
If $-f$ is convex, then $f$ is said to be {\em concave}.
An {\em affine function} is a convex and concave function.
A function $f:X \to \RR$ is said to be {\em $L$-Lipschitz}
with parameter $L \geq 0$
if 
it satisfies $|f(x) - f(y)| \leq L d(x,y)$
for all $x,y \in X$.

\subsubsection{Boundary at infinity and its Euclidean cone}\label{subsub:boundary}
To study the asymptotic behavior of convex functions, 
we consider the boundary of a Hadamard space $X$; 
see  \cite[Chapter II]{BallmanBook} and \cite[Chapter II.8]{BrHa}
for the boundary.

A {\em geodesic ray} is a continuous map $c:[0, \infty) \to X$ 
such that $d(c(s),c(t)) = |s-t|$ for $s,t \in [0,\infty)$.
With a geodesic ray $c$ and $r \geq 0$, the map written as $[0,\infty) \ni t \mapsto c(rt)$
is called a {\em constant-speed ray}, or simply, a {\em ray}.
If $c(0) = x$, we say that ray $c$ {\em issues from} $x$.
Two geodesic rays $c,c'$ are said to be {\em asymptotic}
if there is a positive constant $K > 0$ such that $d(c(t),c'(t)) \leq K$ for all $t \geq 0$.
The asymptotic relation is an equivalence relation on the set of all geodesic rays. 
Let $X^{\infty}$ denote the set of all equivalence classes, 
which is called the {\em boundary of $X$ at infinity}.
The equivalence class of $c$ is called the {\em asymptotic class} of $c$, 
and is denoted by $c(\infty)$.
For any point $x \in X$, and each $\xi \in X^{\infty}$ 
there is a unique geodesic ray $c$ issuing from $x$ such that $c(\infty) = \xi$.
Therefore, $X^{\infty}$ can be identified with the set of 
all geodesic rays issuing from any fixed $x$.

We next introduce a metric on $X^{\infty}$.
For points $x,y,z \in X$ (with $y \neq x \neq z$), 
the {\em comparison angle} $\overline{\angle}_{x}(y,z)$ between $y$ and $z$ at $x$
is the inner angle of the comparison triangle of $x,y,z$ at $x$.
For two geodesic rays $c,c'$ issuing from $x$, let the angle $\angle_x (c,c')$ of $c,c'$ at $x$ 
be defined by
\begin{equation}
\angle_x(c,c') := \lim_{t \to 0} \overline{\angle}_{x}(c(t),c'(t)),
\end{equation}
where the limit indeed exists~\cite[II.3.1]{BrHa}. 
For  $\xi, \xi' \in X^{\infty}$,  
the {\em angle} $\angle (\xi, \xi') \in [0,\pi]$ is defined by
\begin{equation}\label{eqn:angle}
\angle (\xi, \xi') := \sup_{x \in X}  \angle_x (c,c') =  \lim_{t,t' \to \infty} \overline{\angle}_x(c(t),c'(t')) = \sup_{t,t' > 0} \overline{\angle}_x(c(t),c'(t')),   
\end{equation}
where $c,c'$ are geodesic rays issuing from $x$ with
$c(\infty) = \xi$, $c'(\infty) = \xi'$. The equalities in (\ref{eqn:angle}) follow
from \cite[{II. 9.8 (1)}]{BrHa} (where $x$ is arbitrary).
Here $(\xi,\xi') \mapsto \angle (\xi, \xi')$ defines a metric on $X^{\infty}$, 
which is called the {\em angular metric}.
Then $X^{\infty}$ becomes a metric space, and a topological space accordingly.

In addition to $X^{\infty}$, 
we consider its {\em Euclidean cone} $CX^{\infty}$; 
see \cite[Chapter I.5]{BrHa} for generalities of the Euclidean cone construction.
Let $\RR_+$ denote the set of nonnegative numbers.
The set $CX^{\infty}$ is the quotient of $\RR_+ \times X^{\infty}$ 
by the equivalence relation 
$(r, \xi) \simeq (r', \xi')$ if $r =r'=0$ or $(r,\xi) = (r',\xi')$.
The equivalence class of $(r,\xi)$ is denoted by $r\xi$.
By identifying $\xi$ with $1\xi$, 
we regard $X^{\infty}$ as a subset of $CX^{\infty}$.
Let $0$ denote the class of $(0,\xi)$.
The angle $\angle (p,p')$ of (nonzero) points $p= r\xi, p' =r'\xi' \in CX^{\infty}$ 
is defined as $\angle (\xi,\xi')$.
%

The space $CX^{\infty}$ is viewed as the space 
of all constant-speed rays issuing from any fixed point.
Indeed, $r \xi \in CX^{\infty}$ is associated with 
a constant-speed ray $t \mapsto c^*(t) = c(r t)$, 
where $c$ is a geodesic ray with $c(\infty) = \xi$.
In this case, we let $c^*(\infty) := r\xi$.
The space $CX^{\infty}$ is metrized by the following distance $d^{\infty}$:
\begin{equation}\label{eqn:d^infty}
d^{\infty}(r\xi, r'\xi')^2 := r^2 + r'^2 - 2 r r' \cos  \angle (\xi, \xi')  \quad (r\xi, r'\xi' \in CX^{\infty}).
\end{equation}
The topology of $CX^{\infty}$ is given accordingly, which is called the {\em $d^{\infty}$-topology}. 

\begin{Lem}[{see \cite[II.4.8]{BallmanBook}}]\label{lem:CX^infty}
$CX^{\infty}$ is a Hadamard space.
\end{Lem}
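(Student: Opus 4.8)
The plan is to recognise $CX^{\infty}$ as the \emph{Euclidean cone} over the boundary $X^{\infty}$ and then invoke the classical criterion for when such a cone is CAT(0). Recall that, for an arbitrary metric space $Y$, the Euclidean cone $C_0 Y$ is the quotient of $\RR_+ \times Y$ obtained by collapsing $\{0\}\times Y$ to a point, equipped with the distance $d(r\xi, r'\xi')^2 := r^2 + r'^2 - 2rr'\cos(\min\{d_Y(\xi,\xi'),\pi\})$ --- this is a metric by \cite[Chapter~I.5]{BrHa}. Since the angular metric $\angle$ on $X^{\infty}$ already takes values in $[0,\pi]$, the truncation $\min\{\cdot,\pi\}$ is inactive and $(CX^{\infty},d^{\infty})$ is precisely $C_0(X^{\infty},\angle)$. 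The two inputs I would use are: (i) $C_0 Y$ is a CAT(0) space if and only if $Y$ is a CAT(1) space, and (ii) $C_0 Y$ is complete if and only if $Y$ is complete; see \cite[Chapters~I.5 and~II.3]{BrHa} (Berestovskii's theorem). With these in hand, the lemma is equivalent to the single statement that $(X^{\infty},\angle)$ is a complete CAT(1) space.

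That statement is the substantive ingredient, and I would import it from the literature: the boundary at infinity of a complete CAT(0) space, with the angular metric, is a complete CAT(1) space; see \cite[Chapter~II.9]{BrHa} and \cite[Chapter~II]{BallmanBook}. (This is traditionally stated for the \emph{Tits metric} $d_T$ --- the length metric induced by $\angle$ --- but $\angle = \min\{d_T,\pi\}$, so the Euclidean cones over $(X^{\infty},\angle)$ and over $(X^{\infty},d_T)$ are one and the same metric space, and the choice of metric is immaterial here.) Combining this with (i) and (ii) shows at once that $CX^{\infty}$ is a complete CAT(0) space, i.e., a Hadamard space.

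I expect the real obstacle to be exactly this imported ingredient, since the CAT(1)-property of the Tits boundary is not a formal consequence of $X$ being CAT(0). Its proof compares a geodesic triangle ``at infinity'' with a spherical triangle of equal side lengths, and it rests on Alexandrov-type estimates inside $X$: monotonicity of comparison angles $\overline{\angle}_x$ along geodesics, the flat strip / flat sector theorem, and upper semicontinuity of the map $x \mapsto \angle_x(c,c')$; it also uses the separately nonobvious completeness of $(X^{\infty},\angle)$. For present purposes I would take all of this as known. Everything else --- that $d^{\infty}$ is a metric, that the cone equivalences (i) and (ii) hold, and the identity $\angle = \min\{d_T,\pi\}$ --- is routine and is carried out in \cite[Chapters~I.5 and~II.9]{BrHa}.
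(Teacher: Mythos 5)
Your argument is correct and is essentially the same route as the source the paper cites: the paper gives no proof of its own, simply invoking Ballmann's result, and the standard proof of that result is precisely the decomposition you describe — Berestovskii's criterion that the Euclidean cone $C_0 Y$ is complete CAT(0) iff $Y$ is complete CAT(1), plus the theorem that the boundary of a Hadamard space with the angular metric is a complete CAT(1) space.
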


A {\em flat triangle} in a Hadamard space is 
a geodesic triangle whose convex hull is isometric to 
the convex hull of their comparison triangle in $\RR^2$.
It is known (see \cite[II.2.9]{BrHa})
that if one of the vertex angles of the triangle is equal to 
the corresponding angle of the comparison triangle, then it is flat.
By construction of $CX^{\infty}$, the angle at $0$ is always equal 
to the corresponding angle of the comparison triangle. Hence we have:
 \begin{Lem}\label{lem:flat}
	In $CX^{\infty}$, 
	any three points containing $0$ form a flat triangle.
\end{Lem}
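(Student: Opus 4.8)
The plan is to reduce the claim to the \emph{flat triangle lemma} \cite[II.2.9]{BrHa}: in a CAT(0) space, if a geodesic triangle has, at one of its vertices, an angle equal to the corresponding angle of its comparison triangle in $\RR^2$, then the convex hull of the triangle is isometric to the convex hull of the comparison triangle, i.e.\ the triangle is flat. By Lemma~\ref{lem:CX^infty}, $CX^{\infty}$ is a Hadamard space, hence CAT(0) and uniquely geodesic, so this criterion applies. It therefore suffices to prove that for points $p=r\xi$ and $q=r'\xi'$ the angle of the geodesic triangle with vertices $0,p,q$ at the vertex $0$ equals the angle at $\overline{0}$ of its comparison triangle.

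First I would dispose of the degenerate cases $p=0$, $q=0$, or $p=q$, in which the triangle is a segment or a point and is trivially flat; so assume $r,r'>0$ and $p\ne q$. From \eqref{eqn:d^infty} one has $d^{\infty}(s\xi,t\xi)^2=(s-t)^2$, so $t\mapsto t\xi$ is a unit-speed geodesic, and by uniqueness of geodesics $[0,p]=\{t\xi:0\le t\le r\}$ and $[0,q]=\{t\xi':0\le t\le r'\}$. For $s,t>0$ the comparison triangle of $0,s\xi,t\xi'$ has its two sides at $\overline{0}$ of lengths $s$ and $t$ and opposite side of length $d^{\infty}(s\xi,t\xi')=\sqrt{s^2+t^2-2st\cos\angle(\xi,\xi')}$ by \eqref{eqn:d^infty}; the Euclidean law of cosines then gives $\overline{\angle}_{0}(s\xi,t\xi')=\angle(\xi,\xi')$, independently of $s$ and $t$, whence the angle at $0$ equals $\angle(\xi,\xi')$. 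Similarly the comparison triangle $\overline{0},\overline{p},\overline{q}$ has sides $r,r'$ at $\overline{0}$ and opposite side $d^{\infty}(p,q)=\sqrt{r^2+r'^2-2rr'\cos\angle(\xi,\xi')}$, so its angle at $\overline{0}$ is also $\angle(\xi,\xi')$. The two angles coincide, and the flat triangle lemma finishes the proof.

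There is no genuine obstacle; the only point to check is that the angle at $0$, which is by definition the limit $\lim_{s,t\to0}\overline{\angle}_{0}(s\xi,t\xi')$ of comparison angles, equals the ``cone angle'' $\angle(\xi,\xi')$ built into $d^{\infty}$ --- and this holds because that comparison angle is in fact constant in $s$ and $t$. (Alternatively one could exhibit an explicit isometric embedding into $CX^{\infty}$ of a planar Euclidean sector of opening $\angle(\xi,\xi')$, but appealing to \cite[II.2.9]{BrHa} is shorter and matches the remark preceding the statement.)
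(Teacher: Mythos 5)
Your proof is correct and takes essentially the same route as the paper: both reduce to the flat-triangle criterion of \cite[II.2.9]{BrHa} and observe that the Alexandrov angle at the cone point $0$ equals the corresponding comparison angle. You simply spell out (via the law of cosines applied to the distance formula (\ref{eqn:d^infty})) the step the paper dismisses with ``by construction of $CX^{\infty}$,'' namely that the comparison angle $\overline{\angle}_0(s\xi,t\xi')$ is constantly $\angle(\xi,\xi')$ and hence its limit as $s,t\to 0$ is that same value.
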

Therefore, the convex hull of 
$\{ 0, tp,t'p'\}_{t,t' \in \RR_+}$ is isometric to 
the convex cone $C$ in $\RR^2$.
Then, via the isometry to $C \subseteq \RR^2$, 
we can consider nonnegative combinations 
in two nonzero points in $CX^{\infty}$.
%
For a point $p = t\xi \in CX^{\infty}$ and $a \in \RR_+$, 
define $ap := (at) \xi \in CX^{\infty}$.
It is clear that $p \mapsto ap$ is continuous and $a(bp) = (ab) p$.
For two points $p,q \in CX^{\infty}$, 
the sum $p+q$ of $p,q$ is defined by
\begin{equation*}
p + q := 2 ((1/2) p + (1/2) q).
\end{equation*}
Recall that $(1/2) p + (1/2) q$ is the midpoint of the geodesic path between $p$ and $q$.
Map $(p,q) \mapsto p+q$ is continuous; 
this follows from the fact~\cite[II.1.4]{BrHa} that 
the geodesic segment in a Hadamard space varies continuously with its endpoints. 
Notice that $+$ is not associative in general; 
so there may be many ``inverses" $q$ of $p$ with $p+q = 0$.
From (\ref{eqn:d^infty}), it holds $d^{\infty}(\alpha p, \alpha q) = \alpha d^{\infty} (p,q)$.
This means that by $p \mapsto \alpha p$, 
geodesic segment $[p,q]$ is mapped to geodesic segment $[\alpha p, \alpha q]$. 
Then we have a linearity relation
\begin{equation}
\alpha (p+q) = \alpha p + \alpha q  \quad (\alpha \in \RR_+).
\end{equation}
Define the inner product $\langle p, q \rangle$ of 
$p,q \in CX^{\infty}$
by
\begin{equation}\label{eqn:<p,q>}
2 \langle p, q \rangle := d^{\infty}(0,p)^2 + d^{\infty}(0,q)^2 - d^{\infty}(p,q)^2. 
\end{equation}
For  $p = t \xi \in CX^{\infty}$, 
define the norm $\|p \|$ by $\|p \| := t$.
Observe from definitions (\ref{eqn:d^infty}) (\ref{eqn:<p,q>}) 
that $\|p\| = d^{\infty}(0,p) = \sqrt{\langle p,p \rangle}$. Then we have
\begin{equation}\label{eqn:<p,q>_2}
\langle p,q \rangle =\|p\| \|q\| \cos \angle (p, q).
\end{equation}

A function $f:CX^{\infty} \to \RR \cup \{\infty\}$ is called {\em positively homogeneous} 
if it holds
\begin{equation*}
f(\alpha p) = \alpha f(p) \quad (a \in \RR_+).
\end{equation*}

\begin{Lem}\label{lem:<p,q>}
For any $q \in CX^{\infty}$,
the function $p \mapsto \langle p, q \rangle$ is continuous 
and positively homogeneous concave.
\end{Lem}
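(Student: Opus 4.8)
The plan is to treat the three assertions separately, only concavity requiring real work. \emph{Continuity} is immediate from the defining identity (\ref{eqn:<p,q>}): since $d^{\infty}$ is a metric it is jointly continuous, so $p \mapsto d^{\infty}(0,p)^2$ and $p \mapsto d^{\infty}(p,q)^2$ are continuous, and $\langle \cdot, q\rangle$ is a continuous affine combination of them (and of the constant $d^{\infty}(0,q)^2$). \emph{Positive homogeneity}: for $\alpha > 0$, if $p = t\xi$ then $\alpha p = (\alpha t)\xi$ by definition of the $\RR_+$-action, so $\angle(\alpha p, q) = \angle(p, q)$ and $\|\alpha p\| = \alpha\|p\|$, whence (\ref{eqn:<p,q>_2}) gives $\langle \alpha p, q\rangle = \|\alpha p\|\,\|q\|\cos\angle(\alpha p, q) = \alpha\langle p,q\rangle$; for $\alpha = 0$ we have $\alpha p = 0$, and (\ref{eqn:<p,q>}) gives $\langle 0, q\rangle = \tfrac12\bigl(\|q\|^2 - d^{\infty}(0,q)^2\bigr) = 0 = 0\cdot\langle p,q\rangle$.

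For \emph{concavity}, fix $q$, fix a geodesic segment $[p_0,p_1]$ in $CX^{\infty}$, and set $p_t = (1-t)p_0 + tp_1$ for $t \in [0,1]$. Multiplying (\ref{eqn:<p,q>}) through by $-2$, the desired inequality $\langle p_t,q\rangle \ge (1-t)\langle p_0,q\rangle + t\langle p_1,q\rangle$ is equivalent, after cancelling the $\|q\|^2$-terms, to
\[
d^{\infty}(p_t,q)^2 - d^{\infty}(0,p_t)^2 \;\le\; (1-t)\bigl(d^{\infty}(p_0,q)^2 - d^{\infty}(0,p_0)^2\bigr) + t\bigl(d^{\infty}(p_1,q)^2 - d^{\infty}(0,p_1)^2\bigr).
\]
I would bound the $d^{\infty}(\cdot,q)^2$ terms from above by applying the CAT(0) inequality (\ref{eqn:CAT(0)_2}) in the Hadamard space $CX^{\infty}$ (Lemma~\ref{lem:CX^infty}) to the triangle with vertices $p_0, p_1, q$:
\[
d^{\infty}(p_t,q)^2 \;\le\; (1-t)d^{\infty}(p_0,q)^2 + t\,d^{\infty}(p_1,q)^2 - t(1-t)d^{\infty}(p_0,p_1)^2 .
\]
For the $d^{\infty}(0,\cdot)^2$ terms I would instead invoke Lemma~\ref{lem:flat}: the triangle $0, p_0, p_1$ is flat, hence its convex hull is isometric to that of the comparison triangle in $\RR^2$, with $p_t$ corresponding to $(1-t)\overline{p_0} + t\overline{p_1}$; the elementary planar identity $\|(1-t)a + tb - o\|^2 = (1-t)\|a-o\|^2 + t\|b-o\|^2 - t(1-t)\|a-b\|^2$ then upgrades the CAT(0) estimate for $d^{\infty}(0,p_t)^2$ to the \emph{equality}
\[
d^{\infty}(0,p_t)^2 \;=\; (1-t)d^{\infty}(0,p_0)^2 + t\,d^{\infty}(0,p_1)^2 - t(1-t)d^{\infty}(p_0,p_1)^2 .
\]
Subtracting this equality from the preceding inequality, the $t(1-t)d^{\infty}(p_0,p_1)^2$ terms cancel, leaving exactly the displayed inequality; this proves concavity.

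The only step that is more than bookkeeping is the appeal to Lemma~\ref{lem:flat}: flatness of the triangle $0, p_0, p_1$ is precisely what turns the CAT(0) estimate for $d^{\infty}(0,p_t)^2$ into an \emph{equality} with the Euclidean comparison value, so that the quadratic ``curvature'' terms from the two displays cancel rather than combining into a residual term of the wrong sign. I expect this to be the conceptual crux; everything else follows by unwinding (\ref{eqn:<p,q>}), (\ref{eqn:<p,q>_2}), and the definitions of the $\RR_+$-action and of $+$ on $CX^{\infty}$.
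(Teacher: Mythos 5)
Your proof is correct, and it rests on the same two tools the paper uses: the CAT(0) inequality in $CX^{\infty}$ applied to the triangle with vertex $q$, and the flatness of the triangle through $0$ (Lemma~\ref{lem:flat}), which turns the corresponding CAT(0) estimate into an equality so that the $t(1-t)\,d^{\infty}(p_0,p_1)^2$ terms cancel. The organizational difference is this: the paper reduces concavity to the superadditivity statement $\langle p+p',q\rangle\ge\langle p,q\rangle+\langle p',q\rangle$ (its inequality (\ref{eqn:<p+p',q>})) and proves that by running CAT(0) only at the midpoint $t=1/2$; you dispense with the reduction and prove the geodesic inequality $\langle p_t,q\rangle\ge(1-t)\langle p_0,q\rangle+t\langle p_1,q\rangle$ directly for every $t$. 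Your route is arguably a bit more self-contained. The paper's passage from superadditivity plus positive homogeneity to geodesic concavity tacitly uses the fact that, inside the convex hull of a flat triangle through $0$, the cone operation $+$ and the scalar action coincide with the Euclidean ones, so that the geodesic point $(1-t)p_0+tp_1$ equals the cone-sum of $(1-t)p_0$ and $t p_1$; this is true (it follows from the flat-triangle isometry together with non-branching of geodesics in a Hadamard space), but it is an extra implicit step that your direct argument simply never needs, since the flat-triangle isometry applied to the geodesic $[p_0,p_1]$ immediately identifies $p_t$ with $(1-t)\overline{p_0}+t\overline{p_1}$ in the comparison picture.
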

\begin{proof}
	The continuity follows from (\ref{eqn:<p,q>}) that
	$\langle, \rangle$ is written by continuous function $d^{\infty}$. 
    The positive homogeneity follows from (\ref{eqn:<p,q>_2}).
    For concavity, it suffices to show
\begin{equation}\label{eqn:<p+p',q>}
\langle p+p', q \rangle \geq \langle p, q \rangle +  \langle p', q \rangle. 
\end{equation}
Twice the RHS is equal to
\begin{equation}\label{eqn:RHS}
- d^{\infty}(p,q)^2 - d^{\infty} (p',q)^2 +\|p\|^2 + \|p'\|^2 + 2 \|q\|^2. 
\end{equation}
On the other hand, twice the LHS is equal to
\begin{equation}\label{eqn:LHS}
4 \langle (p+p')/2,q \rangle   = - 2 d^{\infty}((p+p')/2,q)^2 + 2 \|(p+p')/2\|^2 + 2\|q\|^2.
\end{equation}
Since $0,p,p'$ form a flat triangle (Lemma~\ref{lem:flat}), the CAT(0) inequality (\ref{eqn:CAT(0)_2}) with $(x,y,z) = (p,p',0)$ and $t=1/2$ holds in equality: 
\begin{equation}\label{eqn:LHS'}
2 \|(p+p')/2\|^2 = \|p\|^2 + \|p'\|^2 - d^{\infty}(p,p')^2/2. 
\end{equation}
From (\ref{eqn:RHS}), (\ref{eqn:LHS}), and (\ref{eqn:LHS'}), we see
that (\ref{eqn:<p+p',q>}) is equivalent to
\begin{equation}\label{eqn:<p+p',q>=>d} 
2 d^{\infty}((p+p')/2,q)^2 \leq d^{\infty}(p,q)^2 + d^{\infty} (p',q)^2 - d^{\infty}(p,q)^2/2.
\end{equation}
This is the CAT(0)-inequality (\ref{eqn:CAT(0)_2}) with $(x,y,z) =(p,p',q)$ and $t=1/2$, which holds by Lemma~\ref{lem:CX^infty}.
\end{proof}


\begin{Ex}\label{ex:Euclidean_X^infty}
	Consider the case of $X = \RR^n$.
	Two geodesic rays $t \to \xi t + b$ and $t \to \xi' t + b'$ are asymptotic 
	if and only if $\xi = \xi'$. 
	The angle of the corresponding asymptotic classes 
	is given by $\cos^{-1} \langle \xi, \xi' \rangle$.
	Therefore, $X^{\infty}$ is isometric to the sphere $S^{n-1}$.
	The Euclidean cone $CX^{\infty}$ is isometric to 
	the Euclidean space $\RR^n = \RR_{+} \times S^{n-1} / \simeq$.
	Also the distance $d^{\infty}$ and product $\langle,\rangle$ coincides with the Euclidean ones. 
\end{Ex}

\subsubsection{Busemann functions and asymptotic Legendre-Fenchel conjugate}
%

For a geodesic ray $c$,  
the {\em Busemann function} $b_c:X \to \RR$
is defined by
\begin{equation}
b_c(x) := \lim_{t \to \infty} d(x, c(t)) - t \quad (x \in X).
\end{equation}
\begin{Lem}[{see \cite[II.8.22]{BrHa}}]\label{lem:Busemann_convex}
	Busemann functions for geodesic rays are $1$-Lipschitz convex functions.
\end{Lem}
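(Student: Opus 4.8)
The plan is to establish two things separately: that each Busemann function $b_c$ is $1$-Lipschitz, and that it is convex. Both should follow from elementary estimates on the defining limit, but the first order of business is to check that the limit actually exists. For fixed $x$, consider the function $t \mapsto d(x,c(t)) - t$. By the triangle inequality, $d(x,c(t)) \leq d(x,c(s)) + d(c(s),c(t)) = d(x,c(s)) + |t-s|$ for $s \leq t$, so $d(x,c(t)) - t \leq d(x,c(s)) - s$; thus $t \mapsto d(x,c(t)) - t$ is nonincreasing. It is also bounded below, since $d(x,c(t)) \geq d(c(0),c(t)) - d(x,c(0)) = t - d(x,c(0))$, whence $d(x,c(t)) - t \geq -d(x,c(0))$. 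A nonincreasing function bounded below has a limit, so $b_c(x)$ is well defined.

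For the $1$-Lipschitz property, I would simply estimate $|d(x,c(t)) - d(y,c(t))| \leq d(x,y)$ by the triangle inequality for every $t$, subtract $t$ from both terms inside, and pass to the limit: $|b_c(x) - b_c(y)| = \lim_{t\to\infty} |(d(x,c(t)) - t) - (d(y,c(t)) - t)| \leq d(x,y)$. This is routine and needs no curvature hypothesis.

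For convexity, the key input is that the distance function $z \mapsto d(z,w)$ is convex on a Hadamard space for every fixed $w$; equivalently, for $x,y \in X$, $t \in [0,1]$, and $p_t = (1-t)x + ty$ one has $d(p_t, w) \leq (1-t)d(x,w) + t\,d(y,w)$. This follows directly from the CAT(0) inequality~(\ref{eqn:CAT(0)}) applied to the comparison triangle of $x,y,w$, using that $\|\overline{p_t} - \overline{w}\|_2 \leq (1-t)\|\overline{x} - \overline{w}\|_2 + t\|\overline{y} - \overline{w}\|_2$ by convexity of the Euclidean norm. Applying this with $w = c(t)$, subtracting $t$ (which splits as $(1-t)\cdot t + t\cdot t$), and taking the limit as $t \to \infty$ gives $b_c(p_t) \leq (1-t)b_c(x) + t\,b_c(y)$, which is~(\ref{eqn:convexity}). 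Since each $b_c$ takes only finite real values, this is exactly the claimed convexity.

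I do not anticipate a genuine obstacle here — everything reduces to the triangle inequality and the CAT(0) comparison, both available in the excerpt — but the one point that deserves care is the interchange of the pointwise limit in $t$ with the convexity inequality and with the Lipschitz estimate. Since the inequalities $d(p_t, c(t)) - t \leq (1-t)(d(x,c(t)) - t) + t(d(y,c(t)) - t)$ hold for each finite $t$ and all three bracketed quantities converge, the limit passes through termwise; similarly for the Lipschitz bound. Alternatively one may simply cite~\cite[II.8.22]{BrHa} as indicated, but the self-contained argument above is short enough to include.
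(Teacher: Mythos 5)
Your argument is correct and is exactly the standard proof that the cited reference [BrHa, II.8.22] gives (the paper itself supplies no proof, only the citation): existence of the limit comes from monotonicity plus a lower bound, the $1$-Lipschitz estimate is pure triangle inequality with no curvature hypothesis, and convexity follows from applying the CAT(0) comparison to the distance functions $d(\cdot, c(r))$ and passing to the limit termwise. One cosmetic caution worth fixing before including this in the text: you reuse $t$ for both the convex-combination parameter and the ray parameter tending to infinity, for instance in $d(p_t, c(t)) - t \leq (1-t)\bigl(d(x,c(t)) - t\bigr) + t\bigl(d(y,c(t)) - t\bigr)$, so rename one of them (say, use $s \in [0,1]$ for the convexity parameter and keep $t \to \infty$ for the ray) to avoid confusing the reader.
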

Two asymptotic geodesic rays $c,c'$ yields the same Busemann functions 
up to additive constant:
\begin{Lem}[{see \cite[II.8.20]{BrHa}}]\label{lem:constant}
$c$ and $c'$ are asymptotic if and only if $b_c - b_{c'}$ is a constant function.
\end{Lem}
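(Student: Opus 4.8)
I would reduce both implications to a single auxiliary fact, call it $(\ast)$: writing $c_x$ for the unique geodesic ray issuing from a point $x$ with $c_x(\infty)=c(\infty)$, one has $b_c(c_x(t))=b_c(x)-t$ for all $t\ge 0$, and moreover $c_x$ is the \emph{only} geodesic ray $\sigma$ from $x$ along which $b_c$ decreases at unit rate. Throughout I would use that $b_c,b_{c'}$ are $1$-Lipschitz and convex (Lemma~\ref{lem:Busemann_convex}) and that, since $t\mapsto d(x,c(t))-t$ is non-increasing, $b_c(x)=\inf_{t\ge0}\bigl(d(x,c(t))-t\bigr)$. To prove $(\ast)$, I would use that $c_x$ is obtained as the locally uniform limit of the segments $[x,c(t)]$ as $t\to\infty$: writing $p_{t,\tau}$ for the point of $[x,c(t)]$ at distance $\tau$ from $x$ (so $p_{t,\tau}\to c_x(\tau)$), the identity $d(x,c(t))-t=\tau+\bigl(d(p_{t,\tau},c(t))-t\bigr)$ gives, on letting $t\to\infty$, $b_c(x)=\tau+b_c(c_x(\tau))$. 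For the uniqueness clause, if $\sigma,\sigma'$ both issue from $x$ with $b_c$ descending at unit rate along them, then the midpoint $m_t:=\tfrac12\sigma(t)+\tfrac12\sigma'(t)$ satisfies $b_c(m_t)\le b_c(x)-t$ (convexity of $b_c$) and $d(x,m_t)\le t$ (convexity of $d(x,\cdot)$), so the $1$-Lipschitz bound forces $d(x,m_t)=t$, and then the CAT(0)-inequality~(\ref{eqn:CAT(0)_2}) for $\sigma(t),\sigma'(t),x$ (parameter $1/2$) forces $d(\sigma(t),\sigma'(t))=0$.

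Granting $(\ast)$, the direction ``$b_c-b_{c'}$ constant $\Rightarrow$ asymptotic'' is immediate: if $b_c-b_{c'}\equiv C$ then $b_c(c'(t))=b_{c'}(c'(t))+C=-t+C=b_c(c'(0))-t$, so by uniqueness in $(\ast)$ the ray $c'$ coincides with $c_{c'(0)}$, whence $c'(\infty)=c(\infty)$. For the converse, assume $c(\infty)=c'(\infty)=:\xi$. For every $x$ the ray $c_x$ is asymptotic to both $c$ and $c'$, so applying $(\ast)$ to $b_c$ and to $b_{c'}$ and subtracting shows $g:=b_c-b_{c'}$ is constant along $c_x$; specializing to $x=c'(0)$ and to $x=c(0)$ gives $b_c(c'(t))=C_1-t$ and $b_{c'}(c(t))=C_2-t$, where $C_1:=b_c(c'(0))$ and $C_2:=b_{c'}(c(0))$. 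Feeding $b_c(c'(t))=C_1-t$ into the $1$-Lipschitz inequality for $b_c$ yields $d(x,c'(t))-t\ge b_c(x)-C_1$ for all $x$ and $t$, hence $b_{c'}(x)\ge b_c(x)-C_1$, i.e.\ $g(x)\le C_1$; symmetrically $g(x)\ge -C_2$. So $-C_2\le g(x)\le C_1$ for all $x$, and it remains only to show these collapse, i.e.\ that $C_1+C_2=0$.

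This last step is, I expect, the only genuinely nontrivial point. The inequality $C_1+C_2\ge0$ is free (from $-C_2\le g\le C_1$ and nonemptiness of $X$). For $C_1+C_2\le0$ I would invoke the Ptolemy inequality, which holds in every CAT(0) space: applied to $c(0),c(s),c'(s),c'(0)$ it reads
\[
d(c(0),c'(s))\,d(c(s),c'(0))\ \le\ d(c(0),c(s))\,d(c'(s),c'(0))+d(c(s),c'(s))\,d(c'(0),c(0)).
\]
On the right, $d(c(0),c(s))=d(c'(0),c'(s))=s$ and $d(c(s),c'(s))\le K$ is bounded (by the definition of asymptotic), so the right side is $s^2+O(1)$; on the left, $d(c(0),c'(s))-s\to C_2$ and $d(c(s),c'(0))-s\to C_1$ as $s\to\infty$, so the left side is $s^2+(C_1+C_2)s+o(s)$. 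Dividing by $s$ and letting $s\to\infty$ gives $C_1+C_2\le0$, hence $C_1+C_2=0$; then $-C_2\le g(x)\le C_1=-C_2$ forces $g\equiv C_1$, i.e.\ $b_c-b_{c'}$ is constant, completing the proof.
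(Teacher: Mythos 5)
Your proof is correct, and since the paper cites Bridson--Haefliger~\cite[II.8.20]{BrHa} for this lemma without reproducing an argument, yours is a self-contained derivation rather than a variant of one in the text. The auxiliary fact $(\ast)$, its uniqueness clause (via the midpoint argument and~(\ref{eqn:CAT(0)_2})), the forward implication, and the two-sided bound $-C_2 \le g \le C_1$ are all sound.

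The one point worth flagging is the use of the Ptolemy inequality. It is true that CAT(0) spaces satisfy Ptolemy's inequality, but this is not elementary; it is usually obtained from Reshetnyak's majorization theorem (see also Foertsch--Lytchak--Schroeder), neither of which is among the paper's stated tools, so you would owe a citation there. A lighter substitute for your last step $C_1+C_2\le 0$ is the four-point (quadrilateral) inequality
\[
d(x_1,x_3)^2 + d(x_2,x_4)^2 \ \le\ d(x_1,x_2)^2 + d(x_2,x_3)^2 + d(x_3,x_4)^2 + d(x_4,x_1)^2,
\]
which follows directly from the paper's CAT(0) inequality~(\ref{eqn:CAT(0)_2}): with $m$ the midpoint of $[x_1,x_3]$, apply~(\ref{eqn:CAT(0)_2}) at $t=\tfrac12$ once with $(x,y,z)=(x_1,x_3,x_2)$ and once with $(x,y,z)=(x_1,x_3,x_4)$, add, and combine with $d(x_2,x_4)^2 \le 2\,d(x_2,m)^2 + 2\,d(m,x_4)^2$. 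Applying this to $x_1=c(0)$, $x_2=c(s)$, $x_3=c'(s)$, $x_4=c'(0)$ gives $\bigl(s+C_2+o(1)\bigr)^2 + \bigl(s+C_1+o(1)\bigr)^2 \le 2s^2 + O(1)$, hence $2(C_1+C_2)s + o(s) \le O(1)$, and dividing by $s$ and letting $s\to\infty$ yields $C_1+C_2\le 0$ exactly as in your argument, but now using nothing beyond~(\ref{eqn:CAT(0)_2}) and the triangle inequality.
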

The Busemann function $b_{c}$ is extended for 
a constant-speed ray $c: t \mapsto c^*(rt)$ by $b_c := r b_{c^*}$, where $c^*$ is a geodesic ray.
Fix $x_0 \in X$.  
For $p \in CX^{\infty}$, there is a unique ray $c$ issuing $x_0$ with $c(\infty) = p$, 
and hence $b_c$ is also written as $b_{x_0,p}= b_{p}$.
In this way, 
we can identify $CX^{\infty}$ 
with the space of Busemann functions.
\begin{Ex}\label{ex:Euclidean_Busemann}
		In the case of $X = \RR^n$, 
		the Busemann function $b_c$ for a geodesic ray $c(t) = \xi t + x_0$ is given by
	\begin{equation*}
	b_c(x) = -  \langle \xi, x-x_0\rangle.
	\end{equation*}
	This follows from $\| x - \xi t - x_0\|_2 - t = t - \langle \xi, x-x_0\rangle  +  O(1/t) - t$. 
	Thus, Busemann functions (for constant-speed rays) are precisely affine functions.
	If the origin $0$ is chosen as $x_0$, 
	the space $CX^{\infty} (\simeq \RR^n)$ is identified with the space of linear functions 
	$x \mapsto - \langle p,x \rangle$, i.e., the dual space of $\RR^n$. 
\end{Ex}
%
For a (convex) function $f: X \to \RR$, define 
the {\em asymptotic Legendre-Fenchel conjugate} 
$f^*:CX^{\infty} \to \RR \cup \{\infty\}$ of $f$ by 
\begin{equation}\label{eqn:f*}
f^{*}(p) := \sup_{x \in X} - b_{p} (x) - f(x) \quad (p \in CX^{\infty}).
\end{equation}
Note that this notion of Legendre-Fenchel conjugate is rather different from those introduced by \cite{BHLTV2021,LBH2022}.
If $X = \RR^n$, then this matches 
the usual Legendre-Fenchel conjugate (see Examples~\ref{ex:Euclidean_X^infty} and \ref{ex:Euclidean_Busemann}).
Notice that $f^*$ is not necessarily convex in $CX^{\infty}$.
We give a simple lemma providing explicit examples of asymptotic Legendre-Fenchel conjugates.
\begin{Lem}
	For a function $h: \RR_+ \to \RR$, define $f:X \to \RR$ by
	\[
	f(x) := h(d(x,x_0)) \quad (x \in X).
	\] 
	Then  the asymptotic Legendre-Fenchel conjugate $f^*$ is given by
	\[
	f^*(p) := h^*(\|p\|) \quad (p \in CX^{\infty}),
	\]
	where $h^*$ is the Legendre-Fenchel conjugate of $h$, i.e.,
	$h^*(\xi) := \sup_{r \in \RR} \xi r- h(r)$ with $h(r) := \infty$ for $r < 0$.
\end{Lem}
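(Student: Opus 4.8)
The plan is to prove the two inequalities $f^*(p) \le h^*(\|p\|)$ and $f^*(p) \ge h^*(\|p\|)$ separately, both by direct computation with the Busemann function $b_p = b_{x_0,p}$. Write $p = s\xi$ with $s = \|p\| \ge 0$ and $\xi \in X^\infty$ (the case $p = 0$ is treated at the end), and let $c$ be the unique unit-speed geodesic ray issuing from $x_0$ with $c(\infty) = \xi$, so that $b_p = s\, b_c$ by the definition of the Busemann function of a constant-speed ray. The two structural facts I would record first are: (i) $b_p(x_0) = 0$, since $b_c(x_0) = \lim_{t\to\infty} d(x_0, c(t)) - t = \lim_t (t - t) = 0$; and (ii) $b_p$ is $\|p\|$-Lipschitz, since $b_c$ is $1$-Lipschitz (Lemma~\ref{lem:Busemann_convex}) and $b_p = \|p\|\, b_c$.

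For the upper bound I would combine (i) and (ii): for every $x \in X$,
\[
-b_p(x) = -\big(b_p(x) - b_p(x_0)\big) \le \|p\|\, d(x,x_0),
\]
hence, writing $r := d(x,x_0) \ge 0$,
\[
-b_p(x) - f(x) \le \|p\|\, r - h(r) \le \sup_{\rho \ge 0}\big(\|p\|\,\rho - h(\rho)\big) = h^*(\|p\|),
\]
where the last equality uses that $h$ is set to $+\infty$ on $(-\infty,0)$, so the supremum defining $h^*$ runs effectively over $\RR_+$. Taking the supremum over $x \in X$ gives $f^*(p) \le h^*(\|p\|)$.

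For the lower bound I would test the objective along the ray $c$ itself. For $r \ge 0$ set $x = c(r)$; then $d(x,x_0) = r$, so $f(x) = h(r)$, while $b_c(c(r)) = \lim_{t\to\infty} d(c(r),c(t)) - t = \lim_t (t-r) - t = -r$, so $b_p(x) = -\|p\|\, r$. Hence $-b_p(x) - f(x) = \|p\|\, r - h(r)$, and taking the supremum over $r \ge 0$ yields $f^*(p) \ge \sup_{r\ge 0}\big(\|p\|\,r - h(r)\big) = h^*(\|p\|)$. Together with the upper bound this gives $f^*(p) = h^*(\|p\|)$ for $p \ne 0$. For $p = 0$ one has $b_0 \equiv 0$, the upper-bound argument still gives $f^*(0) \le h^*(0)$, and the matching lower bound follows from the same ray computation applied to any geodesic ray issuing from $x_0$ (such a ray exists unless $X = \{x_0\}$, a degenerate case one may exclude).

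On difficulty: this lemma has no genuine obstacle; it is essentially bookkeeping once the elementary facts $b_p(x_0)=0$, $b_p$ is $\|p\|$-Lipschitz, and $b_c(c(r)) = -r$ are in hand. The one place meriting care is the normalization factor $\|p\|$ coming from passing to a constant-speed (rather than unit-speed) ray, and---if full generality is wanted---the $p=0$ boundary case flagged above.
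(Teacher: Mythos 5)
Your proof is correct and follows essentially the same route as the paper's: the paper bounds $-b_p(x)$ by comparing $d(x,c(t))$ with $d(c(r),c(t))$ via the triangle inequality (which is just what the Lipschitz estimate packages) to restrict the supremum to the ray $c$, and then computes on $c$ exactly as in your lower-bound step. The only cosmetic difference is that you split the argument into two explicit inequalities and cite the Lipschitz lemma rather than redoing the triangle inequality inline; your flagging of the $p=0$ / bounded-$X$ degeneracy is a small point the paper leaves implicit.
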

\begin{proof}
	By definitions of $f^*$ and $b_p$, we have
	\begin{equation*}
	f^*(p) = \sup_{x \in X} - \|p\| \lim_{t \to \infty} (d(x,c(t))-t)  - h(d(x,x_0)),
	\end{equation*}
where $c$ is a geodesic ray with $c(0) = x_0$ and $c(\infty) = p/\|p\|$.
For $x \in X$, letting $r := d(x,x_0)$, 
we have $h(d(x,x_0)) = h(r)$, 
and $d(x,c(t)) \geq d(c(t),x_0) - d(x,x_0) = t-r = d(c(r),c(t))$ for every large $t$. 
This implies that $\sup$ can be restricted to $c$.
Thus $f^*(p) = \sup_{r \in \RR_+}  \|p\| r - h(r) = h^*(\|p\|)$.
\end{proof}

\begin{Ex}
	Suppose that $f(x) = \frac{1}{2}d(x,x_0)^2$. By the CAT(0)-inequality (\ref{eqn:CAT(0)_2}), 
	$f$ is (strongly) convex.  By the above lemma, we have
	$f^*(p) = \frac{1}{2}\|p\|^2$. 
	In this case, $f^*$ is also convex. 
	The convexity is seen by considering the flat triangle of vertices $0,p,q$. 
\end{Ex}
Further investigation of $f^*$ is left for future research.
As mentioned in the introduction, our central interest is how to describe the domain
$\dom f^* = \{ p \in CX^{\infty} \mid f^*(p) < \infty\}$ 
of the conjugate $f^*$.

\subsubsection{Recession functions (asymptotic slope functions)}
Let $f: X \to \RR$ be a continuous convex function.
The {\em recession function}
 $f^{\infty}: CX^{\infty} \to \RR \cup \{\infty\}$ of $f$ is defined by
 \begin{equation}\label{eqn:f^infty}
 f^{\infty}(p) := \lim_{t \to \infty} \{f(c(t)) - f(c(0))\}/t = \lim_{t \to \infty} f(c(t))/t \quad (p \in CX^{\infty})
 \end{equation}
 where $c$ is a constant-speed ray with $c(\infty) = p$. 
 The recession function is just a homogeneous extension of the {\em asymptotic slope function} by Kapovich, Leeb, and Millson~\cite{KLM2009JDG}, which is defined on $X^{\infty}$.
 By convexity, $(f(c(t)) - f(c(0)))/t$ is monotone nondecreasing, 
 and it converges to a finite value or $\infty$.
  The recession function is indeed independent of the choice of a ray $c$.
 \begin{Lem}[{\cite[Lemma 2.10]{KL2006}}]
 	$\lim_{t \to \infty} f(c(t))/t = 
 	\lim_{t \to \infty} f(c'(t))/t$ holds for two asymptotic geodesic rays $c,c'$.
 \end{Lem}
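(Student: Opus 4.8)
The plan is to prove the one-sided statement: writing $s_c:=\lim_{t\to\infty}f(c(t))/t$ and $s_{c'}:=\lim_{t\to\infty}f(c'(t))/t$ (both existing in $\RR\cup\{\infty\}$ since the difference quotients are nondecreasing by convexity),
\[
s_{c'}<\infty\ \Longrightarrow\ s_c\le s_{c'}.
\]
The lemma follows at once: applying this with $c$ and $c'$ exchanged settles the case where $s_c$ and $s_{c'}$ are both finite, while if, say, $s_{c'}=\infty$, then the contrapositive forbids $s_c<\infty$, so $s_c=\infty=s_{c'}$. So fix $p:=c(0)$ and a constant $K>0$ with $d(c(t),c'(t))\le K$ for all $t\ge 0$; by the triangle inequality, $|\,d(p,c'(t))-t\,|\le K$.

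First I would approximate the ray $c$ by the geodesic segments $\gamma_t:=[p,c'(t)]$. Set $L_t:=d(p,c'(t))$, so $|L_t-t|\le K$ and $L_t\to\infty$. Fix $\ell>0$. For $t$ large enough that $\ell\le L_t$, apply the CAT(0) inequality (\ref{eqn:CAT(0)}) to the comparison triangle of $p$, $c'(t)$, $c(L_t)$ with both interpolation parameters equal to $\ell/L_t$: the two interpolated points are the point $\gamma_t(\ell)$ of $\gamma_t$ at distance $\ell$ from $p$ and the point $c(\ell)$ of $c|_{[0,L_t]}=[p,c(L_t)]$ at distance $\ell$ from $p$, and the distance between the corresponding comparison points is $(\ell/L_t)\,d(c'(t),c(L_t))$. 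Hence
\[
d\bigl(\gamma_t(\ell),c(\ell)\bigr)\le\frac{\ell}{L_t}\,d\bigl(c'(t),c(L_t)\bigr)\le\frac{\ell}{L_t}\bigl(d(c'(t),c(t))+|L_t-t|\bigr)\le\frac{2K\ell}{L_t},
\]
which tends to $0$ as $t\to\infty$. Thus $\gamma_t(\ell)\to c(\ell)$, and so $f(\gamma_t(\ell))\to f(c(\ell))$ by continuity of $f$.

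Finally I would combine this with convexity of $f$ along $\gamma_t$. As $f\circ\gamma_t$ is convex on $[0,L_t]$, the chord-slope inequality gives, for $0<\ell\le L_t$,
\[
\frac{f(\gamma_t(\ell))-f(p)}{\ell}\le\frac{f(c'(t))-f(p)}{L_t}.
\]
Let $t\to\infty$ with $\ell$ fixed. The left-hand side tends to $(f(c(\ell))-f(c(0)))/\ell$; the right-hand side, written as $(t/L_t)\cdot(f(c'(t))/t)-(f(p)/L_t)$, tends to $s_{c'}$ because $L_t/t\to1$ and $f(p)/L_t\to0$. Hence $(f(c(\ell))-f(c(0)))/\ell\le s_{c'}$ for every $\ell>0$, and letting $\ell\to\infty$ gives $s_c\le s_{c'}$, as claimed. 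The one step demanding care is the geodesic-approximation claim $\gamma_t(\ell)\to c(\ell)$ — that the segments joining $p$ to the escaping points $c'(t)$ converge locally uniformly to the ray $c$; the computation above establishes it straight from the CAT(0) comparison with the ``parallel'' segments along $c$, using only that $c,c'$ stay within bounded distance and that $c$ is, by uniqueness of geodesic rays in a Hadamard space, the ray from $p$ asymptotic to $c'$.
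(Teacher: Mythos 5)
Your argument is correct. Note first that the paper itself supplies no proof here: it cites the result directly from Kleiner and Leeb \cite[Lemma 2.10]{KL2006}, so there is nothing in the text to compare against line by line. Your proof fills this in with a clean, self-contained argument. The two ingredients are exactly the right ones: (i) the CAT(0) comparison along the segments $\gamma_t=[p,c'(t)]$ against the segments $[p,c(L_t)]$ of $c$, which gives $d(\gamma_t(\ell),c(\ell))\le(\ell/L_t)\,d(c'(t),c(L_t))\le 2K\ell/L_t\to 0$ and hence $f(\gamma_t(\ell))\to f(c(\ell))$ by continuity; and (ii) the one-dimensional chord-slope inequality for the convex function $f\circ\gamma_t$, which after letting $t\to\infty$ (using $L_t/t\to 1$) and then $\ell\to\infty$ yields $s_c\le s_{c'}$. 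The reduction to the one-sided inequality is valid (your invocation of ``the contrapositive'' is slightly loose phrasing — you are really just applying the one-sided claim with $c$ and $c'$ swapped — but the logic goes through). One small remark: the closing sentence about uniqueness of the asymptotic ray from $p$ is not actually used anywhere; the CAT(0) estimate already delivers $\gamma_t(\ell)\to c(\ell)$ directly without invoking that uniqueness statement, so it can be dropped. A notable feature of your route is that it requires only continuity of $f$, not any Lipschitz hypothesis, which is precisely the level of generality the paper works at.
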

 It is easy to see the positive homogeneity of $f^{\infty}$
 (by the change of variable in (\ref{eqn:f^infty})).
 In particular, $f^{\infty}(r\xi) = r f^{\infty}(\xi)$ for $\xi \in X^{\infty}$.
A partial convexity property of asymptotic slope functions
is obtained in \cite[Lemma 3.2 (ii)]{KLM2009JDG}.
In the setting of recession functions 
the following general convexity holds.
\begin{Thm}\label{thm:f^infty}
The recession function  $f^{\infty}$ is positively homogeneous convex.
\end{Thm}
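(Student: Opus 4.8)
The plan is to verify positive homogeneity (already essentially noted in the text) and then convexity, the latter being the substantive part. Positive homogeneity of $f^{\infty}$ follows from the change of variable $c \mapsto (t \mapsto c(\alpha t))$ in the definition~(\ref{eqn:f^infty}), so I would dispatch it in one line. For convexity, I must show that for $p,q \in CX^{\infty}$ and $s \in [0,1]$,
\begin{equation*}
f^{\infty}((1-s)p + sq) \leq (1-s) f^{\infty}(p) + s f^{\infty}(q),
\end{equation*}
where $(1-s)p+sq$ is the point on the geodesic $[p,q] \subseteq CX^{\infty}$ at the appropriate parameter. Fix a basepoint $x_0 \in X$, and let $c_p, c_q$ be the constant-speed rays issuing from $x_0$ with $c_p(\infty)=p$, $c_q(\infty)=q$. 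The key geometric idea is that the geodesic $[p,q]$ in $CX^{\infty}$ is ``shadowed'' inside $X$ by the geodesics $[c_p(t), c_q(t)]$ for large $t$: more precisely, for $r = (1-s)p+sq$, the point $m_t := (1-s)c_p(t) + s\, c_q(t) \in X$ should satisfy $d(x_0, m_t)/t \to \|r\|$ and the ray from $x_0$ toward the ``direction'' of $m_t$ approximates a ray with endpoint $r$. This is exactly the content of the flat-triangle picture: by Lemma~\ref{lem:flat} the triangle $\{0,p,q\}$ in $CX^{\infty}$ is flat, and the cone construction means the comparison configuration for $x_0, c_p(t), c_q(t)$ in $X$ degenerates, as $t\to\infty$, to this flat triangle scaled by $t$.

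The concrete steps I would carry out: (1) Show that $\overline{\angle}_{x_0}(c_p(t), c_q(t)) \to \angle(p,q)$ as $t \to \infty$ — this is essentially the definition of the angular metric, equation~(\ref{eqn:angle}). (2) Deduce that $d(c_p(t), c_q(t))/t \to d^{\infty}(p,q)$ (using the law of cosines in the comparison triangle together with $d(x_0,c_p(t))=t\|p\|$, $d(x_0,c_q(t))=t\|q\|$, and step (1)). (3) For the midpoint case $s=1/2$ first: set $m_t := (1/2)c_p(t) + (1/2)c_q(t)$. By the CAT(0)-inequality~(\ref{eqn:CAT(0)_2}) applied with $(x,y,z)=(c_p(t),c_q(t),x_0)$ and $t=1/2$,
\begin{equation*}
d(x_0,m_t)^2 \leq \tfrac{1}{2}d(x_0,c_p(t))^2 + \tfrac{1}{2}d(x_0,c_q(t))^2 - \tfrac{1}{4}d(c_p(t),c_q(t))^2,
\end{equation*}
and dividing by $t^2$ and passing to the limit with step (2), the right side tends to $\|(1/2)p+(1/2)q\|^2$ (using~(\ref{eqn:d^infty}) and~(\ref{eqn:<p,q>_2}) in the flat triangle $\{0,p,q\}$), so $\limsup_t d(x_0,m_t)/t \leq \|r\|$ for $r=(1/2)p+(1/2)q$. (4) Convexity of $f$ along $[c_p(t),c_q(t)]$ gives $f(m_t) \leq \tfrac12 f(c_p(t)) + \tfrac12 f(c_q(t))$, hence $\limsup_t f(m_t)/t \leq \tfrac12 f^{\infty}(p) + \tfrac12 f^{\infty}(q)$. (5) The final link: since $f$ is continuous and convex, hence locally Lipschitz — actually I'd rather use that $f$ is convex along the ray from $x_0$ to $r$, so $f^{\infty}(r) = \lim_t f(c_r(t))/t$ where $c_r$ is the ray to $r$ — I need to compare $f(m_t)$ with $f(c_r(\|r\| t))$. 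The points $m_t$ and $c_r(\|r\| t)$ both have distance $\approx \|r\| t$ from $x_0$ and asymptotically point in the same direction $r/\|r\|$; I would argue $d(m_t, c_r(\|r\|t)) = o(t)$, so if $f$ were $L$-Lipschitz the two quotients $f(m_t)/t$ and $f(c_r(\|r\|t))/t$ have the same limit. In general $f$ need not be globally Lipschitz, but one can reduce to this: either invoke that a continuous convex function on a Hadamard space is Lipschitz on bounded sets and use a rescaling/cone-point argument, or — cleaner — observe that $f^{\infty}$ can equivalently be computed as $\sup$ over $x_0$ of the directional rate, and use a monotonicity/convexity argument in $t$ directly rather than Lipschitzness.

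The step I expect to be the main obstacle is precisely step (5): controlling $f(m_t)$ in terms of $f^{\infty}(r)$ when the ``approximate midpoint ray'' $t \mapsto m_t$ is not itself a geodesic ray and $f$ is not assumed Lipschitz. The estimate $d(m_t, c_r(\|r\|t)) = o(t)$ requires knowing that the sequence of geodesic segments $[c_p(t), c_q(t)]$ fellow-travels the cone geodesic $[p,q]$ at sublinear error — a Hausdorff-convergence statement about the flat-triangle degeneration — and then converting this into a bound on $f$ values needs either a Lipschitz estimate (available locally, and transportable to this scale by a careful argument) or a direct convexity comparison. I would handle this by first proving the theorem under the extra assumption that $f$ is $L$-Lipschitz (where the argument above is immediate), and then removing the assumption: for general continuous convex $f$ and fixed $p,q,s$, restrict attention to the convex hull of the three rays to $p$, $q$, $r$ from $x_0$, note $f$ is $L$-Lipschitz on each ball of fixed radius, and exploit that the relevant error terms, after dividing by $t$, are controlled on the scale where $f$'s local Lipschitz constant matters — or alternatively cite the general fact (as in \cite{KLM2009JDG, KL2006}) that the asymptotic slope is insensitive to sublinear perturbations of the ray. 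The extension from $s=1/2$ to all dyadic, hence all $s\in[0,1]$, is then routine by iterating the midpoint inequality and using continuity of $f^{\infty}$ along geodesics (which follows once convexity is known on dyadics, since a midpoint-convex function bounded above is convex).
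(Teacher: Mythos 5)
Your plan parallels the paper's up through step (4): both pass to the midpoint $m_t$ of $c_p(t)$ and $c_q(t)$, both use the CAT(0) inequality to control $d(x_0,m_t)/t$, and both use convexity of $f$ along $[c_p(t),c_q(t)]$ to bound $f(m_t)$. You have also correctly located the crux at step (5), which is to your credit. But the two resolutions you sketch there do not close the gap, and the paper's actual mechanism is different from either.

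Your first suggestion, a Lipschitz transfer from $f(m_t)$ to $f(c_r(\|r\|t))$ via $d(m_t,c_r(\|r\|t)) = o(t)$, has two problems. First, the sublinear-error fellow-traveling estimate $d(m_t, c_r(\|r\|t)) = o(t)$ is not something you prove or cite, and it is not an elementary consequence of the $\limsup$ bound in your step (3) (that bound controls distance from $x_0$, not distance to the limiting ray). Second, even granting it, a continuous convex function on a Hadamard space is only \emph{locally} Lipschitz, and the region you need to control moves out to infinity with $t$; turning a local Lipschitz constant into a useful global bound on the cone you describe would itself be a lemma requiring proof, not a remark. Your alternative, citing ``insensitivity to sublinear perturbations of the ray,'' is closer in spirit but is again an assertion rather than an argument.

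What the paper actually does at this step is cleverer and avoids Lipschitz estimates entirely. It reparametrizes $[x_0,m(t)]$ as a constant-speed path $\rho_t$ on $[0,et]$, where $e=\lim_t d(x_0,m(t))/t$, and invokes \cite[II.4.4]{BallmanBook} to get that $\rho_t(s)\to\rho(s)$ \emph{pointwise in $s$} as $t\to\infty$, where $\rho$ is the ray toward $\eta$ with $p+p'=2e\eta$. Then, for each \emph{fixed} $s$, pointwise continuity of $f$ at $\rho(s)$ plus convexity of $f$ along the finite segment $[x_0,m(t)]$ yields $f(\rho(s))\le\epsilon + f(m(t))\,d(x_0,\rho_t(s))/d(x_0,m(t))$ for $t$ large. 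Letting $t\to\infty$ (so $d(x_0,\rho_t(s))\to s$, $d(x_0,m(t))/t\to e$) and \emph{then} $s\to\infty$ chains through to $f^{\infty}(p+p')\le f^{\infty}(p)+f^{\infty}(p')$. The order of limits is what makes it work: one never needs to compare $m(t)$ with a point on $\rho$ at comparable distance from $x_0$. This is the ingredient your proposal is missing. You also need to handle separately the degenerate case $p+p'=0$ (i.e.\ $\angle(\xi,\xi')=\pi$, $\|p\|=\|p'\|$), where the midpoint ray collapses; the paper does so by a perturbation $p+(1+\epsilon)p'$ and a limit in $\epsilon$, a case your sketch does not address.

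One small simplification you could make: your final paragraph about extending from dyadic $s$ to all $s\in[0,1]$ is unnecessary. Since the triangle $\{0,p,q\}$ is flat (Lemma~\ref{lem:flat}), the geodesic point $(1-s)p+sq$ coincides with the cone-sum of $(1-s)p$ and $sq$, so positive homogeneity plus subadditivity $f^{\infty}(p+q)\le f^{\infty}(p)+f^{\infty}(q)$ already gives convexity at every $s$ in one line, with no dyadic iteration or continuity argument.
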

\begin{proof}
We have already seen the positive homogeneity.
Hence it suffices to show
\begin{equation*}
f^{\infty}(p) + f^{\infty}(p') \geq f^{\infty}(p+p') \quad (p,p' \in CX^{\infty}).
\end{equation*}
Let $p,p' \in CX^{\infty}$. We can assume that both $p$ and $p'$ are nonzero 
and both $f^{\infty}(p)$ and $f^{\infty}(p')$ are finite.
Suppose that $p = a \xi$ and $p' = a' \xi'$ 
for $a \geq a' > 0$ and $\xi, \xi' \in X^{\infty}$.
Let $x \in X$ and let $\sigma, \sigma'$ 
be geodesic rays issuing from $x$ with $\sigma(\infty) = \xi$ and $\sigma'(\infty) = \xi'$.

Suppose first that $p + q \neq 0$. 
Then $\angle (\xi,\xi') < \pi$, or 
$\angle (\xi,\xi') = \pi$ and $a > a'$.
Let $m(t)$ be the midpoint of $\sigma(a t)$ and $\sigma'(a't)$. 
We will consider geodesic segment $[x, m(t)]$ with $t \to \infty$.
We show $e := \lim_{t \to \infty} d(x,m(t))/ t > 0$.
Indeed, by triangle inequality $d(x,m(t)) \geq d(x, \sigma(at)) - d(\sigma(at),m(t))$
with $d(x,\sigma(at)) = at$ and $d(\sigma(at), m(t)) = d(\sigma(a t), \sigma'(a' t))/2$ we have
\begin{equation*}
d(x,m(t))/ t  \geq  a - d(\sigma(a t), \sigma'(a' t))/2t =  a  - \frac{1}{2}\sqrt{a^2 + a'^2 -2 a a' 
	\cos \overline{\angle}_x (\sigma(a t), \sigma'(a' t))},
\end{equation*}
where the equality follows from 
the law of cosine in the comparison triangle of $x, \sigma(at), \sigma(a't')$.
Letting $t \to \infty$, we have 
$\overline{\angle}_x (\sigma(a t), \sigma'(a' t)) \to \angle(\xi, \xi')$ and
\begin{equation*}
e \geq a - \frac{1}{2}\sqrt{a^2 + a'^2 -2 a a' 
	\cos \angle (\xi,\xi')} \geq (a-a')/2.
\end{equation*}
By $a > a'$ or $\angle (\xi,\xi') < \pi$,   
we have $e > 0$.
By~\cite[II.4.4]{BallmanBook},
it holds $p + p' = 2 e \eta$ for some $\eta \in X^{\infty}$.
Let $\rho$ denote the geodesic ray with $\rho(0) = x$ 
and $\rho(\infty) = \eta$. 

We can assume $f(x) = 0$ by replacing $f$ with $f- f(x)$.
Consider 
the constant-speed path $\rho_t: [0,et] \to X$ with $\rho(0) = x$ and 
$\rho(et) = m(t)$.
Then, by \cite[II.4.4]{BallmanBook},  
the path $\rho_t$ converges to geodesic ray $\rho$ for $t \to \infty$. 
For every $\epsilon > 0$ and $s \geq 0$, 
there is $t_0$ such that for every $t \geq t_0$ it holds
\begin{equation*}
f(\rho(s)) \leq \epsilon + f(\rho_t(s)) \leq \epsilon + f(m(t))d(x,\rho_t(s))/d(x,m(t)),
\end{equation*}
where the second inequality follows from $f(x) = 0$ and the convexity of $f$ along $[x, m(t)]$. 
For all large $s,t$ we have
\begin{equation*}
\frac{f(\sigma(at))}{t} + \frac{f(\sigma'(a't))}{t} \geq 2\frac{f(m(t))}{t} \geq 
2 \left(\frac{f(\rho(s))}{s} -  \frac{\epsilon}{s}\right) \frac{s}{d(x, \rho_t(s))} \frac{d(x,m(t))}{t}
\end{equation*}
By $t \to \infty$, we have $d(x,\rho_t(s)) \to s$ and $d(x,m(t))/t \to e$.  By $s \to \infty$, we have
\[
f^{\infty}(p) + f^{\infty}(p') \geq \lim_{s \to \infty} f(\rho(s))2e /s = 2e f^{\infty}(\eta)= f^{\infty}(p+p').
\]

Finally, consider the case $p+p' = 0$, 
i.e., $\angle(\xi,\xi') = \pi$ and $a=a'$.
For $\epsilon > 0$, it holds $p+ (1+\epsilon)p' \neq 0$.
Then, from the above case, we have
\[
f^{\infty}(p) + (1+ \epsilon) f^{\infty}(p') = f^{\infty}(p) + f^{\infty}((1+\epsilon)p') \geq 
f^{\infty}(p + (1+\epsilon)p')= \epsilon f^{\infty}(p').
\]
For the last equality, observe from (\ref{eqn:d^infty}) that 
the midpoint between $p,(1+\epsilon)p'$ is $(\epsilon/2) p'$. Hence
$p +(1+\epsilon) p' = \epsilon p'$ and 
$f^{\infty}(\epsilon p') = \epsilon f^{\infty}(p')$.
By $\epsilon \to 0$, we obtain the desired inequality.
\end{proof}
It is known \cite[p. 318]{KLM2009JDG} that 
the asymptotic slope of a Busemann function is given by 
$b_{\xi}(\eta)^{\infty} = - \cos \angle (\xi, \eta)$ for $\xi, \eta \in X^{\infty}$.
Hence we have: 
\begin{Lem}\label{lem:b^infty} 
	It holds $(b_{p})^{\infty}(q) = - \langle p, q \rangle$ for $p,q \in CX^{\infty}$.
\end{Lem}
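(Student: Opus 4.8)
The plan is to reduce the statement to the stated fact $b_{\xi}^{\infty}(\eta) = -\cos\angle(\xi,\eta)$ for $\xi,\eta \in X^{\infty}$ by peeling off the two radial scalings. First I would handle the degenerate cases: if $p = 0$ then $b_p \equiv 0$ and $\langle p,q\rangle = 0$ by \eqref{eqn:<p,q>_2}, so both sides vanish; similarly if $q = 0$ then the ray representing $q$ is constant, so $(b_p)^{\infty}(0) = \lim_{t\to\infty} b_p(c(t))/t$ with $c$ constant equals $0 = -\langle p, 0\rangle$. So assume $p = a\xi$ and $q = a'\eta$ with $a, a' > 0$ and $\xi,\eta \in X^{\infty}$.

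Next I would use the two homogeneity relations. On one hand, by definition $b_p = a\, b_{\xi}$ (the Busemann function of a constant-speed ray of speed $a$ is $a$ times that of the unit-speed ray in the same asymptotic class, as recorded just before Example \ref{ex:Euclidean_Busemann}), so $(b_p)^{\infty} = a\,(b_{\xi})^{\infty}$ as a function on $CX^{\infty}$, since scaling $f$ by a positive constant scales $f^{\infty}$ by the same constant (immediate from \eqref{eqn:f^infty}). On the other hand, $(b_{\xi})^{\infty}$ is positively homogeneous on $CX^{\infty}$ (Theorem \ref{thm:f^infty}, or directly by the change of variable in \eqref{eqn:f^infty}), so $(b_{\xi})^{\infty}(a'\eta) = a'\,(b_{\xi})^{\infty}(\eta)$. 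Combining,
\[
(b_p)^{\infty}(q) = a\,(b_{\xi})^{\infty}(a'\eta) = a a'\,(b_{\xi})^{\infty}(\eta) = -a a' \cos\angle(\xi,\eta),
\]
using the quoted fact $(b_{\xi})^{\infty}(\eta) = -\cos\angle(\xi,\eta)$ in the last step. Finally, by \eqref{eqn:<p,q>_2} together with $\|p\| = a$, $\|q\| = a'$, and $\angle(p,q) = \angle(\xi,\eta)$ (the angle on $CX^{\infty}$ was defined to depend only on the directions), we get $\langle p,q\rangle = a a' \cos\angle(\xi,\eta)$, so $(b_p)^{\infty}(q) = -\langle p,q\rangle$, as desired.

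There is essentially no hard step here: everything is bookkeeping of definitions, and the only substantive input is the Kapovich--Leeb--Millson computation of the asymptotic slope of a unit Busemann function, which is cited. The one point that deserves a line of care is the interchange $(b_p)^{\infty} = a\,(b_\xi)^{\infty}$ — i.e. that the recession operation commutes with positive scalar multiplication of the function — but this is visibly true from the limit formula \eqref{eqn:f^infty} since the limit is linear in $f$. The lemma then follows.
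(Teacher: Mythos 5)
Your proof is correct and follows essentially the same approach as the paper: the paper simply cites the Kapovich--Leeb--Millson formula $(b_\xi)^\infty(\eta)=-\cos\angle(\xi,\eta)$ for $\xi,\eta\in X^\infty$ and states the lemma as an immediate consequence, and you have spelled out the homogeneity bookkeeping (in both the ray $p$ and the direction $q$, plus the degenerate cases) that the word ``hence'' in the paper quietly absorbs.
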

Motivated by (\ref{eqn:Euclidean_B(f^infty)}), 
for any positively homogeneous function $h:CX^{\infty} \to \RR \cup \{\infty\}$
we define
a subset $B(h) \subseteq CX^{\infty}$ by
\begin{equation}
B(h) := \{  p \in CX^{\infty} \mid \langle u, p \rangle \leq h(u)\ (u \in X^{\infty} \subseteq CX^{\infty})\}.
\end{equation}
Since $p \mapsto \langle u, p \rangle$ is continuous (Lemma~\ref{lem:<p,q>}), 
we have:
\begin{Lem}\label{lem:closed}
	$B(h)$ is a closed subset in $CX^{\infty}$.
\end{Lem}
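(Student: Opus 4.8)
The plan is to exhibit $B(h)$ as an arbitrary intersection of closed sets and then invoke the elementary topological fact that such an intersection is closed. Concretely, for each fixed $u \in X^{\infty} \subseteq CX^{\infty}$ I would set
\[
H_u := \{ p \in CX^{\infty} \mid \langle u, p \rangle \leq h(u) \},
\]
so that by the very definition of $B(h)$ we have $B(h) = \bigcap_{u \in X^{\infty}} H_u$. Hence it suffices to prove that every $H_u$ is closed in the $d^{\infty}$-topology.

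For this I would first observe that the inner product is symmetric: this is immediate from the defining formula (\ref{eqn:<p,q>}), which is symmetric in its two arguments. Consequently $p \mapsto \langle u, p \rangle = \langle p, u \rangle$ is precisely the function that Lemma~\ref{lem:<p,q>} shows to be continuous on $CX^{\infty}$. Now two cases: if $h(u) = \infty$, then $H_u = CX^{\infty}$, which is trivially closed; if $h(u) \in \RR$, then $H_u$ is the preimage of the closed interval $(-\infty, h(u)]$ under the continuous map $p \mapsto \langle u, p \rangle$, hence closed. (Note that $h$ takes values in $\RR \cup \{\infty\}$, so the value $-\infty$ does not occur and no further case is needed.)

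Finally, $B(h) = \bigcap_{u \in X^{\infty}} H_u$ is an intersection of closed subsets of $CX^{\infty}$ and is therefore closed. There is essentially no obstacle here; the only point demanding a moment's care is confirming that the continuity statement of Lemma~\ref{lem:<p,q>} applies in the argument position we need, which is handled by the symmetry of $\langle\cdot,\cdot\rangle$ noted above.
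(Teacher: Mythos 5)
Your proof is correct and takes essentially the same route as the paper: the paper's one-line justification is exactly that $p \mapsto \langle u, p \rangle$ is continuous by Lemma~\ref{lem:<p,q>}, and you merely spell out the resulting intersection-of-preimages argument (plus the harmless symmetry observation needed to cite Lemma~\ref{lem:<p,q>} verbatim). No gap.
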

We mainly consider $B(f^{\infty})$ for recession function $f^{\infty}$.
Belonging to $B(f^{\infty})$ is a necessary condition 
for $p \in CX^{\infty}$ for which $f+b_p$ is bounded below.
\begin{Lem}\label{lem:unbounded}
$\dom f^* \subseteq B(f^{\infty})$. 
\end{Lem}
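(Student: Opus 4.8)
The goal is to show that if $f^*(p) < \infty$ for some $p \in CX^\infty$, then $p \in B(f^\infty)$, i.e.\ $\langle u, p\rangle \le f^\infty(u)$ for every $u \in X^\infty$. The natural strategy is to compare the growth rates of $f$ and of the Busemann function $b_p$ along a well-chosen ray. The hypothesis $f^*(p) < \infty$ says precisely that $g := f + b_p$ is bounded below on $X$, say $g(x) \ge -c$ for all $x$. I would then fix $u \in X^\infty$, pick a point $x_0 \in X$, let $c$ be the geodesic ray issuing from $x_0$ with $c(\infty) = u$, and evaluate the bound along this ray: $f(c(t)) + b_p(c(t)) \ge -c$ for all $t \ge 0$. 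Dividing by $t$ and letting $t \to \infty$ gives $f^\infty(u) + (b_p)^\infty(u) \ge 0$.

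The key ingredients are then (i) that $f^\infty$ and $(b_p)^\infty$ are computed along the \emph{same} ray $c$, which is legitimate because both recession functions are independent of the chosen ray in the asymptotic class (this is the lemma cited from \cite{KL2006}, applicable since $b_p$ is itself a continuous convex function by Lemma~\ref{lem:Busemann_convex}); and (ii) the identity $(b_p)^\infty(u) = -\langle p, u\rangle$ from Lemma~\ref{lem:b^infty}. Combining, $f^\infty(u) - \langle p, u\rangle \ge 0$, i.e.\ $\langle u, p\rangle \le f^\infty(u)$. Since $u \in X^\infty$ was arbitrary, this is exactly the defining condition for $p \in B(f^\infty)$.

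One small technical point I would want to handle carefully: the inequality $f(c(t)) + b_p(c(t)) \ge -c$ must be divided by $t$ and passed to the limit, and I need to know the limit of the sum equals the sum of the limits. Since $(f(c(t)) - f(c(0)))/t$ and $(b_p(c(t)) - b_p(c(0)))/t$ are each monotone nondecreasing (by convexity of $f$ and of $b_p$) and converge in $\RR \cup \{\infty\}$, and since $f^\infty(u)$ is finite whenever the conclusion is nontrivial (if $f^\infty(u) = \infty$ the desired inequality is automatic), the passage to the limit is routine; the limit of the sum is at least the sum of the limits, which already suffices for the inequality $f^\infty(u) + (b_p)^\infty(u) \ge 0$. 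I do not expect any serious obstacle here — the only thing to be slightly careful about is that $f^\infty$ and $(b_p)^\infty$ are a priori defined via constant-speed rays, so I should make sure the normalization of $c$ (unit speed) matches the definitions, but with $u \in X^\infty$ identified with $1 \cdot u$ this is exactly the unit-speed case and causes no trouble.
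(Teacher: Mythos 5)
Your proof is correct and is essentially the same as the paper's: both rest on the additivity $(f+b_p)^\infty(u) = f^\infty(u) + (b_p)^\infty(u) = f^\infty(u) - \langle u,p\rangle$ along a single ray asymptotic to $u$, and on the monotone convergence of the difference quotients. The paper merely phrases it contrapositively (if $\langle u,p\rangle > f^\infty(u)$ for some $u$, then $f+b_p$ decays linearly along the corresponding ray, so $f^*(p)=\infty$), whereas you argue directly from the lower bound $f+b_p \ge -c$; the content is identical.
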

\begin{proof}
Suppose that $\langle u,p \rangle > f^{\infty}(u)$ for some $u \in X^{\infty}$.
Then $f^{\infty}(u) - \langle u,p \rangle = (f + b_p)^{\infty}(u) =  
\lim_{t \to \infty} (f(c(t)) + b_{p} (c(t)))/t$ is 
negative, where $c$ is a ray with $c(\infty) = u$.  
Therefore, for some $\alpha > 0$, it holds
$
f(c(t)) + b_{p}(c(t)) < - \alpha t
$ 
for every large $t> 0$.
This means that $\inf_{x \in X} f(x) + b_{p}(x) = - \infty$, 
and $p \not \in \dom f^*$.
\end{proof}
For a subset $R \subseteq CX^{\infty}$, let $\overline{R}$ 
denote the closure of $R$ with respect to the $d^{\infty}$-topology.
By the above lemma, it holds $\overline{\dom f^*} \subseteq B(f^{\infty})$.
We do not know whether the equality holds in general.

\subsection{Hadamard manifolds}
From here, we 
restrict our study to smooth convex optimization on a {\em Hadamard manifold}, i.e., a simply-connected complete Riemannian manifold 
having nonpositive sectional curvature.
We utilize elementary concepts in Riemannian geometry; see e.g.,~\cite{Sakai}.
A recent book~\cite{Boumal} for optimization perspectives is also useful.
For Hadamard manifolds, we consult~\cite{BGS1985,Eberlien}. 

Let $M$ be an $n$-dimensional Hadamard manifold.  
For $x \in M$, let $\langle, \rangle_x$ denote the inner product of the tangent space $T_x$. Let $d$ denote the distance function on $M$ 
obtained from the Riemannian connection.
The metric space $M$ is known to be a Hadamard space. 
A geodesic ray issuing from $x$ 
is given by the {\em exponential map} $\exp_x: T_x \to M$.
Namely, for $v \in T_x$, the map $t \mapsto \exp_x (vt)$ is 
a constant-speed ray 
with speed $\|v\|_x := \sqrt{\langle v,v \rangle_x}$.
The map $\exp_x$ is a diffeomorphism from $T_x$ to $M$.

Consider the boundary $M^{\infty}$ and its Euclidean cone $CM^{\infty}$ of $M$.
Via the exponential map, the boundary $M^{\infty}$ is identified
with the unit sphere at $T_{x}$, and the 
cone $CM^{\infty}$ is identified with $T_x \simeq \RR^n$.
This identification gives another topology to $M^{\infty}$ and to $CM^{\infty}$, which is called the {\em standard topology}, and  is independent of the choice of $x$.
In the standard topology, $M^{\infty}$ and $CM^{\infty}$ are homeomorphic to sphere $S^{n-1}$ and Euclidean space $\RR^n$, respectively.
It is known \cite[II.9.7(1)]{BrHa} that the identity map on 
$CM^{\infty}$ from the $d^{\infty}$-topology 
to the standard topology is continuous (and is not homeomorphic in general).
\begin{figure}[t]
	\begin{center}
		\includegraphics[scale=0.7]{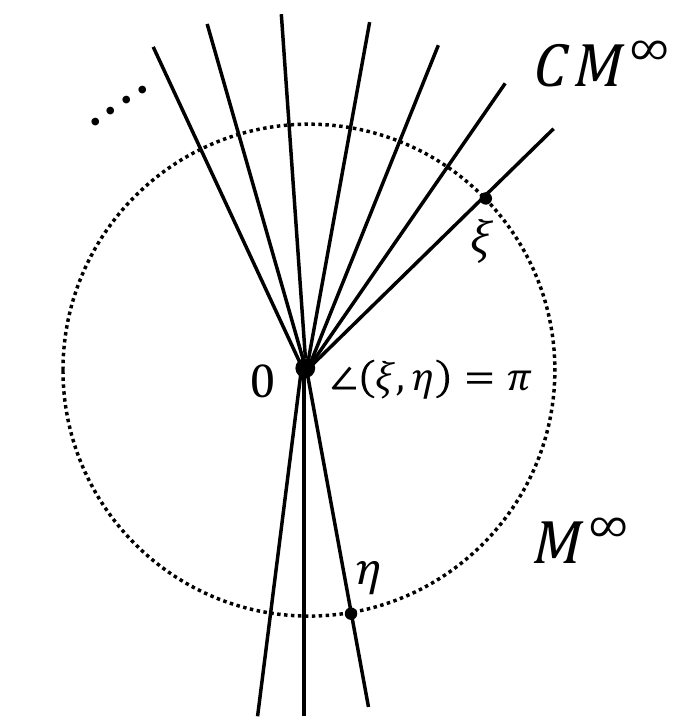}
		\caption{The boundary of hyperbolic space}
		\label{fig:hyperbolic}
	\end{center}
\end{figure}\noindent
\begin{Ex}\label{ex:hyperbolic}
	Suppose that $M$ is a hyperbolic space.
	For distinct $\xi,\eta \in M^{\infty}$ 
	there is a geodesic line $c: \RR \to M$ such that $c(\infty) = \xi$ and $c(-\infty) = \eta$. This means that 
	$\angle (\xi, \eta) = \pi$ for every distinct $\xi,\eta \in M^{\infty}$.
	Therefore, the boundary $M^{\infty}$ is a discrete topological space. See \cite[II.9.6 (ii)]{BrHa}.
	The cone $CM^{\infty}$ is a {\em star} obtained from 
	infinitely many half-lines $\RR_+$, each associated with $\xi \in M^{\infty}$, 
	by identifying the origin of all $\RR_+$. 
	See Figure~\ref{fig:hyperbolic}.
	This can be seen from the distance formula (\ref{eqn:d^infty}) as $d^{\infty}(r\xi,s \eta) = r+s$ if $\xi \neq \eta$ 
	and $|r-s|$ if $\xi = \eta$.
\end{Ex}

Let $f: M \to \RR$ be a smooth convex function. 
\begin{Lem}\label{lem:standard}
\begin{itemize}
\item[(1)] $f^{\infty}$ 
is lower semicontinuous in the standard topology~\cite[Lemma 2.11]{KL2006}.
\item[(2)] 
$(p,q) \mapsto \langle p, q \rangle$ is upper semicontinuous in the standard topology.
\end{itemize}
\end{Lem}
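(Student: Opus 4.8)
The plan is to obtain both assertions from a single principle: a pointwise supremum of functions that are continuous in the standard topology is itself lower semicontinuous in the standard topology. Throughout I fix a base point $x_0 \in M$ and use the identification $CM^{\infty} \simeq T_{x_0}$ furnished by $\exp_{x_0}$, which is exactly what defines the standard topology; explicitly, $p \in CM^{\infty}$ corresponds to $v \in T_{x_0}$ whose associated constant-speed ray $c_p$ satisfies $c_p(0) = x_0$ and $c_p(t) = \exp_{x_0}(tv)$, and under this correspondence $\|p\| = \|v\|_{x_0}$, so $p \mapsto \|p\|$ is continuous in the standard topology.

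For (1): since $f$ is convex along geodesics, $t \mapsto f(c_p(t))$ is convex, hence $t \mapsto (f(c_p(t)) - f(c_p(0)))/t$ is nondecreasing and $f^{\infty}(p) = \sup_{t>0}(f(\exp_{x_0}(tv)) - f(x_0))/t$. For each fixed $t>0$ the map $v \mapsto (f(\exp_{x_0}(tv)) - f(x_0))/t$ is continuous, so $f^{\infty}$ is a supremum of standard-continuous functions and is therefore lower semicontinuous in the standard topology; this recovers \cite[Lemma 2.11]{KL2006}.

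For (2): I would first show that the angular metric is lower semicontinuous in the standard topology. Using (\ref{eqn:angle}) with the base point $x_0$, $\angle(\xi,\eta) = \sup_{t,t'>0} \overline{\angle}_{x_0}(c_\xi(t), c_\eta(t'))$, where $c_\xi, c_\eta$ are the unit-speed geodesic rays from $x_0$ to $\xi,\eta$. For $t, t' > 0$ fixed, $(\xi,\eta) \mapsto \overline{\angle}_{x_0}(c_\xi(t), c_\eta(t'))$ is continuous in the standard topology: $\xi \mapsto c_\xi(t) = \exp_{x_0}(t v_\xi)$ is continuous, $d$ is continuous, and the comparison angle at $x_0$ is the continuous function $\arccos\!\big((t^2 + t'^2 - d(c_\xi(t),c_\eta(t'))^2)/(2tt')\big)$ of $d(c_\xi(t),c_\eta(t'))$, the other two side lengths being the fixed positive numbers $t,t'$. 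Hence $\angle$, a supremum of such functions, is lower semicontinuous, so $(\xi,\eta) \mapsto \cos\angle(\xi,\eta)$ is upper semicontinuous in the standard topology.

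Finally I would combine this with continuity of the norm and formula (\ref{eqn:<p,q>_2}). Take $p_n \to p$, $q_n \to q$ in the standard topology of $CM^{\infty}$, so $\|p_n\| \to \|p\|$ and $\|q_n\| \to \|q\|$. If $\|p\|\,\|q\| = 0$, then $\langle p, q\rangle = 0$ while $|\langle p_n,q_n\rangle| \le \|p_n\|\,\|q_n\| \to 0$. If $\|p\|, \|q\| > 0$, then $p_n, q_n \ne 0$ for large $n$ with $p_n/\|p_n\| \to p/\|p\|$ and $q_n/\|q_n\| \to q/\|q\|$ in the standard topology, whence
\begin{equation*}
\limsup_n \langle p_n, q_n\rangle = \|p\|\,\|q\| \limsup_n \cos\angle(p_n,q_n) \le \|p\|\,\|q\|\cos\angle(p,q) = \langle p, q\rangle .
\end{equation*}
In either case $\limsup_n \langle p_n, q_n\rangle \le \langle p, q\rangle$, the claimed upper semicontinuity. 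I do not expect a genuine obstacle: the only delicate points are that the comparison-angle maps are continuous because $t,t'$ are kept strictly positive (so one never divides by zero) and the bookkeeping of the zero cases; and the fact that one obtains only upper semicontinuity, not continuity, is precisely the reflection of $\angle$ being lower semicontinuous but not continuous in the standard topology.
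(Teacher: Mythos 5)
Your proof is correct and takes essentially the same approach as the paper: part (1) writes $f^{\infty}$ as a supremum of standard-continuous difference quotients exactly as the paper does, and part (2) deduces upper semicontinuity of $\langle\cdot,\cdot\rangle$ from lower semicontinuity of the angular metric together with continuity of the norm and monotonicity of cosine. The only difference is that you reprove the lower semicontinuity of the angle (which the paper cites from Bridson and Haefliger, II.9.5) directly from formula~(\ref{eqn:angle}) as a supremum of continuous comparison-angle maps, and you spell out the zero-norm cases that the paper leaves implicit.
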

\begin{proof}
(1). Take an arbitrary $x \in M$, and identify $CM^{\infty}$ with $T_x$ (by $v \mapsto \exp_x v \infty$).
	Let $f_t (p) := (f(\exp_x (pt)) - f(x))/t$ for $p \in T_x$, 
which
is monotone nondecreasing in $t$ and continuous on $T_x$ (in the standard topology).
Therefore $f^{\infty}(p) = \sup_{t\geq 0} f_t (p)$. 
It is well-known that 
the supremum of continuous functions $f_t$ is lower semicontinuous.

(2). It is known \cite[II.9.5]{BrHa} that
$(\xi,\eta) \mapsto \angle (\xi, \eta)$ is 
lower semicontinuous. Then $(p,q) \mapsto \langle p,q\rangle$ 
is upper semicontinuous, since $p \mapsto d^{\infty}(0,p) = \| p \|_x$ is continuous in the standard topology and cosine is a decreasing function.  
\end{proof}

As in the Euclidean case, a minimizer of $f$ 
is characterized by the gradient.
The {\em gradient} $\nabla f(x) \in T_x$ of $f$ at $x$ is defined via
\begin{equation*}
\langle \nabla f(x), v \rangle_x = df(x)(v) = \lim_{t \to 0} \{ f(\exp_x t v) - f(x) \}/t \quad (v \in T_x),
\end{equation*} 
where $df(x):T_x \to \RR$ is the differential of $f$ at $x$.
It is easy to see:
\begin{Lem}[{see \cite[Corollary 11.22]{Boumal}}]
	$x \in M$ is a minimizer of $f$ if and only if $\nabla f(x) = 0$.
\end{Lem}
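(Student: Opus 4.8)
The claim to prove is the standard fact that a smooth convex function $f$ on a Hadamard manifold $M$ attains a minimum precisely at a critical point: $x$ is a minimizer iff $\nabla f(x) = 0$.

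The plan is to split into the two implications. The easy direction is that a minimizer has vanishing gradient: if $x$ minimizes $f$, then for any $v \in T_x$ the one-variable function $t \mapsto f(\exp_x tv)$ has a minimum at $t=0$, so its derivative there, which is $df(x)(v) = \langle \nabla f(x), v\rangle_x$, is zero; since $v$ is arbitrary, $\nabla f(x) = 0$.

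For the converse — $\nabla f(x) = 0$ implies $x$ is a global minimizer — I would use geodesic convexity. Given any $y \in M$, let $c$ be the geodesic with $c(0) = x$, $c(1) = y$, and set $g(t) := f(c(t))$. Then $g$ is a convex function of one real variable (this is the defining property of geodesic convexity, specialized to the geodesic through $x$ and $y$), and it is smooth along $c$ because $f$ and $\exp_x$ are smooth. Its derivative at $0$ is $g'(0) = df(x)(\dot c(0)) = \langle \nabla f(x), \dot c(0)\rangle_x = 0$. A smooth convex function of one variable with zero derivative at a point attains its minimum there, so $g(1) \ge g(0)$, i.e. $f(y) \ge f(x)$. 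As $y$ was arbitrary, $x$ is a global minimizer.

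There is essentially no obstacle here; the only point requiring a little care is confirming that geodesic convexity of $f$ on $M$ forces $t \mapsto f(c(t))$ to be convex in the ordinary sense for the (unique, since $M$ is uniquely geodesic) geodesic $c$ joining $x$ and $y$ — but this is immediate from the definition \eqref{eqn:convexity} applied along $c$. One could alternatively invoke the cited result \cite[Corollary 11.22]{Boumal} directly; I would spell out the short argument above for completeness.
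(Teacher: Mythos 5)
Your proof is correct. The paper does not actually spell out a proof of this lemma — it labels it ``easy to see'' and cites \cite[Corollary 11.22]{Boumal} — and your argument is exactly the standard one that the cited reference gives: the first-order optimality condition handles the forward direction, and geodesic convexity of $f$ along the unique geodesic from $x$ to an arbitrary $y$, combined with $g'(0)=0$ for the one-variable restriction $g$, handles the converse.
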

In Euclidean case $M=\RR^n$, 
$x$ is a minimizer of $x \mapsto f(x) - \langle p,x \rangle$
if and only if $\nabla f(x) = p$.
To extend it, we consider
the gradient of Busemann functions.
Note that 
Busemann functions are continuously differentiable; see \cite[1.10.2 (1)]{Eberlien}.
As in the previous subsection, we fix $x_0 \in M$ and 
identify $CM^{\infty}$ with 
the space of Busemann functions for rays issuing from $x_0$.

\begin{Lem}[{see \cite[1.10.2 (2)]{Eberlien}}]\label{lem:nabla_Busemann}
	For $x \in M$ and $p \in CM^{\infty}$, it holds
	$\nabla b_{p}(x) = - u$ for 
	$u \in T_x$ with $p = \exp_x \infty u$.
\end{Lem}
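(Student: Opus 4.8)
The plan is to reduce the statement to the special case of a \emph{unit-speed} geodesic ray evaluated at its own base point, and there to pin down the gradient by combining two facts that are already available: the Busemann function of such a ray decreases at exactly unit rate along the ray, while on the whole manifold it is only $1$-Lipschitz. The ``slack'' in the Lipschitz bound is therefore zero along the ray, and this forces the gradient there.

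Concretely, I would first recall that Busemann functions on a Hadamard manifold are continuously differentiable (see \cite[1.10.2 (1)]{Eberlien}), so that $\nabla b_p(x)$ makes sense. Write $r := \|p\| = \|u\|_x$ and assume $r>0$ (for $p=0$ the claim is trivial). The constant-speed ray representing $p$ and issuing from $x$ is $\tilde c(t) := \exp_x(tu)$, of speed $r$; let $\gamma(t) := \exp_x(tu/r) = \tilde c(t/r)$ be its unit-speed reparametrization, so that $\gamma(0)=x$ and $\gamma'(0)=u/r$. By the definition of the Busemann function of a constant-speed ray, $b_p = b_{\tilde c} = r\, b_{\gamma}$, hence $\nabla b_p(x) = r\, \nabla b_{\gamma}(x)$, and it suffices to show $\nabla b_{\gamma}(x) = -\gamma'(0) = -u/r$. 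Next I would evaluate $b_{\gamma}$ along $\gamma$: since $d(\gamma(t),\gamma(s)) = s-t$ for all $s \ge t \ge 0$, one reads off $b_{\gamma}(\gamma(t)) = \lim_{s\to\infty}\bigl((s-t)-s\bigr) = -t$ for $t\ge 0$; in particular $b_{\gamma}(x)=0$, and differentiating at $t=0$ gives $\langle \nabla b_{\gamma}(x), \gamma'(0)\rangle_x = -1$.

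To conclude, I would invoke Lemma~\ref{lem:Busemann_convex}: $b_{\gamma}$ is $1$-Lipschitz, hence $\|\nabla b_{\gamma}(x)\|_x \le 1$. Together with $\|\gamma'(0)\|_x = 1$ and the identity $\langle \nabla b_{\gamma}(x), \gamma'(0)\rangle_x = -1$, the Cauchy--Schwarz inequality is forced to hold with equality, which yields $\|\nabla b_{\gamma}(x)\|_x = 1$ and $\nabla b_{\gamma}(x) = -\gamma'(0) = -u/r$; multiplying by $r$ gives $\nabla b_p(x) = -u$. The argument is largely routine; the only points that need a small amount of care are the appeal to the equality case of Cauchy--Schwarz --- which is precisely why it is convenient to use only the $1$-Lipschitz bound rather than the a priori stronger fact $\|\nabla b_{\gamma}\| \equiv 1$ --- and the $C^1$-regularity of $b_{\gamma}$ at the base point, which is the cited input and is in essence the content of the lemma.
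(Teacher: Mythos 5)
Your argument is correct, and it is worth noting at the outset that the paper does not actually prove this lemma --- it cites Eberlein~\cite[1.10.2 (2)]{Eberlien} and moves on --- so your derivation fills a gap rather than competes with an in-paper proof. The chain of reasoning is sound: reduce by homogeneity to the unit-speed ray $\gamma$; observe $b_\gamma(\gamma(t)) = -t$, so $C^1$-regularity (the cited \cite[1.10.2 (1)]{Eberlien}) gives $\langle \nabla b_\gamma(x), \gamma'(0)\rangle_x = -1$; combine with the $1$-Lipschitz bound of Lemma~\ref{lem:Busemann_convex}, which gives $\|\nabla b_\gamma(x)\|_x \le 1$, and the equality case of Cauchy--Schwarz forces $\nabla b_\gamma(x) = -\gamma'(0)$. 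You are careful to use only ingredients already available in the paper, which is appropriate. One small imprecision: you assert $b_p = b_{\tilde c}$, but in the paper's conventions $b_p$ is the Busemann function of the ray issuing from the \emph{fixed base point} $x_0$, while $\tilde c$ issues from $x$; by Lemma~\ref{lem:constant} the two differ by an additive constant, so $\nabla b_p(x) = \nabla b_{\tilde c}(x)$ still holds, but it would be cleaner to say this explicitly rather than write an equality of functions. The remark about preferring the $1$-Lipschitz bound over $\|\nabla b_\gamma\|\equiv 1$ is a stylistic aside and does not affect correctness.
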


The {\em asymptotic gradient} $\nabla^{\infty} f(x)$ for $x \in M$  is defined as the asymptotic class of 
the ray $t \mapsto \exp_x t \nabla f(x)$, that is,
\begin{equation*}
\nabla^{\infty} f(x) := \exp_x \infty \nabla f(x). 
\end{equation*}
Notice that $\|\nabla^{\infty}f(x)\| = d^{\infty}(0, \nabla^{\infty}f(x))$ is the speed of $t \mapsto \exp_x t \nabla f(x)$. Therefore we have
\begin{equation}\label{eqn:d^infty(0,nabla)}
\|\nabla^{\infty}f(x)\| = \|\nabla f(x)\|_x. 
\end{equation}
Then we have the following analogue of the one in Euclidean convex analysis.
\begin{Lem}\label{lem:f(x)+f*(p)=-b_p(x)}
For $p \in CM^{\infty}$, 
the following conditions are equivalent:
\begin{itemize}
\item[(i)] $x$ is a minimizer of $f + b_{p}$ over $M$.
\item[(ii)] $\nabla^{\infty} f(x) = p$.
\item[(iii)] $f(x) + f^*(p) = - b_{p}(x)$
\end{itemize}
\end{Lem}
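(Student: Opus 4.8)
The plan is to prove the cyclic chain of implications (i) $\Rightarrow$ (ii) $\Rightarrow$ (iii) $\Rightarrow$ (i), exploiting that $f + b_p$ is a smooth convex function on the Hadamard manifold $M$ and that Busemann functions are continuously differentiable with explicitly known gradient (Lemma~\ref{lem:nabla_Busemann}).

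\textbf{(i) $\Rightarrow$ (ii).} If $x$ minimizes the smooth convex function $g := f + b_p$, then $\nabla g(x) = 0$. Since $\nabla g(x) = \nabla f(x) + \nabla b_p(x)$ and, by Lemma~\ref{lem:nabla_Busemann}, $\nabla b_p(x) = -u$ where $u \in T_x$ is the vector with $\exp_x \infty u = p$, this gives $\nabla f(x) = u$. Applying $\exp_x(\cdot\,\infty)$ to both sides yields $\nabla^{\infty} f(x) = \exp_x \infty \nabla f(x) = \exp_x \infty u = p$, which is (ii).

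\textbf{(ii) $\Rightarrow$ (iii).} Assume $\nabla^{\infty}f(x) = p$, i.e. $\exp_x \infty \nabla f(x) = p$. Taking $u := \nabla f(x)$ in Lemma~\ref{lem:nabla_Busemann} gives $\nabla b_p(x) = -\nabla f(x)$, so $\nabla(f + b_p)(x) = 0$ and $x$ minimizes the convex function $f + b_p$. Hence $\inf_{y \in M}\{f(y) + b_p(y)\} = f(x) + b_p(x)$, which is finite. By the definition (\ref{eqn:f*}) of the asymptotic Legendre--Fenchel conjugate, $f^*(p) = \sup_{y}\{-b_p(y) - f(y)\} = -\bigl(f(x) + b_p(x)\bigr)$, i.e. $f(x) + f^*(p) = -b_p(x)$, which is (iii).

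\textbf{(iii) $\Rightarrow$ (i).} Suppose $f(x) + f^*(p) = -b_p(x)$. From the definition of $f^*$ we always have $-b_p(y) - f(y) \le f^*(p)$ for all $y \in M$, i.e. $f(y) + b_p(y) \ge -f^*(p)$ for all $y$. Combined with the hypothesis this gives $f(y) + b_p(y) \ge -f^*(p) = f(x) + b_p(x)$ for all $y \in M$, so $x$ minimizes $f + b_p$, which is (i). This closes the cycle.

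I do not expect a genuine obstacle here: the argument is the verbatim Hadamard-manifold transcription of the Euclidean fact that $x$ minimizes $f - \langle p, \cdot\rangle$ iff $\nabla f(x) = p$ iff $f(x) + f^*(p) = \langle p,x\rangle$. The only points requiring care are (a) using the right sign conventions for the Busemann gradient, which are supplied by Lemma~\ref{lem:nabla_Busemann}; (b) noting that $f + b_p$ is a \emph{smooth} convex function (sum of the smooth convex $f$ and the continuously differentiable convex Busemann function $b_p$), so that the first-order optimality condition $\nabla(f+b_p)(x)=0$ is both necessary and sufficient for a minimum; and (c) observing that $\exp_x$ is a diffeomorphism $T_x \to M$, so the correspondence $u \leftrightarrow \exp_x \infty u$ between $T_x$ and $CM^{\infty}$ used implicitly in (ii) is well defined and matches the identification in Lemma~\ref{lem:nabla_Busemann}. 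If one wishes, the finiteness of $f^*(p)$ in (ii) $\Rightarrow$ (iii) can be recorded as a byproduct, showing $p \in \dom f^*$.
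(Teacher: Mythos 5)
Your proof is correct and takes essentially the same approach as the paper: the paper proves (i) $\Leftrightarrow$ (ii) via the first-order condition $\nabla(f+b_p)(x)=0$ together with Lemma~\ref{lem:nabla_Busemann}, and (i) $\Leftrightarrow$ (iii) directly from the definition~(\ref{eqn:f*}); you merely reorganize the same two ingredients into a cyclic chain (i)~$\Rightarrow$~(ii)~$\Rightarrow$~(iii)~$\Rightarrow$~(i) with the details written out.
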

\begin{proof}
	(i) $\Leftrightarrow$ 
	$\nabla (f + b_{p})(x) = 0$ 
	$\Leftrightarrow$ $\nabla f(x) = u$ for 
	$p = \exp_x u \infty$
	 $\Leftrightarrow$ (ii). 
	 (i) $\Leftrightarrow$ (iii) is obvious from the definition~(\ref{eqn:f*}) of conjugate $f^*$. 
\end{proof}
As in the Euclidean case, 
the conjugate of a (smooth) convex function 
recovers the original function 
via the inverse transformation.
\begin{Lem}
$
f(x) = \sup_{p \in CM^{\infty}} - b_{p}(x) - f^*(p) \quad (x \in M).
$
\end{Lem}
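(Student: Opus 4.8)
The plan is to prove the inversion formula $f(x) = \sup_{p \in CM^\infty} \{ -b_p(x) - f^*(p) \}$ by establishing the two inequalities separately. The inequality $f(x) \geq \sup_{p} \{ -b_p(x) - f^*(p) \}$ is immediate and holds at the level of the definition of $f^*$: for every $p \in CM^\infty$ we have $f^*(p) = \sup_{y \in M} \{ -b_p(y) - f(y) \} \geq -b_p(x) - f(x)$, which rearranges to $f(x) \geq -b_p(x) - f^*(p)$; taking the supremum over $p$ gives the bound. This requires nothing beyond the definition~(\ref{eqn:f*}) and is valid on any Hadamard space, not just a manifold.

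The reverse inequality $f(x) \leq \sup_{p} \{ -b_p(x) - f^*(p) \}$ is where the manifold structure and smoothness are used. The natural strategy is to exhibit, for each fixed $x \in M$, a point $p^* \in CM^\infty$ that \emph{attains} the supremum, namely $p^* := \nabla^\infty f(x) = \exp_x \infty\, \nabla f(x)$. By Lemma~\ref{lem:nabla_Busemann}, with $u := \nabla f(x) \in T_x$ we have $\nabla b_{p^*}(x) = -u = -\nabla f(x)$, so $\nabla(f + b_{p^*})(x) = 0$; since $f + b_{p^*}$ is convex (sum of a convex function and a Busemann function, which is convex by Lemma~\ref{lem:Busemann_convex}), vanishing gradient means $x$ is a global minimizer of $f + b_{p^*}$ over $M$. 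This is precisely condition (i) of Lemma~\ref{lem:f(x)+f*(p)=-b_p(x)}, so by the equivalence with (iii) we obtain $f(x) + f^*(p^*) = -b_{p^*}(x)$, i.e. $f(x) = -b_{p^*}(x) - f^*(p^*) \leq \sup_p \{ -b_p(x) - f^*(p) \}$. Combining the two inequalities yields the claim.

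Thus the proof is essentially a direct corollary of Lemma~\ref{lem:f(x)+f*(p)=-b_p(x)}: the forward inequality is the trivial half of a Fenchel-type inequality, and the reverse inequality is witnessed by the asymptotic gradient $\nabla^\infty f(x)$. The only genuinely substantive input is the existence and finiteness of $\nabla f(x)$ — guaranteed by the smoothness of $f$ and the fact that $\exp_x$ is a diffeomorphism $T_x \to M$, so that $p^* = \exp_x \infty\,\nabla f(x)$ is a well-defined point of $CM^\infty$ — together with the convexity argument that a critical point of a convex function is a global minimum (Lemma~\ref{lem:f(x)+f*(p)=-b_p(x)}'s proof already packages this). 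The main (and minor) obstacle is simply to be careful that $f^*(p^*) < \infty$, which is automatic here because $x$ itself realizes the supremum defining $f^*(p^*)$, so the value is finite and equal to $-b_{p^*}(x) - f(x)$; no separate boundedness argument via $B(f^\infty)$ or Lemma~\ref{lem:unbounded} is needed. I would write the proof in three or four lines, citing Lemma~\ref{lem:f(x)+f*(p)=-b_p(x)} for the nontrivial direction and the definition of $f^*$ for the trivial one.
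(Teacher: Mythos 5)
Your proof is correct and follows essentially the same route as the paper's: the easy inequality comes from the definition of $f^*$, and the reverse is witnessed by $p=\nabla^{\infty}f(x)$ via Lemma~\ref{lem:f(x)+f*(p)=-b_p(x)}. The paper states this in two lines; you merely unpack the same argument in more detail, including the observation that finiteness of $f^*(p^*)$ is automatic since $x$ attains the defining supremum.
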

\begin{proof}
	By definition, $f^{*}(p) \geq - b_{p}(x) - f(x)$ 
	for every $x \in M$ and $p \in CM^{\infty}$.
	Therefore $f(x) \geq \sup_{p \in CM^{\infty}} - b_{p}(x) - f^{*}(p)$. 
	For $p = \nabla^{\infty} f(x)$, 
	the equality is attained by Lemma~\ref{lem:f(x)+f*(p)=-b_p(x)}.	
\end{proof}
This gives rise to an interesting question of characterizing the class of functions $g$ 
on $CM^{\infty}$ for which $g^*(x) := \sup_{p \in CM^{\infty}} - b_{p}(x) - g(p)$
is convex on $M$. However this is beyond the theme of this paper, 
and we leave it for future research.

For a set $R \subseteq CM^{\infty}$, 
let $\interior R$ denote the interior of $R$ 
in the $d^{\infty}$-topology.
\begin{Lem}\label{lem:interior}
	Let $h: CM^{\infty} \to \RR \cup \{\infty\}$ be a positively homogeneous function.
	Suppose that $h$ is lower semicontinuous in the standard topology.
	Then it holds
	\begin{equation}\label{eqn:int_B(h)}
\interior B(h) 
= \{ p \in CM^{\infty} \mid  \langle \xi, p \rangle < h(\xi)\ (\xi \in M^{\infty} \subseteq CM^{\infty})\}.
\end{equation}
\end{Lem}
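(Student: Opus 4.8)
The plan is to prove the two inclusions of $(\ref{eqn:int_B(h)})$ separately. Write $U$ for its right-hand side, so $U=\{p\in CM^{\infty}\mid \langle\xi,p\rangle<h(\xi)\ (\xi\in M^{\infty})\}$; note first that $U\subseteq B(h)$, since a strict inequality implies the corresponding non-strict one.

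For the inclusion $\interior B(h)\subseteq U$ I would argue by contradiction. Assume $p\in\interior B(h)$ but $\langle\xi,p\rangle\geq h(\xi)$ for some $\xi\in M^{\infty}$, so in particular $\|\xi\|=1$. The idea is to perturb $p$ in the direction of $\xi$ while staying inside the $d^{\infty}$-ball around $p$ witnessing $p\in\interior B(h)$. Put $q_{s}:=p+s\xi$ for $s>0$. Since $a\mapsto a\xi$ and $(p,q)\mapsto p+q$ are continuous in the $d^{\infty}$-topology (Section~\ref{subsub:boundary}) and $p+0=p$, we get $q_{s}\to p$ as $s\to 0^{+}$, hence $q_{s}\in B(h)$ for all small $s>0$. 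On the other hand, superadditivity $(\ref{eqn:<p+p',q>})$ of $p'\mapsto\langle p',\xi\rangle$ together with its positive homogeneity (from $(\ref{eqn:<p,q>_2})$) gives $\langle\xi,q_{s}\rangle=\langle p+s\xi,\xi\rangle\geq\langle p,\xi\rangle+s\langle\xi,\xi\rangle=\langle\xi,p\rangle+s\geq h(\xi)+s>h(\xi)$, which contradicts $q_{s}\in B(h)$. Note this direction uses neither the compactness of $M^{\infty}$ nor the semicontinuity hypothesis on $h$.

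For the reverse inclusion $U\subseteq\interior B(h)$ I would again argue by contradiction. If $p\in U$ is not an interior point of $B(h)$, then, since $p\in B(h)$, there is a sequence $q_{n}\to p$ in the $d^{\infty}$-topology with $q_{n}\notin B(h)$, so there are $\xi_{n}\in M^{\infty}$ with $\langle\xi_{n},q_{n}\rangle>h(\xi_{n})$. The two structural facts I would invoke are: (i) in the \emph{standard} topology $M^{\infty}$ is homeomorphic to $S^{n-1}$, hence compact, so after passing to a subsequence $\xi_{n}\to\xi_{*}$ in the standard topology, and $\xi_{*}\in M^{\infty}$ since the norm is continuous in the standard topology; and (ii) the $d^{\infty}$-topology is finer than the standard topology, so $q_{n}\to p$ holds in the standard topology as well. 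Then upper semicontinuity of $(p,q)\mapsto\langle p,q\rangle$ in the standard topology (Lemma~\ref{lem:standard}(2)) together with lower semicontinuity of $h$ in the standard topology yields $\langle\xi_{*},p\rangle\geq\limsup_{n}\langle\xi_{n},q_{n}\rangle\geq\liminf_{n}h(\xi_{n})\geq h(\xi_{*})$, contradicting $\langle\xi_{*},p\rangle<h(\xi_{*})$ (which holds because $p\in U$ and $\xi_{*}\in M^{\infty}$).

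The step I expect to require the most care is keeping the two topologies on $CM^{\infty}$ straight: ``$\interior$'' in the statement and the continuity of the cone operations $+$ and scalar multiplication are with respect to the $d^{\infty}$-topology, whereas the compactness of $M^{\infty}$ and the two semicontinuity statements are with respect to the coarser standard topology; the second inclusion goes through precisely because a $d^{\infty}$-convergent sequence is standard-convergent. Everything else reduces to the superadditivity and positive homogeneity of $\langle\cdot,\cdot\rangle$ recorded earlier, so no new calculation is needed.
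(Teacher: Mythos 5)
Your proof is correct, and both inclusions rest on exactly the same ingredients the paper uses: continuity of the cone operations for the easy inclusion, and compactness of $M^{\infty}$ in the standard topology, upper semicontinuity of $\langle\cdot,\cdot\rangle$, lower semicontinuity of $h$, and the comparison of the two topologies on $CM^{\infty}$ for the hard one. The only real difference is in the second inclusion: the paper argues forward with a finite subcover, introducing $\Delta(q):=\inf_{\xi}h(\xi)-\langle\xi,q\rangle$, producing (via joint lower semicontinuity of $(q,\xi)\mapsto h(\xi)-\langle\xi,q\rangle$) open sets $U_{\xi}\times V_{\xi}$, intersecting finitely many $U_{\xi_i}$, and concluding that this intersection is an entire standard-open (hence $d^\infty$-open) neighborhood of $p$ inside $B(h)$, while you argue by contradiction via sequential compactness. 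Your route is shorter and more elementary; the paper's construction additionally exhibits a standard-open neighborhood, which is why it can remark afterward that the right-hand side is also the interior of $B(h)$ in the standard topology. Either way the logical content is the same, so I would call this essentially the paper's argument with a sequential packaging of the compactness step.
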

\begin{proof} Let $p \in B(h)$.
Suppose that $\langle \xi,p \rangle  = h(\xi)$ for some $\xi \in M^{\infty}$.
Then, for arbitrary $\epsilon > 0$,
$\langle \xi,p+ \epsilon \xi \rangle \geq \langle \xi,p \rangle + \epsilon \|\xi\|^2 > h(\xi)$ (Lemma~\ref{lem:<p,q>}). 
This means $p+ \epsilon \xi \not \in B(h)$.
Since $\langle,\rangle$ is continuous (Lemma~\ref{lem:<p,q>}), $p$ is never an interior point of $B(h)$. 

Suppose that $\langle \xi,p \rangle  < h(\xi)$ for all $\xi \in M^{\infty}$.
For $q \in CM^{\infty}$, 
let $\Delta (q):= \inf_{\xi \in M^{\infty}} h(\xi) - \langle \xi, q \rangle$.
Since $M^{\infty}$ is compact and $\xi \mapsto h(\xi) - \langle \xi, q \rangle$
is lower semicontinuous in the standard topology (Lemma~\ref{lem:standard}~(2)),
the infimum is always attained.
Let $\epsilon := \Delta(p) > 0$.
Since $(q,\xi) \mapsto h(\xi) -\langle \xi,q\rangle$ is lower semicontinuous, 
for each $\xi \in M^{\infty}$ there is an open neighborhood $U_{\xi} \times V_{\xi}$
of $(p,\xi)$ (in the standard topology)
such that for each $(q,\eta) \in U_{\xi} \times V_{\xi}$ we have
$h(\eta) -\langle \eta, q\rangle \geq h(\xi) -\langle \xi, p\rangle - \epsilon \geq \Delta(p) - \epsilon = 0$.
Since $M^{\infty}$ is compact, there are $\xi_1,\xi_2,\ldots,\xi_m$ 
with $M^{\infty} = \bigcup_{i=1}^m V_{\xi_i}$.
Let $U := \bigcap_{i=1}^m U_{\xi_i}$, 
which is an open neighborhood of $p$ (in the standard topology).
For any $q \in U$, 
$\Delta(q) = h(\xi') - \langle \xi',q \rangle$ for some $\xi'$.
Since $\xi'$ belongs to $V_{\xi_i}$ for some $i$, 
it holds $\Delta(q) = h(\xi') - \langle \xi',q \rangle \geq 0$.
Hence, we have $p \in U \subseteq B(h)$.
Since the identify map on $CM^{\infty}$ 
from the $d^{\infty}$-topology to the standard topology 
is continuous, 
$U$ is an open neighborhood of $p$ in $d^{\infty}$-topology. 
\end{proof}
As the proof shows, the RHS of (\ref{eqn:int_B(h)}) is the interior of $B(h)$ 
also in the standard topology, although $B(h)$ may not be closed in this topology.

We apply this lemma to the recession function $f^{\infty}$  and the associated subset $B(f^{\infty})$.
\begin{Prop}\label{prop:minimizer} 
For $p \in \interior  B(f^{\infty})$, there is a minimizer of 
$f + b_{p}$. In particular, any point in $B(f^{\infty}) \setminus \dom f^*$ belongs to the boundary of $B(f^{\infty})$.
\end{Prop}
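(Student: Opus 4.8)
The plan is to show that for $p \in \interior B(f^{\infty})$ the function $g := f + b_p$ attains its infimum on $M$, and then conclude via Lemma~\ref{lem:f(x)+f*(p)=-b_p(x)} (specifically (i)$\Rightarrow$(iii)) that $p \in \dom f^*$, which gives the ``in particular'' statement by contraposition. The strategy for the existence of a minimizer is the classical one in Hadamard spaces: $g$ is a continuous convex function (convexity of $f$ by hypothesis, of $b_p$ by Lemma~\ref{lem:Busemann_convex}), so it suffices to show that $g$ is \emph{coercive}, i.e.\ $g(x) \to \infty$ whenever $d(x_0,x) \to \infty$; a continuous convex coercive function on a Hadamard manifold (indeed on any Hadamard space) attains its minimum, since its sublevel sets are closed, bounded, convex, hence compact in the manifold case. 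So the crux is coercivity.

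For coercivity, I would argue by contradiction along rays. Fix $x_0 \in M$ and suppose $g$ is not coercive; then there is a sequence $x_k$ with $d(x_0,x_k) = r_k \to \infty$ and $g(x_k) \le C$ for some constant $C$. Let $u_k \in T_{x_0}$ be the unit vector with $\exp_{x_0}(r_k u_k) = x_k$; by compactness of the unit sphere in $T_{x_0}$ (standard topology) we may pass to a subsequence with $u_k \to u$, and set $\xi := \exp_{x_0}\infty u \in M^{\infty}$, with corresponding geodesic ray $c$ issuing from $x_0$, $c(\infty) = \xi$. The idea is that along $c$ the convex function $g$ satisfies $g(c(t))/t \to g^{\infty}(\xi) = f^{\infty}(\xi) + b_p^{\infty}(\xi) = f^{\infty}(\xi) - \langle \xi, p\rangle$, using that $b_p^{\infty}(\xi) = -\langle p,\xi\rangle$ by Lemma~\ref{lem:b^infty} and additivity of the recession limit. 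Since $p \in \interior B(f^{\infty})$, Lemma~\ref{lem:interior} (applicable because $f^{\infty}$ is lower semicontinuous in the standard topology by Lemma~\ref{lem:standard}(1)) gives $\langle \xi,p\rangle < f^{\infty}(\xi)$, so $g^{\infty}(\xi) > 0$, i.e.\ $g(c(t)) \to \infty$. The remaining work is to transfer the bound $g(x_k) \le C$ at the ``off-ray'' points $x_k = \exp_{x_0}(r_k u_k)$ to a contradiction with $g(c(r_k)) = g(\exp_{x_0}(r_k u)) \to \infty$; this uses that $x_k$ is close to $c(r_k)$ in a suitable sense. Concretely, by convexity of $g$ restricted to geodesics and a comparison argument (the angle between the ray through $x_k$ and $c$ at $x_0$ tends to $0$), one shows $\liminf_k g(x_k)/r_k \ge g^{\infty}(\xi)$; I expect the cleanest route is to use the standard fact (CAT(0) convexity, cf.\ the argument in the proof of Theorem~\ref{thm:f^infty}) that $g(c(t))/t$ is nondecreasing with limit $g^{\infty}(\xi)$, together with a uniform lower bound $g(\exp_{x_0}(r v)) \ge g^{\infty}(\xi)\,r - o(r)$ that is \emph{uniform for $v$ near $u$} — obtainable because $g$ is locally Lipschitz (Lipschitz on bounded sets) and $d(\exp_{x_0}(r u_k), \exp_{x_0}(r u)) \le r\,\|u_k - u\|_{x_0}\cdot(1+o(1))$, which is $o(r)$ uniformly once we also let $k\to\infty$ along the subsequence. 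Pushing $r_k, k \to \infty$ then contradicts $g(x_k) \le C$.

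The main obstacle is precisely this last uniform estimate: controlling $g$ at points $\exp_{x_0}(r_k u_k)$ with $u_k$ moving toward $u$, rather than along the fixed ray, since $f^{\infty}$ is only lower semicontinuous in the standard topology, not continuous, and the $d^{\infty}$-topology on $CM^{\infty}$ is badly behaved. I would handle this by avoiding any appeal to continuity of $f^{\infty}$ at $\xi$ and instead using only: (i) lower semicontinuity of $f^{\infty}$, which via Lemma~\ref{lem:interior} is exactly what makes $\interior B(f^{\infty})$ open and gives a strict gap $\Delta(p) = \inf_{\eta \in M^{\infty}}(f^{\infty}(\eta) - \langle \eta,p\rangle) > 0$ (infimum attained by compactness, as in the proof of Lemma~\ref{lem:interior}); and (ii) the Lipschitz bound on $f$ near the ray. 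The uniform gap $\Delta(p) > 0$ is the key leverage: it lets me bound $g$ below by $\Delta(p)\, r - o(r)$ \emph{along every direction simultaneously}, so the perturbation $u_k \to u$ costs only a lower-order term and the contradiction survives. Alternatively — and perhaps more robustly — one can dispense with sequences entirely: show directly that every sublevel set $\{g \le C\}$ is bounded by arguing that if it contained points arbitrarily far from $x_0$, a standard CAT(0) selection argument (Ballmann, cf.\ \cite[II.4.4]{BallmanBook}) extracts an asymptotic ray direction $\xi \in M^{\infty}$ with $g^{\infty}(\xi) \le 0$, contradicting $\Delta(p) > 0$. I would present whichever of these two is shorter in the write-up.
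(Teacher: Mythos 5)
Your core idea matches the paper's: derive coercivity of $g := f + b_p$ from the strict uniform gap $\Delta(p) = \inf_{\xi \in M^\infty}\bigl(f^\infty(\xi) - \langle\xi,p\rangle\bigr) > 0$, which exists because $f^\infty$ is lower semicontinuous and $M^\infty$ is compact in the standard topology, then invoke Hopf--Rinow. But your two sketches for converting the gap into a bound on sublevel sets are not equally sound, and the first one has a real gap. The claim that the gap ``lets me bound $g$ below by $\Delta(p)\,r - o(r)$ along every direction simultaneously'' is precisely the thing that needs proving and does not follow from $\Delta(p)>0$ alone: for each $\xi$ the monotone slope $(g(\exp_{x_0}t\xi)-g(x_0))/t \nearrow g^\infty(\xi) \ge \Delta(p)$, but the onset time after which the slope exceeds, say, $\Delta(p)/2$ can a priori blow up as $\xi$ varies, and a direction-uniform $o(r)$ is exactly the content you have deferred. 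Your second sketch does close the gap correctly and is a genuinely different, cleaner route than the paper's: from a diverging sequence $x_k$ in a sublevel set $\{g \le C\}$ extract (by compactness of the unit sphere in $T_{x_0}$) a convergent subsequence of unit directions $u_k \to u$; for any fixed $t$, convexity of $g$ along the chord $[x_0,x_k]$ gives $g(\exp_{x_0}(tu_k)) \le (1-t/r_k)g(x_0) + (t/r_k)C$, which tends to $g(x_0)$; by continuity of $g$ and $\exp_{x_0}$, $g$ is bounded on the limiting ray, so $g^\infty(\xi) \le 0 < \Delta(p)$, a contradiction. The paper implements the same compactness differently: it defines the onset function $h(\xi) := \inf\{t \ge 0 : g(\exp_{x_0}t\xi)-g(x_0) > \alpha' t\}$ for a fixed $\alpha'\in(0,\Delta(p))$, verifies it is upper semicontinuous (via monotonicity of the slope), takes $t^* = \max_{M^\infty} h < \infty$, and concludes every sublevel set lies in the ball of radius $t^*$. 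Both methods trade on compactness of $M^\infty$; the paper bounds onset times uniformly, while your alternative transports the sublevel bound onto the limiting ray via convexity of chords, which is the more familiar Euclidean-style coercivity argument. In the write-up you should discard the first sketch (or supply the missing uniformity, which effectively reproduces the paper's $h$-argument) and present the second.
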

See \cite[Lemma 3.2 (iv)]{KLM2009JDG} for a related argument.
\begin{proof}
Since $\xi \mapsto f^{\infty}(\xi) - \langle \xi,p \rangle$ is 
lower semicontinuous on 
compact set $M^{\infty}$ in the standard topology,
the minimum value $\alpha > 0$ exists. 
Let $\alpha' \in (0, \alpha)$ and let $g := f+ b_p$.
Fix an arbitrary $x \in M$.
Then, for every $\xi \in M^{\infty}$ there is $t_{\xi} \in \RR_+$ 
such that $g (\exp_x t \xi) - g (x) > \alpha' t$ for all $t \geq t_{\xi}$.
Define $h: M^{\infty} \to \RR_+$ by
\begin{equation*}
h(\xi) := \inf \{ t \geq 0 \mid g (\exp_x t \xi) - g (x) > \alpha' t \} \quad (\xi \in M^{\infty}).
\end{equation*}
Then $h$ is upper semicontinuous, 
since the epigraph $\{ (\xi,t) \in M^{\infty} \times \RR \mid t \leq h(\xi) \}$ is the closed set
$\{ (\xi,t) \in M^{\infty} \times \RR_+ \mid g (\exp_x t \xi) - g (x) \leq \alpha' t \} 
\cup \{ (\xi,t) \in M^{\infty} \times \RR \mid t \leq 0\}$.
Since $M^{\infty}$ is compact, 
the maximum $t^*$ of $h$ over $M^{\infty}$ exists.
Then, for every  $\xi \in M^{\infty}$, we have
$
g (\exp_x t \xi) - g (x) \geq \alpha' t$ for all $t \geq t^*$.
This means that the level set
$\{y \in M \mid g (y) \leq g (x) + \alpha' t^*\}$ belongs to 
the metric ball at center $x$ with radius $t^*$, which is compact by Hopf-Rinow theorem (see \cite[III.1]{Sakai}). 
A minimizer of $g$ exists in this set.  
\end{proof}
Thus we have
\begin{equation}\label{eqn:inclusion}
\interior B(f^{\infty}) \subseteq \nabla^{\infty}f(M) 
\subseteq \dom f^* \subseteq B(f^{\infty}).
\end{equation}
%

%
We next provide a characterization of $B(f^{\infty})$, which sharpens the following important result by Kapovich, Leeb, and Millson~\cite{KLM2009JDG}.
\begin{Thm}[{\cite[Lemma 3.4]{KLM2009JDG}}]
	If $f^{\infty}(u) \geq 0$ for all $u \in M^{\infty}$, i.e., $0 \in B(f^{\infty})$,
	then $\inf_{x \in M}\|\nabla f(x)\|_{x} = 0$. 
\end{Thm}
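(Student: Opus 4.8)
The plan is a Tikhonov-type regularization argument. If $0$ happens to lie in the $d^{\infty}$-interior of $B(f^{\infty})$, then Proposition~\ref{prop:minimizer} (applied with $p=0$, noting $b_0\equiv 0$) already produces a minimizer of $f$, at which $\nabla f$ vanishes, and we are done. The real content is the boundary case $0\in B(f^{\infty})\setminus\interior B(f^{\infty})$, in which $f$ need not attain its infimum. To handle it I would add a vanishingly small multiple of the squared distance to a basepoint, solve the perturbed problem exactly, read off the gradient of $f$ from the first-order condition, and then show that this gradient is asymptotically negligible.

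Fix $x_0\in M$, put $\phi:=\tfrac{1}{2} d(\cdot,x_0)^2$, and for $\varepsilon>0$ set $g_{\varepsilon}:=f+\varepsilon\phi$. Since $\exp_{x_0}$ is a diffeomorphism, $\phi$ is smooth; the CAT(0)-inequality~(\ref{eqn:CAT(0)_2}) makes $\phi$ (strongly) convex, and $\|\nabla\phi(x)\|_x=d(x,x_0)$ (the gradient of the squared distance). Hence $g_{\varepsilon}$ is a smooth convex function. Along any unit-speed geodesic ray $c$ issuing from $x_0$ we have $g_{\varepsilon}(c(t))/t=f(c(t))/t+\tfrac{\varepsilon}{2}t$, and since $f(c(t))/t\to f^{\infty}(c(\infty))\ge 0$ while $\tfrac{\varepsilon}{2}t\to\infty$, this tends to $+\infty$; thus $g_{\varepsilon}^{\infty}(\xi)=+\infty$ for every $\xi\in M^{\infty}$. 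In particular $\langle\xi,0\rangle=0<g_{\varepsilon}^{\infty}(\xi)$, and since $g_{\varepsilon}^{\infty}$ is lower semicontinuous in the standard topology (Lemma~\ref{lem:standard}(1)), Lemma~\ref{lem:interior} gives $0\in\interior B(g_{\varepsilon}^{\infty})$. Proposition~\ref{prop:minimizer} then yields a minimizer $x_{\varepsilon}$ of $g_{\varepsilon}$, and the first-order condition $\nabla g_{\varepsilon}(x_{\varepsilon})=0$ rearranges to $\nabla f(x_{\varepsilon})=-\varepsilon\,\nabla\phi(x_{\varepsilon})$, so that
\[
\|\nabla f(x_{\varepsilon})\|_{x_{\varepsilon}}=\varepsilon\, d(x_{\varepsilon},x_0).
\]

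It then remains to show $\varepsilon\, d(x_{\varepsilon},x_0)\to 0$ as $\varepsilon\downarrow 0$, which is the heart of the matter. Suppose not: there are $\delta>0$ and $\varepsilon_k\downarrow 0$ with $r_k:=d(x_{\varepsilon_k},x_0)\ge\delta/\varepsilon_k$, so $r_k\to\infty$ and $\varepsilon_k r_k\ge\delta$. Minimality of $x_{\varepsilon_k}$ gives $f(x_{\varepsilon_k})-f(x_0)\le-\tfrac{\varepsilon_k}{2}r_k^2$. Writing $x_{\varepsilon_k}=\exp_{x_0}(r_k\xi_k)$ with $\xi_k$ a unit tangent vector at $x_0$, convexity of $f$ along the geodesic $[x_0,x_{\varepsilon_k}]$ yields, for $0\le s\le r_k$,
\[
f(\exp_{x_0}(s\xi_k))-f(x_0)\ \le\ \frac{s}{r_k}\bigl(f(x_{\varepsilon_k})-f(x_0)\bigr)\ \le\ -\frac{s\,\varepsilon_k r_k}{2}\ \le\ -\frac{s\delta}{2}.
\]
Passing to a subsequence so that $\xi_k\to\xi_{*}$ in the compact unit sphere of $T_{x_0}$, and using continuity of $\exp_{x_0}$ and of $f$, we obtain $f(\exp_{x_0}(s\xi_{*}))-f(x_0)\le-\tfrac{s\delta}{2}$ for every fixed $s\ge 0$ (apply the previous display for $k$ large enough that $r_k\ge s$). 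Dividing by $s$ and letting $s\to\infty$ gives $f^{\infty}(\xi_{*})\le-\delta/2<0$, contradicting the hypothesis $f^{\infty}\ge 0$ on $M^{\infty}$. Hence $\varepsilon\, d(x_{\varepsilon},x_0)\to 0$, and therefore $\inf_{x\in M}\|\nabla f(x)\|_x\le\lim_{\varepsilon\downarrow 0}\varepsilon\, d(x_{\varepsilon},x_0)=0$; the reverse inequality is trivial.

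The main obstacle is precisely this last step, namely controlling how fast the regularized minimizers $x_{\varepsilon}$ can escape to infinity. The leverage comes from combining the chord (convexity) inequality for $f$ along the segment $[x_0,x_{\varepsilon}]$ with the non-negativity of $f^{\infty}$ in every boundary direction, and then using compactness of the unit tangent sphere at $x_0$ to extract a genuine limiting direction $\xi_{*}\in M^{\infty}$. In writing this out one should be careful that every continuity and semicontinuity statement is invoked in the correct topology (standard versus $d^{\infty}$), since Lemmas~\ref{lem:standard} and~\ref{lem:interior} are topology-sensitive, and should double-check that nothing in the argument secretly requires a minimizer of $f$ itself.
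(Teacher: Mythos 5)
Your proof is correct, and it takes a genuinely different route from the one the paper outlines. The paper (following Kapovich--Leeb--Millson and Woodward) argues by contraposition via the normalized gradient flow: if $\inf_x\|\nabla f(x)\|_x>0$, a trajectory of $\dot x=-\nabla f(x)/\|\nabla f(x)\|_x$ strictly decreases $f$ at a uniform rate, escapes to a boundary direction $u\in M^{\infty}$, and forces $f^{\infty}(u)<0$. The technical content there is showing the flow line converges to a single ideal point and relating the decay rate to the asymptotic slope. You instead use Tikhonov (quadratic) regularization: you perturb $f$ by $\varepsilon\phi$ with $\phi=\tfrac12 d(\cdot,x_0)^2$, observe $g_\varepsilon^{\infty}\equiv+\infty$ (so Lemma~\ref{lem:interior} and Proposition~\ref{prop:minimizer} supply an exact minimizer $x_\varepsilon$), read off $\|\nabla f(x_\varepsilon)\|_{x_\varepsilon}=\varepsilon\, d(x_\varepsilon,x_0)$ from the stationarity condition, and then control the escape rate by a compactness-and-contradiction argument: if $\varepsilon_k r_k\ge\delta$, the chord inequality for $f$ along $[x_0,x_{\varepsilon_k}]$ plus convergence $\xi_k\to\xi_*$ on the compact unit tangent sphere yields $f^{\infty}(\xi_*)\le-\delta/2$, contradicting $0\in B(f^{\infty})$. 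This avoids existence/long-time analysis of the gradient-flow ODE and its boundary convergence entirely, replacing it with a sequence of static minimization problems and ordinary compactness in $T_{x_0}$; it also integrates cleanly with the paper's own tools (Lemmas~\ref{lem:standard},~\ref{lem:interior} and Proposition~\ref{prop:minimizer}), at the price of needing smoothness and strong convexity of the squared-distance function (which hold on any Hadamard manifold). The gradient-flow argument, by contrast, is more directly geometric and applies verbatim in the nonsmooth Hadamard-space setting of the original KLM paper via subgradient/resolvent flows, which your argument does not immediately cover. Within the smooth-manifold setting the paper works in here, both are valid; your regularization proof is arguably the more elementary and self-contained of the two.
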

An outline of the proof is as follows: 
If $\inf_{x \in M}\|\nabla f(x)\|_x >0$, then
a trajectory of the normalized gradient flow of $f$ goes to $u \in M^{\infty}$ with $f^{\infty}(u) < 0$; See also \cite[section 5.4]{Woodward}.

We now obtain an analogue of the equivalence between (a) and (c) in the introduction.
\begin{Thm}~\label{thm:char_of_B(f^infty)}
	For $p \in CM^{\infty}$, 
	the following are equivalent:
	\begin{itemize}
		\item[(a)] $\inf_{x \in M}\|\nabla (f+b_p)(x) \|_{x} = 0$.
		\item[(c)] $p \in B(f^{\infty})$.  
	\end{itemize}	
\end{Thm}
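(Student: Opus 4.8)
The plan is to prove the two implications (a) $\Rightarrow$ (c) and (c) $\Rightarrow$ (a) separately, applying to $g := f + b_p$ the machinery already developed. The implication (a) $\Rightarrow$ (c) is the easy direction. Suppose $p \notin B(f^{\infty})$; then $\langle u, p\rangle > f^{\infty}(u)$ for some $u \in M^{\infty}$, so by Lemma~\ref{lem:b^infty} and additivity of recession functions along a fixed ray, $g^{\infty}(u) = f^{\infty}(u) + (b_p)^{\infty}(u) = f^{\infty}(u) - \langle u, p\rangle < 0$. Fixing a geodesic ray $c$ with $c(\infty) = u$ issuing from some $x$, convexity of $g$ along $c$ gives that the slope $t \mapsto (g(c(t)) - g(x))/t$ is nondecreasing with limit $g^{\infty}(u) < 0$; hence $g(c(t)) \to -\infty$ linearly. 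In particular $\langle \nabla g(c(t)), \dot c(t)\rangle_{c(t)} \le g^{\infty}(u) < 0$ for large $t$ (the directional derivative along a geodesic is nondecreasing and bounded above by the asymptotic slope), so $\|\nabla g(c(t))\|_{c(t)} \ge |g^{\infty}(u)| > 0$ uniformly along the ray — but this alone does not yet contradict (a). Instead, the cleanest route is: if (a) holds, then by Proposition~\ref{prop:minimizer}'s circle of ideas (or directly) $\inf_{x} g(x) > -\infty$, which contradicts $g(c(t)) \to -\infty$; so one first observes $\inf_x \|\nabla g(x)\|_x = 0$ forces $\inf_x g(x)$ finite via the gradient-flow / Hopf–Rinow argument, and then the linear descent along $c$ gives the contradiction. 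Thus (a) $\Rightarrow$ (c).

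For (c) $\Rightarrow$ (a), the natural approach is to reduce to the cited theorem of Kapovich–Leeb–Millson (the displayed Theorem just before the statement), applied to the function $g = f + b_p$. One must check that $g$ is a smooth convex function on $M$: convexity is Lemma~\ref{lem:Busemann_convex} together with convexity of $f$, and smoothness uses that Busemann functions on a Hadamard manifold are $C^1$ (the remark before Lemma~\ref{lem:nabla_Busemann}) — if only $C^1$ is available one should note the KLM argument only needs $C^1$ (the normalized gradient flow makes sense), or cite that Busemann functions on Hadamard manifolds are in fact $C^2$ in the relevant references. Now $p \in B(f^{\infty})$ means $\langle u, p\rangle \le f^{\infty}(u)$ for all $u \in M^{\infty}$, i.e. $g^{\infty}(u) = f^{\infty}(u) - \langle u,p\rangle \ge 0$ for all $u \in M^{\infty}$ by Lemma~\ref{lem:b^infty}; equivalently $0 \in B(g^{\infty})$. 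The KLM theorem then yields $\inf_{x \in M}\|\nabla g(x)\|_x = 0$, which is exactly (a).

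The main obstacle I anticipate is the regularity bookkeeping for $b_p$ and the precise statement of the KLM theorem: that theorem is quoted for ``smooth'' convex functions, whereas $g = f + b_p$ is only as smooth as the Busemann function. I would handle this by recording explicitly (citing \cite[1.10.2]{Eberlien} or \cite{BGS1985}) the differentiability class actually needed and noting the KLM gradient-flow argument requires only $C^1$; alternatively, one can approximate $b_p$ by smooth convex functions with the same recession function and pass to the limit, though that adds technical overhead. A secondary point worth stating carefully is the identity $g^{\infty} = f^{\infty} + (b_p)^{\infty}$ on all of $CM^{\infty}$: this is immediate from the definition \eqref{eqn:f^infty} since both limits are taken along the same ray $c$ and the limits exist (one finite by hypothesis on the relevant $u$, one equal to $-\langle p, \cdot\rangle$ by Lemma~\ref{lem:b^infty}), so the sum of limits equals the limit of the sum. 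With these two observations in place the proof is short; the content is entirely in invoking the KLM theorem for the shifted function and in Lemma~\ref{lem:b^infty}.
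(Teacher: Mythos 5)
Your direction (c) $\Rightarrow$ (a) coincides with the paper's: apply the quoted Kapovich--Leeb--Millson theorem to $g := f + b_p$, noting $g^{\infty}(u) = f^{\infty}(u) - \langle u, p\rangle \geq 0$ by Lemma~\ref{lem:b^infty}. The smoothness concern you raise is reasonable (the paper is silent on it), but it is the minor issue.

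The genuine gap is in your argument for (a) $\Rightarrow$ (c). You propose the chain (a) $\Rightarrow$ ``$\inf_x g(x) > -\infty$'' $\Rightarrow$ (c), citing ``Proposition~\ref{prop:minimizer}'s circle of ideas / a gradient-flow + Hopf--Rinow argument'' for the first arrow. That first arrow is false in general. The middle condition is exactly the condition (b) of the introduction, and the point of the closedness hypothesis on $\dom f^*$ in the Euclidean discussion is precisely that (a) does \emph{not} imply (b) without it: one can have $\|\nabla g\| \to 0$ while $g$ decreases to $-\infty$ sublinearly (so $g^{\infty} \geq 0$ yet $\inf g = -\infty$). Proposition~\ref{prop:minimizer} gives existence of a minimizer only under the stronger hypothesis $p \in \interior B(f^{\infty})$, and the KLM gradient-flow argument runs in the opposite direction; neither yields (a) $\Rightarrow$ (b).

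The ironic part is that the correct argument is already in your own words and you discarded it. You observe that along a ray $c$ with $c(\infty) = u$ one has $\langle \nabla g(c(t)), \dot c(t)\rangle_{c(t)} \leq g^{\infty}(u)$ (since the slope $t \mapsto (g(c(t)) - g(c(0)))/t$ is nondecreasing and $\geq$ the initial directional derivative), hence $\|\nabla g(c(t))\|_{c(t)} \geq |g^{\infty}(u)|$, and you then say ``but this alone does not yet contradict (a)'' because it is only a bound along one ray. But the ray can be taken to issue from \emph{any} $x \in M$, and the asymptotic slope $g^{\infty}(u)$ is independent of the base point (the lemma from Kleiner--Leeb cited after the definition of $f^\infty$). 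So for every $x$, picking $v \in T_x$ with $\|v\|_x = 1$ and $\exp_x v\infty = u$, convexity and the base-point independence give $\langle \nabla g(x), v\rangle_x \leq g^{\infty}(u) < 0$, whence $\|\nabla g(x)\|_x \geq |g^{\infty}(u)| > 0$ for all $x$, contradicting (a). This is exactly the paper's argument, phrased in contrapositive: the paper instead fixes a near-critical $x$ with $\|\nabla g(x)\|_x < \epsilon$, observes that every directional derivative at $x$ exceeds $-\epsilon$, and concludes $g^{\infty}(u) > -\epsilon$ for all $u \in M^{\infty}$, then lets $\epsilon \to 0$. Replacing your detour through (b) with this one-line base-point-independence observation closes the gap.
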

%
\begin{proof}
	(c) $\Rightarrow$ (a) follows from applying the above theorem to $f+b_p$.
	We verify (a) $\Rightarrow$ (c). Let $g := f+b_p$.
	For arbitrary $\epsilon > 0$, there is $x \in M$ 
	such that $\|\nabla g(x)\|_x < \epsilon$.
	Consider $u \in M^{\infty}$ and 
	the geodesic ray $t \mapsto \exp_x v t$ with $\exp_x v \infty= u$. 
	Then $\lim_{t \to 0}  (g(\exp_x v t) - g(x))/t  = \langle \nabla g(x), v \rangle_x  = \|\nabla g(x)\|_x \cos \theta > - \epsilon$, where $\theta$ is the angle between $\nabla g(x)$ and $v$ in $T_x$.
    Since $(g(\exp_x v t) - g(x))/t$ is monotone nondecreasing, by $t \to \infty$ 
    we have $g^{\infty}(u) = f^{\infty}(u) - \langle u,p \rangle > - \epsilon$.
	Thus, for every $\epsilon > 0$ and $u \in M^{\infty}$, 
	it holds $\langle u,p \rangle < f^{\infty}(u) + \epsilon$.
	This implies $p \in B(f^{\infty})$.
\end{proof}
The condition (a) $\inf_{x\in M}\|\nabla (f+b_p)(x) \|_{x} = 0$ 
may be viewed as a correspondent of $\inf_{x \in \RR^n} \|\nabla f(x) - p\| = 0$ 
of the Euclidean case.
If $\nabla^{\infty} f(x_i)$ $(i=1,2,\ldots)$ converges to $p \in CM^{\infty}$ 
in the $d^{\infty}$-topology, then (a) holds.
Indeed, by Lemma~\ref{lem:nabla_Busemann} and (\ref{eqn:d^infty(0,nabla)}) it holds
\begin{eqnarray*}
	\|\nabla (f + b_p)(x) \|_{x}^2 &= &\|\nabla f(x)\|_x^2 + \|u\|_x^2 - 2 \|\nabla f(x)\|_x 
	\|u\|_x \cos \theta_x \\
	&\leq &\|\nabla^{\infty}f(x)\|^2 + \|p\|^2 -  2 \|\nabla^{\infty}f(x)\|\|p\| \cos \theta \\
	&= & d^{\infty}(\nabla^{\infty}f(x),p)^2, 
\end{eqnarray*}
where $u := - \nabla b_p(x)$, $\theta_x$ is the angle between $\nabla f(x)$ and $u$ in $T_x$, 
and $\theta := \angle (\nabla^{\infty}f(x), p)$ ($\geq \theta_x$ by definition (\ref{eqn:angle})).  
However, the converse is not true. 
Also (a) does not mean the convergence in the standard topology. 

%

\subsection{Symmetric spaces of nonpositive curvature}\label{subsec:symmetric_space}

Here we consider symmetric spaces of nonpositive curvature, 
which constitute a fundamental class of Hadamard manifolds.
Our argument basically consults~\cite[Chapters 2 and 3]{Eberlien}. 
%
In a Hadamard manifold $M$ and a point $x$, 
the {\em geodesic symmetry} $\sigma_x:M \to M$ at $x$
is defined by $\sigma_x(\exp_x(v)) = \exp_x(-v)$ for $v \in T_x$.
A {\em symmetric space of nonpositive curvature} is a Hadamard manifold $M$ 
such that for every $x \in M$ the geodesic symmetry $\sigma_x$ is an isometry on $M$.
Via the de Rham decomposition theorem (see \cite[III.6]{Sakai}), $M$ is 
(uniquely) decomposed as Riemannian product $M = M_0 \times N$, 
where $M_0$ is isometric to Euclidean space $\RR^k$ $(k \geq 0)$  
(called the Euclidean de Rham factor of $M$) 
and  a symmetric space $N$ {\em of noncompact type},  i.e.,  it is given
by $N = G/K$ for a (real) semisimple Lie group $G$ and its maximal compact subgroup $K$.
Then $N$ has a trivial Euclidean de Rham factor, and  
its Riemannian structure is given by a $G$-invariant metric.
We will see more concrete constructions in Sections~\ref{subsec:P_n} and \ref{subsec:orbit}.
%

Let $M$ be a symmetric space of nonpositive curvature, and let $x_0 \in M$.
A $k$-dimensional {\em flat} is a submanifold of $M$ isometric to $\RR^k$. 
A {\em maximal flat} is a flat that is not contained in another flat of a larger dimension.
It is a basic fact that all maximal flats have the same dimension $d$, 
which is called the {\em rank} of $M$.
By a {\em geodesic line} 
we mean a map $l: \RR  \to M$
with $d(l(s),l(t)) = |s - t|$ for $s,t \in \RR$, which is just a $1$-dimensional flat.
A geodesic line is called {\em regular} if it is contained by a unique maximal flat.
Let $x \in M$ and let $F$ be a maximal flat containing $x$. 
A {\em Weyl chamber} at tip $x$ is 
the closure of a connected component of the set of points $y \in F \setminus \{x\}$ 
such that the unique geodesic line containing $x,y$ is regular.
When $F$ is viewed as $\RR^d$ with origin $x$, 
Weyl chambers are polyhedral cones.
They have the same shape, since
the group $G$ acts transitively on them.

We next explain the building structure of 
the boundary $M^{\infty}$ and its cone $CM^{\infty}$;
\cite[Appendix 5]{BGS1985} is a useful reference. 
It is clear (from Example~\ref{ex:Euclidean_X^infty}) that the boundary $F^{\infty}$ 
of a maximal flat $F$ is isometric to sphere $S^{d-1}$. 
The boundary $C^{\infty}$ of a Weyl chamber $C$ (at some tip)
is called an (asymptotic) {\em Weyl chamber}. 
For two Weyl chambers $C,D$ (at possibly different tips), 
the asymptotic ones $C^{\infty}, D^{\infty}$ are the same or have disjoint interiors.
Then, the set of all Weyl chambers and their faces 
give rise to a cell-complex structure on $M^{\infty}$, where 
each cell is isometric to a polyhedral cell in a sphere (the intersection of a sphere and a polyhedral cone). 
This structure is an {\em $M_{1}$-polyhedral complex} in the sense of \cite[Chapter I.7]{BrHa}, 
and forms a {\em spherical building}, 
where an {\em apartment} is precisely the subcomplex formed by the boundary of 
a maximal flat. 
Although Weyl chambers here may not be simplices, 
one can subdivide them to obtain a simplicial complex ${\cal C}$ so that each apartment is a spherical Coxeter complex. Then the apartments are glued nicely. That is, 
they satisfy, as an abstract simplicial complex, 
the axiom of building:
\begin{itemize}
	\item Any two simplices in ${\cal C}$ are contained in a common apartment.
	\item For two apartments $A,A'$ 
	including simplices $C,C'$, 
	there is an isomorphism $A\to A'$ fixing $C,C'$ pointwise.
\end{itemize}
See e.g., \cite{BuildingBook} and \cite[II.10. Appendix]{BrHa} for (formal) theory of building.
By considering the Euclidean cone,  
$CM^{\infty}$ has 
the structure of a {\em Euclidean building}.   
A maximal cone is also called a Weyl chamber.
Apartments are the subcomplexes induced by $CF^{\infty}$ for all maximal flats $F$.
We simply call $CF^{\infty}$ an apartment.
Each apartment is a convex subspace of $CM^{\infty}$ 
isometric to $\RR^d$. 
We can identify an apartment $E \subseteq CM^{\infty}$ with $\RR^d$ so that the origins coincide. 
In this identification,  
$\langle, \rangle$ in $E$ is precisely the Euclidean inner product of $\RR^d$.
%
%

Suppose that $M = \RR^k \times N$, where $N$ is a symmetric space $G/K$ of noncompact type.
Then $CM^{\infty} = \RR^k \times CN^{\infty}$. 
The structure of building $CM^{\infty}$ 
is determined by $CN^{\infty}$.
We can suppose (as in \cite{Eberlien}) that $G$ is the identity component of 
the group of isometries of $N$, where $G$ acts isometrically on $M = \RR^k \times N$ 
by $\RR^k \times N \ni (x',x) \mapsto (x',gx)$.
Since any isometry on $M$ induces an isometry on $M^{\infty}$, 
the group $G$ acts isometrically on $M^{\infty}$ and 
on $CM^{\infty}$ by $g p = \|p\| g (p/\|p\|)$. 
Also $G$ acts on the set of Weyl chambers and their faces. 
The facial incidence structure of the building $M^{\infty}$ is 
described by the inclusion relation of all parabolic subgroups of $G$. 
Here a {\em parabolic subgroup} is the subgroup consisting of 
$g \in G$ with $g \xi = \xi$ for some $\xi \in N^{\infty}$, 
which is denoted by $G_\xi$.
If $\xi, \xi'$ belong to 
the relative interior of the same face of a Weyl chamber, then $G_{\xi} = G_{\xi'}$.
Therefore, 
minimal parabolic subgroups
correspond to Weyl chambers.
Fix a Weyl chamber $C_0$, and regard it as a polyhedral cone in $\RR^d$.
Let $P$ be the minimal parabolic subgroup for $C_0$. 
The set of Weyl chambers is identified with the flag variety $G/P$. 
If $p \in CM^{\infty}$ belongs to a Weyl chamber corresponding to ${\cal F} \in G/P$ 
and the orbit of $p \in CM^{\infty}$ by $G$-action 
meets a (unique) point $\lambda$ in $C_0 \subseteq \RR^d$, 
then we denote $p$ by
\begin{equation}\label{eqn:lambda_F}
p = \lambda \cdot {\cal F}.
\end{equation}   
This notation  
designates the coordinate $\lambda$ of $p$ in the Weyl chamber indexed by ${\cal F}$. 

Two geodesic lines $c,c':\RR \to M$ are said to be {\em parallel} 
if $d(c(t),c'(t))$ $(t \in \RR)$ is bounded above.
For a geodesic line $c$, let $F(c)$ denote the union of all geodesic lines parallel to $c$.
It is known that $F(c)$ is a totally geodesic submanifold of $M$.
If $c$ is regular, $F(c)$ is a maximal flat.

For $\xi \in N^{\infty}$, the {\em horospherical subgroup} $N_{\xi}$ of $G_{\xi}$
consists of $n \in G$ such that $\lim_{t \to \infty} d(n(c(t)), c(t)) = 0$, 
where $c$ is the geodesic ray with $c(\infty) = \xi$ and $c(0) = x_0$ (independent of $x_0$).
Then  $N_\xi$ keeps 
the Busemann function $b_\xi$ as $b_\xi (n x)= b_\xi (x)$, since 
$|b_\xi (n x) - b_{\xi}(x)|= \lim_{t \to \infty} |d(x, n^{-1}c(t)) -d(x,c(t))| 
\leq \lim_{t \to \infty} d(c(t), n^{-1}c(t)) = 0$.
The {\em generalized Iwasawa decomposition}~\cite[2.17.5 (5)]{Eberlien} implies that $M$ 
is diffeomorphic to $N_{\xi} \times F(c)$ by $(n,y) \mapsto n(y)$. 

In this setting, let us start our convex analysis on $M$.
Let $f: M \to \RR$ be a smooth convex function.
For a maximal flat $F$, 
let $f_F: F \to \RR$ denote the restriction of $f$ to $F$.
The asymptotic Legendre-Fenchel conjugate ${f_F}^*: CF^{\infty} \to \RR \cup \{\infty\}$ 
is defined by ${f_F}^*(p):= \sup_{x \in F} - b_{p}(x) - f(x)$, 
where $x_0$ may be outside of~$F$.
By $p \in CF^{\infty}$ and Example~\ref{ex:Euclidean_Busemann}, 
$b_{p}$ is an affine function on $F$.
Therefore, ${f_F}^*$
is viewed as the ordinary Legendre-Fenchel conjugate  (up to an additive constant)
under identification $CF^{\infty} = \RR^d$.  Hence we have:
\begin{Lem} For any maximal flat $F$, 
	${f_F}^*$ is a convex function on $CF^{\infty}$.
\end{Lem}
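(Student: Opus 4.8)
The plan is to push everything down to ordinary Euclidean convex analysis on the flat $F$, which is a totally geodesic submanifold isometric to $\RR^d$. Fix once and for all the origin $o := \pi_F(x_0)\in F$, the metric projection of $x_0$ onto the closed convex set $F$; identify $F$ with $\RR^d$ by $o\mapsto 0$, and identify the apartment $CF^{\infty}$ with $\RR^d$ as well, so that $\langle\,,\rangle$ on $CF^{\infty}$ is the Euclidean inner product. I will show that each $b_{p}$ restricts to an affine function on $F$, deduce that ${f_F}^{*}$ equals the ordinary Legendre--Fenchel conjugate of $f_F$ plus a term depending only on $p$, and then argue that that term is convex; the convexity of that term is the only step that uses the geometry of $M$, and it is the crux.

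Since $p\in CF^{\infty}$, the geodesic ray issuing from $x_0$ with asymptotic class $p/\|p\|$ is asymptotic to the geodesic ray contained in $F$ with the same asymptotic class (which exists because $F$ is complete and totally geodesic). By Lemma~\ref{lem:constant}, extended to constant-speed rays, $b_{p}$ differs by an additive constant from the Busemann function $\widetilde b_{p}$ of that ray in $F$; and $\widetilde b_{p}|_F$ is affine by Example~\ref{ex:Euclidean_Busemann}. Hence $b_{p}|_F$ is affine. Its gradient at any $x\in F$ is $-p$: by Lemma~\ref{lem:nabla_Busemann}, $\nabla b_{p}(x)=-u$ with $u\in T_xM$ the initial velocity of the geodesic from $x$ to $p$, which lies in $F$, so $u\in T_xF$ corresponds to $p$ under the identifications. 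Therefore $b_{p}|_F$ is the affine map $x\mapsto c(p)-\langle p,x\rangle$, where $c(p):=b_{p}(o)$. Substituting into the definition of ${f_F}^{*}$,
\[
  {f_F}^{*}(p) = \sup_{x\in F}\bigl(\langle p,x\rangle - f_F(x)\bigr) - c(p) = (f_F)^{*}(p) - c(p),
\]
where $(f_F)^{*}$ is the ordinary Legendre--Fenchel conjugate of $f_F:\RR^d\to\RR$, a convex function on $\RR^d=CF^{\infty}$ (being a supremum of affine functions).

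It remains to identify $c$ and show $-c$ is convex. Since $\widetilde b_{p}$ is the Busemann function of a ray issuing from $o$, we have $\widetilde b_{p}(o)=0$, and since $b_{p}$ is the Busemann function of a ray issuing from $x_0$, we have $b_{p}(x_0)=0$; as $b_{p}-\widetilde b_{p}$ is constant, this constant equals $-\widetilde b_{p}(x_0)$, so $c(p)=b_{p}(o)=-\widetilde b_{p}(x_0)$. Consequently
\[
  {f_F}^{*}(p) = (f_F)^{*}(p) + \widetilde b_{p}(x_0),
\]
and the lemma reduces to the claim that $p\mapsto \widetilde b_{p}(x_0)$ is convex on $CF^{\infty}$, where $\widetilde b_{p}$ is the Busemann function of the ray in $F$ issuing from $o=\pi_F(x_0)$ toward $p$, so that the geodesic $[x_0,o]$ meets $F$ orthogonally.

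This last claim is where I expect the main difficulty to lie; the preceding steps are a purely formal translation. When $x_0\in F$ one has $o=x_0$ and $\widetilde b_{p}(x_0)\equiv 0$, so there is nothing to prove. In general $\widetilde b_{p}(x_0)\ge 0$: the $\mathrm{CAT}(0)$ comparison inequality applied to the right-angled geodesic triangle with vertices $x_0$, $o$, $o+t\tfrac{p}{\|p\|}$ gives $d\bigl(x_0,o+t\tfrac{p}{\|p\|}\bigr)^{2}\ge d(x_0,o)^{2}+t^{2}$, hence $\widetilde b_{p}(x_0)=\|p\|\lim_{t\to\infty}\bigl(d(x_0,o+t\tfrac{p}{\|p\|})-t\bigr)\ge 0$. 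For convexity in $p$ one should exploit that $y\mapsto d(x_0,y)$ is a convex function on $F\cong\RR^d$, that the limit defining $\widetilde b_{p}(x_0)$ is a monotone infimum, $\widetilde b_{p}(x_0)=\|p\|\inf_{t\ge 0}\bigl(d(x_0,o+t\tfrac{p}{\|p\|})-t\bigr)$, and the orthogonality of $[x_0,o]$ to $F$; in the Euclidean and Riemannian-product cases $\widetilde b_{p}(x_0)$ turns out to be piecewise linear in $p$, indicating the expected behavior. Once this convexity is in hand, ${f_F}^{*}=(f_F)^{*}+\widetilde b_{\bullet}(x_0)$ is a sum of two convex functions, completing the proof.
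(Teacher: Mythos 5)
Your reduction is correct and is essentially the route the paper intends: $b_p|_F$ is affine with gradient $-p$ at every $x\in F$, so ${f_F}^*(p)=(f_F)^*(p)+\widetilde b_p(x_0)$, and since the ordinary Euclidean conjugate $(f_F)^*$ is automatically convex, the lemma is \emph{equivalent} to the convexity of $p\mapsto\widetilde b_p(x_0)=-b_p(o)$ on $CF^\infty$. You also correctly observe that this is where the substance lies. The paper's phrase ``up to an additive constant'' quietly elides exactly the same point, since the ``constant'' is $b_p(o)$ and depends on $p$; so your identification of the crux is apt.

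But you do not prove that convexity, and it is not a formality, so your proof has a genuine gap at its central step. None of the ingredients you suggest close it: the $\inf_{t\ge 0}$ representation is unhelpful (infima of convex functions need not be convex, and $t\mapsto d(x_0,o+tp/\|p\|)-t$ is not even jointly convex in $p$), and nonnegativity of $\widetilde b_p(x_0)$ says nothing about convexity. In the symmetric-space setting of this subsection the right tool is the Iwasawa decomposition: from $b_p+b_q=b_{p+q}$ for $p,q$ in a common Weyl chamber (equation~(\ref{eqn:b_p+b_q}), established later in the paper via Iwasawa), together with $b_{\alpha p}=\alpha b_p$, the map $p\mapsto b_p(o)$ is \emph{linear on each Weyl chamber} $C\subseteq CF^\infty$, namely $b_p(o)=-\langle p,\,o-y_C\rangle$, where $y_C\in F$ is the $F$-component of $x_0$ in the Iwasawa decomposition $M=N_C F$. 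Convexity of $-b_\bullet(o)$ is then precisely the assertion that, for every chamber $C$ and every $p\in C$, $y_C$ minimizes $\langle p,\cdot\rangle$ over the finite set $\{y_{C'}\}_{C'}$ of Iwasawa $F$-components of $x_0$. That is a real, nontrivial property of the Iwasawa projection (in the orbit of Kostant's convexity theorems), and it has to be proved rather than asserted; it is the missing step in your argument, and supplying it is what would turn your ``purely formal translation'' into a proof.
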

Still, $f^{*}$ may be nonconvex but ``partially" convex in the following sense:
\begin{Prop}\label{prop:partial_convexity}
	For a Weyl chamber $C \subseteq CM^{\infty}$, it holds
	\begin{equation}
	f^*(p) =\sup_{F} {f_F}^* (p) \quad (p \in C),
	\end{equation}
where $\sup$ is taken over all maximal flats $F$ 
with $CF^{\infty} \supseteq C$.
In particular, $f^*$ is a convex function on $C$, and 
 $C \cap \dom f^*$ is a convex set contained by a convex set $\bigcap_F C \cap \dom {f_F}^*$:
 \begin{equation}\label{eqn:subseteq}
 		C \cap \dom f^* \subseteq \bigcap_F C \cap \dom {f_F}^*.
 \end{equation}
\end{Prop}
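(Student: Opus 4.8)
The plan is to derive the identity from the trivial inequality $f^*(p)\ge f_F^*(p)$ together with a single geometric fact about symmetric spaces: through every point of $M$ there passes a maximal flat whose boundary cone contains the prescribed Weyl chamber $C$. The easy half is immediate: every maximal flat $F$ satisfies $F\subseteq M$, and the defining suprema use the same Busemann function $b_p$ (with the globally fixed basepoint $x_0$), so $f^*(p)=\sup_{x\in M}\bigl(-b_p(x)-f(x)\bigr)\ge\sup_{x\in F}\bigl(-b_p(x)-f(x)\bigr)=f_F^*(p)$, hence $f^*(p)\ge\sup_F f_F^*(p)$ where $F$ ranges over the maximal flats with $CF^\infty\supseteq C$.

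For the reverse inequality I would fix, once and for all, a point $\eta$ in the (nonempty) interior of $C$; by the definition of Weyl chambers $\eta$ is a regular boundary point. The geometric input to invoke is that in a symmetric space of nonpositive curvature, for every $x\in M$ there is a \emph{unique} maximal flat $F_x$ with $x\in F_x$ and $\eta\in CF_x^\infty$ --- equivalently, the complete geodesic through $x$ asymptotic to $\eta$ is regular, hence lies in a unique maximal flat (see \cite{Eberlien,BGS1985}). Since the apartment $CF_x^\infty$ meets the interior of $C$ at $\eta$, it contains all of $C$ (an apartment of a building containing an interior point of a chamber contains the whole chamber); thus $C\subseteq CF_x^\infty$. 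Consequently, for any $p\in C$ we have $p\in CF_x^\infty$, so $-b_p(x)-f(x)\le\sup_{y\in F_x}\bigl(-b_p(y)-f(y)\bigr)=f_{F_x}^*(p)\le\sup_F f_F^*(p)$; taking the supremum over $x\in M$ gives $f^*(p)\le\sup_F f_F^*(p)$, and with the easy half this yields the displayed identity for every $p\in C$.

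The remaining assertions are then formal. By the previous lemma each $f_F^*$ is convex on $CF^\infty$, hence on the convex cone $C\subseteq CF^\infty$, so $f^*=\sup_F f_F^*$ is convex on $C$ and $C\cap\dom f^*$ is convex as the effective domain of this function. If $p\in C\cap\dom f^*$, then $f_F^*(p)\le f^*(p)<\infty$ for every admissible $F$, whence $p\in\bigcap_F\bigl(C\cap\dom f_F^*\bigr)$; this gives (\ref{eqn:subseteq}), whose right-hand side is convex as an intersection of convex sets. The only non-routine step is the geometric input in the second paragraph: I expect the uniqueness (rather than the mere existence) of $F_x$ to be where the symmetric-space structure is genuinely used, and a secondary point to check is that when $M$ carries a Euclidean de Rham factor $\RR^k$ the regular directions still fill the interior of $C$ --- those directions whose noncompact-type component degenerates form a proper face of $C$ --- so the whole argument goes through unchanged.
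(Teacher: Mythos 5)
Your proof is correct, and it proceeds by the same basic mechanism as the paper's: the whole argument hinges on the single geometric fact that every $x\in M$ lies in some maximal flat $F$ with $CF^\infty\supseteq C$, after which $\sup_{x\in M}=\sup_F\sup_{x\in F}$ and the easy inequality finishes the identity, with the convexity consequences being formal. The difference is only in how that fact is obtained: the paper invokes the generalized Iwasawa decomposition $M\cong N_\xi\times F(c)$ (stated earlier in Section~2.3), which presents $M$ as the \emph{disjoint} union of all maximal flats asymptotic to $C$; you instead pick a regular interior point $\eta$ of $C$, observe that the geodesic line through $x$ asymptotic to $\eta$ is regular and so lies in a unique maximal flat $F_x$, and that an apartment containing an interior point of a chamber contains the whole chamber. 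These are two ways of packaging the same symmetric-space input, and your version is arguably more self-contained since it avoids explicitly parametrizing the flats by the horospherical subgroup. One small misdirection in your commentary: uniqueness of $F_x$ is not actually what the argument needs --- mere existence of a flat through $x$ asymptotic to $\eta$ suffices to get $\sup_{x\in M}\le\sup_F\sup_{x\in F}$; uniqueness is a bonus of regularity but does no work here. Your side remark about the Euclidean de Rham factor is correctly resolved: writing $M=\RR^k\times M_1$, Weyl chambers factor as $\RR^k\times C_1$ and the regular directions, i.e.\ those with nondegenerate regular $M_1$-component, are exactly the interior of $C$, so the argument is unaffected.
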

Note that $\subseteq$ may be strict since
$f^*(p) = \sup_F {f_F}^*(p)$ may be $\infty$ 
even if ${f_F}^*(p) < \infty$ for every $F$. 
\begin{proof}
Consider the Iwasawa decomposition $M = N \times F_0$, 
where $F_0$ is a maximum flat containing $C$ at infinity and 
$N$ is the holospherical subgroup for $C$.     
Then $NF_0$ ranges over all maximal flats containing $C$ at infinity.
Thus
$M$ is the (disjoint) union  
of all maximal flats $F$ such that $F^{\infty} \supseteq C \ni p$.
Then we have $f^*(p) = \sup_{F} \sup_{x \in F} - b_p (x) - f(x) =  \sup_F {f_F}^*(p)$.
\end{proof}
%


Next, we consider the recession function $f^{\infty}$ and the associated subset 
$B(f^{\infty})$.
For each apartment $E$, we also consider
\begin{equation}
B_E(f^{\infty}) := \{ p \in E \mid \langle u,p \rangle \leq f^{\infty}(u)\ (u \in E)\}.
\end{equation}
\begin{Prop}\label{prop:C_cap_B(h)}
	For a Weyl chamber $C \subseteq E$, it holds
	\begin{equation}\label{eqn:C_cap_B(f^infty)}
	C  \cap  B(f^{\infty})  = \bigcap_{F} C \cap B_{CF^{\infty}} (f^{\infty})  = \bigcap_F C \cap \overline{\dom {f_F}^*}.
	\end{equation}
	where $\bigcap$ is taken over all maximal flats $F$ with $CF^{\infty} \supseteq C$.
In particular, $C \cap B(f^{\infty})$ is a convex set in $C$.
\end{Prop}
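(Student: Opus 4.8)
The plan is to prove the two equalities in (\ref{eqn:C_cap_B(f^infty)}) separately, using Proposition~\ref{prop:partial_convexity} and the Iwasawa decomposition as the main structural input. First I would establish the right-hand equality, $\bigcap_{F} C \cap B_{CF^{\infty}}(f^{\infty}) = \bigcap_F C \cap \overline{\dom {f_F}^*}$, by showing term-by-term that $C \cap B_{CF^{\infty}}(f^{\infty}) = C \cap \overline{\dom {f_F}^*}$ for each maximal flat $F$ with $CF^{\infty} \supseteq C$. Under the identification $CF^{\infty} \simeq \RR^d$, the function $f_F$ is an ordinary convex function on $\RR^d$, its conjugate ${f_F}^*$ is the usual Legendre--Fenchel conjugate, and the recession function of $f_F$ in the Euclidean sense is exactly $f^{\infty}$ restricted to $CF^{\infty}$ (this uses that rays in $F$ issuing from a point of $F$ are geodesic rays of $M$, so the limit defining $f^{\infty}$ agrees on both sides). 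Then the classical Euclidean identity (\ref{eqn:Euclidean_B(f^infty)})--(\ref{eqn:relation_Euclidean}), namely $\overline{\dom {f_F}^*} = B((f_F)^{\infty})$, gives $C \cap \overline{\dom {f_F}^*} = C \cap B_{CF^{\infty}}(f^{\infty})$ directly. One has to be slightly careful that $B_{CF^{\infty}}(f^{\infty})$ is defined by testing against all $u \in CF^{\infty}$, which matches the Euclidean $B(\cdot)$ since positive homogeneity lets us restrict to unit vectors.

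Next I would prove the left-hand equality $C \cap B(f^{\infty}) = \bigcap_{F} C \cap B_{CF^{\infty}}(f^{\infty})$. The inclusion $\subseteq$ is immediate: if $p \in B(f^{\infty})$ then $\langle u, p\rangle \le f^{\infty}(u)$ for all $u \in M^{\infty}$, in particular for all $u \in F^{\infty}$, so $p \in B_{CF^{\infty}}(f^{\infty})$ for every such $F$. For the reverse inclusion, suppose $p \in C$ lies in $B_{CF^{\infty}}(f^{\infty})$ for every maximal flat $F$ with $CF^{\infty} \supseteq C$; I must show $\langle u, p\rangle \le f^{\infty}(u)$ for an arbitrary $\xi \in M^{\infty}$ (equivalently $u = \xi$, then homogenize). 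The key point is the building axiom: $\xi$ and the interior of $C$ (pick any $\eta \in \ri C$) are contained in a common apartment $CF^{\infty}$; since $\eta \in \ri C$, this apartment satisfies $CF^{\infty} \supseteq C$. Hence $\xi \in CF^{\infty}$ and, by hypothesis, $\langle \xi, p\rangle \le f^{\infty}(\xi)$. Taking the supremum over $F$ containing $C$ exhausts all of $M^{\infty}$ reachable together with $C$, which is all of $M^{\infty}$ by the first building axiom, giving $p \in B(f^{\infty})$.

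The main obstacle I anticipate is the rigorous handling of the angular inner product $\langle \xi, p\rangle$ across apartments: I must ensure that the value of $\langle \xi, p\rangle$ used in the definition of $B(f^{\infty})$ agrees with the Euclidean inner product computed inside whichever apartment $CF^{\infty}$ contains both $\xi$ and $p$. This follows from the remark in the excerpt that on an apartment $E$ identified with $\RR^d$, $\langle,\rangle$ in $E$ is precisely the Euclidean inner product, together with the fact that $\langle \xi, p\rangle = \|\xi\|\|p\|\cos\angle(\xi,p)$ (equation (\ref{eqn:<p,q>_2})) depends only on the angle, which is an invariant of the pair and can be measured in any flat containing both points. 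A secondary subtlety is that the supremum $\sup_F {f_F}^*(p)$ from Proposition~\ref{prop:partial_convexity} governs $\dom f^*$ but not directly $B(f^{\infty})$; however, we do not need that here, since the two chains of equalities are linked only through the common middle term $\bigcap_F C \cap B_{CF^{\infty}}(f^{\infty})$, and convexity of $C \cap B(f^{\infty})$ then follows because it is an intersection of the convex sets $C \cap B_{CF^{\infty}}(f^{\infty})$ (each convex as the intersection of $C$ with a Euclidean polyhedron-like set $B_{CF^{\infty}}(f^{\infty}) = \overline{\dom {f_F}^*}$).
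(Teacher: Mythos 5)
Your proof is correct and follows essentially the same route as the paper's: the right-hand equality via the Euclidean identity $\overline{\dom f_F^*} = B((f_F)^\infty)$ applied apartment-by-apartment (after noting that the Euclidean recession function of $f_F$ agrees with $f^\infty$ restricted to $CF^\infty$), the left-hand equality via the building axiom that every point of $CM^\infty$ lies in some apartment containing $C$, and convexity as an intersection of Euclidean convex sets. The only cosmetic differences are ordering (you prove the right equality first; the paper proves the left first) and that the paper invokes Lemma~\ref{lem:standard} to characterize $f^\infty|_E$ as a lower semicontinuous positively homogeneous convex function before identifying it as a support function, whereas you appeal directly to the classical theorem; both reach the same conclusion.
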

\begin{proof}
Since $CM^{\infty}$ is the union of all apartments $E$ containing any fixed Weyl chamber $C$, 
we have the first equality.
When $E$ is viewed as Euclidean space $\RR^n$, then $C$ is a convex cone, 
$\langle u,p \rangle \leq f^{\infty}(u)$ is a linear inequality, 
and therefore $C \cap B_E(f^{\infty})$ is convex.
Necessarily the intersection  $\bigcap_{E} C \cap B_E (f^{\infty})$ 
is convex.
By Lemma~\ref{lem:standard}, 
the restriction of $f^{\infty}$ to Euclidean space $E$ 
is lower semicontinuous and positively homogeneous convex, and must be the support function of $\dom {f_F}^*$.
Hence we have $\overline{\dom {f_F}^*} = B_{E}({f}^{\infty})$, and the second equality. 
\end{proof}
In the view of (\ref{eqn:subseteq}) and (\ref{eqn:C_cap_B(f^infty)}), 
the closure $\overline{\dom f^*}$
seems very close to $B(f^\infty)$, 
although we do not know whether they really differ.
Under non-degeneracy assumptions, three spaces $\nabla^{\infty} f(M)$, $\overline{\dom f^*}$, and
$B(f^{\infty})$ are equal, as in Euclidean case.
\begin{Prop}\label{prop:main0}
	For a Weyl chamber $C$, 
	if $C \cap B(f^{\infty})$ has nonempty interior, then
	$C \cap \overline{\nabla^{\infty} f(M)} = C \cap \overline{\dom f^{*}} = C \cap B(f^{\infty})$.
\end{Prop}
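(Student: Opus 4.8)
The plan is to prove a chain of inclusions
\[
C \cap B(f^{\infty}) \subseteq C \cap \overline{\nabla^{\infty} f(M)} \subseteq C \cap \overline{\dom f^{*}} \subseteq C \cap B(f^{\infty}),
\]
under the hypothesis that $C \cap B(f^{\infty})$ has nonempty interior; the outer two inclusions already follow from \eqref{eqn:inclusion}, so the real work is the first one. First I would take an arbitrary $p$ in $C \cap \interior B(f^{\infty})$ and invoke Proposition~\ref{prop:minimizer} to get a minimizer of $f + b_p$, whence $p \in \nabla^{\infty} f(M)$ by Lemma~\ref{lem:f(x)+f*(p)=-b_p(x)}. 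This shows $C \cap \interior B(f^{\infty}) \subseteq C \cap \nabla^{\infty}f(M)$. To pass from the interior to all of $C \cap B(f^{\infty})$, I would use convexity: by Proposition~\ref{prop:C_cap_B(h)}, $C \cap B(f^{\infty})$ is a convex set, and when $C$ is viewed inside an apartment $E \simeq \RR^d$ it is a (finite-dimensional) convex set with nonempty interior relative to its ambient affine hull. For any such convex set, every point is a limit of points in its interior; since $p \in C \cap \interior B(f^{\infty})$ implies $p \in \nabla^{\infty}f(M)$, any $q \in C \cap B(f^{\infty})$ is a limit of points of $\nabla^{\infty}f(M)$, i.e.\ $q \in \overline{\nabla^{\infty} f(M)}$.

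A point that needs care is whether the $d^{\infty}$-interior hypothesis ``$C \cap B(f^{\infty})$ has nonempty interior'' is strong enough to run the approximation argument, and whether the topology used matters. Here I would note that $C$ sits inside an apartment $E$, and on $E$ the $d^{\infty}$-topology agrees with the standard Euclidean topology of $\RR^d$ (as recorded in the excerpt: $\langle,\rangle$ restricted to $E$ is the Euclidean inner product, and $d^{\infty}$ restricted to $E$ is the Euclidean distance, cf.\ Example~\ref{ex:Euclidean_X^infty} and the discussion of apartments). Moreover $C \cap B(f^{\infty}) = \bigcap_F C \cap B_{CF^{\infty}}(f^{\infty})$ is an intersection of finitely-generated-cone-type convex sets inside $E$; so once it has nonempty $d^{\infty}$-interior, it has nonempty interior in $E \simeq \RR^d$, and the standard ``closure of the interior equals the closure'' fact for convex sets with nonempty interior applies verbatim. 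Care is also needed that $\nabla^{\infty}f(M) \cap \interior B(f^{\infty})$ contains all interior points of $C \cap B(f^{\infty})$, not merely interior points of $B(f^{\infty})$ itself; but an interior point of $C \cap B(f^{\infty})$ that lies in $C$ need not be interior to $B(f^{\infty})$ in $CM^{\infty}$. I would resolve this by working within the apartment $E$: interior in $E$ of $C \cap B_E(f^{\infty})$, together with $p$ lying in the relative interior of $C$, does give a point to which Proposition~\ref{prop:minimizer}'s argument (or a direct Hopf--Rinow coercivity argument as in its proof) applies, because the relevant strict inequalities $\langle u, p\rangle < f^{\infty}(u)$ for $u$ ranging over the compact boundary sphere can be arranged.

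The main obstacle I anticipate is precisely this last bookkeeping: the statement is phrased with the interior of $C \cap B(f^{\infty})$, and one must be sure that such interior points really yield minimizers of $f + b_p$ (so that $p \in \nabla^{\infty}f(M)$), even though they may lie on the boundary of $B(f^{\infty})$ viewed in the full cone $CM^{\infty}$. The cleanest route is to re-run the coercivity argument of Proposition~\ref{prop:minimizer} with $M^{\infty}$ replaced, morally, by directions relevant to the flats through $C$, using the generalized Iwasawa decomposition $M = N \times F_0$ from the proof of Proposition~\ref{prop:partial_convexity} to reduce the minimization of $f + b_p$ over $M$ to minimizations over the flats $F$ with $CF^{\infty} \supseteq C$, on each of which $B_{CF^{\infty}}(f^{\infty}) = \overline{\dom f_F^*}$ already controls boundedness via the Euclidean theory. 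Granting that reduction, the argument is: interior in $E$ of $\bigcap_F C \cap \overline{\dom f_F^*}$ forces $p$ to lie in $\interior \dom f_F^*$ after a harmless shift, so $f_F + b_p$ attains its minimum on each $F$, and gluing these via Iwasawa yields a minimizer of $f + b_p$ on $M$. Everything else is the routine ``closure of interior'' fact for convex sets in $\RR^d$ plus the already-established inclusions \eqref{eqn:inclusion}.
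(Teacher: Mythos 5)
Your first paragraph is essentially the paper's proof: combine the inclusion chain \eqref{eqn:inclusion} intersected with $C$, apply the Euclidean fact $\overline{\interior D}=D$ for a closed convex $D$ with nonempty interior to $C\cap B(f^\infty)$ viewed inside an apartment $E\simeq\RR^d$, and take closures. The difficulty you then raise, however, is not actually an obstacle, and the detour you propose to resolve it is both unnecessary and not clearly sound.

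Your worry is that ``an interior point of $C\cap B(f^{\infty})$ that lies in $C$ need not be interior to $B(f^{\infty})$ in $CM^{\infty}$.'' This is false, and it is false for the trivial topological reason that $\interior(A\cap B)\subseteq\interior B$ holds in any topological space: $\interior(A\cap B)$ is an open subset of $A\cap B\subseteq B$, hence lies in $\interior B$. So $\interior(C\cap B(f^\infty))\subseteq C\cap\interior B(f^\infty)$, which is exactly the inclusion the paper uses before taking closures. The only genuine topological subtlety is whether the $d^\infty$-interior of $C\cap B(f^\infty)$ agrees with its Euclidean interior inside the apartment $E$; here it does, because $C$ is a maximal cell of the building, so its relative interior in $E$ is actually open in $CM^{\infty}$ (branching occurs only along lower-dimensional faces), and a small $d^\infty$-ball around a relative-interior point of $C$ is a Euclidean ball in $E$. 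You gesture at this (``on $E$ the $d^\infty$-topology agrees with the standard Euclidean topology of $\RR^d$'') but then do not use it to dismiss your own concern.

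Given that the concern is unfounded, the Iwasawa-decomposition re-run of the coercivity argument is a more complicated path than needed, and as written it does not hold up: ``interior in $E$ of $\bigcap_F C\cap\overline{\dom f_F^*}$ forces $p$ to lie in $\interior\dom f_F^*$ after a harmless shift'' is unjustified (interiors of infinite intersections are not controlled by interiors of the individual terms), and ``gluing these via Iwasawa yields a minimizer of $f+b_p$ on $M$'' is not a routine step, since an infimum over the family of flats need not be attained even if each restricted problem has a minimizer. So keep your first paragraph, add the one-line topological fact $\interior(C\cap B(f^\infty))\subseteq C\cap\interior B(f^\infty)$, and discard the rest.
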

\begin{proof}
By (\ref{eqn:inclusion}), it holds 
\begin{equation}\label{eqn:inclusion2}
C \cap \interior B(f^{\infty}) 
\subseteq C \cap \overline{\nabla^{\infty} f(M)} \subseteq C \cap \overline{\dom f^{*}} \subseteq C \cap B(f^{\infty}).
\end{equation}	
In Euclidean space,
for any closed convex set $D$ having nonempty interior, 
it holds $\overline{\interior D} = D$. 
This can be applied to $C \cap B(f^{\infty})$, which is viewed as a Euclidean convex set.
From  $C \cap B(f^{\infty}) = \overline{\interior (C \cap B(f^{\infty}))} \subseteq \overline{C \cap \interior B(f^{\infty})} 
\subseteq C \cap B(f^{\infty})$, by taking the closure in (\ref{eqn:inclusion2}), we have the claim. 
\end{proof}
\begin{Prop}
	Suppose that $f$ is strictly convex.
	Then the gradient map $x \mapsto \nabla^{\infty}f(x)$ is  
	a bijection from $M$ to $\interior B(f^{\infty})$.  
	For a Weyl chamber $C$, if $C \cap \nabla^{\infty}f(M) \neq \emptyset$, 
	then $C \cap \overline{\nabla^{\infty}f(M)} =C \cap \overline{\dom f^*} = C \cap B(f^{\infty})$.
\end{Prop}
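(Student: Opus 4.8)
\emph{Proof strategy.} The plan is to handle the two assertions separately, drawing on Lemma~\ref{lem:f(x)+f*(p)=-b_p(x)}, Proposition~\ref{prop:minimizer}, Lemma~\ref{lem:interior}, Proposition~\ref{prop:C_cap_B(h)}, and Proposition~\ref{prop:main0}. For the bijection, first I would note that strict convexity of $f$ forces $f + b_p$ to be strictly convex for every $p \in CM^{\infty}$ (a strictly convex function plus the convex Busemann function $b_p$ of Lemma~\ref{lem:Busemann_convex}), so $f + b_p$ has at most one minimizer. Injectivity of $x \mapsto \nabla^{\infty} f(x)$ then follows because, by Lemma~\ref{lem:f(x)+f*(p)=-b_p(x)}, $\nabla^{\infty} f(x) = \nabla^{\infty} f(y) = p$ makes both $x$ and $y$ minimizers of $f + b_p$. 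Surjectivity onto $\interior B(f^{\infty})$ is immediate from Proposition~\ref{prop:minimizer} combined with Lemma~\ref{lem:f(x)+f*(p)=-b_p(x)}: any $p \in \interior B(f^{\infty})$ is realized as $\nabla^{\infty} f(x)$ at the (unique) minimizer $x$ of $f + b_p$.

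The one point needing an argument is that $\nabla^{\infty} f(x)$ really lands in the \emph{interior} of $B(f^{\infty})$ and not merely in $B(f^{\infty})$. Here I would exploit the uniqueness of the minimizer: writing $p := \nabla^{\infty} f(x)$ and $g := f + b_p$, the point $x$ is the unique minimizer of $g$, so along the geodesic ray $c$ from $x$ to any $\xi \in M^{\infty}$ the strictly convex function $t \mapsto g(c(t))$ attains its minimum only at $t = 0$; hence $(g(c(t_0)) - g(c(0)))/t_0 > 0$ for every $t_0 > 0$, and monotonicity of the difference quotients gives $g^{\infty}(\xi) > 0$, i.e.\ $\langle \xi, p \rangle < f^{\infty}(\xi)$ (using $g^{\infty}(\xi) = f^{\infty}(\xi) - \langle \xi, p \rangle$ via Lemma~\ref{lem:b^infty}). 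Since this strict inequality holds for all $\xi \in M^{\infty}$ and $f^{\infty}$ is positively homogeneous and standard-lower-semicontinuous (Lemma~\ref{lem:standard}), Lemma~\ref{lem:interior} identifies $p$ as an interior point of $B(f^{\infty})$. Thus $\nabla^{\infty} f(M) = \interior B(f^{\infty})$ and the map is a bijection.

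For the chamber identity, the reduction I have in mind is to verify the hypothesis of Proposition~\ref{prop:main0}: that $C \cap B(f^{\infty})$ — a closed convex subset of any apartment $E \cong \RR^d$ containing $C$, by Proposition~\ref{prop:C_cap_B(h)} and Lemma~\ref{lem:closed} — has nonempty interior; Proposition~\ref{prop:main0} then yields $C \cap \overline{\nabla^{\infty} f(M)} = C \cap \overline{\dom f^{*}} = C \cap B(f^{\infty})$ directly. By the bijection, the hypothesis $C \cap \nabla^{\infty} f(M) \neq \emptyset$ becomes $C \cap \interior B(f^{\infty}) \neq \emptyset$. Since $C$ is a full-dimensional cone in $E$, its relative interior is dense in $C$ and $\interior B(f^{\infty}) \cap E$ is open in $E$; so the nonempty, relatively open subset $C \cap \interior B(f^{\infty})$ of $C$ contains a point $q$ in the relative interior of the chamber $C$. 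Around such a $q$ a small $d^{\infty}$-ball lies entirely in $C$ — this is where the Euclidean building structure enters: distinct Weyl chambers meet only along proper faces and apartments are convex, so $CM^{\infty}$ does not branch near an interior point of a top-dimensional chamber — while $q \in \interior B(f^{\infty})$ supplies a $d^{\infty}$-ball inside $B(f^{\infty})$; intersecting the two balls shows $q \in \interior(C \cap B(f^{\infty}))$, so this set has nonempty interior, and Proposition~\ref{prop:main0} closes the proof.

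\emph{Main obstacle.} I expect the bijection to be routine once one makes the observation that strict convexity upgrades the defining inequalities of $B(f^{\infty})$ to strict ones via the monotone difference quotients. The genuinely delicate step is the last one: moving from an arbitrary point of $C \cap \interior B(f^{\infty})$, which a priori may sit on the relative boundary of the chamber $C$, to a bona fide interior point of the intersection $C \cap B(f^{\infty})$ inside the apartment. This is exactly where one must use that interior points of top-dimensional Weyl chambers are Euclidean (manifold) points of the building $CM^{\infty}$, so that the $d^{\infty}$-topology agrees locally with the Euclidean topology of $E$; with that in hand, the rest is an appeal to Proposition~\ref{prop:main0}.
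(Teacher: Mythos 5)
Your proof is correct, and for the substantive half of the bijection argument you take a genuinely different and in fact slicker route than the paper. To show $\nabla^{\infty}f(x) \in \interior B(f^{\infty})$, the paper constructs an explicit $d^{\infty}$-neighborhood $B_{\epsilon}$ of $p := \nabla^{\infty}f(x)$ and shows $B_{\epsilon} \subseteq \nabla^{\infty}f(M)$: it first derives the additivity $b_p + b_q = b_{p+q}$ for $p,q$ in a common Weyl chamber from the Iwasawa decomposition, then uses the Lipschitz bound on Busemann functions together with the positive minimum gap $\alpha$ of $f + b_p$ on the unit sphere at $x$ to localize a minimizer of each perturbed $f + b_q$. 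Your argument instead works pointwise at infinity: strict convexity and uniqueness of the minimizer $x$ force $(g(c(t)) - g(c(0)))/t > 0$ for every ray $c$ and $t>0$, the monotone difference quotients push this up to $g^{\infty}(\xi) > 0$, i.e.\ $\langle \xi, p\rangle < f^{\infty}(\xi)$ for all $\xi \in M^{\infty}$, and Lemma~\ref{lem:interior} (valid by Lemma~\ref{lem:standard}~(1)) then identifies $p$ as interior. This is shorter, avoids the Busemann additivity $b_p + b_q = b_{p+q}$ entirely, and works on any Hadamard manifold rather than using the symmetric-space structure — a small generalization the paper does not exploit. For the chamber identity, the paper asserts without comment that $C \cap \nabla^{\infty}f(M) \neq \emptyset$ forces nonempty interior; your argument — passing to a point in the relative interior of the top-dimensional cone $C$, where the building is locally Euclidean — supplies the missing detail before handing off to Proposition~\ref{prop:main0}. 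Both halves are sound.
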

\begin{proof}
	We first verify $\nabla^{\infty}f: M \to CM^{\infty}$ is injective.
	Indeed, if $\nabla^{\infty}f(x) = \nabla^{\infty}f(y) = p$, 
	then $x$ and $y$ are minimizers of $f + b_p$, and 
	it must hold $x=y$ by strict convexity of $f + b_p$.
	For surjectivity, 
	from Proposition~\ref{prop:minimizer} 
	it suffices to show that $\nabla^{\infty}f(x)$ for any $x \in M$ 
	belongs to $\interior B(f^{\infty})$. We utilize the following property, 
	where $C$ is a Weyl chamber.
	\begin{equation}\label{eqn:b_p+b_q}
	b_p+b_q =  b_{p+q} \quad (p,q \in C).
	\end{equation}
	Indeed, consider a maximal flat $F$ containing $x_0$ and $C$ at infinity, and
	consider horospherical subgroup $N$ of $C$ and Iwasawa decomposition $M = N \times F$.
	If $x = n(y)$ for $n \in N$ and $y \in F$, 
	then $b_p(x) + b_q (x) = b_p(y) + b_q (y) 
	= - \langle p, y-x_{0} \rangle - \langle q, y-x_{0} \rangle =
	-\langle p+q, y-x_{0} \rangle = b_{p+q}(y) = b_{p+q}(x)$, 
	where $p+q \in C$ and $F$ is viewed as $\RR^d$ (see Example~\ref{ex:Euclidean_Busemann}).
	
	Let $p := \nabla^{\infty}f(x)$ for $x \in M$. 
	Suppose that $0 \neq p = \|p\| \xi$ for $\xi \in M^{\infty}$.
	Consider sufficiently small $\epsilon > 0$.
	Let $B_{\epsilon}$ be the set of all points $q$ represented as 
	$q = (\|p\| - t) \xi + 2 t \eta$, where
	$t \leq \epsilon$ and 
	$\xi, \eta \in CM^{\infty}$ belong to the same Weyl chamber.  
    Then $B_{\epsilon}$ contains $p$ in its interior.
	
	We show that $B_{\epsilon} \subseteq \nabla^{\infty}f(M)$ for small $\epsilon > 0$.
	By strict convexity of $f$, 
	$x$ is a unique minimizer of $f + b_p$.
	Let $\alpha > 0$ be the minimum of $(f+ b_p) (\exp_x v) - (f+b_p)(x)$
	over all $v \in T_x$ with $\|v\|_x =1$, 
	which exists by compactness.
	For $q  = (\|p\| - t) \xi + 2 t \eta \in B_{\epsilon}$, 
	it holds $f+b_q = f+ b_p - t b_{\xi} + 2t b_{\eta}$ by (\ref{eqn:b_p+b_q}). 
	Here the additional term $- t b_{\xi} + 2t b_{\eta}$ 
	is $3\epsilon$-Lipschitz (Lemma~\ref{lem:Busemann_convex}).
	For $3 \epsilon \leq \alpha$, 
	a minimizer of $f+b_q$ exists in the unit ball around $x$.
	Thus $q \in \nabla^{\infty}f(M)$, as required.
	The proof of the case $p=0$ is similar; omit $-t\xi$ above. 
	
	If $\nabla^{\infty}f(M) \cap C \neq 0$, 
	then $\nabla^{\infty}f(M) \cap C$ has nonempty interior, 
	and Proposition~\ref{prop:main0} is applicable to obtain the latter statement.
\end{proof}
Note that there is a possibility that $C \cap \nabla^{\infty}f(M)$
is empty but $C \cap B(f^{\infty})$ is nonempty and has no interior.
Further refined study on such a degenerate situation is left for future research.  

Via the inverse of $x \mapsto \nabla^{\infty}f(x)$, 
the symmetric space $M$ is coordinated by the asymptotic gradient space $\nabla^{\infty}f(M)$.
This can be viewed as a generalization of the {\em dual coordinate} of dually-flat manifolds
in information geometry~\cite{AmariNagaoka}.
It is an interesting research direction to develop an information-geometrical theory 
based on this idea.

In the case of $\dom f^* = B(f)$ which we will face in Section~\ref{sec:scaling}, 
the boundedness of $\inf_{x \in M} (f+b_p) (x)$ is verified by 
convex optimization on Euclidean building $CM^{\infty}$: 
\begin{equation}\label{eqn:optimization}
	\mbox{inf.} \quad f^{\infty}(u) - \langle u, p \rangle \quad \mbox{s.t.} \quad u \in U, 
\end{equation}
where $U \subseteq CM^{\infty}$ is any convex neighborhood 
of the origin, such as a ball.

\subsection{Symmetric space $P_n = GL(n, \CC) / U(n)$}\label{subsec:P_n}

Here we consider the symmetric space $P_n=GL(n,\CC)/U(n)$ 
of positive definite Hermitian $n \times n$ matrices, 
and present concrete descriptions and specializations 
of several concepts introduced above. 
Arguments regarding the PSD-cone as a symmetric space are found 
in \cite[Chapter II.10]{BrHa} and \cite[2.2.13]{Eberlien}, where
they consider $GL(n,\RR)/O(n)$ or $SL(n,\RR)/SO(n)$ but the arguments are analogous.     

When regarding $P_n$ as a Riemannian manifold,
the tangent space $T_x$ at $x \in P_n$ is identified with the space $S_n$ of $n \times n$ Hermitian matrices and the inner product is 
given by $\langle H, H'\rangle_x = \trace x^{-1} H x^{-1} H'$.
The cotangent space $T_x^{\ast}$ is also identified with $S_n$ 
by $H(H') := \trace H H'$ for $H,H' \in S_n$. 
Let $d$ denote the corresponding distance function on $M$.
The exponential map $\exp_x: T_x \to P_n$ at $x$ 
is given by $H \mapsto x^{1/2} e^{x^{-1/2}Hx^{-1/2}}x^{1/2}$.
In particular, any constant-speed ray is written as  
$t \mapsto g e^{t \diag \lambda} g^{\dagger}$ 
for $\lambda \in \RR^n$
and $g \in GL(n,\CC)$, where $\diag \lambda$ denotes 
the diagonal matrix with diagonal entries $\lambda_1,\lambda_2,\ldots,\lambda_n$ in order, and $(\cdot)^{\dagger}$ denotes the complex conjugate. 
By $(g,x) \mapsto gxg^{\dagger}$, 
$GL(n,\CC)$ acts isometrically and transitively on $P_n$. The isotropy group at $I$ is the group $U(n)$ of unitary matrices. 
So $P_n$ is a symmetric space $GL(n,\CC)/U(n) = \RR \times (SL(n,\CC)/SU(n))$, where 
the geodesic symmetry at $x \in P_n$ is given by $y \mapsto x y^{-1}x$.
The identity matrix $I$ is naturally chosen as 
the base point $x_0$ of $P_n$.

Any maximal flat is the set of matrices of form
\begin{equation}
F(g) := \{ g e^{\diag \lambda} g^{\dagger} \mid \lambda \in \RR^n\}
\end{equation}
for $g \in GL(n,\CC)$, where
$\lambda \mapsto g e^{\diag \lambda} g^{\dagger}$ is an isometry from $\RR^n$ to $F(g)$.
From this, we see that a geodesic ray $t \mapsto g e^{t \diag \lambda}g^{\dagger}$ 
is regular if and only if all values $\lambda_i$ are different.
A vector $\lambda \in \RR^n$ is said to be {\em arranged} 
if $\lambda_1 \geq \lambda_2 \geq \cdots \geq \lambda_n$.
The next lemma characterizes asymptotic classes of geodesic rays.
\begin{Lem}[{See \cite[II.10.64]{BrHa}}]\label{lem:asymptotic}
For arranged vectors $\lambda,\mu \in \RR^n$ with $\|\lambda\|_2=\|\mu\|_2=1$ and $g,h \in GL(n,\CC)$, 
two geodesic rays $t \mapsto g e^{t \diag \lambda} g^{\dagger}$ and  $t \mapsto h e^{t \diag \mu} h^{\dagger}$ are asymptotic if and only if  $\lambda=\mu$ and $(h^{-1}g)_{ij} = 0$ for all $i \geq j$ with $\lambda_i < \lambda_j$.
\end{Lem}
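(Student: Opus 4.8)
The plan is first to normalise by a group element. Since $x\mapsto h^{-1}x(h^{-1})^{\dagger}$ is an isometry of $P_n$, it carries the second ray to $t\mapsto e^{t\diag\mu}$ and the first to $t\mapsto b\,e^{t\diag\lambda}b^{\dagger}$ with $b:=h^{-1}g\in GL(n,\CC)$, and preserves asymptoticity; so it suffices to treat the case $h=I$, where the claimed condition reads ``$\lambda=\mu$ and $b_{ij}=0$ for all $i\ge j$ with $\lambda_i<\lambda_j$''. Next I would use the explicit distance on $P_n$ coming from the exponential map recalled above: $d(x,y)^2=\sum_i(\log\sigma_i)^2$, where $\sigma_1,\dots,\sigma_n$ are the (real, positive) eigenvalues of $x^{-1}y$, equivalently of the positive definite matrix $x^{-1/2}yx^{-1/2}$. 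Applying this to $x=e^{t\diag\mu}$ and $y=b\,e^{t\diag\lambda}b^{\dagger}$ and setting $A(t):=e^{-t\diag\mu/2}\,b\,e^{t\diag\lambda/2}$, one checks $e^{-t\diag\mu/2}\,y\,e^{-t\diag\mu/2}=A(t)A(t)^{\dagger}$, so that
\[
d\bigl(b\,e^{t\diag\lambda}b^{\dagger},\,e^{t\diag\mu}\bigr)^2=\sum_{i=1}^n\bigl(\log\tau_i(t)\bigr)^2,\qquad A(t)_{ij}=b_{ij}\,e^{t(\lambda_j-\mu_i)/2},
\]
where $\tau_1(t),\dots,\tau_n(t)$ are the squared singular values of $A(t)$.

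For the ``only if'' direction, asymptoticity means the left-hand side is bounded in $t$, hence each $(\log\tau_i(t))^2$ is bounded, so every singular value of $A(t)$, and likewise of $A(t)^{-1}=e^{-t\diag\lambda/2}\,b^{-1}\,e^{t\diag\mu/2}$, stays in a fixed compact subinterval of $(0,\infty)$. Since $|A(t)_{ij}|\le\|A(t)\|_{\mathrm{op}}$ for every entry, whenever $b_{ij}\ne 0$ the factor $e^{t(\lambda_j-\mu_i)/2}$ must remain bounded as $t\to\infty$, forcing $\mu_i\ge\lambda_j$; the same argument on $A(t)^{-1}$ gives $\lambda_i\ge\mu_j$ whenever $(b^{-1})_{ij}\ne 0$. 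As $b$ is invertible, the Leibniz expansion of $\det b$ provides a permutation $\pi$ with $b_{i\pi(i)}\ne 0$ for all $i$, hence $\mu_i\ge\lambda_{\pi(i)}$ and $\sum_i\mu_i\ge\sum_i\lambda_i$; symmetrically $\sum_i\lambda_i\ge\sum_i\mu_i$. Thus $\sum_i\mu_i=\sum_i\lambda_i$, and the nonnegative numbers $\mu_i-\lambda_{\pi(i)}$ summing to zero must all vanish, so $\mu$ is a permutation of $\lambda$; both being arranged, $\lambda=\mu$. Finally, with $\mu=\lambda$, the implication ``$b_{ij}\ne 0\Rightarrow\lambda_i\ge\lambda_j$'' is exactly ``$b_{ij}=0$ whenever $\lambda_i<\lambda_j$'', and since $\lambda$ is arranged any such pair has $i>j$; this is the asserted condition for $b=h^{-1}g$.

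For the ``if'' direction, assume $\lambda=\mu$ and $b_{ij}=0$ whenever $\lambda_i<\lambda_j$. Then $A(t)_{ij}=b_{ij}e^{t(\lambda_j-\lambda_i)/2}$ equals $b_{ij}$ when $\lambda_i=\lambda_j$, tends to $0$ when $\lambda_i>\lambda_j$, and is $0$ otherwise; hence $A(t)\to A_\infty$, the matrix obtained from $b$ by zeroing every entry $b_{ij}$ with $\lambda_i\ne\lambda_j$. This $A_\infty$ is block diagonal for the partition of $\{1,\dots,n\}$ into the level sets of $\lambda$, and its diagonal blocks are those of $b$; since $b$ is block upper triangular for this partition, $\det b$ is the product of these blocks' determinants, so each is invertible and $A_\infty$ is invertible. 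Therefore $A(t)A(t)^{\dagger}\to A_\infty A_\infty^{\dagger}\succ 0$, the $\tau_i(t)$ converge to positive numbers, and $d(b\,e^{t\diag\lambda}b^{\dagger},e^{t\diag\lambda})^2$ converges to a finite limit; in particular it is bounded, so the two rays are asymptotic.

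The genuinely delicate step is the ``only if'' derivation of $\lambda=\mu$: converting boundedness of a sum of squared logarithms into two-sided uniform control of all singular values of $A(t)$, extracting from the entrywise bounds on $A(t)$ and $A(t)^{-1}$ the inequalities $\mu_i\ge\lambda_j$ on the support of $b$ and $\lambda_i\ge\mu_j$ on the support of $b^{-1}$, and then closing the loop through the determinant/permutation averaging. Everything else is a short computation once the distance formula on $P_n$ and the substitution $A(t)=e^{-t\diag\mu/2}b\,e^{t\diag\lambda/2}$ are in place.
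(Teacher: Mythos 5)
Your proof is correct and uses the same basic setup as the paper: reduce to $h=I$, set $A(t)=e^{-t\diag\mu/2}(h^{-1}g)\,e^{t\diag\lambda/2}$, observe that $d\bigl(ge^{t\diag\lambda}g^{\dagger},he^{t\diag\mu}h^{\dagger}\bigr)=d\bigl(A(t)A(t)^{\dagger},I\bigr)$, and translate asymptoticity into uniform two-sided control of the singular values of $A(t)$ (and of $A(t)^{-1}$). The genuine difference is in how the ``only if'' direction is closed. The paper argues \emph{locally}: it picks the smallest index $i$ with $\lambda_i\neq\mu_i$ (say $\lambda_i<\mu_i$), shows that the $i\times(n-i+1)$ upper-right block of $A(t)$ decays, so any bounded subsequential limit of $A(t)$ is singular, and concludes $d\to\infty$. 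You argue \emph{globally}: entrywise bounds on $A(t)$ and $A(t)^{-1}$ yield the support implications $b_{ij}\neq 0\Rightarrow\mu_i\geq\lambda_j$ and $(b^{-1})_{ij}\neq 0\Rightarrow\lambda_i\geq\mu_j$, and the Leibniz expansion of $\det b$ and $\det b^{-1}$ supplies permutations forcing $\sum_i\mu_i\geq\sum_i\lambda_i$ and the reverse, whence $\lambda=\mu$ for arranged vectors. Your route avoids the WLOG/smallest-index case split and isolates the combinatorial core nicely. You are also more careful than the paper in the ``if'' direction: the paper merely states that $b(t)$ ``converges to a constant matrix'' and concludes boundedness of the distance, but this implicitly requires the limit to be \emph{invertible}; you supply the justification via the block upper triangular structure of $b$ with respect to the level sets of $\lambda$, which forces its diagonal blocks, and hence $A_{\infty}$, to be invertible.
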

\begin{proof}
	Let $b(t) := e^{- t \diag \mu/2} h^{-1}g e^{t \diag \lambda/2}$, 
	where $b(t)_{ij} = e^{-t(\mu_i-\lambda_j)/2} (h^{-1}g)_{ij}$.
	Then we have $d(g e^{t \diag \lambda} g^{\dagger},  h e^{t \diag \mu} h^{\dagger})
	= d(b(t) b(t)^{\dagger},I)$. 
	Consider the smallest $i$ with $\lambda_i \neq \mu_i$ if it exists.
	Say $\lambda_i < \mu_i$. Then $\lim_{t \to \infty} b(t)$ 
	has a zero block of $i$ rows and $n-i+1$ columns, 
	and is singular (and/or has a $\infty$ entry).
	Necessarily $d(b(t) b(t)^{\dagger},I) \to \infty$.
	If $\lambda= \mu$ and $(h^{-1}g)_{ij} \neq 0$ for some $i \geq j$ with $\lambda_i < \lambda_j$, then $(b(t) b(t)^{\dagger})_{ii}  \to \infty$, implying $d(b(t) b(t)^{\dagger},I) \to \infty$.
	Conversely, suppose that the condition is satisfied. 
	Then $b(t)$ converges to a constant matrix.
	This means that $d(b(t) b(t)^{\dagger},I)$ is bounded, and the two rays are asymptotic.	
\end{proof}

Thus,  the minimal parabolic subgroup 
for  Weyl chamber $C_0 := \{ e^{\infty \diag \lambda} \mid \lambda:\mbox{arranged} \}$
is the group $B$ of upper triangular matrices.
Then $GL(n, \CC)/ B$ is viewed as the space of complete flags $U_1 \subset U_2 \subset \cdots \subset U_n = \CC^n$ of vector subspaces $U_i$ of $\CC^n$.
As an abstract simplicial complex, 
the building $CP_n^{\infty}$ is the order complex of 
the lattice of all (nonzero) vector subspaces of $\CC^n$. 
As consistent with (\ref{eqn:lambda_F}), 
any point $p$ in $CP^{\infty}_{n}$ is represented as $p = \lambda \cdot {\cal U}$ for
an arranged vector $\lambda$ and complete flag ${\cal U}$.
Specifically, if $p$ is written as $p = g e^{\infty \lambda} g^{\dagger}$ for an arranged vector $\lambda$ and $g \in GL(n,\CC)$, 
then ${\cal U}$ is the complete flag consisting 
of vector subspaces spanned by the first $i$ columns of $g$ 
for $i=1,2,\ldots,n$.
The $i$-th vector subspaces of complete flags ${\cal U}$ and ${\cal V}$ 
are denoted by $U_i$ and $V_i$, respectively.
Lemma~\ref{lem:asymptotic} rephrases as: 
$\lambda \cdot {\cal U} = \mu \cdot {\cal V}$ if and only 
if $\lambda = \mu$ and $U_i=V_i$ for each $i \in [n-1]$ with $\lambda_i > \lambda_{i+1}$.
By associating $\lambda \cdot {\cal U}$ 
with formal sum $\sum_{i=1}^n (\lambda_i - \lambda_{i+1}) U_i$ with $\lambda_{n+1} := 0$,  
the boundary $CP^{\infty}_{n}$ is also identified with the set of all formal sums 
\begin{equation}\label{eqn:formal_sum}
p= \sum_{i} \alpha_i U_i,
\end{equation}
where vector subspaces $U_i$ form a (partial) flag 
and nonzero coefficients $\alpha_i \in \RR$ are positive if $U_i \neq \CC^n$. 
Note that this expression (\ref{eqn:formal_sum}) is unique.
In this way, any single vector subspace $X (= 1 X)$ is viewed as 
a point of $CP_n^{\infty}$.

A  Weyl chamber $C \subseteq CP^{\infty}_n$
consists of $\lambda \cdot {\cal U}$ over all arranged vectors $\lambda$ 
for a (fixed) flag ${\cal U}$. 
The distance $d^{\infty}(p,q)$ of two points 
$\lambda \cdot {\cal U}$ and $\mu \cdot {\cal U}$ in the same chamber  
is given by $\|\lambda - \mu \|_2$.
Accordingly, the distance of any two points in $CP^{\infty}_n$ is given by 
the length metric of the two points (the infimum of the length of a path connecting them).  
The whole space $CP^{\infty}_n$ is a polyhedral cone complex
obtained by gluing these Euclidean polyhedral cones.
An apartment $E(g) := F(g)^{\infty} = \{g e^{\infty \diag \lambda}g^{\dagger} \mid \lambda \in \RR^n\}$ for $g \in GL(n,\CC)$ 
consists of all points of form $\lambda \cdot {\cal U}$
such that each subspace $U_i$ in flag ${\cal U}$ is spanned by column vectors of $g$.
$GL(n,\CC)$ acts isometrically on  $CP^{\infty}_n$ by 
$(g, \lambda \cdot {\cal U}) \mapsto \lambda \cdot g {\cal U}$.
\begin{figure}[t]
	\begin{center}
		\includegraphics[scale=0.6]{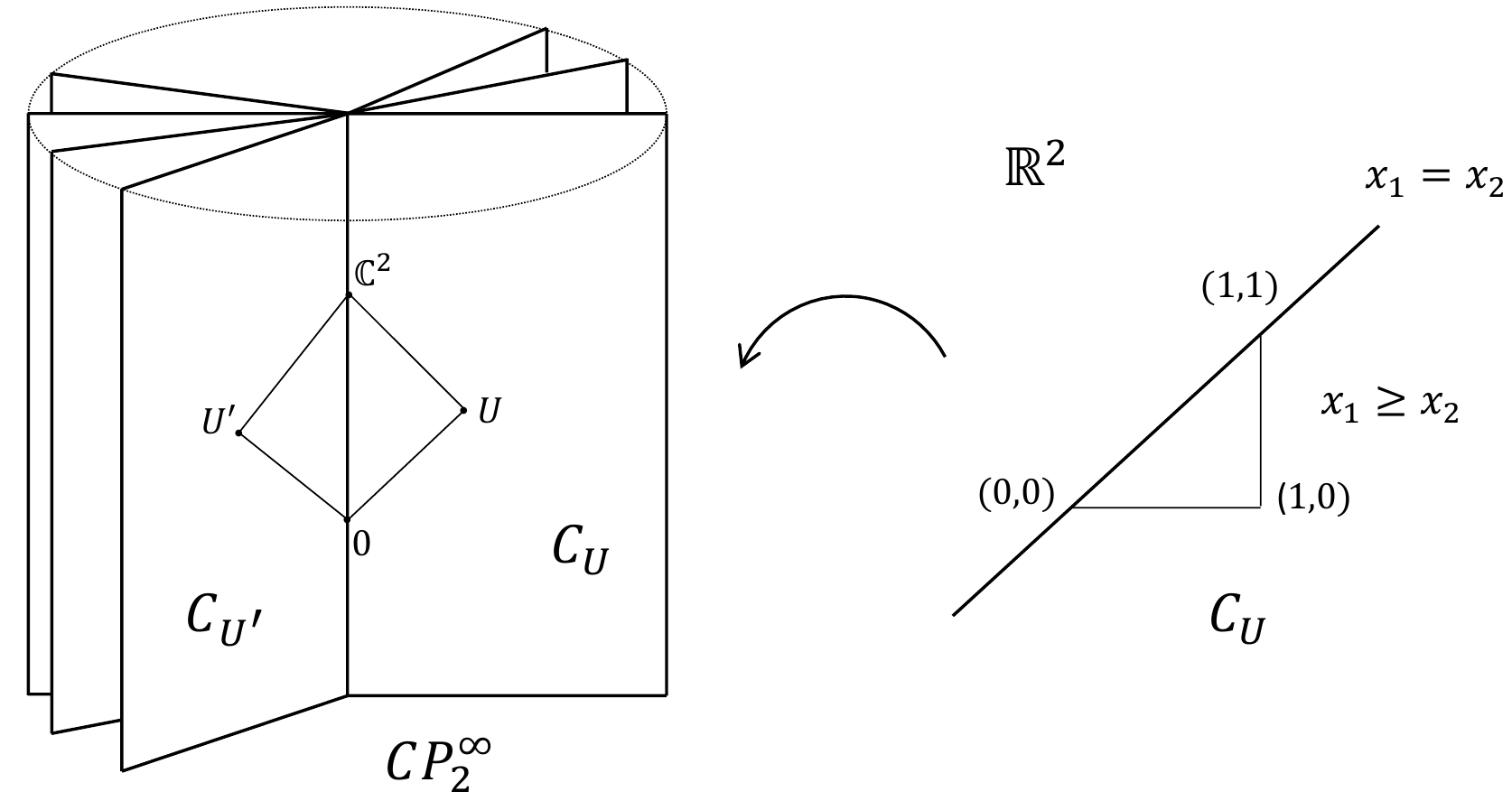}
		\caption{The boundary $CP_2^{\infty}$ of $P_2$}
		\label{fig:CP_2^infty}
	\end{center}
\end{figure}\noindent
\begin{Ex}
	Consider the case of $n=2$. Any complete flag ${\cal U}$ 
	is uniquely determined by its $1$-dimensional subspace $U=U_1$.
	The corresponding Weyl chamber 
	is a form of 
	$\{ \lambda \cdot {\cal U} \mid \lambda \in \RR^2, \lambda_1 \geq \lambda_2 \}$, 
	and is isometric to half-plane $\{ x \in \RR^2 \mid x_1 \geq x_2 \}:=C_{U} \subseteq \RR^2$.
	Then $CP_2^\infty$ is obtained by gluing 
    $C_{U}$ for all $1$-dimensional subspaces $U$, along the line of $x_1=x_2$.
    Specifically, 
    it is the disjoint union $\coprod_U C_U$ over all $1$-dimensional subspaces $U$ 
    modulo the equivalence relation:  $(U,x)  \sim (U',x')$ 
    if and only if $x_1=x_2=x_1'=x_2'$. Subspaces $U$ and $\CC^2$ 
    are the points of $CP_2^{\infty}$ that are the images of $(U, (1,0))$ and $(U, (1,1))$, respectively. See Figure~\ref{fig:CP_2^infty}. 
	This shape of $CP_2^{\infty}$ can be directly seen from the expression $CP_2^{\infty} = \RR  \times C(SL(2,\CC)/SU(2))^{\infty}$. Here $SL(2,\CC)/SU(2)$ is a $3$-dimensional hyperbolic space, and $C(SL(2,\CC)/SU(2))^{\infty}$ is an infinite star (Example~\ref{ex:hyperbolic}).
\end{Ex}

In this setting, we present explicit descriptions of Busemann functions, asymptotic gradients, and inner product $\langle, \rangle$ on $CP_{n}^{\infty}$.
For $g \in GL(n,\CC)$, let $[g]$ denote the image in $G/B$, 
which is the complete flag of vector subspaces spanned 
by the first $i$ columns of $g$ for $i=1,2,\ldots,n$.
For a Hermitian matrix $H$ and subset $I \subseteq [n] := \{1,2,\ldots,n\}$, 
let $H[I]$ denote the principal matrix of $H$ consisting of 
row/column indices in $I$,
where $H[\emptyset] :=1$.
\begin{Lem}[{see \cite[II.10.69]{BrHa}}]\label{lem:b_p_P_n}
	For $p = \lambda \cdot [u] \in CP_n^{\infty}$ with $u \in U(n)$, it holds
	\begin{eqnarray}
	b_p(x) &=& - \sum_{i=1}^n \lambda_i \log \frac{\det (u^{\dagger}x u)[\{i,\ldots,n\}]}{\det (u^{\dagger}x u)[\{i+1,\ldots,n\}]}, 
	\label{eqn:b_p_P_n} \\
	\nabla b_p(x) &=& - ub \diag \lambda b^\dagger u^\dagger, \label{eqn:nabla_b_p_P_n}
	\end{eqnarray}
	where $u^{\dagger}x^{1/2} = bk$ 
	for upper triangular matrix $b$ and unitary matrix $k$  (Gram–Schmidt orthonormalization).
\end{Lem}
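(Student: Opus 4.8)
The plan is to reduce both formulas to the case $u=I$ (standard flag) by $U(n)$-symmetry, then prove the gradient formula (\ref{eqn:nabla_b_p_P_n}) by explicitly producing the geodesic ray from $x$ to $p$, and finally deduce the Busemann formula (\ref{eqn:b_p_P_n}) by integrating that gradient.

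\emph{Reduction to $u=I$.} For $u\in U(n)$ the map $\phi(y):=u^{\dagger}yu$ is the isometry of $P_n$ induced by $u^{\dagger}\in GL(n,\CC)$; it fixes $x_0=I$ and carries the constant-speed ray $c(t)=ue^{t\diag\lambda}u^{\dagger}$ (which issues from $I$ and has $c(\infty)=\lambda\cdot[u]=p$) to $e^{t\diag\lambda}$, whose asymptotic class is $\lambda\cdot[I]$. Hence $b_p=b_{\lambda\cdot[I]}\circ\phi$, i.e.\ $b_p(x)=b_{\lambda\cdot[I]}(u^{\dagger}xu)$, and since $d\phi_x(H)=u^{\dagger}Hu$ is a linear isometry, the chain rule gives $\nabla b_p(x)=u\,\nabla b_{\lambda\cdot[I]}(u^{\dagger}xu)\,u^{\dagger}$. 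Writing $u^{\dagger}x^{1/2}=bk$ as in the statement, we have $(u^{\dagger}xu)^{1/2}=u^{\dagger}x^{1/2}u=b(ku)$ with $ku$ unitary, so the upper-triangular Gram--Schmidt factor of $(u^{\dagger}xu)^{1/2}$ is the same $b$. Thus it suffices to treat $u=I$ with $y\in P_n$ arbitrary.

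\emph{The gradient for $u=I$.} Fix $x\in P_n$ and write $x^{1/2}=bk$ with $b$ upper triangular and $k$ unitary; then $x=x^{1/2}(x^{1/2})^{\dagger}=bb^{\dagger}$, and $bk=(bk)^{\dagger}$ gives $kbk=b^{\dagger}$, hence $b^{\dagger}k^{\dagger}b^{-1}=k$. Using $x^{-1/2}=k^{\dagger}b^{-1}$, a short computation yields $x^{-1/2}(b\diag\lambda\, b^{\dagger})x^{-1/2}=k^{\dagger}(\diag\lambda)k$, so
\[
\exp_x\!\big(t\,b\diag\lambda\, b^{\dagger}\big)=x^{1/2}e^{t k^{\dagger}(\diag\lambda)k}x^{1/2}=b\,e^{t\diag\lambda}\,b^{\dagger}.
\]
Since $b$ is upper triangular, the subspaces spanned by its first $i$ columns form the standard flag, so by Lemma~\ref{lem:asymptotic} (and the ensuing description of $\lambda\cdot{\cal U}$) this ray has asymptotic class $\lambda\cdot[I]$; it also issues from $bb^{\dagger}=x$ and has speed $\|\diag\lambda\|_2=\|\lambda\|_2$. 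Lemma~\ref{lem:nabla_Busemann} then gives $\nabla b_{\lambda\cdot[I]}(x)=-\,b\diag\lambda\, b^{\dagger}$, which is (\ref{eqn:nabla_b_p_P_n}) for $u=I$; transporting back by $\phi$ yields the general case.

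\emph{The Busemann function for $u=I$.} Let $h(x)$ be the right-hand side of (\ref{eqn:b_p_P_n}) with $u=I$; putting $S_j:=\{j,\dots,n\}$ and $\lambda_0:=0$ it telescopes to $h(x)=-\sum_{j=1}^{n}(\lambda_j-\lambda_{j-1})\log\det x[S_j]$. Differentiating, $dh(x)(H)=-\sum_j(\lambda_j-\lambda_{j-1})\trace\!\big((x[S_j])^{-1}H[S_j]\big)=\trace(GH)$ where $G=-\sum_j(\lambda_j-\lambda_{j-1})E_{S_j}$ and $E_{S_j}$ is the Hermitian matrix equal to $(x[S_j])^{-1}$ on the block $S_j\times S_j$ and zero elsewhere; converting to the Riemannian gradient via $\nabla h(x)=xGx$ gives $\nabla h(x)=-\sum_j(\lambda_j-\lambda_{j-1})\,xE_{S_j}x$. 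Using $x=bb^{\dagger}$ with $b$ upper triangular, one checks that $x[S_j]$ is the bottom-right block $b^{(j)}(b^{(j)})^{\dagger}$ of $x$, that the columns of $x$ indexed by $S_j$ equal $(b\,\mathrm{col}_{S_j})(b^{(j)})^{\dagger}$, and hence $xE_{S_j}x=b\,P_{S_j}\,b^{\dagger}$ with $P_{S_j}$ the coordinate projection onto $S_j$. Summing, $\sum_j(\lambda_j-\lambda_{j-1})P_{S_j}=\diag\lambda$, so $\nabla h(x)=-\,b\diag\lambda\, b^{\dagger}=\nabla b_{\lambda\cdot[I]}(x)$ by the previous step. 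As $P_n$ is connected, $h-b_{\lambda\cdot[I]}$ is constant, and both sides vanish at $x=I$ (all determinant ratios being $1$), so $h=b_{\lambda\cdot[I]}$; transporting by $\phi$ gives (\ref{eqn:b_p_P_n}).

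The only genuinely computational point is the identity $xE_{S_j}x=b\,P_{S_j}\,b^{\dagger}$, which I expect to be the main obstacle: it rests on tracking the supports of the columns of the upper-triangular $b$ so that the $S_j\times S_j$ blocks of $x$ and of $(x[S_j])^{-1}$ factor through $b^{(j)}$, combined with the telescoping $\sum_j(\lambda_j-\lambda_{j-1})P_{S_j}=\diag\lambda$. An alternative, more classical route to (\ref{eqn:b_p_P_n}) avoids integration but trades this for a different difficulty: compute $d(x,e^{t\diag\lambda})$ through the singular values of $e^{(t/2)\diag\lambda}x^{-1/2}$, extract $\prod_{k\le j}\sigma_k^2\sim e^{t(\lambda_1+\cdots+\lambda_j)}\det(x^{-1}[\{1,\dots,j\}])$ by Cauchy--Binet, pass to the limit defining $b_p$, and rewrite via Jacobi's identity $\det(x^{-1}[\{1,\dots,j\}])=\det(x[\{j+1,\dots,n\}])/\det x$; here the delicate part is controlling the $o(1)$ errors in $\log\sigma_j$ when $\lambda$ has repeated entries, which can be handled either by continuity in $\lambda$ or by observing that the ambiguous factors cancel in the combination that forms $b_p$.
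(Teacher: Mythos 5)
Your proof is correct, and the gradient formula is established the same way as in the paper (constructing the explicit geodesic ray $t\mapsto be^{t\diag\lambda}b^\dagger$ from $x$ toward $p$, then invoking Lemma~\ref{lem:nabla_Busemann}). For the Busemann formula itself, however, you take a genuinely different route. The paper computes $b_p(x)$ directly from its defining limit: it reduces to $\lim_t d(e^{t\diag\lambda},u^\dagger xu)-t$, factors $u^\dagger xu = n\,\diag r\,n^\dagger$ with $n$ unipotent upper triangular, uses an asymptotic estimate to replace $n\,\diag r\,n^\dagger$ by $\diag r$, and reads off $-\sum_i\lambda_i\log r_i$ from the flat $\{\diag(\cdot)\}$ (with a secondary argument for repeated $\lambda_i$). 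You instead prove the Busemann formula by \emph{integrating the gradient}: you check that the proposed right-hand side $h$ has gradient $-b\diag\lambda\, b^\dagger$ via the telescoped form $h(x)=-\sum_j(\lambda_j-\lambda_{j-1})\log\det x[S_j]$, the identity $xE_{S_j}x=bP_{S_j}b^\dagger$ (which indeed reduces to a $2\times 2$ block computation with $x=bb^\dagger$), and the telescoping $\sum_j(\lambda_j-\lambda_{j-1})P_{S_j}=\diag\lambda$; then you conclude by connectedness of $P_n$ and normalization at $x=I$. Your route trades the limit estimate (and the case distinction for repeated $\lambda_i$) for a clean finite-dimensional linear-algebra identity, and it makes transparent why the answer is a linear combination of $\log\det$'s of trailing principal minors; the paper's route is closer to the source~\cite[II.10.69]{BrHa} and avoids appealing to the $C^1$ regularity of Busemann functions, though that regularity is anyway used (via Lemma~\ref{lem:nabla_Busemann}) in the gradient half of the statement, so nothing extra is consumed. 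Both are sound.
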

\begin{proof}
	It suffices to consider $p \in P_n^{\infty}$. 
	The geodesic ray $c$ issuing from $I$ with $c(\infty) = p$ is written as 
	$c(t) = u e^{t \diag \lambda} u^{\dagger}$. 
	Therefore, we have $b_p(x) = \lim_{t\to \infty} d(u e^{t \diag \lambda}u^{\dagger}, x) - t = \lim_{t\to \infty} d(e^{t \diag \lambda}, u^{\dagger}xu) - t$.
	Suppose that all $\lambda_i$ are different. 
	Decompose $u^{\dagger}xu$ as $u^{\dagger}xu = n \diag r n^{\dagger}$, 
	where $n$ is an upper triangular matrix having $1$ on each diagonal 
	and  $r \in \RR^n$ is a positive vector with $r_i$ written as
	\begin{equation}\label{eqn:r_i}
	r_i = \frac{\det (u^{\dagger}xu) [\{i,\ldots,n\}]}{\det (u^{\dagger}xu) [\{i+1,\ldots,n\}]}. 
	\end{equation}
	As in the proof of Lemma~\ref{lem:asymptotic}, it holds
	$\lim_{t \to \infty} d(n^{-1}e^{t\diag \lambda} n^{-\dagger}, e^{t \diag \lambda}) = 0$, and we have $b_p(x) = \lim_{t\to \infty} d(e^{t \diag \lambda}, \diag r) - t  
	= \lim_{t\to \infty} \|t \diag \lambda - \log r \|_2 -t
	= - \sum_{i=1}^n \lambda_i \log r_i$ (see Example~\ref{ex:Euclidean_Busemann}), 
	where $\log r \in \RR^n$ is defined by $(\log r)_i := \log r_i$.
	Then we obtain (\ref{eqn:b_p_P_n}) from (\ref{eqn:r_i}). 
	If some of $\lambda_i$ are equal, we decompose $u^{\dagger} x u$ 
    to $n y n^{\dagger}$, 
    where  $n$ is an upper triangular matrix satisfying $n_{ii} =1$ and
	$n_{ij} = n_{ji} = 0$ if $\lambda_i = \lambda_j$ and $i \neq j$, and $y$ is a block diagonal matrix with $y_{ij} = y_{ji} = 0$ if $\lambda_i \neq \lambda_j$. 
	As above, it holds $b_p(x) = \lim_{t\to \infty} d(e^{t \diag \lambda}, y)-t$.
	Diagonalizing $y$ in each block by unitary matrices, 
	we obtain the same formula.
	
	Next we verify the second equation.
	For $H := ub \diag \lambda b^{\dagger} u^{\dagger}$, 
	consider the geodesic $t \mapsto x^{1/2} e^{t x^{-1/2} H x^{-1/2}}x^{1/2}$ (issuing from $x$).
	By $x^{1/2} = ubk = k^{\dagger}b^{\dagger} u^{\dagger}$ 
	and $x^{-1/2} = k^{\dagger}b^{-1}u^{\dagger} 
	= u b^{- \dagger} k$, we have 
  $x^{1/2} e^{t x^{-1/2} H x^{-1/2}}x^{1/2} = u b e^{t \diag \lambda} b^{\dagger} u^{\dagger}$. Hence, 
  by Lemma~\ref{lem:asymptotic}, 
  this geodesic is asymptotic to $c$, i.e., $t \mapsto u e^{t \diag \lambda} u^{\dagger}$.
  From Lemma~\ref{lem:nabla_Busemann},  we have $\nabla b_p(x) = - H$. 
\end{proof}

Let $\| \cdot \|_{\rm F}$ denote the Frobenius norm; 
then $\| H \|_x = \| x^{-1/2} H x^{-1/2}\|_{\rm F}$ for $H \in T_x$.

\begin{Prop}\label{prop:nabla^inf_P_n}
	Let $f:P_n \to \RR$ be a smooth convex function.
	\begin{itemize}
		\item[(1)] For $p = \lambda \cdot [u]$ 
	with unitary matrix $u$, it holds
	\begin{equation}\label{eqn:|nabla f+b_p|}
	\| \nabla (f + b_p)(x) \|_x = \|  b^{\dagger} u^{\dagger} df(x) u b - \diag \lambda  \|_{\rm F},
	\end{equation}
	where $u^{\dagger}x^{1/2} = bk$ 
	for upper triangular matrix $b$ and unitary matrix $k$.
	In particular, if $\nabla^{\infty}f(x) = p$, then $x^{1/2}df(x)x^{1/2} =  k^{\dagger} \diag \lambda k$.
	\item[(2)] Let $s$ denote the projection $\lambda \cdot {\cal U} \mapsto \lambda$. Then it holds
	\begin{equation}\label{eqn:sB(f^infty)}
	\overline{s\nabla^{\infty}f(P_n)} = \overline{s \dom f^*} = s B(f^{\infty}) = \bigcup_{C} s(C \cap B(f^{\infty})),
	\end{equation}
	where $C$ ranges over all Weyl chambers.
	\end{itemize}
\end{Prop}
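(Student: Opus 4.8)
The plan is a direct matrix computation. Write $g:=f+b_p$ and decompose $\nabla g(x)=\nabla f(x)+\nabla b_p(x)$. From the Riemannian metric $\langle H,H'\rangle_x=\trace x^{-1}Hx^{-1}H'$ and the identification of $T_x^{\ast}$ with $S_n$, the gradient and differential are linked by $\nabla f(x)=x\,df(x)\,x$, while Lemma~\ref{lem:b_p_P_n} gives $\nabla b_p(x)=-ub\diag\lambda\,b^{\dagger}u^{\dagger}$. Since $\|H\|_x=\|x^{-1/2}Hx^{-1/2}\|_{\rm F}$, conjugating both terms by $x^{-1/2}$ turns $\nabla f(x)$ into $x^{1/2}df(x)x^{1/2}$ and, using $x^{1/2}=ubk$ (from $u^{\dagger}x^{1/2}=bk$) so that $x^{-1/2}ub=k^{\dagger}$, turns $\nabla b_p(x)$ into $-k^{\dagger}\diag\lambda\,k$. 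Hence $\|\nabla g(x)\|_x=\|x^{1/2}df(x)x^{1/2}-k^{\dagger}\diag\lambda\,k\|_{\rm F}$; conjugating by the unitary $k$ and using $kx^{1/2}=(x^{1/2}k^{\dagger})^{\dagger}=(ub)^{\dagger}=b^{\dagger}u^{\dagger}$ together with unitary invariance of $\|\cdot\|_{\rm F}$ yields exactly (\ref{eqn:|nabla f+b_p|}). For the last assertion: if $\nabla^{\infty}f(x)=p$ then $x$ minimizes $f+b_p$ by Lemma~\ref{lem:f(x)+f*(p)=-b_p(x)}, so $\nabla g(x)=0$, and the computation above forces $x^{1/2}df(x)x^{1/2}=k^{\dagger}\diag\lambda\,k$. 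The only delicate point here is bookkeeping of the Hermitian conjugates and of the Gram--Schmidt convention; there is no conceptual obstacle.

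\textbf{Part (2), reduction.} The last equality is immediate: $CP_n^{\infty}=\bigcup_C C$ over all Weyl chambers $C$, so applying $s$ gives $sB(f^{\infty})=\bigcup_C s(C\cap B(f^{\infty}))$. Applying $s$ to the inclusions (\ref{eqn:inclusion}) yields
\[
s\nabla^{\infty}f(P_n)\ \subseteq\ s\dom f^{*}\ \subseteq\ sB(f^{\infty}),
\]
so after taking closures it suffices to establish \emph{(A)} $sB(f^{\infty})\subseteq\overline{s\nabla^{\infty}f(P_n)}$ and \emph{(B)} $sB(f^{\infty})$ is closed in $\RR^n$: granting these, the displayed chain forces $\overline{s\nabla^{\infty}f(P_n)}=\overline{s\dom f^{*}}=\overline{sB(f^{\infty})}$, and (B) removes the closures.

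\textbf{Part (2)(A).} Here I would argue cleanly via Part~(1). Let $\lambda_0\in sB(f^{\infty})$, witnessed by $p_0=\lambda_0\cdot[u_0]\in B(f^{\infty})$ with $u_0$ unitary and $\lambda_0$ arranged. By Theorem~\ref{thm:char_of_B(f^infty)}, $\inf_{x}\|\nabla(f+b_{p_0})(x)\|_x=0$, which by Part~(1) equals $\inf_{x}\|b(x)^{\dagger}u_0^{\dagger}df(x)u_0 b(x)-\diag\lambda_0\|_{\rm F}$. As shown in Part~(1), $b(x)^{\dagger}u_0^{\dagger}df(x)u_0 b(x)$ is unitarily conjugate to $x^{1/2}df(x)x^{1/2}$, hence its sorted eigenvalue vector is $s(\nabla^{\infty}f(x))$, while $\diag\lambda_0$ has sorted eigenvalue vector $\lambda_0$. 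Since the sorted-eigenvalue map on Hermitian matrices is $1$-Lipschitz in the Frobenius norm (Hoffman--Wielandt), we conclude $\inf_{x}\|s(\nabla^{\infty}f(x))-\lambda_0\|_2=0$, i.e.\ $\lambda_0\in\overline{s\nabla^{\infty}f(P_n)}$.

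\textbf{Part (2)(B), the obstacle.} For closedness, take $\lambda_k\to\lambda_0$ with each $\lambda_k\in sB(f^{\infty})$ witnessed by $u_k\in U(n)$, and (by compactness of $U(n)$) pass to a subsequence with $u_k\to u_{*}$; the plan is to prove $\lambda_0\cdot[u_{*}]\in B(f^{\infty})$ via Theorem~\ref{thm:char_of_B(f^infty)}. By the explicit formulas of Lemma~\ref{lem:b_p_P_n} and continuity of Gram--Schmidt in $u$, the data $b_{\lambda_k\cdot[u_k]}$ and $\nabla b_{\lambda_k\cdot[u_k]}$ converge pointwise (for fixed $x$) to those of $\lambda_0\cdot[u_{*}]$, so if the near-minimizers $x_k$ of $f+b_{\lambda_k\cdot[u_k]}$ stay in a bounded region one extracts a convergent subsequence and concludes by continuity. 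The genuinely hard case---which I expect to be the main obstacle of the whole argument---is when $(x_k)$ escapes to infinity: there the Gram--Schmidt unitaries $k(x_k,u_k)$ are no longer controlled by $u_k\to u_{*}$, since the relevant modulus of continuity degenerates with the condition number of $x_k$. To handle it I would combine the monotonicity of the slope of $t\mapsto(f+b_{\lambda_k\cdot[u_k]})(\gamma(t))$ along geodesic rays with the semicontinuity of $f^{\infty}$ and of $\langle\cdot,\cdot\rangle$ in the standard topology (Lemma~\ref{lem:standard}) and the $d^{\infty}$-closedness of $B(f^{\infty})$ (Lemma~\ref{lem:closed}), so as to force $\lambda_0\cdot[u_{*}]$ (or some point with the same $s$-value) into $B(f^{\infty})$; alternatively, one re-runs the sorted-eigenvalue estimate of (A) after re-selecting the witnessing unitaries by compactness. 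This escaping-to-infinity situation is precisely the degenerate case flagged in the remark after Proposition~\ref{prop:main0}, and carrying it through carefully is where the bulk of the work lies.
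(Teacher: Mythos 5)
Your Part (1) is exactly the paper's computation: $\nabla f(x)=x\,df(x)\,x$, $\nabla b_p(x)=-ub\diag\lambda\,b^{\dagger}u^{\dagger}$, conjugate by $x^{-1/2}$, use $x^{1/2}=ubk$ so that $x^{-1/2}ub=k^{\dagger}$, and then unitary invariance of $\|\cdot\|_{\rm F}$. Your Part (2)(A) is also the paper's argument verbatim in substance: take $p=\lambda\cdot[u]\in B(f^{\infty})$, get a near-optimizing sequence $(x_i)$ from Theorem~\ref{thm:char_of_B(f^infty)}, observe that $\|\nabla(f+b_p)(x_i)\|_{x_i}=\|h_i^{\dagger}\diag(s\nabla^{\infty}f(x_i))h_i-k_i^{\dagger}\diag\lambda\,k_i\|_{\rm F}\to 0$, and conclude $s\nabla^{\infty}f(x_i)\to\lambda$ from the $1$-Lipschitz dependence of sorted eigenvalues on a Hermitian matrix (Hoffman--Wielandt, which the paper uses implicitly).

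Where you diverge from the paper is Part (2)(B), the closedness of $sB(f^{\infty})$, and your instinct here is worth spelling out because the paper itself is terse at exactly this point. The paper's proof of (2) opens by asserting that ``$\overline{s\nabla^{\infty}f(P_n)}\subseteq\overline{s\dom f^*}\subseteq sB(f^{\infty})$ is clear'' from (\ref{eqn:inclusion}); but (\ref{eqn:inclusion}) only yields $s\dom f^{*}\subseteq sB(f^{\infty})$, and passing to the closure on the left requires $sB(f^{\infty})$ to be closed -- the very thing you call (B). The paper offers no separate argument for this (and note that $B(f^{\infty})$ is closed only in the $d^{\infty}$-topology, which is strictly finer than the standard one in which the flag variety $G/B$ is compact; since $\langle\cdot,\cdot\rangle$ is merely upper semicontinuous in the standard topology by Lemma~\ref{lem:standard}(2), closedness of $B(f^{\infty})$ in the standard topology does not follow, and $sB(f^{\infty})=\bigcup_{C}s(C\cap B(f^{\infty}))$ is an a priori uncountable union of closed polytopes). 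So you correctly identified that the chain of inclusions plus (A) only produces $\overline{s\nabla^{\infty}f(P_n)}=\overline{s\dom f^*}=\overline{sB(f^{\infty})}$, and that removing the last closure is an additional statement.

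That said, your sketch for (B) does not close this hole: as you yourself observe, when the near-minimizers $x_k$ escape to infinity the Gram--Schmidt unitaries are not controlled by $u_k\to u_*$ (their modulus of continuity degrades with the condition number of $x_k$), and the alternative you mention -- re-running the sorted-eigenvalue estimate of (A) and re-selecting witnessing unitaries by compactness -- only reproduces $\lambda_0\in\overline{s\nabla^{\infty}f(P_n)}=\overline{sB(f^{\infty})}$, which is circular. So your proposal matches the paper on (1) and (2)(A), and on (2)(B) it honestly flags a subtle step that the paper's proof glosses over in a single clause rather than establishing.
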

We will see in Section~\ref{subsec:orbit} that $\overline{s\nabla^{\infty}f(P_n)}$
is viewed as an analogue of the moment polytope. 
\begin{proof} (1).
	From $\langle \nabla f(x), H \rangle_x  
	= \trace df(x) H$, we have $\nabla f(x) = x df(x) x$.
    Then $\| \nabla (f + b_p)(x) \|_x = \|x^{-1/2} (x df(x) x -  ub \diag \lambda b^\dagger u^\dagger )x^{-1/2}\|_{\rm F} = \|  k^{\dagger} (b^{\dagger} u^{\dagger} df(x) u b - \diag \lambda)k  \|_{\rm F} = \|  b^{\dagger} u^{\dagger} df(x) u b - \diag \lambda  \|_{\rm F}$. 
    
    (2). By (\ref{eqn:inclusion}), the inclusion $
    \overline{s\nabla^{\infty}f(P_n)} \subseteq \overline{s \dom f^*} \subseteq s B(f^{\infty})
    $ is clear. Take $p = \lambda \cdot [u] \in B(f^{\infty})$.
    By Theorem~\ref{thm:char_of_B(f^infty)}, there is a sequence $(x_i)$ in $P_n$ such that $\lim_{i \to \infty} \| \nabla (f+b_{p})(x_i)  \|_{x_i} = 0$. Suppose that $\nabla^{\infty}f(x_i) = \lambda_i \cdot [u_i]$.
    Via the decomposition $u^{\dagger} x_i^{1/2} = b_k k_i$ and 
    $u^{\dagger}_ix^{1/2}_i = b'_ih_i$, it holds  $x_i^{1/2}df(x)x_i^{1/2} =  h_i \diag \lambda_i h_i^{\dagger}$, and $s \nabla^{\infty}f(x_i) = \lambda_i$.
    By the above calculation,
    we have  $\| \nabla (f+b_{p})(x_i)  \|_{x_i} = \|h_i^{\dagger} \diag \lambda_i h_i - k_i^{\dagger} \diag \lambda k_i \|_{\rm F} \to 0$ $(i \to \infty)$. This implies that  $s \nabla^{\infty}f(x_i) = \lambda_i \to \lambda$, and $\lambda \in \overline{s \nabla^{\infty}f(P_n)}$.
\end{proof}

\begin{Lem}\label{lem:<>}
	For two points $p = \lambda \cdot {\cal U}$, $q = \mu \cdot {\cal V}$ in $CP^{\infty}_n$, it holds
	\begin{equation}\label{eqn:<>}
	\langle p,q \rangle = \sum_{1 \leq i,j \leq n} (\lambda_i - \lambda_{i+1}) 
	(\mu_{j}- \mu_{j+1}) \dim U_{i} \cap V_{j}.
	\end{equation}
\end{Lem}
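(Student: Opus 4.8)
The plan is to exploit the Euclidean-building structure of $CP_n^\infty$. Although $\langle\cdot,\cdot\rangle$ is only positively homogeneous and concave in general (Lemma~\ref{lem:<p,q>}), its restriction to any apartment is the ordinary Euclidean inner product (Section~\ref{subsec:symmetric_space}); so I would first place $p$ and $q$ inside a single apartment and compute bilinearly there, reducing the identity to the atomic case of two single subspaces.

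First I would invoke the building axiom recalled in Section~\ref{subsec:symmetric_space}: the chambers corresponding to the complete flags ${\cal U}$ and ${\cal V}$ lie in a common apartment $E$. Concretely this is the elementary fact that there is a basis $g_1,\dots,g_n$ of $\CC^n$ such that every $U_i$ and every $V_j$ is spanned by a subset of $\{g_1,\dots,g_n\}$; then $E=E(g)$ with $g=(g_1\mid\cdots\mid g_n)$, and under the identification $\RR^n\cong E(g)$, $\nu\mapsto ge^{\infty\diag\nu}g^\dagger$ (sending $0$ to $0$), the inner product $\langle\cdot,\cdot\rangle$ becomes the Euclidean one of $\RR^n$. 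In these coordinates a subspace $W=\langle g_k:k\in A\rangle$ is the indicator vector $\mathbf{1}_A$, since $1\cdot W=ge^{\infty\diag\mathbf{1}_A}g^\dagger$; and $\lambda\cdot{\cal U}$ is the vector $\lambda$ placed in the chamber of ${\cal U}$, since the $k$-th coordinate of $\sum_i(\lambda_i-\lambda_{i+1})\mathbf{1}_{A_i}$ telescopes to $\lambda_k$ (with $\lambda_{n+1}=0$), where $A_i$ denotes the index set of $U_i$. Hence the formal sum $p=\sum_i(\lambda_i-\lambda_{i+1})U_i$ of (\ref{eqn:formal_sum}) is an honest Euclidean sum inside $E$, and likewise $q=\sum_j(\mu_j-\mu_{j+1})V_j$; bilinearity of the Euclidean inner product on $E$ then gives
\[
\langle p,q\rangle=\sum_{1\leq i,j\leq n}(\lambda_i-\lambda_{i+1})(\mu_j-\mu_{j+1})\,\langle U_i,V_j\rangle .
\]

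It then remains to prove the atomic identity $\langle U_i,V_j\rangle=\dim U_i\cap V_j$, which is immediate in the apartment coordinates: $\langle U_i,V_j\rangle=\mathbf{1}_{A_i}\cdot\mathbf{1}_{B_j}=|A_i\cap B_j|$, where $A_i,B_j$ are the index sets of $U_i,V_j$, while $U_i\cap V_j=\langle g_k:k\in A_i\cap B_j\rangle$ by linear independence of $g_1,\dots,g_n$, so $\dim U_i\cap V_j=|A_i\cap B_j|$; substituting into the previous display yields (\ref{eqn:<>}). I expect the main obstacle to be the first step: verifying that any two complete flags of $\CC^n$ admit a common adapted line-decomposition (equivalently, that their chambers lie in a common apartment of the building) and that the $\nu$-coordinates above genuinely realize $\langle\cdot,\cdot\rangle$ as the Euclidean product. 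Both belong to the building picture of Section~\ref{subsec:symmetric_space}, but each deserves an explicit sentence; everything past that is bookkeeping with indicator vectors.
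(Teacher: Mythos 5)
Your proposal is correct and takes essentially the same route as the paper: both invoke the building axiom to find a common apartment (the paper phrases this as choosing $g \in GL(n,\CC)$ and a permutation $\sigma$ with ${\cal U}=[g]$, ${\cal V}=[g\sigma]$, which is your common adapted basis with $A_i=[i]$ and $B_j=\sigma[j]$), identify subspaces with $0{,}1$-indicator vectors, and use Euclidean bilinearity in the apartment plus $|A_i\cap B_j|=\dim U_i\cap V_j$. Your telescoping check that the formal sum realizes the vector $\lambda$ is a small explicit verification the paper leaves implicit, but otherwise the arguments coincide.
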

\begin{proof}
	It is well-known (see \cite[II. 10.80]{BrHa}) that
	there are $g \in GL(n,\CC)$ and a permutation matrix $\sigma$ such that 
	${\cal U} = [g]$ and ${\cal V} = [g \sigma]$; this is nothing but 
	an axiom of building. 
	In particular, both $p=ge^{\infty \diag \lambda}g^{\dagger}$ 
	and $q = ge^{\infty \sigma \diag \mu \sigma^{\top}}g^{\dagger}$ belong to the apartment $F(g)^{\infty}$.  They are regarded as points in $\RR^n$:
    $p = \lambda = \sum_{i} (\lambda_i - \lambda_{i+1}) {\bf 1}_{[i]}$
	and $q =  \sum_{i} (\mu_i - \mu_{i+1}) {\bf 1}_{\sigma [i]}$, 
	where ${\bf 1}_J$  denotes the $n$-dimensional 0,1-vector 
	taking $1$ only on indices in $J \subseteq [n]$.
    Then $\langle p,q \rangle = \sum_{i,j} (\lambda_i - \lambda_{i+1}) 
    (\mu_{j}- \mu_{j+1}) |[i] \cap \sigma [j]|$.
    Here $|[i] \cap \sigma [j]| = \dim U_i \cap V_j$.
\end{proof}
Kapovich, Leeb, and Millson~\cite[Lemma 6.1]{KLM2009JDG} gives
the corresponding formula of the angle  of 
two vector subspaces regarded as points in $(SL(n,\CC)/SU(n))^{\infty}$.

\paragraph{Connections to submodular functions.}

Let ${\cal S}(\CC^n)$ denote the family of all vector subspaces of $\CC^n$.
A function $\rho: {\cal S}(\CC^n) \to \RR \cup \{\infty\}$ is called {\em submodular} 
if it satisfies
\begin{equation}\label{eqn:submodular}
\rho(X) + \rho(Y) \geq \rho(X \cap Y) + \rho(X + Y) \quad (X,Y \in {\cal S}(\CC^n)).
\end{equation}
This extends the classical submodular functions, 
which are functions $\kappa$ on $2^{[n]}$ satisfying $\kappa(X)+ \kappa(Y) \geq \kappa(X \cap Y) + \kappa(X \cup Y)$; see e.g., \cite{FujiBook,MurotaDCA,Schrijver}. 
A submodular function $\rho$ with $\rho(\{0\}) = 0$ gives rise to a positively homogeneous function 
$\overline{\rho}: CP_n^{\infty} \to \RR \cup \{\infty\}$ via piecewise linear extension 
\begin{equation}
\overline{\rho}(p) := \sum_{i=1}^n (\lambda_i- \lambda_{i+1}) \rho(U_i) \quad (p= \lambda \cdot {\cal U} \in CP_n^{\infty}).
\end{equation} 
This is an analogy of the {\em Lov\'asz extension}~\cite{Lovasz83} in the classical setting.
So we call $\overline{\rho}$ the Lov\'asz extension of $\rho$.
This is equivalent to the one considered in \cite{HamadaHirai,HH16L-convex}, where $\lambda$ is restricted to $\lambda \in  [0,1]^n$.
\begin{Prop}[{\cite[Theorem 3.9]{HH16L-convex}}]
	For a positively homogeneous function $h: CP_n^{\infty} \to \RR\cup \{\infty\}$, the following are equivalent:
	\begin{itemize}
		\item[(i)] $h$ is a convex function that is affine on each Weyl chamber.
		\item[(ii)] $h$ is the Lov\'asz extension of a submodular function on ${\cal S}(\CC^n)$. 
	\end{itemize} 
\end{Prop}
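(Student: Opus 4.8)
The plan is to establish the two implications (i)$\Rightarrow$(ii) and (ii)$\Rightarrow$(i) separately, using the identification of $CP_n^{\infty}$ with the space of formal sums $\sum_i \alpha_i U_i$ over partial flags, and the fact that each apartment $E(g)$ is isometric to $\RR^n$ with $\langle,\rangle$ the standard inner product.

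\textbf{(ii)$\Rightarrow$(i).} Suppose $h = \overline{\rho}$ for a submodular $\rho$ with $\rho(\{0\})=0$. Affineness on each Weyl chamber is immediate: within a chamber $C$ determined by a fixed complete flag ${\cal U}$, the map $p = \lambda \cdot {\cal U} \mapsto \sum_i (\lambda_i - \lambda_{i+1})\rho(U_i)$ is a fixed linear functional of $\lambda$, so $\overline{\rho}|_C$ is linear in the coordinates $\lambda \in C \subseteq \RR^n$. For convexity on all of $CP_n^{\infty}$, by the definition of $d^{\infty}$ as a length metric it suffices to check convexity along geodesics, and by Lemma~\ref{lem:flat} I only need to verify the midpoint inequality $\overline{\rho}(p) + \overline{\rho}(q) \geq 2\overline{\rho}((p+q)/2)$ for $p,q \in CP_n^{\infty}$. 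Writing $p = \sum_i \alpha_i U_i$ and $q = \sum_j \beta_j V_j$ with all $\alpha_i,\beta_j > 0$, the midpoint $(p+q)/2$ lies in the apartment spanned by a common flag refining a maximal chain through both, where it is computed coordinatewise in $\RR^n$. The inequality then reduces, after expanding in the basis of indicator vectors, to checking that $\dim$ of the ``meet'' data behaves submodularly; concretely it follows from the submodular inequality (\ref{eqn:submodular}) applied to the subspaces $U_i$ and $V_j$ together with the telescoping identity $\rho(U) = \sum$ of successive increments. This is the standard Lov\'asz-extension argument adapted to the modular lattice ${\cal S}(\CC^n)$, and is essentially the content cited from \cite{HH16L-convex}.

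\textbf{(i)$\Rightarrow$(ii).} Suppose $h$ is positively homogeneous, convex on $CP_n^{\infty}$, and affine on each Weyl chamber. Since every single subspace $X \in {\cal S}(\CC^n)$ is a point of $CP_n^{\infty}$ (namely $1\cdot X$), define $\rho(X) := h(X)$, and $\rho(\{0\}) := h(0) = 0$ by positive homogeneity. The key point is that for a complete flag ${\cal U}$ and arranged $\lambda$, the point $p = \lambda \cdot {\cal U} = \sum_i (\lambda_i - \lambda_{i+1}) U_i$ lies in the chamber $C$ of ${\cal U}$; since $h|_C$ is affine and positively homogeneous hence linear, and since the points $U_1, \dots, U_n = \CC^n$ (with coordinates ${\bf 1}_{[1]}, \dots, {\bf 1}_{[n]}$) span the cone $C$, linearity forces $h(p) = \sum_i (\lambda_i - \lambda_{i+1}) h(U_i) = \sum_i (\lambda_i - \lambda_{i+1})\rho(U_i) = \overline{\rho}(p)$. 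Thus $h = \overline{\rho}$ as functions on $CP_n^{\infty}$. It remains to show $\rho$ is submodular. Given $X, Y \in {\cal S}(\CC^n)$, choose a complete flag ${\cal U}$ refining $X \cap Y \subset X \subset X+Y$ and a complete flag ${\cal V}$ refining $X \cap Y \subset Y \subset X+Y$ lying in the same apartment $E \simeq \RR^n$ (possible by the building axioms, as in the proof of Lemma~\ref{lem:<>}). In $E$, the four points $X, Y, X\cap Y, X+Y$ become indicator-type vectors with $\mathbf{1}_X + \mathbf{1}_Y = \mathbf{1}_{X\cap Y} + \mathbf{1}_{X+Y}$, so the midpoint of $X$ and $Y$ equals the midpoint of $X\cap Y$ and $X+Y$; applying convexity of $h$ at this common midpoint gives $\tfrac12(h(X)+h(Y)) \geq h(\text{midpt}) = \tfrac12 \cdot 2 h(\text{midpt})$, hmm—more precisely convexity along the geodesic $[X, Y]$ gives $\tfrac12 h(X) + \tfrac12 h(Y) \geq h\big(\tfrac12 X + \tfrac12 Y\big)$, and one needs the reverse-type inequality along $[X\cap Y, X+Y]$ too; combining via the identity of midpoints and the affineness (hence, in particular, linearity and the equality $h(\tfrac12(X\cap Y) + \tfrac12(X+Y)) = \tfrac12 h(X\cap Y) + \tfrac12 h(X+Y)$ which holds because these four points lie in the common apartment which is a union of chambers on each of which $h$ is affine) yields exactly (\ref{eqn:submodular}).

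\textbf{Main obstacle.} The delicate step is arranging that the four subspaces $X$, $Y$, $X\cap Y$, $X+Y$ (or appropriate complete flags through them) simultaneously lie in one apartment so that the convexity of $h$ can be read off as an honest linear/convex inequality in $\RR^n$. This is where the building axioms are essential, and some care is needed because $h$ is only affine on each \emph{chamber}, not on a whole apartment—so I must track which chambers the relevant geodesic segments pass through and patch the piecewise-linear data together. Everything else (positive homogeneity bookkeeping, the telescoping $\rho(U_i) = \sum$ of increments, reduction to midpoints via Lemma~\ref{lem:flat}) is routine.
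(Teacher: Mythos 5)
Your proposal is essentially correct, and the (i)$\Rightarrow$(ii) direction is carried out cleanly: placing $X,Y$ (and hence $X\cap Y$, $X+Y$) in a common apartment $E\simeq\RR^n$, noting $\mathbf{1}_S+\mathbf{1}_T=\mathbf{1}_{S\cap T}+\mathbf{1}_{S\cup T}$ so the two midpoints coincide, and then combining convexity along $[X,Y]$ with affineness along $[X\cap Y,X+Y]$ (which lies in a single chamber since $X\cap Y\subset X+Y$ is a flag) is exactly what is needed. However, you take a more hands-on route than the paper, and one half of your argument is not actually completed.

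The paper's proof is a reduction: restrict $h$ to a single apartment $E\simeq\RR^n$. There the Weyl-chamber decomposition is the braid arrangement, and the restriction of $h$ is a positively homogeneous function that is linear on each braid cone, i.e.\ exactly the object in Lov\'asz's classical theorem on $\RR^n$. Two observations transfer this to the global statement: (a) apartments are convex subcomplexes of $CP_n^{\infty}$ and any two points lie in a common apartment, so every geodesic lies in some apartment; thus $h$ is convex on $CP_n^{\infty}$ iff its restriction to each apartment is convex on $\RR^n$. (b) Any pair $X,Y\in{\cal S}(\CC^n)$ lies with $X\cap Y, X+Y$ in a common apartment, on whose sublattice $\cap,+$ become $\cap,\cup$ on $2^{[n]}$; so $\rho$ is submodular on ${\cal S}(\CC^n)$ iff each such restriction is submodular on $2^{[n]}$. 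Given (a) and (b), both directions are the classical Lov\'asz characterization. Your (i)$\Rightarrow$(ii) is, up to repackaging, the easy half of this with the classical proof unrolled; that is fine. But your (ii)$\Rightarrow$(i) never actually establishes the midpoint inequality: you write out $p,q$ in a common apartment and then assert ``this is the standard Lov\'asz-extension argument adapted to the modular lattice\ldots essentially the content cited from \cite{HH16L-convex},'' which is a pointer, not a proof. The cleanest fix is exactly the paper's: note the restriction of $\overline{\rho}$ to each apartment is the classical Lov\'asz extension of a classical submodular function (this uses that $\dim$ restricted to a frame is cardinality), hence convex by Lov\'asz's theorem, and then invoke (a) to upgrade to global convexity.

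Two smaller points. First, your invocation of Lemma~\ref{lem:flat} to ``reduce to the midpoint inequality'' is misplaced; that lemma concerns flat triangles with a vertex at $0$, and the reduction to midpoint convexity is simply continuity of $\overline{\rho}$ (piecewise linear on a polyhedral cone complex) together with the standard midpoint-convexity criterion. Second, in ``Writing $p=\sum_i\alpha_iU_i$ and $q=\sum_j\beta_jV_j$ with all $\alpha_i,\beta_j>0$'' you are excluding a negative coefficient on $\CC^n$, which the representation (\ref{eqn:formal_sum}) does permit; this does not affect the argument but should be stated correctly.
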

The proof reduces to the classical convexity characterization~\cite{Lovasz83} by restricting $h$ to each apartment.
\begin{Lem}\label{lem:submodular_<>}
		Let $q = \mu \cdot {\cal V} \in CP_n^{\infty}$. Then $p \mapsto - \langle q, p \rangle$ is the Lov\'asz extension of submodular function 
		\begin{equation}\label{eqn:submo_dim_U^X}
		X \mapsto - \langle q,X\rangle = - \sum_{i=1}^n (\mu_i- \mu_{i+1}) \dim V_i \cap X.
		\end{equation}
\end{Lem}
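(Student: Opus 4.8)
The statement bundles two claims: that the set function $\rho_q\colon X \mapsto -\langle q,X\rangle = -\sum_{i=1}^n(\mu_i-\mu_{i+1})\dim(V_i\cap X)$ is submodular on ${\cal S}(\CC^n)$ (with $\rho_q(\{0\})=0$, so that its Lov\'asz extension is defined), and that $\overline{\rho_q}(p)=-\langle q,p\rangle$ for every $p\in CP_n^\infty$. Note that the displayed formula for $\langle q,X\rangle$ in (\ref{eqn:submo_dim_U^X}) is itself Lemma~\ref{lem:<>} applied to the pair $(1\cdot X,\,q)$, so it is already available and need not be reproved.

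For submodularity of $\rho_q$, the plan is to exhibit $\langle q,\cdot\rangle$ as a sum of manifestly supermodular pieces. Since $\mu$ is arranged and $\mu_{n+1}:=0$, we have $\mu_i-\mu_{i+1}\ge 0$ for $i\le n-1$, while $V_n=\CC^n$ gives $\dim(V_n\cap X)=\dim X$; thus $\langle q,X\rangle=\sum_{i=1}^{n-1}(\mu_i-\mu_{i+1})\dim(V_i\cap X)+\mu_n\dim X$. The one sublemma needed is that for a fixed subspace $V$ the function $X\mapsto\dim(V\cap X)$ is supermodular: apply the modular law $\dim A+\dim B=\dim(A\cap B)+\dim(A+B)$ with $A=V\cap X$, $B=V\cap Y$, and use $(V\cap X)\cap(V\cap Y)=V\cap X\cap Y$ together with $(V\cap X)+(V\cap Y)\subseteq V\cap(X+Y)$ to obtain $\dim(V\cap X)+\dim(V\cap Y)\le\dim(V\cap X\cap Y)+\dim(V\cap(X+Y))$. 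Hence $\langle q,\cdot\rangle$ is a nonnegative combination of supermodular functions plus the modular function $X\mapsto\dim X$, so it is supermodular, and $\rho_q=-\langle q,\cdot\rangle$ is submodular.

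For the identification of the Lov\'asz extension, I would unwind the definition against Lemma~\ref{lem:<>}. Writing $p=\lambda\cdot{\cal U}$, we have $\overline{\rho_q}(p)=\sum_{i=1}^n(\lambda_i-\lambda_{i+1})\rho_q(U_i)=-\sum_{i=1}^n(\lambda_i-\lambda_{i+1})\langle q,U_i\rangle$, and $\langle q,U_i\rangle=\sum_{j=1}^n(\mu_j-\mu_{j+1})\dim(U_i\cap V_j)$ by (\ref{eqn:submo_dim_U^X}); substituting and comparing with Lemma~\ref{lem:<>} gives $\overline{\rho_q}(p)=-\sum_{i,j}(\lambda_i-\lambda_{i+1})(\mu_j-\mu_{j+1})\dim(U_i\cap V_j)=-\langle p,q\rangle=-\langle q,p\rangle$. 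An alternative, computation-free route: $p\mapsto-\langle q,p\rangle$ is convex by Lemma~\ref{lem:<p,q>} and is linear on each Weyl chamber $C$, because the building axiom places $C$ and $q$ in a common apartment, on which $\langle,\rangle$ is the Euclidean inner product; the preceding proposition characterizing Lov\'asz extensions then identifies it as the Lov\'asz extension of some submodular function, which must be $\rho_q$ by evaluating on single subspaces $1\cdot X$.

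The only real point to watch is the direction of the inequality in the sublemma: the naive guess that $X\mapsto\dim(V\cap X)$ is submodular is false (two distinct lines in $\CC^2$ already violate it), and one must also keep track of the sign of the unconstrained coefficient $\mu_n$ — which is harmless here precisely because the term it multiplies, $X\mapsto\dim X$, is modular. Everything else reduces to substitution into Lemma~\ref{lem:<>}.
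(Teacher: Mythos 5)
Your argument is correct, and the submodularity half coincides exactly with the paper's: supermodularity of $X\mapsto\dim(V\cap X)$ via the modular identity applied to $V\cap X$, $V\cap Y$ together with $(V\cap X)+(V\cap Y)\subseteq V\cap(X+Y)$, plus the observation that the $V_n=\CC^n$ summand is modular so the sign of $\mu_n$ is irrelevant. You differ slightly on the identification of $p\mapsto-\langle q,p\rangle$ as the Lov\'asz extension of (\ref{eqn:submo_dim_U^X}): your primary route is a direct substitution of the definition of the Lov\'asz extension into Lemma~\ref{lem:<>}, which is elementary and self-contained, whereas the paper deduces convexity of $p\mapsto-\langle q,p\rangle$ from Lemma~\ref{lem:b^infty} (together with Theorem~\ref{thm:f^infty}), notes affineness on any apartment through $q$ and hence on every Weyl chamber, and then appeals to the preceding characterization Proposition to conclude it is a Lov\'asz extension. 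You flag exactly this as your ``computation-free'' alternative. Either route is sound; yours needs less machinery and avoids the paper's slightly compressed ``therefore it suffices to show'' transition, while the paper's version illustrates the interplay with the convex-analysis framework developed earlier in the section.
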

\begin{proof}
	By Lemma~\ref{lem:b^infty}, $p \mapsto - \langle q, p \rangle$ is convex.
	Also it is an affine function on any apartment containing $q$.
	Necessarily, it is affine on every Weyl chamber. 
	Therefore, if suffices to show that (\ref{eqn:submo_dim_U^X}) is submodular. 
	We first show that $X \mapsto - \dim V \cap X$ is submodular.
	This follows from $\dim V \cap X + \dim V \cap Y = \dim V \cap X \cap Y+ \dim ((V \cap X)+ (V \cap Y))$ and $V \cap X + V \cap Y \subseteq V \cap (X + Y)$, 
	we have submodularity of each summand in (\ref{eqn:submo_dim_U^X}) with $i\neq n$.
	For $i=n$ $(V_n=\CC)$, 
	the equality $\dim X \cap V_n + \dim X \cap V_n = \dim (X \cap Y) \cap V_n + \dim (X \cup Y) \cap V_n$ 
	holds, and 
	hence (\ref{eqn:submo_dim_U^X}) is submodular (with taking zero on $\{0\}$). 
\end{proof}

For a submodular function $\rho: {\cal S}(\CC^n) \to \RR \cup \{\infty\}$ with $\rho(\{0\}) =0$, 
the subset $B(\overline{\rho})$ is described by fewer inequalities indexed by vector subspaces: It equals 
the set of points $p \in CP_n^{\infty}$ satisfying
\begin{equation}\label{eqn:base_polyhedron}
\langle X, p \rangle \leq \rho(X) \  (X \in {\cal S}(\CC^n)), \quad  \langle \CC^n,p \rangle = \rho(\CC^n).
\end{equation}
Indeed, if $p$ satisfies (\ref{eqn:base_polyhedron}), then for 
$q = \mu \cdot {\cal V} \in CP_n^{\infty}$, 
it holds $\langle q, p \rangle = \sum_{i} (\mu_i- \mu_{i+1}) \langle V_i,p \rangle 
\leq \sum_{i} (\mu_i- \mu_{i+1}) h(V_i) = h(q)$.  

Then $B(\overline{\rho})$ is called the {\em base polyhedron} of $\rho$, 
which is clearly an analogue of the classical one.
A convex function $f$ on $P_n$ is said to 
be {\em asymptotically submodular}
if $f^{\infty}$ is the Lov\'asz extension of a submodular function.
In this case, the condition (c) in Theorem~\ref{thm:main} 
can be replaced by (\ref{eqn:base_polyhedron}) with $\rho = f^{\infty}$.
Accordingly, the convex optimization problem (\ref{eqn:optimization}) 
becomes ``discrete" convex optimization (submodular function minimization) over the lattice of vector subspaces: 
\begin{equation}
\mbox{inf.} \quad f^{\infty}(X) - \langle p, X \rangle \quad \mbox{s.t.} \quad X \in {\cal S}(\CC^n).
\end{equation}

\section{Scaling problems}\label{sec:scaling}

In this section, we explain how the results in the previous sections 
are applied to operator scaling and its generalizations.
In our argument, the following convex function on $\RR^n$
plays important roles. This function appears in proving (semi)stability results 
(Kempf-Ness theorem, Hilbert-Mumford criterion) 
in invariant theory; see \cite{KempfNess,Wallach_GIT}.
\begin{Lem}\label{lem:log_sum_exp}
	For $a_i > 0$ and $w_i \in \RR^n$ ($i=1,2,\ldots,m$), 
	define $f: \RR^n \to \RR$ by
	\begin{equation}
	f(x) := \log \sum_{i=1}^m a_i e^{\langle w_i, x \rangle}\quad (x \in \RR^n).
	\end{equation}
	Then $f$ is convex, where: 
	\begin{eqnarray*}
	&&	f^{\infty}(p) = \max_{i=1,2,\ldots,m} \langle w_i, p \rangle \quad (p \in \RR^n),\\
	&& \dom f^* = B(f^{\infty}) = \mbox{the convex hull of $w_1,w_2,\ldots,w_m$}.
	\end{eqnarray*}
\end{Lem}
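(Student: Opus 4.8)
The plan is to establish the three assertions in order: convexity of $f$, the formula for $f^{\infty}$, and finally the identification of $\dom f^*$ and $B(f^{\infty})$ with the convex hull $\Conv\{w_1,\dots,w_m\}$. Convexity of $f$ is the classical log-sum-exp convexity: writing $f(x)=\log\sum_i a_i e^{\langle w_i,x\rangle}$, one checks that the Hessian is the covariance matrix of a probability distribution (the Gibbs weights $a_i e^{\langle w_i,x\rangle}/\sum_j a_j e^{\langle w_j,x\rangle}$ evaluated against the vectors $w_i$), hence positive semidefinite. Alternatively one cites Hölder's inequality directly; I would just invoke the standard fact.

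For $f^{\infty}$, I would compute the Euclidean recession function via \eqref{eqn:Euclidean_recession}: fix $x=0$ and evaluate $\lim_{t\to\infty} f(tp)/t = \lim_{t\to\infty} \frac{1}{t}\log\sum_i a_i e^{t\langle w_i,p\rangle}$. Let $\mu:=\max_i \langle w_i,p\rangle$ and let $I$ be the set of indices attaining it. Then $\sum_i a_i e^{t\langle w_i,p\rangle} = e^{t\mu}\bigl(\sum_{i\in I} a_i + \sum_{i\notin I} a_i e^{t(\langle w_i,p\rangle-\mu)}\bigr)$; the second sum tends to $0$ and the first is a positive constant, so $\frac{1}{t}\log(\cdots) \to \mu$. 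This gives $f^{\infty}(p)=\max_i\langle w_i,p\rangle$. Note that here $X=\RR^n$, so by Example \ref{ex:Euclidean_X^infty} and Example \ref{ex:Euclidean_Busemann} the asymptotic recession function $f^{\infty}$ on $CX^{\infty}\simeq\RR^n$ and the asymptotic conjugate $f^*$ on $CX^{\infty}\simeq\RR^n$ agree with the ordinary Euclidean ones, so all subsequent reasoning can be carried out in ordinary Euclidean convex analysis.

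For the last identity, I would argue $B(f^{\infty})=\Conv\{w_1,\dots,w_m\}$ first: by definition $B(f^{\infty})=\{p\mid \langle u,p\rangle\le \max_i\langle w_i,u\rangle\ (u\in\RR^n)\}$, and since $u\mapsto\max_i\langle w_i,u\rangle$ is precisely the support function of $\Conv\{w_1,\dots,w_m\}$, the set $B(f^{\infty})$ is exactly that polytope (a closed convex set equals the intersection of the halfspaces defined by its support function). It remains to show $\dom f^*=\Conv\{w_1,\dots,w_m\}$. The inclusion $\dom f^*\subseteq B(f^{\infty})$ is Lemma \ref{lem:unbounded}, so I only need $\Conv\{w_1,\dots,w_m\}\subseteq\dom f^*$. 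For this, take $p=\sum_i \theta_i w_i$ with $\theta_i\ge 0$, $\sum_i\theta_i=1$; I must bound $f^*(p)=\sup_x\bigl(\langle p,x\rangle - \log\sum_i a_i e^{\langle w_i,x\rangle}\bigr)$ from above. Write $\langle p,x\rangle = \sum_i\theta_i\langle w_i,x\rangle$ and apply the concavity of $\log$ (Jensen) or equivalently the weighted AM-GM inequality: $\sum_i\theta_i\langle w_i,x\rangle = \sum_i\theta_i\log\bigl(a_i e^{\langle w_i,x\rangle}\bigr) - \sum_i\theta_i\log a_i \le \log\bigl(\sum_i\theta_i a_i e^{\langle w_i,x\rangle}\bigr) - \sum_i\theta_i\log a_i \le \log\bigl(\sum_i a_i e^{\langle w_i,x\rangle}\bigr) + \max_i(-\log a_i) \cdot(\text{something})$ — more cleanly, $\sum_i\theta_i\log(a_ie^{\langle w_i,x\rangle})\le \log\sum_i a_i e^{\langle w_i,x\rangle}$ since $\theta$ is a probability vector and each term is dominated, giving $\langle p,x\rangle - \log\sum_i a_ie^{\langle w_i,x\rangle} \le -\sum_i\theta_i\log a_i < \infty$ uniformly in $x$. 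Hence $f^*(p)\le -\sum_i\theta_i\log a_i<\infty$ and $p\in\dom f^*$. Combining, $\dom f^*=B(f^{\infty})=\Conv\{w_1,\dots,w_m\}$.

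The only mild subtlety — the ``hard part,'' though it is not really hard — is getting the Jensen-type estimate in the last step clean: one wants the bound $\sum_i \theta_i \log(a_i e^{\langle w_i,x\rangle}) \le \log\bigl(\sum_i a_i e^{\langle w_i,x\rangle}\bigr)$ to hold for all $x$, which is exactly concavity of $\log$ applied to the convex combination $\sum_i\theta_i$ of the numbers $a_ie^{\langle w_i,x\rangle}$ rescaled; I would state it as weighted AM–GM, $\prod_i y_i^{\theta_i}\le\sum_i\theta_i y_i\le\sum_i y_i$ for $y_i>0$, with $y_i=a_ie^{\langle w_i,x\rangle}$, which immediately yields the uniform bound. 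Everything else is routine.
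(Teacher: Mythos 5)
Your proof is correct and follows essentially the same route as the paper's: standard log-sum-exp convexity, the direct limit computation for $f^{\infty}$, the support-function identification of $B(f^\infty)$ with the convex hull, and a Jensen/weighted-AM--GM bound to show $\Conv\{w_1,\dots,w_m\}\subseteq\dom f^*$ (the paper packages the handling of the coefficients $a_i$ by choosing $c>0$ with $c\theta_i\le a_i$ and bounding $f(x)\ge\log c+\langle p,x\rangle$, whereas you subtract $\sum_i\theta_i\log a_i$; these are interchangeable).
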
	
	\begin{proof}
		The convexity of $f$ is well-known. Computing $f^{\infty}$ 
		is also a little exercise \cite[p. 68]{Rockafellar}:
	Letting $v^*:= \max_{i} \langle w_i, p \rangle$, we have 
	$f^{\infty}(p) = 
	\lim_{t \to \infty} \frac{1}{t} \log \sum_{i=1}^m a_i e^{\langle w_i, p t \rangle} 
	= v^* + \lim_{t \to \infty} \frac{1}{t} \log \sum_{i=1}^m a_i e^{t (\langle w_i, p \rangle- v^*)} = v^*$.
	
	Thus $B(f^{\infty})$ equals the convex hull of $w_i$. We show that 
	it equals $\dom f^*$. Suppose $p = \sum_{i} \lambda_i w_i \in B(f^{\infty})$ with
	$\lambda_i \geq 0$ and $\sum_i \lambda_i = 1$.
	Then, for small $c > 0$, it holds $c \lambda_i \leq a_i$ for all $i$. 
	From $\log \sum_{i=1}^m a_i e^{\langle w_i, x \rangle} \geq  \log \sum_{i=1}^m c \lambda_i e^{\langle w_i, x \rangle} \geq \log c +  \log e^{\sum_{i} \lambda_i \langle w_i, x \rangle} = \log c + \langle p,x \rangle$, we have $f(x) - \langle p,x \rangle \geq \log c$ for all $x \in \RR^n$. Hence $B(f^{\infty}) \subseteq \dom f^*$; the reverse inclusion  is generally true.
\end{proof}

\subsection{Operator scaling with specified marginals}

Let $A = (A_1,A_2,\ldots,A_m)$ be an $m$ tuple of nonzero complex $n \times n$ matrices.  
Let $\lambda,\mu \in \RR^n$ be nonnegative arranged vectors with 
the same sum $\sum_{i}\lambda_i = \sum_{i}\mu_i = n$ (say).
The {\em operator scaling problem with marginal $\lambda,\mu$}, introduced by Franks~\cite{Franks18}, is 
to find a pair of nonsingular matrices $g,h \in GL(n, \CC)$ such that
\begin{equation}\label{eqn:scaling}
\sum_{k=1}^m g^{\dagger}A_k h h^{\dagger} A_k^\dagger g = \diag \lambda, \quad 
\sum_{k=1}^m h^{\dagger}A_k^{\dagger} gg^{\dagger} A_k h = \diag \mu.
\end{equation}
If such $g,h$ exist, then $A$ is said to be {\em $(\lambda, \mu)$-scalable}.
If for every $\epsilon > 0$ there are $g,h$ such that
\begin{equation}\label{eqn:approximatescalability}
\left\| \sum_{k=1}^m g^{\dagger}A_k h h^{\dagger} A_k^\dagger g - \diag \lambda \right\|_{\rm F} < \epsilon, \quad \left\| \sum_{k=1}^m h^{\dagger}A_k^{\dagger} gg^{\dagger} A_k h - \diag \mu \right\|_{\rm F} < \epsilon,
\end{equation}
then $A$ said to be {\em approximately $(\lambda, \mu)$-scalable}.
If $\lambda =\mu = {\bf 1}$, it is
the original operator scaling problem by Gurvits~\cite{Gurvits04}.
	In this case, the approximate scalability is equivalent to 
the {\em noncommutative nonsingularity} of symbolic matrix $\sum_{k}A_k x_k$ \cite{FortinReutenauer04,IQS15a}.  
See \cite{Franks18,FranksSomaGoemans2022,GGOW} for further applications of operator scaling.
 
For simplicity, we assume that at least one of 
$\bigcap_{k=1}^m \ker A_k$ and $\bigcap_{k=1}^m \ker A_k^{\dagger}$
is trivial $\{0\}$. 
Otherwise, by coordinate change, we can make $A$ 
satisfy $(A_{k})_{in} = (A_{k})_{nj} = 0$ for $i,j,k$.
Then the problem reduces 
to the upper left $(n-1) \times (n-1)$ submatrices.

The operator scaling problem is viewed as the problem 
of finding a point $(x,y)$ in $P_n \times P_n$ 
at which the following convex function $f_{A}: P_n \times P_n \to \RR$ has a specified asymptotic gradient:
\begin{equation}
f_{A}(x,y) := n \log \sum_{k=1}^m \trace  x A_k y A_k^{\dagger}.
\end{equation}
This function is known to be (geodesically) convex. 
\begin{Lem}[{See e.g., \cite{AGLOW}}]
	$f_A$ is convex.	
\end{Lem}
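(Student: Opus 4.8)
The plan is to restrict $f_A$ to an arbitrary geodesic of $P_n\times P_n$ and recognize the result as the one-variable restriction of the convex log-sum-exp function already treated in Lemma~\ref{lem:log_sum_exp}. Since $P_n\times P_n$ carries the $\ell^2$-product metric, every geodesic is of the form $t\mapsto(c_1(t),c_2(t))$ with $c_1,c_2$ constant-speed geodesics of $P_n$, and as recalled in Section~\ref{subsec:P_n} a geodesic of $P_n$ may be written $t\mapsto g e^{t\diag\lambda}g^{\dagger}$ for some $g\in GL(n,\CC)$ and $\lambda\in\RR^n$ (diagonalize the Hermitian direction vector by a unitary matrix and absorb it into $g$). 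Hence it suffices to prove that for all $g,h\in GL(n,\CC)$ and $\lambda,\mu\in\RR^n$ the function $t\mapsto f_A\bigl(g e^{t\diag\lambda}g^{\dagger},\,h e^{t\diag\mu}h^{\dagger}\bigr)$ is convex on $\RR$.

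First I would carry out the trace computation. Writing $B_k:=g^{\dagger}A_k h$ and using cyclicity of the trace,
\begin{equation*}
\trace\bigl(g e^{t\diag\lambda}g^{\dagger}A_k h e^{t\diag\mu}h^{\dagger}A_k^{\dagger}\bigr)
=\trace\bigl(e^{t\diag\lambda}B_k e^{t\diag\mu}B_k^{\dagger}\bigr)
=\sum_{i,j=1}^{n}|(B_k)_{ij}|^2\,e^{t(\lambda_i+\mu_j)},
\end{equation*}
so that
\begin{equation*}
f_A\bigl(g e^{t\diag\lambda}g^{\dagger},\,h e^{t\diag\mu}h^{\dagger}\bigr)
= n\log\sum_{k=1}^{m}\sum_{i,j=1}^{n}|(B_k)_{ij}|^2\,e^{t(\lambda_i+\mu_j)}.
\end{equation*}
The coefficients $|(B_k)_{ij}|^2$ are nonnegative and not all zero, since each $A_k$ is nonzero and $g,h$ are invertible, so $B_k\neq 0$; after discarding the vanishing terms the right-hand side has the form $n\log\sum_{\alpha}c_\alpha e^{t w_\alpha}$ with $c_\alpha>0$ and $w_\alpha=\lambda_i+\mu_j$. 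By Lemma~\ref{lem:log_sum_exp} applied on $\RR^2$ (with data $a_{(k,i,j)}=|(B_k)_{ij}|^2$ and $w_{(k,i,j)}=(\lambda_i,\mu_j)$), the function $(s,t)\mapsto n\log\sum_{k,i,j}|(B_k)_{ij}|^2 e^{\lambda_i s+\mu_j t}$ is convex on $\RR^2$; our function is its restriction to the diagonal line $s=t$, and therefore convex. This establishes the geodesic convexity of $f_A$.

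I do not expect a genuine obstacle here: the only points requiring a word of care are that geodesics of the product split as geodesics of the factors (and that $f_A$ is finite and real-valued, which holds because $\trace x A_k y A_k^{\dagger}=\|x^{1/2}A_k y^{1/2}\|_{\rm F}^2\geq 0$ with at least one summand positive), and that the coefficients $|(B_k)_{ij}|^2$ are not all zero so that Lemma~\ref{lem:log_sum_exp} applies; both are immediate.
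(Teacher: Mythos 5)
Your proof is correct and takes essentially the same route as the paper: the paper restricts $f_A$ to maximal flats $F(g)\times F(h)$, computes the exact same expression $n\log\sum_{i,j}a_{ij}(g,h)e^{\alpha_i+\beta_j}$ with $a_{ij}(g,h)=\sum_k|(g^\dagger A_k h)_{ij}|^2$, and invokes Lemma~\ref{lem:log_sum_exp}. Restricting to a one-parameter geodesic, as you do, versus restricting to a two-parameter maximal flat is an immaterial difference (any geodesic of $P_n\times P_n$ lies inside some $F(g)\times F(h)$), and your trace calculation and appeal to the log-sum-exp lemma match the paper's.
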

Indeed, 
	on a maximal flat $F = F(g) \times F(h) = \{g e^{\diag \alpha} g^{\dagger} \}_{\alpha \in \RR^n} \times \{h e^{\diag \beta} h^{\dagger} \}_{\beta \in \RR^n}$, $f_{A}$ is written as
	\begin{equation}\label{eqn:on_flat}
		f_{A}(ge^{\diag \alpha}g^{\dagger},h e^{\diag \beta} h^{\dagger}) 
		=  n \log \sum_{1 \leq i,j \leq n} a_{ij}(g,h) e^{\alpha_i+\beta_j} \quad 
		(\alpha, \beta \in \RR^n),
	\end{equation}
	where $a_{ij}(g,h) := \sum_{k=1}^m |(g^{\dagger} A_k h)_{ij}|^2$.
	By Lemma~\ref{lem:log_sum_exp},
	$f_{A}$ is convex in every flat. 

In addition to the $(\lambda, \mu)$-scalability, 
we consider a sharper scalability concept.
Let ${\cal U}, {\cal V}$ be complete flags, 
and consider points $(\lambda \cdot {\cal U}, \lambda \cdot {\cal V})$ in the boundary 
$C(P_n \times P_n)^{\infty} = CP_n^{\infty} \times CP_n^{\infty}$. 
We say that $A$ is {\em $(\lambda \cdot {\cal U}, \mu \cdot {\cal V})$-scalable} 
if there are $g,h \in GL(n, \CC)$ such that 
$([g],[h]) = ({\cal U}, {\cal V})$ and (\ref{eqn:scaling}) hold.
Accordingly, 
we say that $A$ is {\em approximately $(\lambda \cdot {\cal U}, \mu \cdot {\cal V})$-scalable}
if for every $\epsilon > 0$ there are $g,h \in GL(n,\CC)$ 
such that $([g],[h]) = ({\cal U}, {\cal V})$ and (\ref{eqn:approximatescalability}) hold.
By definition, 
$A$ is (approximately) $(\lambda, \mu)$-scalable if
and only if $A$ is (approximately) $(\lambda \cdot {\cal U}, \mu \cdot {\cal V})$-scalable
for some flags ${\cal U},{\cal V}$.

When ${\cal U}$ and ${\cal V}$ are standard flag ${\cal E} := [I]$,
scaling matrices $g,h$ are upper triangular, and
hence the (approximate) $(\lambda \cdot {\cal E}, \mu \cdot {\cal E})$-scalability is equivalent to (approximate) $(\lambda, \mu)$-scalability by 
triangular matrices in the sense of Franks~\cite{Franks18}.
Note that the $(\lambda \cdot {\cal U}, \mu \cdot {\cal V})$-scalability reduces to 
the triangular scalability, since  
$A$ is $(\lambda \cdot {\cal U}, \mu \cdot {\cal V})$-scalable if 
and only if $g^{\dagger} A h$ is $(\lambda \cdot {\cal E}, \mu \cdot {\cal E})$-scalable 
for $([g],[h]) = ({\cal U}, {\cal V})$.

The $(\lambda \cdot {\cal U}, \mu \cdot {\cal V})$-scalability is rephrased by
using asymptotic gradient $\nabla^{\infty}$ and Busemann functions.
\begin{Prop}\label{prop:scalability}
	\begin{itemize}
		\item[(1)]  $A$ is $(\lambda \cdot {\cal U},\mu \cdot {\cal V})$-scalable
		if and only if there are points $x,y$ in $P_n$ such that $\nabla^{\infty}f_A(x,y) = (\lambda \cdot {\cal U},\mu \cdot {\cal V})$.
		\item[(2)]   $A$ is approximately $(\lambda \cdot {\cal U},\mu \cdot {\cal V})$-scalable
		if and only if  $\inf_{x,y \in P_n} \| \nabla (f_A + b_{\lambda \cdot {\cal U}, \mu \cdot {\cal V}})(x,y)\|_{x,y} = 0$.
	\end{itemize}
\end{Prop}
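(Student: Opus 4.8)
The plan is to reduce both parts to a single explicit formula for the gradient norm $\|\nabla(f_A+b_p)(x,y)\|_{x,y}$, which recasts it as the Frobenius distance of the operator-Sinkhorn normal form of $A$ from $(\diag\lambda,\diag\mu)$. First I would record the Euclidean gradient of $f_A$: with $D:=\sum_{k}\trace xA_kyA_k^{\dagger}>0$, the differential of $f_A$ at $(x,y)$ is the pair of cotangent vectors $(df_A)_x=\tfrac{n}{D}\sum_kA_kyA_k^{\dagger}$ and $(df_A)_y=\tfrac{n}{D}\sum_kA_k^{\dagger}xA_k$ in $S_n\times S_n$. Fix $p=(\lambda\cdot{\cal U},\mu\cdot{\cal V})$ and choose unitary matrices $u,v$ with $[u]={\cal U}$, $[v]={\cal V}$; let $u^{\dagger}x^{1/2}=\beta k_1$ and $v^{\dagger}y^{1/2}=\gamma k_2$ be the Gram--Schmidt decompositions, and put $g:=u\beta$, $h:=v\gamma$, so that $[g]={\cal U}$, $[h]={\cal V}$ and $gg^{\dagger}=x$, $hh^{\dagger}=y$. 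Since the Busemann function splits on a product, $b_p(x,y)=b_{\lambda\cdot{\cal U}}(x)+b_{\mu\cdot{\cal V}}(y)$, applying the gradient formula of Proposition~\ref{prop:nabla^inf_P_n}(1) in each factor and substituting $(df_A)_x,(df_A)_y$ yields, with $B_k:=g^{\dagger}A_kh$ and $D=\sum_k\|B_k\|_{\rm F}^2$,
\[
\|\nabla(f_A+b_p)(x,y)\|_{x,y}^2 = \Bigl\|\, \tfrac{n}{D}\sum_{k}B_kB_k^{\dagger}-\diag\lambda \,\Bigr\|_{\rm F}^2 + \Bigl\|\, \tfrac{n}{D}\sum_{k}B_k^{\dagger}B_k-\diag\mu \,\Bigr\|_{\rm F}^2 .
\]
Changing the choices of $u,v$ replaces $g,h$ by $g\rho$, $h\sigma$ with $\rho,\sigma$ diagonal unitary, which leaves the right-hand side unchanged because $\diag\lambda,\diag\mu$ commute with diagonal unitaries; hence the identity holds for \emph{every} $g,h\in GL(n,\CC)$ with $gg^{\dagger}=x$, $hh^{\dagger}=y$, $[g]={\cal U}$, $[h]={\cal V}$.

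For part (1): since $f_A+b_p$ is convex, $\nabla^{\infty}f_A(x,y)=p$ is equivalent to $(x,y)$ minimizing $f_A+b_p$ (Lemma~\ref{lem:f(x)+f*(p)=-b_p(x)}), i.e.\ to $\nabla(f_A+b_p)(x,y)=0$, i.e.\ by the displayed identity to $\tfrac{n}{D}\sum_kB_kB_k^{\dagger}=\diag\lambda$ and $\tfrac{n}{D}\sum_kB_k^{\dagger}B_k=\diag\mu$. If $A$ is $(\lambda\cdot{\cal U},\mu\cdot{\cal V})$-scalable through $g,h$, set $x:=gg^{\dagger}$, $y:=hh^{\dagger}$; taking traces in (\ref{eqn:scaling}) gives $D=\trace(\diag\lambda)=n$, so these two equations coincide with (\ref{eqn:scaling}) and $\nabla^{\infty}f_A(x,y)=p$. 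Conversely, from $\nabla^{\infty}f_A(x,y)=p$ we obtain $\tfrac{n}{D}\sum_kB_kB_k^{\dagger}=\diag\lambda$ and $\tfrac{n}{D}\sum_kB_k^{\dagger}B_k=\diag\mu$; replacing $g$ by $\sqrt{n/D}\,g$ preserves $[g]={\cal U}$ and multiplies each $B_k$ by $\sqrt{n/D}$, turning these into (\ref{eqn:scaling}), so $A$ is $(\lambda\cdot{\cal U},\mu\cdot{\cal V})$-scalable.

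For part (2): by the displayed identity, $\|\nabla(f_A+b_p)(x,y)\|_{x,y}<\epsilon$ holds precisely when the pair $g':=\sqrt{n/D}\,g$, $h$ (which still satisfies $[g']={\cal U}$, $[h]={\cal V}$, since $B_k':=(g')^{\dagger}A_kh=\sqrt{n/D}\,B_k$ and $\sum_kB_k'(B_k')^{\dagger}=\tfrac{n}{D}\sum_kB_kB_k^{\dagger}$) is an $\epsilon$-approximate $(\lambda\cdot{\cal U},\mu\cdot{\cal V})$-scaling; running $(x,y)$ over a sequence along which the gradient norm tends to $0$ gives ``$\inf=0\ \Rightarrow$ approximately scalable''. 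For the converse, take, for a given $\epsilon>0$, $\epsilon$-approximate scaling matrices $g,h$ and set $x:=gg^{\dagger}$, $y:=hh^{\dagger}$. Then $|D-n|=|\trace(\sum_kB_kB_k^{\dagger}-\diag\lambda)|\le\sqrt{n}\,\epsilon$, so $n/D=1+O(\epsilon)$, and together with $\|\sum_kB_kB_k^{\dagger}\|_{\rm F}\le\|\lambda\|_2+\epsilon$ the displayed identity bounds $\|\nabla(f_A+b_p)(x,y)\|_{x,y}$ by $O(\epsilon)$; letting $\epsilon\to0$ gives $\inf_{x,y}\|\nabla(f_A+b_p)(x,y)\|_{x,y}=0$.

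Conceptually the statement is just Lemma~\ref{lem:f(x)+f*(p)=-b_p(x)} fed through the concrete $P_n$-formulas of Section~\ref{subsec:P_n}; the only real work is the bookkeeping behind the displayed identity---tracking the Gram--Schmidt factors $\beta,\gamma$, checking that the unitary parts $k_1,k_2$ and the diagonal-phase non-uniqueness of $g,h$ drop out, and (in contrast to matrix scaling, where the normalization is fixed) carrying the scalar $n/D$ correctly through the rescaling $g\mapsto\sqrt{n/D}\,g$.
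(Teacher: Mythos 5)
Your proof is correct and follows essentially the same route as the paper's: you derive the same key identity $\|\nabla(f_A+b_p)(x,y)\|_{x,y}^2 = \|\tfrac{n}{D}\sum_k B_kB_k^{\dagger}-\diag\lambda\|_{\rm F}^2 + \|\tfrac{n}{D}\sum_k B_k^{\dagger}B_k-\diag\mu\|_{\rm F}^2$ (the paper's $C_{x,y}$ is your $n/D$ and its $b^\dagger u^\dagger A_k vc$ is your $B_k$), using the differential of $f_A$ together with Proposition~\ref{prop:nabla^inf_P_n}(1), and then identify the rescaled $g,h$ as the scaling matrices. The paper compresses the deduction of both parts from this identity into a single sentence, whereas you explicitly carry out the rescaling bookkeeping, the invariance under the diagonal-unitary ambiguity in $u,v$, and the $\epsilon$-estimate for the converse of (2) — these are the implicit details behind the paper's "From this, we have the claims."
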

\begin{proof}
	From $df_{A}(x,y)(H,G) =\frac{\rm d}{{\rm d}t}\mid_{t=0} n \log \sum_{k=1}^m \trace  (x+tH) A_k (y+tG) A_k^{\dagger}$, 
		we have 
	\begin{equation}\label{eqn:df_A}
		d f_{A}(x,y) = C_{x,y} \left( \sum_{k} A_k y A_k^{\dagger}, \sum_{k} A_k^{\dagger} x A_k \right),
	\end{equation}
	where $C_{x,y} := n/\sum_{k=1}^m \trace  x A_k y A_k^{\dagger}$.
	Let ${\cal U} =[u]$ and ${\cal V} = [v]$ for $u,v \in U(n)$.
	By Proposition~\ref{prop:nabla^inf_P_n}, we have
	\begin{eqnarray}
	\|\nabla (f_A+b_{\lambda \cdot {\cal U}, \mu \cdot {\cal V}})(x,y)\|_{x,y}^2 &=&
	\left\|C_{x,y}  \sum_{k} b^{\dagger} u^{\dagger} A_k v c c^{\dagger}v^{\dagger} A_k^{\dagger} u b  - \diag \lambda \right\|_{\rm F}^2 \nonumber \\ 
	&& +  \left\| C_{x,y}  \sum_{k}c^{\dagger} v^{\dagger} A_k^{\dagger} u b b^{\dagger}u^{\dagger} A_k v c  - \diag \mu  \right\|_{\rm F}^2, \label{eqn:|nabla_f_A+b|}
	\end{eqnarray}
	where $u^{\dagger} x^{\frac{1}{2}} = b k$ and $v^{\dagger} y^{\frac{1}{2}} = ck'$ for $k,k' \in U(n)$ and upper-triangular matrices $b,c$. From this, we have the claims, where 
   required scaling matrices $g,h$ are given as $g=  C_{x,y}^{1/4} ub$ and $h = C_{x,y}^{1/4} vc$ with $[g] = [u] = {\cal U}$ and $[h] = [v] ={\cal V}$. 
\end{proof}

For the function $f_A$, the inclusion $\dom f_A^* \subseteq \overline{\dom f_A^*} \subseteq B(f_A^{\infty})$ 
becomes equality.
\begin{Prop}\label{prop:domf_A*}
	$\dom f_A^* = B(f_A^{\infty})$.
\end{Prop}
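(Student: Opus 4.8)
The inclusion $\dom f_A^*\subseteq B(f_A^\infty)$ is Lemma~\ref{lem:unbounded}, so the work is the reverse inclusion $B(f_A^\infty)\subseteq\dom f_A^*$. First I would normalize the flag data: $GL(n,\CC)\times GL(n,\CC)$ acts on $P_n\times P_n$ by isometries $\varphi_{g,h}(x,y)=(gxg^\dagger,hyh^\dagger)$ with $f_A\circ\varphi_{g,h}=f_{g^\dagger Ah}$, and such an isometry replaces $A$ by $g^\dagger Ah$ while carrying an arbitrary flag pair $(\mathcal{U},\mathcal{V})$ to the standard pair $(\mathcal{E},\mathcal{E})$; since it also intertwines recession functions and conjugates, it suffices to prove $f_A^*(p)<\infty$ for $p=(\lambda\cdot\mathcal{E},\mu\cdot\mathcal{E})\in B(f_A^\infty)$.

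Fix the Weyl chamber $C\ni p$. By Proposition~\ref{prop:partial_convexity}, $f_A^*(p)=\sup_F(f_{A,F})^*(p)$ over maximal flats $F$ with $CF^\infty\supseteq C$. On such an $F=F(g)\times F(h)$ the function $f_A$ is, in flat coordinates, the log-sum-exp $n\log\sum_{i,j}a_{ij}(g,h)e^{\alpha_i+\beta_j}$ with $a_{ij}(g,h)=\sum_k|(g^\dagger A_kh)_{ij}|^2$ (equation~(\ref{eqn:on_flat})), while $b_p$ restricts to $F$ as the affine function $(\alpha,\beta)\mapsto-\langle\lambda,\alpha\rangle-\langle\mu,\beta\rangle+\mathrm{const}$; hence, up to an additive constant, $(f_{A,F})^*(p)$ is the ordinary Legendre conjugate of that log-sum-exp function at $(\lambda,\mu)$. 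By Lemma~\ref{lem:log_sum_exp} (using $\sum_i\lambda_i=\sum_j\mu_j=n$) this value is finite precisely when $(\lambda,\mu)/n$ lies in the polytope $Q(g,h):=\Conv\{(e_i,e_j)\mid a_{ij}(g,h)\neq0\}$, and $\overline{\dom(f_{A,F})^*}=\dom(f_{A,F})^*$ equals this \emph{closed} polytope. Since $p\in B(f_A^\infty)$, Proposition~\ref{prop:C_cap_B(h)} yields $p\in\overline{\dom(f_{A,F})^*}=\dom(f_{A,F})^*$ for every such $F$, i.e. $(\lambda,\mu)/n\in Q(g,h)$ for all $g,h$; it remains to show $\sup_F(f_{A,F})^*(p)<\infty$.

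This uniform bound is the crux, and it is exactly where the special structure of $f_A$ enters (for a general convex $f$ the inclusion $\dom f^*\subseteq B(f^\infty)$ can be strict). From the weighted AM--GM estimate in the proof of Lemma~\ref{lem:log_sum_exp}, for each $(g,h)$ one has $(f_{A,F})^*(p)\le-n\log c(g,h)+\mathrm{const}$, where $c(g,h)=\min\{a_{ij}(g,h)/\gamma_{ij}\mid\gamma_{ij}>0\}$ for any representation $(\lambda,\mu)/n=\sum\gamma_{ij}(e_i,e_j)$ with $\gamma$ supported on $\{a_{ij}(g,h)\neq0\}$; so it suffices to bound $c(g,h)$ away from $0$. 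The danger is a family of flats along which some coefficient $a_{i_0j_0}\to0$ while $(\lambda,\mu)/n$ fails to lie in the convex hull of the \emph{surviving} $(e_i,e_j)$'s. I would rule this out by passing to a limit of the corresponding flags in the compact flag variety: only finitely many supports $\{(i,j)\mid a_{ij}(g,h)\neq0\}$ occur, they are ordered by specialization, and the hypothesis $p\in B(f_A^\infty)$ applied to a maximal flat through the limiting flag forces $(\lambda,\mu)/n$ into the smaller polytope, a contradiction. Making this limiting argument precise — in particular identifying the right geometric limit of a degenerating family of maximal flats inside the Euclidean building $C(P_n\times P_n)^\infty$ — is the main obstacle; alternatively one can bypass it by invoking the explicit description of $f_A^\infty$ (obtained next, for Franks' characterization), which presents $B(f_A^\infty)$ as a transportation-type polytope and thereby produces directly a single global inequality $f_A(x,y)+b_p(x,y)\ge\mathrm{const}$ witnessing $p\in\dom f_A^*$.
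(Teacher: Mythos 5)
Your reduction---isometry to the standard flag pair, then Proposition~\ref{prop:partial_convexity} to write $f_A^*(p)=\sup_F (f_{A,F})^*(p)$, then Lemma~\ref{lem:log_sum_exp} to see that each single-flat conjugate $(f_{A,F})^*(p)$ is finite when $p\in B(f_A^\infty)$---is correct and correctly identifies the uniform bound over flats as the substance of the statement. But the limiting argument you sketch for that bound cannot be made to work: the flats $F$ with $CF^\infty\supseteq C$ are indexed not by the compact flag variety but by the non-compact horospherical subgroup of $C$ (pairs of unipotent matrices), as the proof of Proposition~\ref{prop:partial_convexity} makes explicit. A degenerating sequence in that family leaves the family; its limit in the Bruhat compactification lies in a different Bruhat cell, so the ``maximal flat through the limiting flag'' no longer contains $C$ at infinity and is not among the flats over which you take the supremum, and the contradiction you want never materializes. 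Bounding your $c(g,h)$ uniformly away from $0$ is in fact equivalent to positivity of the operator-scaling capacity, which is essentially the theorem itself, not a step toward it; and the alternative remark that the transportation-polytope description of $B(f_A^\infty)$ ``produces directly a single global inequality'' is stated but not argued.

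The paper closes the gap by an entirely different, non-elementary mechanism: Proposition~\ref{prop:domf_A*} is deduced from Proposition~\ref{prop:generalversion} for general Kempf--Ness functions. Rationality and per-chamber convexity (Propositions~\ref{prop:partial_convexity}, \ref{prop:C_cap_B(h)}, \ref{prop:f_v^infty}) reduce to rational $p$; Lemma~\ref{lem:b_-lambda_infty} identifies $\alpha b_p$ itself as a Kempf--Ness function, so $\alpha f_v+b_{\nu\cdot\mathcal F}$ is the Kempf--Ness function of a tensor-product representation; and then Theorem~\ref{thm:main0} applies, whose proof rests on the Hilbert--Mumford criterion: unboundedness of a Kempf--Ness function is already witnessed along a single one-parameter subgroup, hence on a single flat, where Lemma~\ref{lem:log_sum_exp} closes the argument. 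That invariant-theoretic input is exactly what upgrades per-flat finiteness to global finiteness. Without the Hilbert--Mumford criterion or a comparable global input (e.g.\ a scaling-algorithm analysis), your per-flat observations do not combine, and the gap you flagged remains a genuine one.
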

We will prove a general version (Proposition~\ref{prop:generalversion}) in Section~\ref{subsec:orbit}. 
Thus, the $(p,q)$-scalability with
 $p = \lambda \cdot {\cal U}$ and $q = \mu \cdot {\cal V}$ 
can be decided by the boundeness of convex optimization: 
\begin{equation*}
{\rm inf.} \ (f_{A} + b_{p,q}) (x,y) = f_{A}(x,y) + b_p (x) + b_q(y) \ {\rm s.t.} \ (x,y) \in P_n \times P_n,
\end{equation*}
where Busemann functions $b_p$ and $b_q$ are explicitly given by Lemma~\ref{lem:b_p_P_n}.
By optimizing $y$ under a fixed $x \in P_n$, we may minimize function 
$g_{A,q} :P_n \to \RR$:
\begin{equation}\label{eqn:g_A,p}
	g_{A,q}(x) := \inf_{y \in P_n} f_{A}(x,y) + b_{q}(y) \quad (x \in P_n).
\end{equation}
One can see from (\ref{eqn:df_A}) and (\ref{eqn:|nabla_f_A+b|}) that optimal $y$ is obtained by $y = hh^{\dagger}$ for $h \in GL(n,\CC)$ with ${\cal V} = [h]$ and $h^{\dagger}(\sum_{k}A_kxA_k^{\dagger})h = \diag \mu$.
When ${\cal U} ={\cal V} ={\cal E}$, 
the infimum of $f_{A}+ b_p + b_q$ (or $g_{A,q} + b_p$) equals
(up to constant) the logarithm of the {\em capacity of specified marginal} in Franks~\cite{Franks18}.\footnote{
	To see the consistency with his formulation, 
	use the relation 
	$b_{\lambda \cdot {\cal E}}(gg^{\dagger}) = - \log \det (\diag \lambda, g^{\dagger}g)$
	for any upper-triangular matrix $g$, where 
	$\det (\diag \lambda, g^{\dagger}g)$ is the {\em relative determinant} 
	in the sense of~\cite{Franks18}.}

%

We compute explicit descriptions of the recession functions of $f_{A}$
and the associated subset $B(f^{\infty}_{A})$. 
See Remark~\ref{rem:123}~(1) for $g_{A,q}$ and  $B(g^{\infty}_{A})$.
Let ${\cal S}_{A}$ be the family of all pairs $(X,Y)$ of vector subspaces in $\CC^n$ such that $u^{\dagger} A_k v = 0$ for all $u \in X, v \in Y, k \in [m]$.
\begin{Prop}\label{prop:f_Ainfty}
	\begin{itemize}
		\item[(1)] The recession function $f_A^{\infty}$ is given by
		\begin{equation}\label{eqn:f_A^infty}
		f_A^{\infty}(p,q) = n \max \{\alpha_i + \beta_j \mid i,j \in [n]: (g^{\dagger}A_kh)_{ij} \neq 0\  (\exists k \in [m]) \},
		\end{equation}
		where $p = \alpha \cdot [g], q = \beta \cdot [h] \in CP^{\infty}_n$.
		 \item[(2)] 
		 $B(f^{\infty}_{A})$ is the set of $(p,q)\in CP_n^{\infty} \times  CP_n^{\infty}$ satisfying
		 \begin{equation}\label{eqn:X,Y}
		 \langle X,p \rangle + \langle Y,q\rangle \leq n \quad ((X,Y) \in {\cal S}_A).
		 \end{equation}
	\end{itemize}
\end{Prop}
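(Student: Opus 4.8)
The plan is to handle the two parts separately, reducing each to a Euclidean computation on a maximal flat via the building structure of $CP_n^\infty$. For part~(1) I would evaluate $f_A^\infty$ along an explicit ray contained in a single maximal flat. Writing $p=\alpha\cdot[g]$ and $q=\beta\cdot[h]$ with $\alpha,\beta$ arranged (the general case follows because both sides of (\ref{eqn:f_A^infty}) are invariant under a simultaneous permutation of the indices and of the columns of $g$, resp.\ $h$), the map $t\mapsto(ge^{t\diag\alpha}g^\dagger,\ he^{t\diag\beta}h^\dagger)$ is a constant-speed ray of asymptotic class $(p,q)$ by Lemma~\ref{lem:asymptotic}, and it lies on $F(g)\times F(h)$, where $f_A$ is the log-sum-exp function (\ref{eqn:on_flat}). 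Thus $f_A^\infty(p,q)=\lim_{t\to\infty}\tfrac1t\, n\log\sum_{i,j}a_{ij}(g,h)e^{t(\alpha_i+\beta_j)}$, which by the elementary limit computed in the proof of Lemma~\ref{lem:log_sum_exp} equals $n\max\{\alpha_i+\beta_j:a_{ij}(g,h)\ne0\}$; since $a_{ij}(g,h)=\sum_k|(g^\dagger A_kh)_{ij}|^2$, its support is exactly $\{(i,j):(g^\dagger A_kh)_{ij}\ne0\text{ for some }k\}$, which is (\ref{eqn:f_A^infty}).

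For the inclusion of $B(f_A^\infty)$ in the set described by (\ref{eqn:X,Y}) I would realize subspaces as boundary points through adapted bases. Given $(X,Y)\in{\cal S}_A$, choose $g,h$ whose first $\dim X$ (resp.\ $\dim Y$) columns span $X$ (resp.\ $Y$); then $X={\bf 1}_{[\dim X]}\cdot[g]$ and $Y={\bf 1}_{[\dim Y]}\cdot[h]$ as points of $CP_n^\infty$ (with $\{0\}$ the cone point $0$). For $i\le\dim X$ and $j\le\dim Y$ the $i$-th column of $g$ lies in $X$ and the $j$-th column of $h$ lies in $Y$, so $(g^\dagger A_kh)_{ij}=0$ for every $k$; hence in the formula from part~(1) the maximum runs over pairs $(i,j)$ with $i>\dim X$ or $j>\dim Y$, for which the sum of the corresponding entries of ${\bf 1}_{[\dim X]}$ and ${\bf 1}_{[\dim Y]}$ is at most $1$, so $f_A^\infty(X,Y)\le n$. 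Since $\langle(X,Y),(p,q)\rangle=\langle X,p\rangle+\langle Y,q\rangle$ (product structure of $C(P_n\times P_n)^\infty$) and the defining inequality of $B(\cdot)$ is positively homogeneous, $(p,q)\in B(f_A^\infty)$ yields $\langle X,p\rangle+\langle Y,q\rangle\le f_A^\infty(X,Y)\le n$.

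For the reverse inclusion I would localize to apartments and use transportation-polytope feasibility. By Proposition~\ref{prop:C_cap_B(h)} (applied to $P_n\times P_n$), $(p,q)\in B(f_A^\infty)$ iff $(p,q)\in B_{CF^\infty}(f_A^\infty)$ for every maximal flat $F=F(g)\times F(h)$ whose boundary contains the Weyl chamber of $(p,q)$; after a column permutation we may take $[g]={\cal U}_0$, $[h]={\cal V}_0$ (the flags of $p$ and $q$), so that $p=\pi\cdot[g]$, $q=\kappa\cdot[h]$ with $\pi,\kappa$ arranged and $(p,q)$ has coordinates $(\pi,\kappa)$ in $CF^\infty\simeq\RR^n\times\RR^n$. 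By Lemma~\ref{lem:log_sum_exp} applied to (\ref{eqn:on_flat}), $B_{CF^\infty}(f_A^\infty)=\overline{\dom(f_A)_F^*}=\Conv\{(ne_i,ne_j):a_{ij}(g,h)\ne0\}$. After reducing to the affine slice where $p,q$ have nonnegative coordinates and $\langle\CC^n,p\rangle=\langle\CC^n,q\rangle=n$ (the region that $B(f_A^\infty)$ occupies, seen by testing the directions $\pm\CC^n$), membership of $(\pi,\kappa)$ in that polytope is, by the transportation feasibility (max-flow--min-cut) criterion, equivalent to the Hall-type inequalities $\sum_{i\in S}\pi_i\le\sum_{j\in N_{g,h}(S)}\kappa_j$ for all $S\subseteq[n]$, where $N_{g,h}(S)=\{j:(g^\dagger A_kh)_{ij}\ne0\text{ for some }i\in S,\ k\}$. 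The crux is that each of these is an instance of (\ref{eqn:X,Y}): taking $X=\mathrm{span}\{g_i:i\in S\}$ and $Y=\mathrm{span}\{h_j:j\notin N_{g,h}(S)\}$, one has $(g^\dagger A_kh)_{ij}=0$ whenever $i\in S$ and $j\notin N_{g,h}(S)$, so $(X,Y)\in{\cal S}_A$, while Lemma~\ref{lem:<>} gives $\langle X,p\rangle=\sum_{i\in S}\pi_i$ and $\langle Y,q\rangle=\sum_{j\notin N_{g,h}(S)}\kappa_j$ (these are coordinate subspaces for the flags $[g]$, $[h]$), so (\ref{eqn:X,Y}) for this pair reads exactly $\sum_{i\in S}\pi_i\le\sum_{j\in N_{g,h}(S)}\kappa_j$. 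Hence (\ref{eqn:X,Y}) forces $(\pi,\kappa)\in B_{CF^\infty}(f_A^\infty)$ for every admissible $F$, i.e.\ $(p,q)\in B(f_A^\infty)$.

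The main obstacle is precisely this reverse inclusion: one must recognize $B_{CF^\infty}(f_A^\infty)$ as a transportation polytope so that max-flow--min-cut turns membership into finitely many linear inequalities, and then match each Hall inequality with an honest pair in ${\cal S}_A$ built from coordinate subspaces of the two adapted frames, which rests on the closure of ${\cal S}_A$ under shrinking either coordinate and the dictionary ``a block of all $g^\dagger A_kh$ vanishes $\iff$ the span of the corresponding columns lies in ${\cal S}_A$''. One must also dispatch, at the outset, the normalization $\langle\CC^n,p\rangle=\langle\CC^n,q\rangle=n$ (with $p,q$ having nonnegative coordinates), which is the affine slice on which $B(f_A^\infty)$ lives and relative to which the description (\ref{eqn:X,Y}) is to be read.
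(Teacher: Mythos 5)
Your proof is correct and follows essentially the same route as the paper: part~(1) by evaluating the log-sum-exp expression (\ref{eqn:on_flat}) along a ray in a single maximal flat, and part~(2) by showing necessity through adapted bases and then localizing to apartments via Proposition~\ref{prop:C_cap_B(h)}, identifying each apartment slice of $B(f_A^{\infty})$ with a polytope built from the bipartite support graph of $a_{ij}(g,h)$. The only cosmetic difference is that the paper phrases the apartment combinatorics via stable-set inequalities (the ``clique polytope'' description) while you use Hall-type/transportation feasibility inequalities; under the normalization $\langle\CC^n,p\rangle=\langle\CC^n,q\rangle=n$ with nonnegative coordinates these two systems are equivalent by complementing $T=[n]\setminus N_{g,h}(S)$, and the matching of each facet with a pair $(X,Y)\in{\cal S}_A$ is the same in both. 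You are in fact slightly more explicit than the paper about the normalization being needed to read (\ref{eqn:X,Y}) as a full inequality description of $B(f_A^{\infty})$.
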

\begin{proof}
	(1) follows from Lemma~\ref{lem:log_sum_exp} and the expression (\ref{eqn:on_flat}).
	(2). Let $(p,q) \in CP_n^{\infty} \times CP_n^{\infty}$.
	For $(X,Y) \in {\cal S}_A$, 
    choose $g,h \in GL(n,\CC)$ such that $g$ and $h$ span $X$ and $Y$ in the first $k$ and $l$ column subsets, respectively. 
    Then, each $g^{\dagger}A_kh$ has a $k \times l$ zero block in the upper left corner. 
    If $X,Y$ are viewed as points in $CP_n^{\infty}$ by (\ref{eqn:formal_sum}), then 
    $X = {\bf 1}_{[k]} \cdot [g]$ and $Y = {\bf 1}_{[l]} \cdot [h]$.
    By the assumption that
    $\bigcap_{k=1}^m \ker A_k=\{0\}$ or $\bigcap_{k=1}^m \ker A_k^{\dagger} =\{0\}$, the maximum in (\ref{eqn:f_A^infty}) 
    is attained by $i \in [k], j \in [n] \setminus [l]$ or $i \in [n] \setminus [k], j \in [l]$, 
    and  we have $f_A^{\infty}(X,Y) = n$.
    Thus (\ref{eqn:X,Y}) is  
	a necessary condition for $(p,q) \in B(f_A^{\infty})$.
	Consider an apartment  $E(g) \times E(h) = F(g)^{\infty} \times F(h)^{\infty}$ containing $(p,q)$ and identify it with $\RR^n \times \RR^n$ 
	by $(ge^{\infty \alpha}g^{\dagger}, he^{\infty \beta}h^{\dagger}) \mapsto (\alpha, \beta)$.
	From Lemma~\ref{lem:log_sum_exp} and (\ref{eqn:on_flat}), 
	we have
	\begin{equation}
	B_{E(g) \times E(h)}(f^{\infty}) = n\, \mbox{the convex hull of $e_i + f_j$ 
		for all $i,j$ with $a_{ij}(g,h) \neq 0$},
	\end{equation}
	where $e_i$ and $f_i$ denote the $i$-th unit vectors 
	of $E(g)$ and $E(h)$, respectively.
	This is $n$ times
	the {\em clique polytope} of the bipartite graph $G$ 
	with vertex set $[n] \sqcup [n]$ and edge set $\{ ij \mid a_{ij}(g,h) \neq 0\}$; 
	see \cite[Section 65.4]{Schrijver}. 
	By a standard network flow argument, 
	we obtain the inequality description 
	of $B_{E(g) \times E(h)}$ as
		\begin{equation}\label{eqn:S,T}
		\sum_{i \in S} \alpha_i  + \sum_{j \in T} \beta_j \leq n \quad  ((S,T) \subseteq [n] \times [n]: a_{ij}(g,h) = 0\ (i \in S,j \in T)),
		\end{equation}
		where $S \sqcup T$ is nothing but a stable set of the graph $G$.
	Notice that (\ref{eqn:S,T}) is the subsystem for (\ref{eqn:X,Y}) such that
	vector subspace $X$ and $Y$ are 
	spanned by columns vectors of $g$ and $h$.  
	By Proposition~\ref{prop:C_cap_B(h)}, 
	satisfying all such inequalities is also sufficient 
	for $(p,q) \in B(f_A^{\infty})$. 
\end{proof}
Thus, by Theorem~\ref{thm:char_of_B(f^infty)}, 
Propositions~\ref{prop:scalability}, \ref{prop:domf_A*}, \ref{prop:f_Ainfty}, and Lemma~\ref{lem:<>}, we have: 
\begin{Thm}[\cite{Franks18}]\label{thm:Franks}
	The following conditions are equivalent:
	\begin{itemize}
		\item[(a)] $A$ is approximately $(\lambda \cdot {\cal U}, \mu \cdot {\cal V})$-scalable. 
		\item[(b)] $\inf_{x,y \in P_n} f_{A}(x,y) + b_{\lambda \cdot {\cal U}}(x) + b_{\mu \cdot {\cal V}}(y) > -\infty$.
		\item[(c)] For all $(X,Y) \in {\cal S}_A$, it holds
		\begin{equation}\label{eqn:Franks}
		\sum_{i=1}^n (\lambda_i- \lambda_{i+1}) \dim U_i \cap X + 
		\sum_{i=1}^n (\mu_i- \mu_{i+1}) \dim V_i \cap Y \leq n.
		\end{equation}
	\end{itemize}
\end{Thm}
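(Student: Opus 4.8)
The plan is to show that each of (a), (b), (c) is equivalent to the single membership condition $(p,q)\in B(f_A^{\infty})$, where we abbreviate $p:=\lambda\cdot\mathcal{U}$ and $q:=\mu\cdot\mathcal{V}$, and then to chain these equivalences. Throughout I would work on the Hadamard manifold $M:=P_n\times P_n$, which is a symmetric space of nonpositive curvature with boundary cone $CM^{\infty}=CP_n^{\infty}\times CP_n^{\infty}$, on which $f_A$ is convex (established above) and on which Busemann functions split as $b_{(p,q)}(x,y)=b_p(x)+b_q(y)$; this is what allows the general results of the previous subsections, stated for a single symmetric space, to be applied here verbatim.

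For (a) $\Leftrightarrow$ $(p,q)\in B(f_A^{\infty})$: by Proposition~\ref{prop:scalability}(2), approximate $(\lambda\cdot\mathcal{U},\mu\cdot\mathcal{V})$-scalability is exactly the condition $\inf_{(x,y)\in M}\|\nabla(f_A+b_{(p,q)})(x,y)\|_{(x,y)}=0$, and by Theorem~\ref{thm:char_of_B(f^infty)} applied to the smooth convex function $f_A$ on $M$ (with the boundary point $(p,q)\in CM^{\infty}$ playing the role of ``$p$'' there), this vanishing infimum holds if and only if $(p,q)\in B(f_A^{\infty})$. For (b) $\Leftrightarrow$ $(p,q)\in B(f_A^{\infty})$: unwinding the definition~\eqref{eqn:f*}, $f_A^{*}(p,q)=-\inf_{(x,y)}\bigl(f_A(x,y)+b_p(x)+b_q(y)\bigr)$, so (b) says precisely $f_A^{*}(p,q)<\infty$, i.e.\ $(p,q)\in\dom f_A^{*}$, and Proposition~\ref{prop:domf_A*} identifies $\dom f_A^{*}=B(f_A^{\infty})$. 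For (c) $\Leftrightarrow$ $(p,q)\in B(f_A^{\infty})$: Proposition~\ref{prop:f_Ainfty}(2) states that $(p,q)\in B(f_A^{\infty})$ iff $\langle X,p\rangle+\langle Y,q\rangle\le n$ for all $(X,Y)\in\mathcal{S}_A$; viewing a subspace $X$ as the point $1\cdot X$ of $CP_n^{\infty}$ and specializing Lemma~\ref{lem:<>} so that only the single flag term corresponding to $X$ survives gives $\langle X,p\rangle=\sum_{i=1}^n(\lambda_i-\lambda_{i+1})\dim U_i\cap X$ and likewise $\langle Y,q\rangle=\sum_{i=1}^n(\mu_i-\mu_{i+1})\dim V_i\cap Y$, so the inequality becomes exactly \eqref{eqn:Franks}. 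Combining the three equivalences proves the theorem.

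For this particular statement the substantial work has already been done in the earlier results (notably Theorem~\ref{thm:char_of_B(f^infty)}, Proposition~\ref{prop:domf_A*}, and the explicit computation in Proposition~\ref{prop:f_Ainfty}), so the ``hard part'' here is really just the bookkeeping at the interfaces: first, checking carefully that passing to the product $M=P_n\times P_n$ does not affect anything — that $CM^{\infty}=CP_n^{\infty}\times CP_n^{\infty}$, that $b_{(p,q)}=b_p\oplus b_q$, and that $f_A$ is geodesically convex on $M$ — so that the single-space theorems apply as stated; and second, correctly specializing the symmetric pairing of Lemma~\ref{lem:<>} to the asymmetric pairing $\langle X,p\rangle$ with a single subspace $X$, being attentive to the index convention $\lambda_{n+1}:=0$ so that the full family $\mathcal{S}_A$ (including, where relevant, the ``$X=\CC^n$'' type constraints hidden in \eqref{eqn:base_polyhedron}) is recovered. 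Everything beyond these points is routine substitution.
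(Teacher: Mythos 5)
Your proposal is correct and takes essentially the same route as the paper: the paper itself derives Theorem~\ref{thm:Franks} by the single line ``Thus, by Theorem~\ref{thm:char_of_B(f^infty)}, Propositions~\ref{prop:scalability}, \ref{prop:domf_A*}, \ref{prop:f_Ainfty}, and Lemma~\ref{lem:<>}, we have:'' and your write-up simply spells out the same chain of equivalences (a)~$\Leftrightarrow$~$(p,q)\in B(f_A^\infty)$~$\Leftrightarrow$~(b) and $(p,q)\in B(f_A^\infty)$~$\Leftrightarrow$~(c), using exactly those ingredients. The bookkeeping you flag (the product manifold $P_n\times P_n$, the splitting of the Busemann function, and the specialization of $\langle\,\cdot\,,\,\cdot\,\rangle$ to a single subspace) is handled correctly.
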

Franks~\cite{Franks18} showed that 
the approximate scalability
reduces to the triangular scalability in the generic case.
\begin{Thm}[\cite{Franks18}]
	$A$ is approximately $(\lambda, \mu)$-scalable if and only 
	if $g^{\dagger}Ah$ is approximately $(\lambda \cdot {\cal E}, \mu \cdot {\cal E})$-scalable
	for generic $g,h \in GL(n,\CC)$. 
\end{Thm}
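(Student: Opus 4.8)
The plan is to reformulate both scalability notions through condition (c) of Theorem~\ref{thm:Franks} and then run a short genericity argument in $GL(n,\CC)^2$. Applying Theorem~\ref{thm:Franks} to $g^{\dagger}Ah$ with the standard flags ${\cal E}$, and using the elementary dictionary $(X,Y)\in{\cal S}_{g^{\dagger}Ah}\iff(gX,hY)\in{\cal S}_A$ (since $u^{\dagger}g^{\dagger}A_khv=(gu)^{\dagger}A_k(hv)$) together with $\dim E_i\cap X=\dim gE_i\cap gX$ and its analogue for the second factor, one obtains that $g^{\dagger}Ah$ is approximately $(\lambda\cdot{\cal E},\mu\cdot{\cal E})$-scalable if and only if
\[
\textstyle\sum_i(\lambda_i-\lambda_{i+1})\dim gE_i\cap X+\sum_j(\mu_j-\mu_{j+1})\dim hF_j\cap Y\ \le\ n\qquad\text{for all }(X,Y)\in{\cal S}_A.\qquad(\star)
\]
On the other hand, by the definition of approximate $(\lambda,\mu)$-scalability and Theorem~\ref{thm:Franks}, $A$ is approximately $(\lambda,\mu)$-scalable if and only if $(\star)$ holds with $gE_i,hF_j$ replaced by ${\cal U}_i,{\cal V}_j$ for some flags ${\cal U},{\cal V}$. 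Thus the assertion reduces to: \emph{$(\star)$ holds for some flags if and only if $(\star)$ holds for generic $g,h$.} The direction ``$\Leftarrow$'' is immediate, because a generic (hence, in particular, some) pair $(g,h)$ already provides witnessing flags.

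For ``$\Rightarrow$'', suppose $(\star)$ holds for flags ${\cal U},{\cal V}$, and fix $g_0,h_0$ with $([g_0],[h_0])=({\cal U},{\cal V})$. Since $i\mapsto\dim U_i\cap X$ is nondecreasing and bounded by $n$, the tuples $\pi=((c_i)_i,(e_j)_j)$ of intersection dimensions occurring in $(\star)$ form a finite set of \emph{profiles}; and since $\lambda,\mu$ are arranged (so $\lambda_i-\lambda_{i+1}\ge0$, $\mu_j-\mu_{j+1}\ge0$), $(\star)$ \emph{fails} at a pair $(g,h)$ precisely when $(g,h)$ lies in the ``bad locus''
\[
B_\pi:=\bigl\{(g,h)\ :\ \exists (X,Y)\in{\cal S}_A^{(a,b)}\ \text{with}\ \dim gE_i\cap X\ge c_i,\ \dim hF_j\cap Y\ge e_j\ \ (\forall i,j)\bigr\}
\]
for some profile $\pi$ of dimensions $(a,b)$ violating the bound, i.e. with $\sum_i(\lambda_i-\lambda_{i+1})c_i+\sum_j(\mu_j-\mu_{j+1})e_j>n$. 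Here ${\cal S}_A^{(a,b)}$, the set of pairs of subspaces of dimensions $a,b$ annihilated by all $A_k$, is a closed subvariety of the projective variety $Gr(a,n)\times Gr(b,n)$; hence $B_\pi$, being the image of the closed subset of $GL(n,\CC)^2\times{\cal S}_A^{(a,b)}$ cut out by the displayed rank inequalities under the projection to $GL(n,\CC)^2$, is Zariski closed (the projection is proper). As $(\star)$ holds at $(g_0,h_0)$, we have $(g_0,h_0)\notin B_\pi$ for every violating $\pi$, so each $B_\pi$ is a \emph{proper} closed subvariety of the irreducible group $GL(n,\CC)^2$. Consequently the complement of the finite union $\bigcup_\pi B_\pi$ is dense open, and every $(g,h)$ in it satisfies $(\star)$; this finishes ``$\Rightarrow$''.

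The step I expect to need the most care is the closedness of $B_\pi$: without it, knowing only that $B_\pi$ is constructible and misses the single witness $(g_0,h_0)$ would not yield that $B_\pi$ is a proper subvariety. The essential point is the properness of the Grassmannian fibers ${\cal S}_A^{(a,b)}$, which makes the projection $GL(n,\CC)^2\times{\cal S}_A^{(a,b)}\to GL(n,\CC)^2$ a closed map; I would spell this out explicitly. A more hands-on alternative, avoiding algebraic geometry, is the apartment-wise description from the proof of Proposition~\ref{prop:f_Ainfty} — inside $E(g)\times E(h)$ the set $B(f_A^{\infty})$ is $n$ times the clique polytope of the bipartite graph $\{ij:\exists k,\ (g^{\dagger}A_kh)_{ij}\ne0\}$ — from which one checks that for $(g,h)$ outside an explicit (measure-zero) subset these graphs stabilize to the ``upper-left closure'' of the zero-pattern of the $g^{\dagger}A_kh$, leaving only finitely many stable-set inequalities that are then dominated by the hypothesis; but the Zariski argument above is the shortest route.
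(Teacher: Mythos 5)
Your proof is correct and it does prove the statement, but it takes a route that differs from the paper's in two structural ways. First, the paper does not prove this theorem directly; it instead proves the more general ``generic Borel polytope equals moment polytope'' statement for arbitrary Kempf--Ness functions over a reductive group in Section~\ref{subsec:orbit}, and lets the operator scaling statement follow as the special case $G = GL(n,\CC)^2$. You instead stay entirely inside the operator-scaling framework, invoking Theorem~\ref{thm:Franks} and the elementary dictionary ${\cal S}_{g^{\dagger}Ah}\leftrightarrow{\cal S}_A$ to reduce the claim to: the family of inequalities $(\star)$ is satisfiable for some $(g,h)$ iff for generic $(g,h)$. This is the same reduction the paper performs (under the heavier notation $\Lambda$, $A_\nu$, $D_{\nu,\alpha}(g)$, etc.), but your derivation is considerably shorter because it bypasses the general weight-theoretic machinery. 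Second, the step where you both must show the ``bad locus'' is a proper Zariski-closed subvariety is handled differently: the paper proves constructibility of $\pi(S_{\nu,\alpha,d})$ via the elimination/closure theorem and then upgrades to closedness by a sequential argument using compactness of $K$ (through Iwasawa decomposition), whereas you get closedness in one shot from properness of the projection $GL(n,\CC)^2\times{\cal S}_A^{(a,b)}\to GL(n,\CC)^2$ because the Grassmannian factor is complete. These are two faces of the same compactness phenomenon, but your version is the cleaner algebro-geometric argument and avoids the subsequence bookkeeping.

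One point you should make explicit, since it affects whether ``Zariski closed'' is literally correct as stated: the defining conditions of ${\cal S}_A^{(a,b)}$ involve $u^{\dagger}A_k v = 0$, which is anti-holomorphic in the $X$-variable, so ${\cal S}_A^{(a,b)}$ and hence $B_\pi$ are only complex subvarieties after conjugating the first coordinate (replace $X$ by $\bar X$ and correspondingly $g$ by $\bar g$). This is harmless for the genericity conclusion---a proper ``conjugate-Zariski-closed'' subset of an irreducible variety still has dense open complement and is the zero set of polynomials after the substitution $g\mapsto\bar g$, which is the notion of ``generic'' the paper actually needs---but the same issue appears in the paper's $S_{\nu,\alpha,d}$ (defined by vanishing of subdeterminants of $k_0^{\dagger}g^{-\dagger}h^{\dagger}k_0$), so you are in good company. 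Your self-diagnosed worry about the closedness of $B_\pi$ is the right instinct; spelling out the properness of the Grassmannian fibers, as you indicate, indeed closes that gap.
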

Here ``generic" means that 
there is an affine variety $V \subseteq GL(n,\CC)^2$ such that
the latter property holds for all $(g,h) \in GL(n,\CC)^2 \setminus V$.
We will verify this theorem for a general setting of the moment polytope membership in the next section.
\begin{Rem}\label{rem:123}
	\begin{itemize}
		\item[(1)] 
		 One can show that the recession function $g_{A,q}^{\infty}$ of $g_{A,q}$ is the Lov\'asz extension of submodular function
		\begin{equation}\label{eqn:g_A,p^infty}
			X \mapsto n - \langle X^{\bot_{A}}, q \rangle,   
		\end{equation}
	where $X^{\bot_{A}}$ 
	denotes the maximum subspace $Y$ with $(X,Y) \in {\cal S}_{A}$.
		In particular, $g_{A,q}$ is asymptotically submodular,
		and $B(g_{A,q}^{\infty})$ coincides with the base polyhedron of $g_{A,q}^{\infty}$.
		\item[(2)]
Computation of the constant ({\em BL-constant}) 
of the {\em Brascamp-Lieb inequality}~\cite{BrascampLieb,Lieb}
is formulated as the same type of convex optimization 
over the product of PSD-cones (over $\RR$)~\cite{GGOW_GAFA}. 
The objective function is also asymptotically submodular. 
A finiteness characterization of the BL-constant by \cite{BCCT} can be deduced by 
the same way as for (\ref{eqn:Franks}) above.  
\item[(3)]
	Since $(X,Y),(X',Y') \in {\cal S}_A$ implies $(X\cap X',Y+Y'),(X+X',Y \cap Y') \in {\cal S}_A$, the function $(X,Y) \mapsto - \langle X,p \rangle - \langle Y,q \rangle$
	also admits a submodular function structure 
	on the lattice ${\cal S}_A$ with $\wedge = (\cap,+), \vee = (+,\cap)$.
	Its Lov\'asz extension (in the sense of \cite{HamadaHirai,HH16L-convex})
	coincides with a part of $f^{\infty}_{A,p,q}$.
	The nc-rank computation algorithm in~\cite{HamadaHirai} 
	is interpreted as minimizing $f^{\infty}_{A,p,q}$ (with $\mu= \lambda = {\bf 1}$) 
	over a convex neighborhood of $0$.
	\end{itemize}
\end{Rem}

\subsection{Optimization on group orbits}\label{subsec:orbit}
The operator scaling and its generalizations (e.g., tensor scaling~\cite{BGOWW_tensor0,BFGOWW_tensor})
can be formulated as optimization over an orbit of a group action.
We finally consider the generalized scaling problems formulated by B\"urgisser, Franks, Garg, Oliveira, Walter, and Wigderson~\cite{BFGOWW}. 
Let $G \subseteq GL(n,\CC)$ be a reductive algebraic group over $\CC$, 
i.e., $G$ is defined by the zero set of a finite number of polynomials with complex coefficients, and $g \in G$ implies $g^{\dagger} \in G$. 
We assume that $G$ is connected.
Since $G$ is a closed subgroup of Lie group $GL(n,\CC)$, it is also a Lie group.
Let $K := G \cap U(n)$ be a maximal compact subgroup of $G$.
Let $\mathfrak{g}$ and $\mathfrak{u}$ denote the Lie algebras of $G$ and $K$, 
respectively, where $\mathfrak{g} = \mathfrak{u}+ i \mathfrak{u}$ 
is the complexification of $\mathfrak{u}$ (or Cartan decomposition of involution $X \mapsto -(X)^{\dagger}$).
This is a situation of~\cite[VII. 2. Example (2)]{KanppLieGroup}.

Let $\pi: G \to GL(N,\CC)$ be a rational representation, i.e., 
each entry of matrix $\pi(g)$ is a polynomial of $g_{ij}$ and $(\det g)^{-1}$.
Let $\langle \cdot, \cdot \rangle$ denote a $K$-invariant inner product on $\CC^N$, 
i.e., it satisfies $\langle \pi(k)u, \pi(k)v \rangle = \langle u, v \rangle$ for all $k \in K$.
Let $\Pi:= d\pi(I)$ be the Lie algebra representation of $\pi$.
Then $\pi(e^{H}) = e^{\Pi(H)}$ holds for $H \in \mathfrak{g}$.
The conjugate of $g \in GL(N,\CC)$ with respect to $\langle \cdot, \cdot \rangle$
is denoted by $g^{\dagger}$ (the matrix satisfying
$\langle g^{\dagger}u,v\rangle= \langle u,gv\rangle$ for all $u,v \in \CC^N$).
Then $\Pi(H^{\dagger}) = \Pi(H)^{\dagger}$ holds for $H \in \mathfrak{g}$.\footnote{From $\frac{\rm d}{{\rm d}t} \mid_{t=0} \langle \pi(e^{tH_0})u, \pi(e^{tH_0})v\rangle =0$ for $H_0 \in \mathfrak{u}$, we have $\langle \Pi(H_0)u,v\rangle + \langle u, \Pi(H_0)v\rangle =0$.
Thus $\Pi(H_0)^{\dagger} = - \Pi(H_0)$. For $H = H_0+i H_1$ with $H_0,H_1 \in \mathfrak{u}$, 
we have $\Pi(H)^\dagger = \Pi(H_0)^{\dagger} - i \Pi(H_1)^{\dagger} = - \Pi(H_0) + i \Pi(H_1) = \Pi(-H_0+ i H_1) = \Pi(H^{\dagger})$.}
Also $\pi(g)^{\dagger} = \pi(g^{\dagger})$ holds for $g \in G$.\footnote{By $\pi(k^{\dagger}) = \pi(k^{-1}) = \pi(k)^{-1} = \pi(k)^\dagger$ for $k \in K$ 
and polar decomposition $g= k e^{iH}$ for $k \in K$ and $H \in \mathfrak{u}$, we have
$\pi(g^{\dagger}) = e^{- i \Pi(H^{\dagger})} \pi(k^{\dagger}) = e^{-i\Pi(H)^{\dagger}} \pi(k)^{\dagger} 
= (\pi(k)e^{i\Pi(H)})^{\dagger} = \pi(g)^{\dagger}$.}

Given a vector $v \in \CC^N$, 
consider minimization of log-norm $\log \|\pi(g)v\|^2$ (twice of the {\em Kempf-Ness function} in \cite{BFGOWW}) over the $G$-orbit of $v$:
\begin{equation}\label{eqn:rho(G)v}
\mbox{inf.} \ \log \|\pi(g)v\|^2 \quad \mbox{s.t.}\ g \in G.
\end{equation}
This optimization can decide whether 
$0 \in \overline {\pi(G)v}$ (the closure of orbit $\pi(G)v$), via the unboundedness. This is equivalent to the membership of $v$ in 
the {\em null-cone} of the invariant ring of $\pi$.
The operator scaling in the previous section corresponds to the {\em left-right action} 
$(g,h) \mapsto g^{\dagger} A h$,  
where $f_A$ is a constant multiple of the Kempf-Ness function.

Since the norm is $K$-invariant, the optimization problem (\ref{eqn:rho(G)v})
is viewed as that the quotient space $G/K$, 
which turns out to be a symmetric space of nonpositive curvature.
We formulate the optimization problem (\ref{eqn:rho(G)v}) more explicitly, 
as in \cite[Remark 3.4]{BFGOWW}.
Note that $\|\pi(g)v\|^2  = \langle v, \pi(g^{\dagger}g) v \rangle$, 
where $g^{\dagger}g \in G \cap P_n$. 
Since $G$ is algebraic, $x \in G \cap P_n$ implies $x^{1/2} \in G \cap P_n$; see \cite[II.10.59]{BrHa}. 
Therefore (\ref{eqn:rho(G)v}) is also written as
\begin{equation}
\mbox{inf. } f_v(x) := \log \langle v, \pi(x)v \rangle \quad
{\rm s.t.}\quad  x \in M := G \cap P_n.
\end{equation}
Here $M = G \cap P_n$ is a totally geodesic subspace of $P_n$, 
and hence is a symmetric space of nonpositive curvature 
(see \cite[II. 10. 50]{BrHa}). By polar decomposition $G = K e^{i\mathfrak{u}}$, 
we have $M = e^{i \mathfrak{u}} \simeq G/K$, where
$i\mathfrak{u}$ 
is viewed as the tangent space at $I$ with inner product $(\lambda,\nu) \mapsto \trace \lambda \nu$.

Then, $M$ is decomposed as a Euclidean space and symmetric space of noncompact type as follows.
Since $G$ is reductive, the Lie algebra $\mathfrak{g}$ is 
the direct sum of the center $\mathfrak{z}$ and semisimple Lie algebra $\mathfrak{g}_1 :=[\mathfrak{g},\mathfrak{g}]$, where $\mathfrak{z}$ and $\mathfrak{g}_1$ 
are orthogonal in the inner product $X,Y \mapsto {\rm Re} \trace (XY^{\dagger})$; see~\cite[Proposition 1.59]{KanppLieGroup}.
Now $G$ is commuting product $ZG_1$ of the center $Z$ for  $\mathfrak{z}$ and semisimple Lie group $G_1$ for $\mathfrak{g}_1$, where
$i\mathfrak{u} =\mathfrak{z} \cap i \mathfrak{u} +\mathfrak{g}_1 \cap  i \mathfrak{u}$; see \cite[Proposition 7.19 (e)]{KanppLieGroup}. 
Thus $M = G \cap P_n$
is Riemannian product of Euclidean space $\RR^k \simeq e^{\mathfrak{z} \cap i \mathfrak{u}}$ and symmetric space 
$M_1 := G_1 \cap P_n = e^{\mathfrak{g}_1 \cap i \mathfrak{u}}$ of noncompact type. 
If $g = zg_1$ for $z \in Z$ and $g_1 \in G_1$, 
then the action of $g$ on $M = \RR^k \times M_1$ is given   
so that $g_1$ acts on $M_1$ as $x \mapsto g_1 x g_1^\dagger$  (as before)
and $z$ acts on $\RR^k$ as translation. 
Particularly, $z$ acts trivially on the boundary $CM^{\infty} = \RR^k \times CM^{\infty}_1$.

Maximal flats of $M$ 
are the intersection of maximal flats of $P_n$ with $G$, and 
are given by $e^{i\mathfrak{t}}$ for
maximal commutative subspaces (maximal tori) $\mathfrak{t}$ of $\mathfrak{u}$. 
Fix a maximal torus $\mathfrak{t}$ of $\mathfrak{u}$.
Then, any maximal flat is written as $F(g) := \{ g e^\lambda g^{\dagger} \mid \lambda \in i \mathfrak{t}\}$ for $g \in G$,
where $\lambda \mapsto g e^\lambda g^{\dagger}$ is an isometry from 
Euclidean space $i \mathfrak{t}$ to $F(g)$.
The dimension $d$ of $i \mathfrak{t}$ is equal to the rank of $M$.
Via $\lambda \mapsto e^{\infty \lambda}$, 
we regard $i \mathfrak{t}$ as a subset, particularly, an apartment of 
$CM^{\infty}$.
Let $i \mathfrak{t}^+$ be any fixed (asymptotic) Weyl chamber in $i \mathfrak{t}$, 
and let $B$ denote the minimal parabolic subgroup (Borel subgroup) for $i \mathfrak{t}^+$.
Any point $p$ in $CM^{\infty}$ is written as 
$\lambda \cdot {\cal F}$ for $\lambda \in i \mathfrak{t}^+$ and 
${\cal F} \in G/B$.

Since $M = G \cap P_n$, 
any Weyl chamber of $M$ is written as the intersection of 
a Weyl chamber of $P_n$ and a maximal flat of $M$.
Consequently, $M^{\infty}$ is an isometric subspace of $P_n^{\infty}$.
By using the notation in Section~\ref{subsec:P_n}, 
for some $k_0 \in U(n)$,  
vectors $\lambda \in i \mathfrak{t}^+$ are written as $\bar \lambda \cdot [k_0]$, 
where $\bar \lambda$ ranges over a subspace of arranged vectors.

It is known \cite{BFGOWW,Woodward} that the Kempf-Ness function 
$f_v$ is convex on $M$.
Indeed,
consider the expression of $f_v$ in the maximal flat $F(g)$. 
Since $\{ e^{\lambda}\}_{\lambda \in i \mathfrak{t}}$ is a commutative subgroup, 
there is a finite set $\Omega(\pi)$ of vectors, called {\em weights},  in $i\mathfrak{t}$
such that matrices $\pi(e^{\lambda})= e^{\Pi(\lambda)}$ $(\lambda \in  i \mathfrak{t})$ are simultaneously 
diagonalized to
a diagonal matrix of diagonals $e^{\trace \omega \lambda}$ for $\omega \in \Omega(\pi)$.
Therefore, we have
\begin{equation}\label{eqn:f_v(ge^lambdag^dagger)}
f_v(g e^{\lambda} g^{\dagger}) = \log \sum_{\omega \in \Omega(\pi)} \|(\pi(g^{\dagger})v)_{\omega}\|^2 e^{\trace \omega \lambda } \quad (\lambda \in i \mathfrak{t}),  
\end{equation}
where $(\pi(g^{\dagger})v)_{\omega}$ denotes the orthogonal projection of $\pi(g^{\dagger})v$ to the eigenspace of $\omega$.
By Lemma~\ref{lem:log_sum_exp} and the expression (\ref{eqn:f_v(ge^lambdag^dagger)}), we have:
\begin{Lem} $f_v$ is convex, where:
	\begin{itemize}
	\item[(1)] The recession function $f_v^{\infty}$ is given by
	\begin{equation}\label{eqn:f_v^infty(p)}
	f_v^{\infty}(p) = \max \{ \trace \omega \lambda  \mid \omega \in \Omega(\pi) :(\pi(g^{\dagger})v)_{\omega}  \neq 0\}, 
	\end{equation}
	where $p = g e^{\lambda \infty} g^{\dagger}$ for $g \in G$ and $\lambda \in i \mathfrak{t}$.
	\item[(2)] $B_{CF(g)^{\infty}}(f_v^\infty)$ is the convex hull of $\omega$ over all $\omega \in \Omega(\pi)$ with $(\pi(g^{\dagger})v)_{\omega} \neq 0$, where $\omega$ are viewed as points in $CF(g)^{\infty}$ by $\omega \mapsto ge^{\omega \infty}g^{\dagger}$.  
\end{itemize}
\end{Lem}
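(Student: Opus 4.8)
The plan is to reduce all three assertions to Lemma~\ref{lem:log_sum_exp} via the flat expression (\ref{eqn:f_v(ge^lambdag^dagger)}). First I would prove convexity of $f_v$. Fix a maximal flat $F(g)=\{ge^\lambda g^\dagger \mid \lambda\in i\mathfrak{t}\}$; under the isometry $i\mathfrak{t}\ni\lambda\mapsto ge^\lambda g^\dagger$, the restriction $f_v|_{F(g)}$ is the function $\lambda\mapsto \log\sum_{\omega\in\Omega(\pi)} a_\omega e^{\trace\omega\lambda}$ with $a_\omega:=\|(\pi(g^\dagger)v)_\omega\|^2\ge 0$, where $\lambda\mapsto\trace\omega\lambda$ is a linear functional on the Euclidean space $i\mathfrak{t}$. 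Discarding the terms with $a_\omega=0$, this is precisely the log-sum-exp function of Lemma~\ref{lem:log_sum_exp}, hence convex on $i\mathfrak{t}\simeq F(g)$. Since every geodesic segment of $M$ extends to a geodesic line, and every geodesic line of a symmetric space of nonpositive curvature lies in some maximal flat (as every element of $i\mathfrak u$ is contained in a maximal torus; cf.~\cite{Eberlien}), flat-wise convexity upgrades to geodesic convexity of $f_v$ on $M$.

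For part (1), I would compute $f_v^\infty$ apartment by apartment. Given $p=ge^{\lambda\infty}g^\dagger\in CM^\infty$, the constant-speed ray $c(t)=ge^{t\lambda}g^\dagger$ lies in $F(g)$ and satisfies $c(\infty)=p$, so $f_v^\infty(p)=\lim_{t\to\infty}f_v(c(t))/t$ is the value at $\lambda$ of the recession function of the Euclidean function $f_v|_{F(g)}$. By the computation inside the proof of Lemma~\ref{lem:log_sum_exp}, this equals $\max\{\trace\omega\lambda\mid\omega\in\Omega(\pi),\ a_\omega\ne 0\}$, which is (\ref{eqn:f_v^infty(p)}). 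Positive homogeneity of $f_v^\infty$ (already established in general) makes the choice of representative $(g,\lambda)$ of $p$ irrelevant, and every $p\in CM^\infty$ admits such a representative because $CM^\infty$ is the union of the apartments $CF(g)^\infty$ and each contains the origin.

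For part (2), I would invoke Lemma~\ref{lem:log_sum_exp} directly on the apartment $E:=CF(g)^\infty$. Identifying $E$ with $i\mathfrak{t}\simeq\RR^d$ — an identification under which $\langle\cdot,\cdot\rangle$ on $E$ is the Euclidean inner product and $ge^{\omega\infty}g^\dagger$ corresponds to $\omega$ — the set $B_{CF(g)^\infty}(f_v^\infty)=\{p\in E\mid\langle u,p\rangle\le f_v^\infty(u)\ (u\in E)\}$ is exactly $B(h)$ for $h$ the log-sum-exp recession function of part (1), restricted to $E$ (recall that by Lemma~\ref{lem:standard} the restriction of $f_v^\infty$ to the Euclidean space $E$ is the support function of $\dom (f_v|_{F(g)})^*$). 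Lemma~\ref{lem:log_sum_exp} then gives $B(h)=$ the convex hull of $\{\omega\in\Omega(\pi)\mid(\pi(g^\dagger)v)_\omega\ne 0\}$, as claimed. Since each part is a mechanical specialization of Lemma~\ref{lem:log_sum_exp}, there is no genuine obstacle; the only points requiring care are the fact that every geodesic lies in a maximal flat (needed to pass from flat-wise convexity to convexity on $M$) and the verification that the identification $CF(g)^\infty\simeq i\mathfrak{t}$ is simultaneously compatible with the inner product $\langle\cdot,\cdot\rangle$ and with the weight embedding $\omega\mapsto ge^{\omega\infty}g^\dagger$, so that Lemma~\ref{lem:log_sum_exp} applies verbatim.
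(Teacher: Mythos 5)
Your proposal is correct and follows the paper's approach exactly: both reduce all three claims to Lemma~\ref{lem:log_sum_exp} via the flat expression (\ref{eqn:f_v(ge^lambdag^dagger)}), with you spelling out the details the paper leaves implicit (flat-wise convexity upgrading to geodesic convexity, compatibility of the identification $CF(g)^{\infty}\simeq i\mathfrak{t}$ with $\langle\cdot,\cdot\rangle$). One small misattribution: independence of the right-hand side of (1) from the chosen representative $(g,\lambda)$ of $p$ is a consequence of the well-definedness of $f^{\infty}$ on $CM^{\infty}$ (the Kleiner--Leeb lemma that asymptotic rays yield the same limit), not of positive homogeneity, which only governs rescaling of $\lambda$.
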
 
To study the boundedness of $f_v$, the following criterion is fundamental:
\begin{Thm}[{Hilbert-Mumford criterion; see \cite[Section 3.4.2]{Wallach_GIT}}]
		If $\inf_{g \in G} \|\pi(g)v\| = 0$, then there is $u \in i\mathfrak{u}$ 
		such that $\lim_{t \to \infty} \|\pi(e^{tu})v\| = 0$.
\end{Thm}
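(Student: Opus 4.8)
My plan is to convert the statement into the convex-analytic language of the previous sections and read it off from the tools already established there. Set $M=G\cap P_n$. Since $x\mapsto x^{1/2}$ preserves $G$, every $x\in M$ equals $g^{\dagger}g$ for some $g\in G$, and $\|\pi(g)v\|^{2}=\langle v,\pi(g^{\dagger}g)v\rangle=e^{f_v(g^{\dagger}g)}$; hence $\inf_{g\in G}\|\pi(g)v\|=0$ is precisely $\inf_{x\in M}f_v(x)=-\infty$, i.e.\ $f_v^{*}(0)=\infty$ (recall $b_0\equiv 0$, so $f_v^{*}(0)=-\inf_{x}f_v(x)$). On the other side, a one-parameter subgroup $t\mapsto e^{tu}$ with $u\in i\mathfrak u$ is exactly a constant-speed geodesic ray issuing from $I$, because $T_IM=i\mathfrak u$ and $\exp_I(tu)=e^{tu}\in M$; and, reparametrising, $\|\pi(e^{tu})v\|\to 0$ if and only if $f_v(e^{tu})\to-\infty$. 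So the theorem is equivalent to: if $\inf_{x\in M}f_v(x)=-\infty$, then $f_v$ tends to $-\infty$ along some geodesic ray issuing from $I$.

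First I would reduce ``$f_v(e^{tu})\to-\infty$'' to the simpler ``$f_v^{\infty}(u)<0$''. Along the maximal flat through $I$ containing the direction $u$, formula (\ref{eqn:f_v(ge^lambdag^dagger)}) together with Lemma~\ref{lem:log_sum_exp} shows that $t\mapsto f_v(e^{tu})$ is a log-sum-exp, so $f_v(e^{tu})=f_v^{\infty}(u)\,t+C+o(1)$ with $C$ finite (the logarithm of the sum of the squared weight components attaining the maximum in (\ref{eqn:f_v^infty(p)})); in particular $f_v(e^{tu})\to-\infty$ iff $f_v^{\infty}(u)<0$. Using also that every ray from $I$ is of this form, the whole statement becomes: $0\notin\dom f_v^{*}$ implies there is $u_0\in M^{\infty}$ with $f_v^{\infty}(u_0)<0$, i.e.\ $0\notin B(f_v^{\infty})$. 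Granting this, the proof finishes at once: pick such a $u_0$, let $c(t)=e^{tw}$ with $w\in i\mathfrak u$, $\|w\|_I=1$, be the unit-speed geodesic ray from $I$ with $c(\infty)=u_0$, so that $f_v^{\infty}(u_0)=\lim_{t\to\infty}f_v(c(t))/t<0$ forces $f_v(e^{tw})\to-\infty$, whence $\|\pi(e^{(t/2)w})v\|\to 0$, and $u:=w/2\in i\mathfrak u$ is the sought vector.

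The remaining implication ``$0\notin\dom f_v^{*}\Rightarrow 0\notin B(f_v^{\infty})$'' is the contrapositive of the inclusion $B(f_v^{\infty})\subseteq\dom f_v^{*}$; since $\dom f_v^{*}\subseteq B(f_v^{\infty})$ holds for free (Lemma~\ref{lem:unbounded}), what is needed is the equality $\dom f_v^{*}=B(f_v^{\infty})$ for the Kempf-Ness function --- exactly the general version (Proposition~\ref{prop:generalversion}) of Proposition~\ref{prop:domf_A*}. This is the main obstacle: geometrically it says that whenever $f_v$ is unbounded below, the escape to $-\infty$ can be realised along a single geodesic ray. A direct attack would take a minimizing sequence $x_k=k_ke^{\lambda_k}k_k^{\dagger}$ (Cartan decomposition, $k_k\in K$, $\lambda_k$ in a fixed Weyl chamber), note $\|\lambda_k\|\to\infty$ since $f_v$ is finite and continuous and closed balls are compact (Hopf--Rinow), and extract $k_k\to k^{*}$ and $\lambda_k/\|\lambda_k\|\to\mu$ to obtain a candidate direction $k^{*}\mu$. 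The delicate point is strictness: from $f_v(x_k)\to-\infty$ one only gets $\trace\omega\,\mu\le 0$ for every weight $\omega$ with $(\pi(k^{*\dagger})v)_{\omega}\neq 0$, and a weight sitting exactly on the supporting hyperplane makes $f_v^{\infty}(k^{*}\mu\cdot\mathcal F)=0$ rather than $<0$. Overcoming this is precisely where one must exploit the polyhedral/Euclidean-building structure of $CM^{\infty}$ and the explicit weight description (\ref{eqn:f_v^infty(p)}) --- essentially restricting to the span of the ``neutral'' weight spaces and iterating --- which is what Proposition~\ref{prop:generalversion} packages; alternatively, one may invoke Proposition~\ref{prop:minimizer} directly when $0\in\interior B(f_v^{\infty})$ and dispose of the boundary case separately.
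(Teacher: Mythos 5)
The paper does not prove the Hilbert--Mumford criterion. It is stated as an external result (with a pointer to \cite[Theorem 3.23]{Wallach_GIT}, which the paper notes contains an elementary proof) and is then used as a black box to establish the direction (c) $\Rightarrow$ (b) of Theorem~\ref{thm:main0}. Proposition~\ref{prop:generalversion} in turn invokes Theorem~\ref{thm:main0}, and Proposition~\ref{prop:domf_A*} is proved in the paper only as a special case of Proposition~\ref{prop:generalversion}.

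Your proposal is therefore circular. The translation you give --- that the criterion amounts to ``$\inf_{x\in M}f_v(x)=-\infty$ implies $0\notin B(f_v^\infty)$,'' which is exactly the missing inclusion $B(f_v^\infty)\subseteq\dom f_v^*$ --- is a clean restatement of the theorem in the paper's language, and you correctly isolate the delicate point: directions whose asymptotic slope is exactly $0$. But the key step you then lean on, namely the equality $\dom f_v^*=B(f_v^\infty)$ of Proposition~\ref{prop:generalversion}, is obtained in the paper precisely from Theorem~\ref{thm:main0}, whose (c) $\Rightarrow$ (b) direction is deduced \emph{from} the Hilbert--Mumford criterion. So you are using the statement to prove itself. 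The fallback you mention (invoke Proposition~\ref{prop:minimizer} when $0\in\interior B(f_v^\infty)$ and ``dispose of the boundary case separately'') does not close the gap, because the boundary case is exactly where the content of the theorem lives --- when $0\in\interior B(f_v^\infty)$ a minimizer of $f_v$ exists and the hypothesis of the theorem is simply false. A genuinely self-contained proof would have to carry through the ``direct attack'' you sketch (minimizing sequence, Cartan decomposition, extracting a limiting direction, and a recursive reduction on the subgroup preserving the neutral weight spaces to break the tie when $\trace\omega\,\mu=0$), which is in essence Wallach's elementary argument and is exactly what the paper refers out to rather than reproducing.
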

The reference~\cite[Theorem 3.23]{Wallach_GIT} also includes an elementary proof.
As noticed in \cite{KLM2009JDG,Woodward}, 
the nonnegativity of the asymptotic slope function of $f_v$ is equivalent to
the Hilbert-Mumford criterion:
\begin{Thm}[see \cite{KLM2009JDG,Woodward}]\label{thm:main0} The  following conditions are equivalent:
	\begin{itemize}
		\item[(a)] $\inf_{x \in M} \|\nabla f_v(x)\|_x = 0$.
		\item[(b)] $\inf_{x \in M} f_v(x) > -\infty$.
		\item[(c)] $0 \in B(f^{\infty}_v)$.
	\end{itemize}
\end{Thm}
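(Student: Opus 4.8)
The plan is to deduce the theorem from results already in place, invoking the Hilbert--Mumford criterion only for the one implication that genuinely requires it. The equivalence (a) $\Leftrightarrow$ (c) is just Theorem~\ref{thm:char_of_B(f^infty)} specialized to $p=0$: since the Busemann function $b_0$ of the zero ray is identically $0$, the condition $\inf_{x\in M}\|\nabla(f_v+b_0)(x)\|_x=0$ is exactly (a) and the condition $0\in B(f_v^\infty)$ is exactly (c); one only needs that $f_v$ is a smooth convex function on the symmetric space $M$, which is recorded above. For (b) $\Rightarrow$ (c), note that $b_0\equiv 0$ gives $f_v^*(0)=\sup_{x\in M}(-f_v(x))=-\inf_{x\in M}f_v(x)$, so (b) is precisely the statement $0\in\dom f_v^*$, and Lemma~\ref{lem:unbounded} then yields $0\in\dom f_v^*\subseteq B(f_v^\infty)$.

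It remains to prove (c) $\Rightarrow$ (b), which I would do by contraposition. Assume $\inf_{x\in M}f_v(x)=-\infty$. Since every $x\in M=G\cap P_n$ equals $g^\dagger g$ for some $g\in G$ and conversely, this says $\inf_{g\in G}\|\pi(g)v\|^2=\inf_{x\in M}e^{f_v(x)}=0$, so the Hilbert--Mumford criterion produces $u\in i\mathfrak{u}$ with $\lim_{t\to\infty}\|\pi(e^{tu})v\|=0$. Because $i\mathfrak{u}$ consists of Hermitian matrices we have $u^\dagger=u$, and using $\pi(g)^\dagger=\pi(g^\dagger)$ one gets $\|\pi(e^{tu})v\|^2=\langle v,\pi(e^{2tu})v\rangle=e^{f_v(e^{2tu})}$; hence $f_v(e^{su})\to-\infty$ as $s\to\infty$, and in particular $u\neq 0$. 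Writing the Hermitian operator $\Pi(u)=d\pi(I)(u)$ spectrally as $\Pi(u)=\sum_j\gamma_j P_j$ gives $f_v(e^{su})=\log\sum_j e^{s\gamma_j}\|P_jv\|^2$, and for $\gamma^*:=\max\{\gamma_j:P_jv\neq 0\}$ the bound $f_v(e^{su})\ge s\gamma^*+\log\|P_{j^*}v\|^2$ forces $\gamma^*<0$ (otherwise $f_v(e^{su})$ would be bounded below). The same expansion gives, for the boundary point $p:=\exp_I \infty u\in CM^\infty\setminus\{0\}$,
\[
f_v^\infty(p)=\lim_{t\to\infty}\tfrac{1}{t}\log\sum_j e^{t\gamma_j}\|P_jv\|^2=\gamma^*<0 .
\]
Since $\langle p,0\rangle=0>f_v^\infty(p)$, rescaling $p$ to a point of $M^\infty$ shows $0\notin B(f_v^\infty)$, i.e.\ (c) fails; together with (b) $\Rightarrow$ (c) this closes the cycle.

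I do not expect a serious obstacle: once the Hilbert--Mumford criterion is available the argument is bookkeeping. The one point that does require care is that the witnessing one-parameter subgroup $t\mapsto e^{tu}$ must produce a boundary direction along which $f_v^\infty$ is \emph{strictly} negative --- plain convexity of $f_v$ along the ray would only give asymptotic slope $\le 0$ --- and this strictness is exactly where the explicit ``logarithm of a sum of exponentials'' form of $f_v$ on maximal flats, equivalently the piecewise-linear structure of $f_v^\infty$ coming from Lemma~\ref{lem:log_sum_exp}, enters.
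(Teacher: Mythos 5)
Your proposal is correct and takes essentially the same route as the paper: (a)\,$\Leftrightarrow$\,(c) via Theorem~\ref{thm:char_of_B(f^infty)} at $p=0$, (b)\,$\Rightarrow$\,(c) via Lemma~\ref{lem:unbounded}, and (c)\,$\Rightarrow$\,(b) by contraposition through the Hilbert--Mumford criterion together with the log-sum-exp structure of $f_v$ along the witnessing one-parameter subgroup. The only cosmetic difference is that you compute $f_v^\infty$ along $t\mapsto e^{tu}$ directly from the spectral decomposition of $\Pi(u)$, whereas the paper embeds that geodesic in a maximal flat and invokes Lemma~\ref{lem:log_sum_exp} and Proposition~\ref{prop:C_cap_B(h)} --- the same computation packaged differently --- and your explicit remark that convexity alone gives only slope $\le 0$, so the strict negativity really comes from the log-sum-exp form, is a point the paper leaves implicit.
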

The equivalence (a) $\Leftrightarrow$ (b) is known as the Kempf-Ness theorem~\cite{KempfNess}, 
and is called the {\em noncommutative duality} in \cite{BFGOWW}.
\begin{proof}
	We have already seen (a) $\Leftrightarrow$ (c) and (b) $\Rightarrow$ (c) in general situation; see Lemma~\ref{lem:unbounded} and Theorem~\ref{thm:char_of_B(f^infty)}.
	We verify (c) $\Rightarrow$ (b). 
	Suppose that $\inf_{x \in M} f_v(x) = -\infty$. By the Hilbert-Mumford criterion, there is $u \in i\mathfrak{u}$ 
	such that $\lim_{t \to \infty} f_v(e^{tu}) = - \infty$.
	Consider a maximal flat $F$ containing geodesic $t \mapsto e^{tu}$.
	Then $f_v$ is unbounded on $F$. By Lemma~\ref{lem:log_sum_exp}, we have $0 \not \in  B((f_v)_{F}^{\infty}) =B_{CF^{\infty}}(f_v^{\infty})$.
	By Proposition~\ref{prop:C_cap_B(h)}, we have $0 \not \in B(f_v^{\infty})$.
\end{proof}


In particular, a one-parameter subgroup $t \to e^{tu}$ in the Hilbert-Mumford criterion 
can be found by 
convex optimization of $f_v^{\infty}$ on Euclidean building $CM^{\infty}$:
\begin{equation}\label{eqn:opt_nullcone}
\mbox{inf.} \quad f_v^{\infty}(u) \quad {\rm s.t.} \quad u \in  U,\\
\end{equation}
where $U$ is any convex  neighborhood of the origin.
%

%
We are going to extend Theorem~\ref{thm:main0} for $f_v + b_p$
with giving a whole description of $B(f_v^{\infty})$ and $\dom f_v^*$.
For this,
we need a representation theoretic interpretation of Busemann functions.
By a {\em weight}
we mean a point in $i \mathfrak{t}$ that arises as a weight of some representation.
It is known that the set of 
weights is a discrete subgroup ({\em weight lattice}) in $i \mathfrak{t}$, 
and is generated by weights in $i \mathfrak{t}^+$. 
Any weight $\lambda$ in $i\mathfrak{t}^+$ determines 
an irreducible representation $\pi_{\lambda}$ of $G$ 
such that $\lambda$ is a highest weight.
The eigenspace for $\lambda$ is one dimensional, and
the unit eigenvector is denoted by $v_{\lambda}$.
\begin{Lem}\label{lem:b_-lambda_infty}
	For a weight $\lambda$, it holds
		$b_{e^{- \lambda \infty}}(g^{\dagger}g) = \log \|\pi_{\lambda}(g) v_\lambda\|^2 \quad (g \in G)$.
\end{Lem}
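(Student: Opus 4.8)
\textit{Proof sketch.} The plan is to recognize both sides as functions on $M=G\cap P_n$, show that each is invariant under the horospherical subgroup attached to the ray $t\mapsto e^{-t\lambda}$, and then reduce the identity to an elementary weight computation on the flat of parallels of that ray; throughout $\lambda\in i\mathfrak{t}^+$ (the case $\lambda=0$ being trivial). First I would make the right-hand side intrinsic to $M$: since $\pi(g)^\dagger=\pi(g^\dagger)$ and the inner product is $K$-invariant, $\|\pi_\lambda(g)v_\lambda\|^2=\langle v_\lambda,\pi_\lambda(g^\dagger g)v_\lambda\rangle$ depends on $g$ only through $x:=g^\dagger g$, which ranges over all of $M$ (take $g=x^{1/2}\in G\cap P_n$). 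Hence $\log\|\pi_\lambda(g)v_\lambda\|^2=\phi(x)$ with $\phi(x):=\log\langle v_\lambda,\pi_\lambda(x)v_\lambda\rangle$ the Kempf--Ness function $f_{v_\lambda}$ of $(\pi_\lambda,v_\lambda)$, hence (geodesically) convex on $M$. Both $\phi$ and $b:=b_{e^{-\lambda\infty}}$ vanish at $I$: $\phi(I)=\log\|v_\lambda\|^2=0$, and $e^{-\lambda\infty}$ is by definition the asymptotic class in $CM^\infty$ of the constant-speed ray $c(t)=e^{-t\lambda}$ through $I$ (speed $r:=\sqrt{\trace\lambda^2}$), so $b(I)=r\,b_{c^*}(I)=0$. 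It remains to prove $\phi\equiv b$ on $M$.

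Next I would invoke the horospherical subgroup. Let $\xi:=c^*(\infty)\in M^\infty$, so $e^{-\lambda\infty}=r\xi$ and $b=r\,b_\xi$; as recalled in Section~\ref{subsec:symmetric_space}, the horospherical subgroup $N_\xi\subseteq G$ preserves $b_\xi$, hence $b$, for the action $n\cdot x=nxn^\dagger$ on $M$. I would then identify $N_\xi$ Lie-theoretically: writing $\mathfrak{g}=\mathfrak{g}_0\oplus\bigoplus_\alpha\mathfrak{g}_\alpha$ for the restricted-root decomposition with respect to $\mathfrak{a}:=i\mathfrak{t}$, one has $\mathrm{Ad}(e^{t\lambda/2})|_{\mathfrak{g}_\alpha}=e^{t\langle\alpha,\lambda\rangle/2}\,\mathrm{id}$, so the contraction criterion defining $N_\xi$ gives $N_\xi=\exp\bigl(\bigoplus_{\langle\alpha,\lambda\rangle<0}\mathfrak{g}_\alpha\bigr)$, whence $N_\xi^\dagger=\exp\bigl(\bigoplus_{\langle\beta,\lambda\rangle>0}\mathfrak{g}_\beta\bigr)$, every such $\beta$ being a positive root because $\lambda$ is dominant. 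As $v_\lambda$ is a highest-weight vector it is annihilated by all positive root spaces, so $\pi_\lambda(n^\dagger)v_\lambda=v_\lambda$ for all $n\in N_\xi$, and therefore
\[
\phi(nxn^\dagger)=\log\langle v_\lambda,\pi_\lambda(n)\pi_\lambda(x)\pi_\lambda(n^\dagger)v_\lambda\rangle=\log\langle\pi_\lambda(n^\dagger)v_\lambda,\pi_\lambda(x)\pi_\lambda(n^\dagger)v_\lambda\rangle=\phi(x),
\]
i.e.\ $\phi$ is $N_\xi$-invariant as well. By the generalized Iwasawa decomposition $M\cong N_\xi\times F(c)$, $(n,y)\mapsto nyn^\dagger$, it then suffices to prove $\phi\equiv b$ on the flat of parallels $F(c)$.

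Finally I would compute on $F(c)$. The parallel set is the symmetric space $F(c)=Z_G(\lambda)\cap P_n$ of the Levi $L:=Z_G(\lambda)$, and splits as a Riemannian product of the Euclidean factor $e^{i(\mathfrak{z}(\mathfrak{l})\cap\mathfrak{u})}$ and the noncompact-type factor $L'\cap P_n$, $L':=[L,L]$; since $\lambda$ is central in $\mathfrak{l}$, the direction $\xi$ lies at infinity of the Euclidean factor. On $L'\cap P_n$ the function $\phi$ is identically $0$: for $w=z^\dagger z$ with $z\in L'$ one has $\pi_\lambda(w)v_\lambda=v_\lambda$, because the weight $\lambda$ vanishes on $[\mathfrak{l},\mathfrak{l}]$ (whose roots are exactly the $\alpha$ with $\langle\lambda,\alpha^\vee\rangle=0$), so $v_\lambda$ is fixed by $L'$. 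On the Euclidean factor, $\phi(e^\nu)=\log\langle v_\lambda,e^{\Pi_\lambda(\nu)}v_\lambda\rangle=\trace\lambda\nu$ since $v_\lambda$ has weight $\lambda$. A general point of $F(c)$ is $e^\nu w$ with $\nu\in i(\mathfrak{z}(\mathfrak{l})\cap\mathfrak{u})$ and $w\in L'\cap P_n$ commuting, so $\phi(e^\nu w)=\trace\lambda\nu$. On the other side, $b$ restricted to $F(c)$ is the Busemann function of a direction of the Euclidean factor, hence the pullback of the corresponding Euclidean Busemann function; by Example~\ref{ex:Euclidean_Busemann} (velocity $-\lambda$, base point $I$) that pullback is $e^\nu w\mapsto -\trace(-\lambda)\nu=\trace\lambda\nu$, which equals $\phi$. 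Hence $\phi\equiv b$ on $F(c)$, and by the $N_\xi$-invariance of both sides together with $M=N_\xi\cdot F(c)$, on all of $M$.

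The main obstacle is the Lie-theoretic dictionary in the two middle steps: one must correctly pin down the horospherical subgroup $N_\xi$ and the parallel set $F(c)$ in terms of the parabolic $\mathrm{Stab}_G(\CC v_\lambda)$, its Levi $Z_G(\lambda)$ and its unipotent radical, and then invoke the highest-weight relations $\Pi_\lambda(\mathfrak{g}_\beta)v_\lambda=0$ for $\beta>0$ and $\Pi_\lambda([\mathfrak{l},\mathfrak{l}])v_\lambda=0$; one should also phrase the generalized Iwasawa decomposition and the product structure of $F(c)$ in a form valid for singular as well as regular $\lambda$ (this is automatic, as any dominant $\lambda$ gives a standard parabolic). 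Once this dictionary is set up, the argument reduces to the two one-line weight computations above.
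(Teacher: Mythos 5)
Your proof is correct, but it follows a genuinely different and in fact more careful route than the paper's. The paper applies the standard Iwasawa decomposition $g = k e^{\nu} n$ with $n$ in the maximal unipotent $N$ attached to the open chamber $i\mathfrak{t}^+$, computes the right-hand side from $\pi_{\lambda}(n) v_{\lambda} = v_{\lambda}$, and for the left-hand side asserts that $n^{\dagger}$ lies in the horospherical subgroup of $e^{-\lambda\infty}$ by checking $\lim_{t\to\infty} d(n e^{\lambda t} n^{\dagger}, e^{\lambda t}) = 0$; this last limit, however, is valid only when $\lambda$ is regular --- if $n$ has a nontrivial component in some $\mathfrak{g}_{\alpha}$ with $\alpha > 0$ and $\langle \alpha,\lambda\rangle = 0$, the limit is $d(n_0 n_0^{\dagger}, I) > 0$, so the paper's justification of the $N$-invariance of $b_{e^{-\lambda\infty}}$ has a gap for singular dominant weights (the conclusion nonetheless holds). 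Your argument sidesteps this: you use the horospherical subgroup $N_{\xi}$ adapted to the (possibly singular) direction $\xi = e^{-\lambda\infty}$ itself, show both sides are $N_{\xi}$-invariant (the Kempf--Ness side via $N_{\xi}^{\dagger} \subseteq \exp\bigl(\bigoplus_{\beta>0}\mathfrak{g}_{\beta}\bigr)$ and the highest-weight relations), invoke the generalized Iwasawa decomposition $M \cong N_{\xi} \times F(c)$ onto the flat of parallels $F(c) = Z_G(\lambda) \cap P_n$, and then split $F(c)$ into its Euclidean factor (where both sides give $\operatorname{tr}\lambda\nu$) and the noncompact factor of the derived Levi (where both sides vanish, since $L' = [Z_G(\lambda), Z_G(\lambda)]$ fixes $v_{\lambda}$). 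This is a clean and complete treatment of the singular case, at the cost of more Lie-theoretic scaffolding (restricted roots, Levi/derived decomposition, parallel set) than the paper's one-line Iwasawa computation; what the paper's shorter proof buys in brevity it pays for with the unflagged assumption of regularity in the displayed limit.
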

\begin{proof}
	Consider Iwasawa decomposition $g = k e^{\nu} n$ for $k \in K, \nu \in i \mathfrak{t}, n \in N$, where $N$ is interpreted as the horospherical subgroup 
	for $i \mathfrak{t}^+$ (see \cite[Section 2.17]{Eberlien}).
	From $\pi_{\lambda}(n)v_\lambda = v_\lambda$ (see \cite[Theorem 5.5]{KanppLieGroup}), 
	the RHS equals $2 \trace \nu \lambda$.
	On the other hand, $n^{\dagger}$ is an element of 
	the horospherical subgroup of the opposite Weyl chamber $- i\mathfrak{t}^+$,
	since $\lim_{t \to \infty} d(n^{-\dagger}e^{-\lambda t}n^{-1},e^{-\lambda t}) = \lim_{t \to \infty} d(ne^{\lambda t}n^{\dagger},e^{\lambda t}) = 0$ for $\lambda \in t \mathfrak{t}^+$. 
	Then the LHS equals $b_{e^{- \lambda \infty}}(n^{\dagger}e^{2\nu}n) = b_{e^{- \lambda \infty}}(e^{2\nu}) = - 2 \trace (- \lambda \nu)$ (by Example~\ref{ex:Euclidean_Busemann}).
\end{proof}

Therefore, the minimization of $f_v + b_p$ is essentially 
the $p$-{\em scaling problem} in~\cite{BFGOWW}.
A point $\nu$ in $i \mathfrak{t}$ is said to be {\em rational} if $\alpha \nu$ 
is a weight for some positive integer $\alpha$.
Let $s: CM^{\infty} \to i \mathfrak{t}^+$ denote the projection $\lambda \cdot {\cal F} \mapsto \lambda$.
\begin{Prop}\label{prop:f_v^infty}
		There is a finite set $\Lambda \subseteq i \mathfrak{t}^+$ (independent of $v$) such that
		\begin{equation}
		B(f^{\infty}_v) = \{ p \in CM^{\infty} \mid \langle \nu \cdot {\cal F}, p \rangle \leq f^{\infty}_{v}(\nu \cdot {\cal F})\ (\nu \in \Lambda, {\cal F} \in G/B)\}.
		\end{equation}
	For any Weyl chamber $C$, the projection 
		$s(C \cap B(f^{\infty}_v))$ 
          is a rational convex polytope.
\end{Prop}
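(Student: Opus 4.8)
The plan is to reduce, via Proposition~\ref{prop:C_cap_B(h)}, the set $B(f_v^\infty)$ on each Weyl chamber to a finite intersection of lattice polytopes sitting in one model apartment $i\mathfrak{t}$, using the fact recorded above that each $B_{CF(g)^\infty}(f_v^\infty)$ is the convex hull of a subset of the fixed finite set $\Omega(\pi)$ of weights. All the finiteness and rationality will then follow from $\Omega(\pi)$ being a $W$-invariant finite subset of the (rational) weight lattice and from $W$ acting on $i\mathfrak{t}$ by rational orthogonal maps, so that Weyl chambers are rational polyhedral cones.

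First I would fix the combinatorial data. Let $\Lambda_0 \subseteq i\mathfrak{t}$ be the finite union, over all $S \subseteq \Omega(\pi)$, of a finite set of rational vectors $\eta$ cutting out $\mathrm{conv}(S)$ by inequalities $\langle \eta,\cdot\rangle \leq \max_{\omega\in S}\langle\eta,\omega\rangle$, and let $\Lambda \subseteq i\mathfrak{t}^+$ be the set of $W$-orbit representatives in $i\mathfrak{t}^+$ of the points of $W\Lambda_0$; since $\Omega(\pi)$ and $W$ depend only on $\pi$, so does $\Lambda$, and $\Lambda$ is finite and rational. Now fix a Weyl chamber $C$ with flag ${\cal F}_0 \in G/B$. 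By Proposition~\ref{prop:C_cap_B(h)}, $C \cap B(f_v^\infty) = \bigcap_F \bigl(C \cap B_{CF^\infty}(f_v^\infty)\bigr)$ over maximal flats $F = F(g)$ with $CF^\infty \supseteq C$. For each such $F$ I identify the apartment $CF(g)^\infty$ with $i\mathfrak{t}$ by the isometry $\lambda \mapsto g e^{\lambda\infty}g^\dagger$, under which $\langle,\rangle$ becomes the Euclidean product on $i\mathfrak{t}$ (Section~\ref{subsec:symmetric_space}), the Weyl chamber $C$ corresponds to $w_F(i\mathfrak{t}^+)$ for some $w_F \in W$, the restriction $s|_C$ corresponds to $w_F^{-1}\colon w_F(i\mathfrak{t}^+)\to i\mathfrak{t}^+$, and $B_{CF(g)^\infty}(f_v^\infty)$ corresponds to $\mathrm{conv}(S_F)$ with $S_F := \{\omega \in \Omega(\pi): (\pi(g^\dagger)v)_\omega \neq 0\}$ (the lemma preceding this proposition). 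Reading everything inside $i\mathfrak{t}$ then gives
\[
s\bigl(C \cap B(f_v^\infty)\bigr) = i\mathfrak{t}^+ \cap \bigcap_F \mathrm{conv}(w_F^{-1}S_F).
\]
As $F$ varies, the pair $(w_F,S_F)$ takes only finitely many values in $W \times 2^{\Omega(\pi)}$, and each $w_F^{-1}S_F \subseteq \Omega(\pi)$ lies in the weight lattice; hence the right-hand side is a finite intersection of lattice polytopes with the rational polyhedral cone $i\mathfrak{t}^+$, so it is a rational polyhedron, and it is bounded since it is contained in each bounded $\mathrm{conv}(w_F^{-1}S_F)$. This is the second assertion.

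For the first assertion I would unwind the chart identification: a facet inequality $\langle\eta,\cdot\rangle \leq \max_{\omega\in w_F^{-1}S_F}\langle\eta,\omega\rangle$ of $\mathrm{conv}(w_F^{-1}S_F)$ (with $\eta \in \Lambda_0$) becomes, for $p \in C$, the inequality $\langle q,p\rangle \leq f_v^\infty(q)$ with $q := g e^{(w_F\eta)\infty}g^\dagger \in CM^\infty$; here one uses that $\langle q,p\rangle$ equals the Euclidean product of $q$ and $p$ in the apartment $CF(g)^\infty$ containing both, and that $f_v^\infty(q) = \max_{\omega \in S_F}\langle\omega,w_F\eta\rangle$ by the formula for $f_v^\infty$ on the flat $F$ (Lemma~\ref{lem:log_sum_exp}). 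Writing $q = \nu\cdot{\cal F}$ with $\nu \in i\mathfrak{t}^+$ yields $\nu \in \Lambda$ and ${\cal F}\in G/B$. Thus, for $p \in C$, membership $p \in B(f_v^\infty)$ is equivalent to $p$ satisfying the finitely many such inequalities coming from the flats $F \supseteq C$ at infinity, all of which are among the inequalities $\langle\nu\cdot{\cal F},p\rangle \leq f_v^\infty(\nu\cdot{\cal F})$, $\nu \in \Lambda$, ${\cal F}\in G/B$; this shows $\{\,p : \langle\nu\cdot{\cal F},p\rangle \leq f_v^\infty(\nu\cdot{\cal F})\ (\nu\in\Lambda,\ {\cal F}\in G/B)\,\} \subseteq B(f_v^\infty)$, since any such $p$ lies in some $C$ and then lies in every $C \cap B_{CF^\infty}(f_v^\infty)$, hence in $B(f_v^\infty)$ by Proposition~\ref{prop:C_cap_B(h)}. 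The opposite inclusion is immediate: if $p \in B(f_v^\infty)$ then $\langle u,p\rangle \leq f_v^\infty(u)$ for all $u \in M^\infty$, hence for all $u \in CM^\infty$ by positive homogeneity, in particular for $u = \nu\cdot{\cal F}$. The main obstacle will be the bookkeeping in this chart translation — tracking $s|_C$, the Weyl-group label $w_F$, and the identification of $B_{CF^\infty}(f_v^\infty)$ with $\mathrm{conv}(S_F)$ simultaneously — after which the finiteness and rationality are formal consequences of the discreteness of the weight lattice.
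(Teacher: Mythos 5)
Your proposal is correct and takes essentially the same route as the paper: apply Proposition~\ref{prop:C_cap_B(h)} to reduce $C\cap B(f_v^\infty)$ to an intersection of finitely many polytopes $B_{CF^\infty}(f_v^\infty)$ in a single Euclidean model, observe that each such polytope is the convex hull of a subset of the fixed finite weight set $\Omega(\pi)$ (so only finitely many distinct polytopes occur), and then read off $\Lambda$ as the finite collection of facet normals. Your version carries out the chart-translation bookkeeping ($w_F$, $S_F$, the $W$-invariance of $\Omega(\pi)$) that the paper leaves implicit, but the underlying decomposition and key lemma are identical.
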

\begin{proof}
	Take a Weyl chamber $C$ of $CP^{\infty}_{n}$,  
	and consider all apartments $E$ containing $C$.
	When all $E$ are regarded as $\RR^d$ with a common convex cone $C$,
	by Proposition~\ref{prop:C_cap_B(h)},  
	$C\cap B(f^{\infty}_v)$ is the intersection of $C$ and  
	finitely many (integral) polytopes $B_{E}(f_v^\infty)$, which are convex hulls of 
	finite subsets of weights in $\Omega(\pi) \subseteq \RR^n$. 
	Consequently, $s(C \cap B(f^{\infty}_v))$ 
	is a rational convex polytope.
	
	In particular, the affine span of a facet of $B_E(f_v^{\infty})$ is spanned by a subset of $\Omega(\pi)$, and its normal vector is chosen from $i\mathfrak{t}$. 
	The corresponding inequality is written as $\langle \nu \cdot {\cal F}, p \rangle 
	\leq f_v^{\infty}(\nu \cdot {\cal F})$ for 
	some ${\cal F} \in G/B$ and $\nu \in i \mathfrak{t}^+$ (with $\nu \cdot {\cal F} \in F$).
	Thus, $\Lambda$ can be chosen as a (finite) set of vectors arising as normal vectors of $d-1$-spaces spanned by subsets of $\Omega(\pi)$. 
\end{proof}

\begin{Prop}\label{prop:generalversion}
	$\dom f_v^* = B(f_v^{\infty})$.
\end{Prop}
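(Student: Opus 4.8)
The plan is to prove the reverse of the inclusion $\dom f_v^* \subseteq B(f_v^{\infty})$ supplied by Lemma~\ref{lem:unbounded}, i.e.\ $B(f_v^{\infty}) \subseteq \dom f_v^*$. The strategy is two-staged: first handle \emph{rational} boundary points $p$ (those with $s(p)$ a rational point of $i\mathfrak{t}^+$, so that $\alpha\,s(p)$ is a weight for some $\alpha\in\ZZ_{>0}$) by a shifting/tensoring reduction to the base case $p=0$ already settled in Theorem~\ref{thm:main0}; then bootstrap to arbitrary $p$ using the chamberwise convexity of $f_v^*$ (Proposition~\ref{prop:partial_convexity}) together with the polytopality and rationality furnished by Proposition~\ref{prop:f_v^infty}.

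For the rational case, the first point is that both statements ``$p\in B(f_v^{\infty})$'' and ``$p\in\dom f_v^*$'', i.e.\ $\inf_{x\in M}(f_v+b_p)(x)>-\infty$, are invariant under replacing $(v,p)$ by $(\pi(g^{\dagger})v,\,g^{-1}p)$ for $g\in G$, and under the Weyl-group action realized inside $K$. Indeed, $f_{\pi(g^{\dagger})v}(z)=f_v(gzg^{\dagger})$ and $b_p(g\cdot z)$ differs from $b_{g^{-1}p}(z)$ by a constant, which handles the objective; for membership in $B(\cdot)$ one uses $f_{\pi(g^{\dagger})v}^{\infty}(u)=f_v^{\infty}(gu)$ and the fact that $g$ acts on $CM^{\infty}$ as an isometry fixing $0$, hence preserves $\langle\,\cdot\,,\cdot\,\rangle$. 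Writing $p=\lambda\cdot[g_1]$, the substitution $g=g_1$ reduces to $p=e^{\lambda\infty}$ in the standard apartment, and a further reduction by (a $K$-representative of) the longest Weyl element reduces to $p=e^{-\lambda\infty}$ with $\lambda$ dominant and $\alpha\lambda$ a weight. Now Lemma~\ref{lem:b_-lambda_infty} gives $\alpha\,b_p(x)=b_{e^{-(\alpha\lambda)\infty}}(x)=\log\|\pi_{\alpha\lambda}(x^{1/2})v_{\alpha\lambda}\|^2$, so that
\[
\alpha(f_v+b_p)(x)=\log\big\langle v^{\otimes\alpha}\otimes v_{\alpha\lambda},\ \Pi(x)\,(v^{\otimes\alpha}\otimes v_{\alpha\lambda})\big\rangle = f^{\Pi}_{w}(x)
\]
is the Kempf--Ness function of the rational representation $\Pi:=\pi^{\otimes\alpha}\otimes\pi_{\alpha\lambda}$ and vector $w:=v^{\otimes\alpha}\otimes v_{\alpha\lambda}$, for the tensor-product inner product (which is again $K$-invariant). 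By Theorem~\ref{thm:main0}, $\inf_M f^{\Pi}_w>-\infty$ iff $0\in B\big((f^{\Pi}_w)^{\infty}\big)$; since recession functions add and $b_p^{\infty}=-\langle\,\cdot\,,p\rangle$ (Lemma~\ref{lem:b^infty}), we have $(f^{\Pi}_w)^{\infty}=\alpha\big(f_v^{\infty}-\langle\,\cdot\,,p\rangle\big)$, and the latter condition reads $f_v^{\infty}(u)\ge\langle u,p\rangle$ for all $u\in M^{\infty}$, i.e.\ $p\in B(f_v^{\infty})$. Hence for rational $p$ we obtain $p\in B(f_v^{\infty})\iff p\in\dom f_v^*$.

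To pass to arbitrary $p$, fix a Weyl chamber $C$. The projection $s$ is injective on $C$ (the underlying flag is constant there), and by Proposition~\ref{prop:f_v^infty} it carries $C\cap B(f_v^{\infty})$ onto a rational convex polytope; hence $C\cap B(f_v^{\infty})$ is the convex hull of its finitely many vertices, each of which is a rational boundary point and therefore lies in $\dom f_v^*$ by the previous paragraph. Since $C\cap\dom f_v^*$ is convex (Proposition~\ref{prop:partial_convexity}), it contains this convex hull, so $C\cap B(f_v^{\infty})\subseteq\dom f_v^*$. Letting $C$ range over all Weyl chambers gives $B(f_v^{\infty})\subseteq\dom f_v^*$, and together with Lemma~\ref{lem:unbounded} this yields $\dom f_v^*=B(f_v^{\infty})$.

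The main obstacle is the rational case, and concretely the equivariance bookkeeping: one must check that the $G$-action and the Weyl reduction transform both sides of the equivalence compatibly, and that after the reductions Lemma~\ref{lem:b_-lambda_infty} applies with the correct sign — this is precisely why the normalization $p=e^{-\lambda\infty}$ (rather than $e^{+\lambda\infty}$) is forced. The remaining ingredients — additivity of recession functions (valid since $b_p$ is Lipschitz, even where $f_v^{\infty}=+\infty$), $K$-invariance of the tensor inner product so that Theorem~\ref{thm:main0} applies verbatim to $\Pi$, and rationality (with respect to the weight lattice) of the vertices of $C\cap B(f_v^{\infty})$ — are routine given Propositions~\ref{prop:f_v^infty} and~\ref{prop:partial_convexity}.
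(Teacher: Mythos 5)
Your proof is correct and essentially matches the paper's: first reduce to rational $p$ using the rational-polytope structure of $C\cap B(f_v^{\infty})$ from Proposition~\ref{prop:f_v^infty} together with the chamberwise convexity of $\dom f_v^{*}$ (Proposition~\ref{prop:partial_convexity}), then handle rational $p$ by tensoring $\pi^{\otimes\alpha}$ with the highest-weight representation $\pi_{\alpha\lambda}$ so that $\alpha(f_v+b_p)$ becomes a single Kempf--Ness function via Lemma~\ref{lem:b_-lambda_infty}, and invoke Theorem~\ref{thm:main0}. Your write-up is somewhat more explicit than the paper about the $G$-equivariance and Weyl-group normalization needed to put $p$ into the anti-dominant form $e^{-\lambda\infty}$ required by Lemma~\ref{lem:b_-lambda_infty}, a step the paper compresses into the phrase ``for some representation $\pi_{\nu'}$ and vector $v'$.''
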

\begin{proof}
	By rationality and convexity (Proposition~\ref{prop:f_v^infty}), 
	it suffices to show that for $p= \lambda\cdot {\cal F} \in CM^{\infty}$ with rational $\lambda \in i \mathfrak{t}^+$  
	it holds  $\inf_{x\in M}(f_v+ b_{p})(x) > - \infty$ if and only if 
	$p \in B(f_v^{\infty})$.
	Suppose that $\lambda = \nu/\alpha$ where $\nu$ is a weight and $\alpha$ is a positive integer.
	By Lemma~\ref{lem:b_-lambda_infty}, $\alpha b_p = b_{\nu \cdot {\cal F}}$ is 
	the Kempf-Ness function for some representation $\pi_{\nu'}$ and vector $v'$.
	Then $\alpha f_v + b_{\nu \cdot {\cal F}}$ is the Kempf-Ness function 
	for representation $\overbrace{\pi \otimes \pi \otimes \cdots \otimes \pi}^{\alpha} \otimes \pi_{\nu'}$
	and vector $\overbrace{v \otimes v \otimes \cdots \otimes v}^{\alpha} \otimes v'$; 
	see~\cite[Section 3.6]{BFGOWW}.
	Therefore, by Theorem~\ref{thm:main0}, we have $\inf_{x\in M}(f_v+ b_{p})(x) = \inf_{x\in M}(1/\alpha)(\alpha f_v+ b_{\nu \cdot {\cal F}})(x) > -\infty$ 
	$\Leftrightarrow$ $0 \in B((\alpha f_v+ b_{\nu \cdot {\cal F}})^{\infty})$ 
	$\Leftrightarrow$ $0 \in B(f_v^{\infty}+ b_{p}^{\infty})$ $\Leftrightarrow$ $p \in B(f_v^{\infty})$.
\end{proof}
Summarizing, we obtain a convex analysis formulation of $p$-scalability.
\begin{Thm}\label{thm:main} For $p \in CM^{\infty}$, the following conditions are equivalent:
	\begin{itemize}
		\item[(a)] $\inf_{x \in M} \|\nabla (f_v + b_p)(x)\|_x = 0$.
		\item[(b)] $- f^*(p) = \inf_{x \in M} (f_v +b_p)(x) > - \infty$.
		\item[(c)] $p \in B(f_v^{\infty})$. 
	\end{itemize}
\end{Thm}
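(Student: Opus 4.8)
The plan is to assemble the theorem from results already in hand, so that it reads as a short corollary rather than a fresh argument. First I would record the structural facts: $f_v$ is a smooth convex function on the Hadamard manifold $M = G\cap P_n$ (a symmetric space of nonpositive curvature), so every general result of Section~\ref{sec:Hadamard} applies with $f$ replaced by $f_v$. In particular Theorem~\ref{thm:char_of_B(f^infty)}, applied to $f_v$, gives the equivalence (a) $\Leftrightarrow$ (c) verbatim, since (a) is exactly $\inf_{x\in M}\|\nabla(f_v+b_p)(x)\|_x = 0$ and (c) is $p\in B(f_v^{\infty})$.

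Next I would dispatch (b) $\Leftrightarrow$ (c). By the definition (\ref{eqn:f*}) of the asymptotic Legendre--Fenchel conjugate we have $-f_v^*(p) = \inf_{x\in M}(f_v+b_p)(x)$, so condition (b) is precisely the assertion $p\in\dom f_v^*$, i.e.\ $f_v^*(p)<\infty$. The identification of this with (c) is the content of Proposition~\ref{prop:generalversion}, which states $\dom f_v^* = B(f_v^{\infty})$; note that the inclusion $\dom f_v^*\subseteq B(f_v^{\infty})$ is already the general Lemma~\ref{lem:unbounded}, while the reverse inclusion is the substantive half of Proposition~\ref{prop:generalversion}. Combining the two equivalences closes the cycle (a) $\Leftrightarrow$ (c) $\Leftrightarrow$ (b). For the reader's orientation I would add a sentence noting that (b) alone need not produce an attained minimizer, but if $p\in\interior B(f_v^{\infty})$ then Proposition~\ref{prop:minimizer} (applied to $f_v$) yields a minimizer $x$ of $f_v+b_p$, with $\nabla^{\infty}f_v(x)=p$ there by Lemma~\ref{lem:f(x)+f*(p)=-b_p(x)}; this is the analogue of the case where the Euclidean infimum is attained.

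The genuine difficulty is not in this assembly but in the two imported ingredients, and in writing the proof I would simply cite them. The main obstacle is Proposition~\ref{prop:generalversion}: proving $B(f_v^{\infty})\subseteq\dom f_v^*$ uses the rationality and convexity of the slices $s(C\cap B(f_v^{\infty}))$ from Proposition~\ref{prop:f_v^infty} to reduce to rational boundary points $p=\lambda\cdot{\cal F}$, and then the tensor-power/shifting trick — turning $\alpha f_v + b_{\nu\cdot{\cal F}}$ into the Kempf--Ness function of $\pi^{\otimes\alpha}\otimes\pi_{\nu'}$ at $v^{\otimes\alpha}\otimes v'$ via Lemma~\ref{lem:b_-lambda_infty} — so that the claim reduces to the $0\in B(\cdot)$ case of Theorem~\ref{thm:main0}, i.e.\ the Kempf--Ness theorem together with the Hilbert--Mumford criterion. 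The other nontrivial input, Theorem~\ref{thm:char_of_B(f^infty)}, rests in turn on the normalized-gradient-flow argument of Kapovich--Leeb--Millson. So the proof of Theorem~\ref{thm:main} itself is just the formal three-way chain built from Theorem~\ref{thm:char_of_B(f^infty)} and Proposition~\ref{prop:generalversion}.
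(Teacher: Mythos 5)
Your proposal is correct and matches the paper's approach exactly: the paper presents Theorem~\ref{thm:main} as a summary, with (a) $\Leftrightarrow$ (c) from Theorem~\ref{thm:char_of_B(f^infty)} and (b) $\Leftrightarrow$ (c) from Proposition~\ref{prop:generalversion} plus the definition of $f_v^*$. Your additional remarks on Proposition~\ref{prop:minimizer} and the provenance of the imported ingredients are accurate but not needed for the assembly.
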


We finally consider the moment polytope membership.
Here, $\overline{s \nabla^{\infty}f_v(M)}$ is nothing but the {\em moment polytope} for $\pi,v$ in the sense of \cite{BFGOWW}.\footnote{This fact can be seen from  Proposition~\ref{prop:nabla^inf_P_n} and the fact that 
the {\em moment map} $\mu: \pi(G)v  \setminus \{0\} \to i \mathfrak{u} (=T_I)$ 
in \cite{BFGOWW} is written as $\mu(\pi(g)v) = g^\dagger df_v(gg^\dagger) g$.
}
\begin{Lem}\label{lem:momentpolytope}
	$\overline{s \nabla^{\infty}f_v(M)} = \bigcup_{g \in G} i \mathfrak{t}^+ \cap B(f^{\infty}_{\pi(g)v})$.
\end{Lem}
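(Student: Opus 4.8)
\emph{Proof plan.} The plan is to split the claimed equality into two identities: first
$\bigcup_{g\in G} i\mathfrak{t}^{+}\cap B(f^{\infty}_{\pi(g)v}) = s\big(B(f_v^{\infty})\big)$, proved by equivariance, and then
$\overline{s\nabla^{\infty}f_v(M)} = s\big(B(f_v^{\infty})\big)$, proved by the sandwich $(\ref{eqn:inclusion})$ together with a spectral‑perturbation estimate. For the first identity, write $\phi_g(x):=gxg^{\dagger}$ for the isometry of $M$ induced by $g\in G$ and $\phi_g^{\infty}$ for its boundary extension to $CM^{\infty}$ (this is the $G$‑action on $CM^{\infty}$). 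From $\pi(g)^{\dagger}=\pi(g^{\dagger})$ one checks $f_v\circ\phi_g = f_{\pi(g^{\dagger})v}$, hence $f_v^{\infty}\circ\phi_g^{\infty}=(f_v\circ\phi_g)^{\infty}=f^{\infty}_{\pi(g^{\dagger})v}$. Since $G$ acts on $CM^{\infty}$ by isometries preserving $\langle\,,\,\rangle$ and fixing $0$, substituting $u\mapsto\phi_g^{\infty}(u)$ in the definition of $B(\cdot)$ gives $(\phi_g^{\infty})^{-1}B(f_v^{\infty}) = B(f_v^{\infty}\circ\phi_g^{\infty}) = B(f^{\infty}_{\pi(g^{\dagger})v})$. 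Because $s$ is $G$‑invariant ($\phi_g^{\infty}$ sends $\lambda\cdot{\cal F}$ to $\lambda\cdot g{\cal F}$) and restricts to the identity on the reference chamber $i\mathfrak{t}^{+}$, and because $G$ acts transitively on the Weyl chambers of $CM^{\infty}$ (indexed by $G/B$), every chamber is $C=\phi_g^{\infty}(i\mathfrak{t}^{+})$ and
\[
s\big(C\cap B(f_v^{\infty})\big)=s\big(i\mathfrak{t}^{+}\cap(\phi_g^{\infty})^{-1}B(f_v^{\infty})\big)=i\mathfrak{t}^{+}\cap B(f^{\infty}_{\pi(g^{\dagger})v}).
\]
Taking the union over all chambers $C$ (they cover $B(f_v^{\infty})$) and replacing $g$ by $g^{\dagger}$ gives $s(B(f_v^{\infty}))=\bigcup_{g\in G} i\mathfrak{t}^{+}\cap B(f^{\infty}_{\pi(g)v})$, i.e.\ the right‑hand side of the lemma. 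By Proposition~\ref{prop:f_v^infty} each $s(C\cap B(f_v^{\infty}))$ is a rational convex polytope, and there are only finitely many of them (each $B_{CF^{\infty}}(f_v^{\infty})$ is the convex hull of a subset of the finite weight set $\Omega(\pi)$), so $s(B(f_v^{\infty}))$ is a finite union of polytopes; in particular it is closed.

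It remains to prove $\overline{s\nabla^{\infty}f_v(M)}=s(B(f_v^{\infty}))$. The inclusion ``$\subseteq$'' is immediate: by $(\ref{eqn:inclusion})$ applied to $f_v$ on $M$ one has $\nabla^{\infty}f_v(M)\subseteq B(f_v^{\infty})$, hence $s\nabla^{\infty}f_v(M)\subseteq s(B(f_v^{\infty}))$, and the right side is closed. For ``$\supseteq$'', fix $p\in B(f_v^{\infty})$; by Theorem~\ref{thm:char_of_B(f^infty)} there is a sequence $(x_i)$ in $M$ with $\|\nabla(f_v+b_p)(x_i)\|_{x_i}\to 0$. Now I would use that $M=G\cap P_n$ is totally geodesic in $P_n$, so the exponential maps agree, $M^{\infty}$ embeds isometrically in $P_n^{\infty}$, and under this embedding $s$ coincides, up to a fixed linear isometry onto its image, with the $P_n$‑projection $s_{P_n}$. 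By Lemma~\ref{lem:asymptotic} (equivalently Proposition~\ref{prop:nabla^inf_P_n}(1)), for $x\in M$ and $w\in T_xM\subseteq T_xP_n=S_n$ one has $s_{P_n}(\exp_x\infty w)=\lambda^{\downarrow}(x^{-1/2}wx^{-1/2})$, where $\lambda^{\downarrow}(A)\in\RR^n$ denotes the eigenvalues of a Hermitian matrix $A$ arranged in decreasing order. Applying this to $w=\nabla f_v(x_i)\in T_{x_i}M$ and to $w=u_{x_i}:=-\nabla b_p(x_i)\in T_{x_i}M$ (the direction with $\exp_{x_i}\infty u_{x_i}=p$, by Lemma~\ref{lem:nabla_Busemann}), and the Hoffman--Wielandt inequality $\|\lambda^{\downarrow}(A)-\lambda^{\downarrow}(B)\|_2\le\|A-B\|_{\rm F}$ for Hermitian $A,B$, gives
\[
\big\|s\nabla^{\infty}f_v(x_i)-s(p)\big\|=\big\|\lambda^{\downarrow}(x_i^{-1/2}\nabla f_v(x_i)x_i^{-1/2})-\lambda^{\downarrow}(x_i^{-1/2}u_{x_i}x_i^{-1/2})\big\|_2\le\|\nabla(f_v+b_p)(x_i)\|_{x_i}\to 0,
\]
using $\nabla f_v(x_i)-u_{x_i}=\nabla(f_v+b_p)(x_i)$ and $\|H\|_{x_i}=\|x_i^{-1/2}Hx_i^{-1/2}\|_{\rm F}$. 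Hence $s(p)\in\overline{s\nabla^{\infty}f_v(M)}$, and combining the two inclusions proves the lemma.

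The main obstacle is this last estimate. The hypothesis $\|\nabla(f_v+b_p)(x_i)\|_{x_i}\to 0$ only controls the gradient, not the asymptotic gradient in the $d^{\infty}$‑topology, so the coarse bound $\|\nabla(f_v+b_p)(x)\|_x\le d^{\infty}(\nabla^{\infty}f_v(x),p)$ noted after Theorem~\ref{thm:char_of_B(f^infty)} is of no use. What rescues the argument is that the projection $s$ reads off \emph{sorted eigenvalues}, which depend $1$‑Lipschitz on the underlying Hermitian matrix; hence one must pass to the matrix model $M\subseteq P_n$ and invoke the Hoffman--Wielandt inequality rather than work intrinsically. One also has to verify the two structural facts used above: that $\nabla f_v(x)$ and $-\nabla b_p(x)$ genuinely lie in $T_xM$ (they do, being gradients of functions on $M$), and that $s$ on $M^{\infty}$ is the restriction of $s_{P_n}$ up to a linear isometry; both follow from the description of $M^{\infty}$ as an isometric subspace of $P_n^{\infty}$ and of $i\mathfrak{t}^{+}$ as a subspace of arranged vectors, recorded earlier in this subsection.
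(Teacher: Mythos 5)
Your proof reproduces the paper's overall strategy: (i) an equivariance argument identifying $\bigcup_{g\in G} i\mathfrak{t}^{+}\cap B(f^{\infty}_{\pi(g)v})$ with $s(B(f_v^{\infty}))$, and (ii) the identity $\overline{s\nabla^{\infty}f_v(M)}=s(B(f_v^{\infty}))$, which is the content of Proposition~\ref{prop:nabla^inf_P_n}(2) transported to $M$. Step~(i) is essentially the paper's chain of equivalences rewritten via $\phi_g$ (with the benign relabeling $g\leftrightarrow g^{\dagger}$) and is correct. In step~(ii) your strategy is also the paper's, and you are right that the whole proof turns on a Lipschitz-type estimate for the chamber projection; the paper invokes this by silently claiming that Proposition~\ref{prop:nabla^inf_P_n} ``holds in this setting''.

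However, your specific reduction to the Hoffman--Wielandt inequality in $P_n$ has a gap. You claim that, under the isometric embedding $M^{\infty}\hookrightarrow P_n^{\infty}$, the projection $s\colon CM^{\infty}\to i\mathfrak{t}^{+}$ coincides with $s_{P_n}$ up to a fixed \emph{linear} isometry. This is false in general: a Weyl chamber $i\mathfrak{t}^{+}$ of $M=G/K$ may meet several Weyl chambers of $P_n$, so the eigenvalue-arrangement map $\lambda\mapsto\bar\lambda$ is only \emph{piecewise} linear on $i\mathfrak{t}^{+}$, and it can even fail to be injective (take $G$ a maximal torus: $i\mathfrak{t}^{+}=i\mathfrak{t}$, and distinct diagonal matrices can have identical sorted eigenvalue vectors). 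A concrete example of the piecewise-linear phenomenon is $SL_2\times SL_2$ acting on $\CC^2\otimes\CC^2\cong\CC^4$, where the $G$-chamber $\{\alpha\ge0,\beta\ge0\}$ is the union of the two $GL_4$-chambers $\{\alpha\ge\beta\ge0\}$ and $\{\beta\ge\alpha\ge0\}$ in $i\mathfrak{t}$. By the rearrangement inequality the arrangement map is a \emph{contraction}: $\|\bar\lambda-\bar\mu\|_2\le\|\lambda-\mu\|$, so Hoffman--Wielandt gives $\|\bar\lambda_i-\bar\lambda\|_2\to 0$, which is an estimate \emph{from above} of the wrong quantity; it does not control $\|\lambda_i-\lambda\|$, which is the quantity you need. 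What actually makes the argument work is the Lie-theoretic analogue of Hoffman--Wielandt intrinsic to $M=G/K$: the chamber projection $T_xM\to i\mathfrak{t}^{+}$ (given by the unique intersection of the $\mathrm{Ad}_{K_x}$-orbit with $i\mathfrak{t}^{+}$) is $1$-Lipschitz. Alternatively, one can avoid the Lipschitz estimate altogether by a compactness/subsequence argument: since $K$ is compact and $\|\lambda_i\|$ is bounded, pass to a subsequence with $h_i\to h$, $k_i\to k$ and $\lambda_i\to\lambda'\in i\mathfrak{t}^{+}$; then $h^{\dagger}\lambda'h=k^{\dagger}\lambda k$, so $\lambda'$ is in the $\mathrm{Ad}_K$-orbit of $\lambda$ and lies in $i\mathfrak{t}^{+}$, hence $\lambda'=\lambda$ by uniqueness of the chamber representative; since every subsequential limit equals $\lambda$, the full sequence converges.

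Finally, a small remark on your closedness argument for $s(B(f_v^{\infty}))$: the claimed finiteness of the collection of polytopes $s(C\cap B(f_v^{\infty}))$ is correct but slightly compressed; it follows because each $B_{CF^{\infty}}(f_v^{\infty})$ is the convex hull of a subset of the finite weight set $\Omega(\pi)$, so there are finitely many possible polytopes and hence finitely many possible intersections $\bigcap_F C\cap B_{CF^{\infty}}(f_v^{\infty})$. The paper does not need this since it obtains $\overline{s\nabla^{\infty}f_v(M)}=sB(f_v^{\infty})$ directly from Proposition~\ref{prop:nabla^inf_P_n}(2).
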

\begin{proof}
    Proposition~\ref{prop:nabla^inf_P_n} holds in this setting 
	by replacing $\diag \lambda$ with $\lambda \in i \mathfrak{t}^+$ 
	and $bk$ 
	with Iwasawa decomposition $bk$ ($b \in B, k \in K$). 
	By (\ref{eqn:sB(f^infty)}), it suffices to show that $i \mathfrak{t}^+ \cap B(f_{\pi(g)v}^{\infty}) = s(C \cap B(f^{\infty}_v))$ for $g \in G$ and Weyl chamber $C = g^{\dagger} (i\mathfrak{t}^+)$.
	Indeed, from $f_{\pi(g)v}^{\infty}(u) = f_{v}^{\infty}(g^{\dagger} u)$ (by (\ref{eqn:f_v^infty(p)})), 
	we have $\lambda \in i\mathfrak{t}^+ \cap B(f_{\pi(g)v}^{\infty})$ 
	$\Leftrightarrow$ $\langle u, \lambda \rangle \leq f_{\pi(g)v}^{\infty}(u)$ $(\forall u \in M^{\infty})$
	$\Leftrightarrow$  $\langle g^{\dagger} u, g^{\dagger }p\rangle \leq f_{v}^{\infty}(g^{\dagger}u)$ $(\forall u \in M^{\infty})$ $\Leftrightarrow$ $\langle u', g^{\dagger}\lambda\rangle \leq f_{v}^{\infty}(u')$ $(\forall u' \in M^{\infty})$ $\Leftrightarrow$ $g^{\dagger}\lambda \in B(f_v^{\infty})$  $\Leftrightarrow$ $\lambda \in s(C \cap B(f_v^{\infty}))$, 
	where $g^{\dagger}\lambda$ is written as $\lambda \cdot {\cal F}$ for ${\cal F} \in G/B$ corresponding to $C$. 
\end{proof}

The polytope $ i \mathfrak{t}^+ \cap B(f^{\infty}_{\pi(g)v}) = s(C \cap B(f^{\infty}_v))$  is 
what should be called the {\em Borel polytope}; see \cite{BFGOWW_tensor}.
Notice that the approximate $(\lambda,\mu)$-scalability of the previous section 
is nothing but the moment polytope membership $(\lambda, \mu) \in  
\overline{s\nabla^{\infty}f_A(P_n \times P_n)}$.

The convexity theorem of the moment polytope says:
\begin{Thm}[Convexity theorem~\cite{GuilleminSternberg}]\label{thm:convexity_thm}
	The moment polytope $\overline{s \nabla^{\infty}f_v(M)}$ is a rational convex polytope. 
\end{Thm}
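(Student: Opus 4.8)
The plan is to deduce the convexity and rationality of the moment polytope $\overline{s\nabla^{\infty}f_v(M)}$ directly from the description already established in Lemma~\ref{lem:momentpolytope}, namely $\overline{s\nabla^{\infty}f_v(M)} = \bigcup_{g \in G} i\mathfrak{t}^+ \cap B(f^{\infty}_{\pi(g)v})$, combined with the rationality of each Borel polytope from Proposition~\ref{prop:f_v^infty}. The essential point is that, although this is a priori only a union of rational polytopes (hence rational and compact, but not obviously convex), the union stabilizes: by Proposition~\ref{prop:f_v^infty} there is a single finite set $\Lambda \subseteq i\mathfrak{t}^+$, independent of the vector, such that each $B(f^{\infty}_{\pi(g)v})$ is cut out by the inequalities $\langle \nu \cdot {\cal F}, p\rangle \le f^{\infty}_{\pi(g)v}(\nu \cdot {\cal F})$ over $\nu \in \Lambda$, ${\cal F} \in G/B$. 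Using $f^{\infty}_{\pi(g)v}(u) = f^{\infty}_v(g^{\dagger}u)$ (the identity invoked in the proof of Lemma~\ref{lem:momentpolytope}), the whole union is governed by the single recession function $f_v^{\infty}$ on the Euclidean building $CM^{\infty}$, and $s(C \cap B(f_v^{\infty}))$ ranges over only finitely many polytopes as $C$ ranges over Weyl chambers (the chambers $C = g^{\dagger}(i\mathfrak{t}^+)$ give only finitely many combinatorial types, since $f_v^{\infty}$ is determined by the finitely many weights $\Omega(\pi)$ and the finite orbit data).

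Concretely, I would proceed as follows. First, record that $\overline{s\nabla^{\infty}f_v(M)}$ is rational and compact: it is a finite union of the rational polytopes $s(C \cap B(f_v^{\infty}))$, each of which is rational by Proposition~\ref{prop:f_v^infty}, and it is bounded because $B(f_v^{\infty}) \cap i\mathfrak{t}^+$ is bounded (the recession function $f_v^{\infty}$ is the support function of a bounded set of weights on each apartment, by the Lemma following (\ref{eqn:f_v^infty(p)})). Second, establish convexity. For this the clean route is to invoke the equivalence of (b) and (c) in Theorem~\ref{thm:main} together with the shifting/tensoring trick already used in the proof of Proposition~\ref{prop:generalversion}: a rational point $\lambda \in i\mathfrak{t}^+$ lies in $\overline{s\nabla^{\infty}f_v(M)}$ iff $f_v + b_{p}$ is bounded below for $p = (-\lambda)\cdot{\cal F}$ over the appropriate ${\cal F}$, iff $0 \in B\big((\alpha f_v + b_{\nu\cdot{\cal F}})^{\infty}\big)$ for $\nu = \alpha\lambda$ a weight, i.e.\ iff the null-cone condition (Theorem~\ref{thm:main0}(c)) holds for the tensor-power representation $\pi^{\otimes\alpha}\otimes\pi_{\nu'}$ and vector $v^{\otimes\alpha}\otimes v'$. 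Then, if $\lambda^{(1)}$ and $\lambda^{(2)}$ are two rational points of the moment polytope with a common denominator $\alpha$, their midpoint corresponds to tensoring and one runs the standard Gelfand--MacPherson / semistability argument: semistability of $v^{\otimes\alpha}\otimes v'_{1}$ and of $v^{\otimes\alpha}\otimes v'_{2}$ (for the respective shifted problems) implies semistability of the product $v^{\otimes 2\alpha}\otimes v'_{1}\otimes v'_{2}$ for the weight $\lambda^{(1)}+\lambda^{(2)}$, whence $\tfrac12(\lambda^{(1)}+\lambda^{(2)}) \in \overline{s\nabla^{\infty}f_v(M)}$. Since the moment polytope is closed (it is a finite union of closed polytopes) and contains a dense set of rational points closed under midpoints, it is convex.

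The main obstacle I anticipate is exactly the convexity step — the union of polytopes in Lemma~\ref{lem:momentpolytope} is not manifestly convex, and the naive approach of intersecting facet inequalities fails because the bound $f^{\infty}_{\pi(g)v}(\nu\cdot{\cal F})$ genuinely depends on $g$. The tensoring argument resolves this, but it requires care in two places: (i) one must check that multiplicativity of the Kempf--Ness function under tensor products is compatible with the $G$-orbit taken on the left (this is the content of \cite[Section 3.6]{BFGOWW}, already cited), and (ii) one must verify that semistability is preserved under tensor products of vectors in distinct representations with \emph{matching} one-parameter subgroups — this is where the Hilbert--Mumford criterion (used via Theorem~\ref{thm:main0}) does the work, since a destabilizing $u\in i\mathfrak{u}$ for the product would destabilize one of the factors. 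An alternative, more self-contained route would be to prove convexity geodesically: show that $\overline{s\nabla^{\infty}f_v(M)}$ is the image under the ``sorting'' projection $s$ of the set of asymptotic gradients, and that this set is ``convex along Weyl-chamber-compatible directions'' using Proposition~\ref{prop:C_cap_B(h)}; but this seems to require more new geometry than the tensoring shortcut, so I would present the latter. Rationality then comes for free from Proposition~\ref{prop:f_v^infty}, and compactness from boundedness of $B(f_v^{\infty})\cap i\mathfrak{t}^+$ noted above.
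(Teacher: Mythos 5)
Your proposal takes a genuinely different route from the paper: the paper proves a strictly stronger statement, namely $\overline{s\nabla^{\infty}f_v(M)} = i\mathfrak{t}^+ \cap B(f^{\infty}_{\pi(g)v})$ for \emph{generic} $g \in G$, by a direct adaptation of Franks' Lemma~54 and Proposition~55. There one writes out the defining inequalities (\ref{eqn:inequality1})--(\ref{eqn:inequality3}) for $i\mathfrak{t}^+\cap B(f^{\infty}_{\pi(g)v})$ in terms of data $\dim E_i\cap U_j^{g,h}$ and $f^{\infty}_{\pi(h)v}(\nu)$, shows via the closure theorem and an Iwasawa/compactness argument that the relevant projections $\pi(S_{\nu,\alpha,d})$ are affine subvarieties, and concludes that outside a proper subvariety $H\subset G$ the Borel polytope is a single, maximal, rational convex polytope. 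Convexity and rationality are then immediate, since the moment polytope is exhibited as a single $i\mathfrak{t}^+\cap B(f^{\infty}_{\pi(g^*)v})$, which is convex and rational by Proposition~\ref{prop:f_v^infty}.

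Your route — rationality and compactness from Lemma~\ref{lem:momentpolytope} plus Proposition~\ref{prop:f_v^infty}, then convexity via a tensoring/semistability argument — has a genuine gap precisely at the step you flag as the main obstacle. When $\lambda^{(1)}$ and $\lambda^{(2)}$ lie in the moment polytope, Lemma~\ref{lem:momentpolytope} only gives you flags ${\cal F}_1,{\cal F}_2$ (equivalently group elements $g_1,g_2$) with $\lambda^{(i)}\cdot{\cal F}_i \in B(f_v^{\infty})$, and these may be \emph{different}. The shifting trick then identifies $\alpha f_v + b_{\nu_i\cdot{\cal F}_i}$ with the Kempf--Ness function of $v^{\otimes\alpha}\otimes v'_i$, where $v'_i$ is a highest-weight vector with respect to the Borel determined by ${\cal F}_i$. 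Summing gives boundedness of $2\alpha f_v + b_{\nu_1\cdot{\cal F}_1} + b_{\nu_2\cdot{\cal F}_2}$, i.e.\ semistability of $v^{\otimes 2\alpha}\otimes v'_1\otimes v'_2$, which is correct; but this is \emph{not} the shifted problem for the weight $\nu_1+\nu_2$ at any single flag. When ${\cal F}_1\neq{\cal F}_2$, $v'_1\otimes v'_2$ is not a highest-weight vector, and $b_{\nu_1\cdot{\cal F}_1}+b_{\nu_2\cdot{\cal F}_2}$ is not a Busemann function $b_{(\nu_1+\nu_2)\cdot{\cal F}}$ (the identity (\ref{eqn:b_p+b_q}) is only valid within a single Weyl chamber). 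Passing to recession functions makes the issue explicit: you obtain $2\alpha f_v^{\infty}(u) \geq \langle\nu_1\cdot{\cal F}_1,u\rangle + \langle\nu_2\cdot{\cal F}_2,u\rangle$ for all $u$, but since $p\mapsto\langle p,q\rangle$ is \emph{concave} on $CX^{\infty}$ (Lemma~\ref{lem:<p,q>}) the inequality runs the wrong way to replace the right-hand side by $\langle(\nu_1+\nu_2)\cdot{\cal F},u\rangle$ for a single flag ${\cal F}$. What is missing, and what the paper's algebraic-geometric argument supplies, is a genericity statement: that the set of $g$ (equivalently, flags) witnessing $\lambda\cdot{\cal F}\in B(f_v^{\infty})$ is, when nonempty, a dense Zariski-open subset of $G/B$, so a single generic flag works simultaneously for $\lambda^{(1)}$ and $\lambda^{(2)}$ — and then convexity of the single Borel polytope in that chamber (Proposition~\ref{prop:C_cap_B(h)}) finishes. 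Without that genericity input, the tensoring step does not close the argument.

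Your subsidiary observations (rationality and compactness via Proposition~\ref{prop:f_v^infty}, and the role of (\ref{eqn:b_p+b_q})) are fine, and your instinct that the $g$-dependence of $f^{\infty}_{\pi(g)v}$ is the crux is exactly right; but the proposed resolution of that crux is where the proof would break down.
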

The {\em shifting trick}~\cite{Brion,NessMumford} 
reduces the membership of the moment polytope 
to a single optimization problem.

\begin{Thm}[{Shifting trick~\cite{Brion,NessMumford}}]\label{thm:shifting}
	A rational vector $\lambda \in i \mathfrak{t}^+$ belongs to $\overline{s \nabla^{\infty}f_v(M)}$ if and only if $\inf_{x \in M} (f_{\pi(g) v} + b_{e^{\lambda \infty}})(x) > -\infty$ for generic $g \in G$.
\end{Thm}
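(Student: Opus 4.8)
The plan is to recast the stated inequality as a \emph{null-cone membership} for an auxiliary representation---a Zariski-closed condition on $g$---and then to use that a connected algebraic group is irreducible, so the set of ``good'' $g$ is either empty or dense. Fix a rational dominant $\lambda\in i\mathfrak{t}^{+}$ and a positive integer $\alpha$ with $\nu:=\alpha\lambda$ a weight. Since $\lambda$ is dominant, $e^{\lambda\infty}=\lambda\cdot{\cal F}_{0}$ for the base flag ${\cal F}_{0}=eB\in G/B$, so ``$e^{\lambda\infty}\in B(f^{\infty}_{\pi(g)v})$'' is synonymous with ``$\lambda\in i\mathfrak{t}^{+}\cap B(f^{\infty}_{\pi(g)v})$''.

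First I would carry out the reduction. From $b_{\alpha p}=\alpha b_{p}$ and $\alpha\cdot e^{\lambda\infty}=e^{\nu\infty}=\nu\cdot{\cal F}_{0}$ one gets $\alpha\, b_{e^{\lambda\infty}}=b_{\nu\cdot{\cal F}_{0}}$, and by the tensoring device in the proof of Proposition~\ref{prop:generalversion} (via Lemma~\ref{lem:b_-lambda_infty}) $b_{\nu\cdot{\cal F}_{0}}$ is the Kempf-Ness function of a representation $\pi_{\nu'}$ and a vector $v'$ depending only on $\nu$. Hence, for every $g\in G$,
\[
\alpha\bigl(f_{\pi(g)v}+b_{e^{\lambda\infty}}\bigr)=f_{w_{g}},\qquad \pi':=\pi^{\otimes\alpha}\otimes\pi_{\nu'},\qquad w_{g}:=(\pi(g)v)^{\otimes\alpha}\otimes v',
\]
the Kempf-Ness function of $\pi'$ and $w_{g}$. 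The decisive feature is that $v'$ does not depend on $g$, so $g\mapsto w_{g}$ is a morphism of affine varieties (the entries of $\pi(g)$ are regular on $G$). Now $\inf_{x\in M}f_{w}(x)$ equals $\log$ of the orbit-norm infimum $\inf_{h\in G}\|\pi'(h)w\|^{2}$, hence is $-\infty$ exactly when $0\in\overline{\pi'(G)w}$, i.e.\ when $w$ lies in the null-cone ${\cal N}$ of $\pi'$. Combining this with Theorem~\ref{thm:main}, the equivalence (b)$\Leftrightarrow$(c), applied to the vector $\pi(g)v$ and the point $p=e^{\lambda\infty}$, we obtain, for every $g\in G$,
\[
e^{\lambda\infty}\in B(f^{\infty}_{\pi(g)v})\iff \inf_{x\in M}\bigl(f_{\pi(g)v}+b_{e^{\lambda\infty}}\bigr)(x)>-\infty\iff w_{g}\notin{\cal N}.
\]

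Then I would finish by a genericity argument. Put $\Omega:=\{\,g\in G\mid w_{g}\notin{\cal N}\,\}$. The null-cone ${\cal N}$ is Zariski-closed, so $\Omega$ is Zariski-open in $G$; and $G$, being a connected algebraic group, is irreducible, so $\Omega$ is either empty or dense. By the displayed equivalence and Lemma~\ref{lem:momentpolytope}, $\Omega\neq\emptyset$ iff $\lambda\in\bigcup_{g\in G}i\mathfrak{t}^{+}\cap B(f^{\infty}_{\pi(g)v})=\overline{s\nabla^{\infty}f_{v}(M)}$. Hence if $\lambda$ lies in the moment polytope then $\Omega$ is dense open and the stated inequality holds for all $g$ outside the proper closed subset $G\setminus\Omega$, i.e.\ for generic $g$; conversely, if the inequality holds for generic $g$ then the corresponding nonempty open set is contained in $\Omega$, so $\Omega\neq\emptyset$ and $\lambda\in\overline{s\nabla^{\infty}f_{v}(M)}$. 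The step I expect to be the main obstacle is the realization used above: one must be sure that $b_{e^{\lambda\infty}}$ is the Kempf-Ness function of a \emph{fixed} pair $(\pi_{\nu'},v')$---independent of $v$ and of $g$---so that $g\mapsto w_{g}$ is genuinely polynomial; this is precisely what the shifting/tensoring manipulation in the proof of Proposition~\ref{prop:generalversion} delivers, once the flag is kept at the base flag ${\cal F}_{0}$ (permissible because $\lambda$ is dominant). The remaining ingredients---irreducibility of $G$ to upgrade ``$\Omega\neq\emptyset$'' to ``generic'', and Lemma~\ref{lem:momentpolytope} together with Theorem~\ref{thm:main} to identify ``$\Omega\neq\emptyset$'' with $\lambda\in\overline{s\nabla^{\infty}f_{v}(M)}$---are routine.
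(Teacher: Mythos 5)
Your proof is correct, and it takes a genuinely different route from the paper. The paper does not prove the shifting trick directly: it instead proves the stronger theorem that $\overline{s\nabla^{\infty}f_v(M)} = i\mathfrak{t}^+ \cap B(f^{\infty}_{\pi(g)v})$ for a single generic $g$ (an equality of sets, simultaneous over all $\lambda$), from which both the convexity theorem and the shifting trick fall out. That proof works with the explicit inequality description of Proposition~\ref{prop:f_v^infty}, sets up the incidence data $\dim E_i\cap U_j^{g,h}$ and $f_{\pi(h)v}^{\infty}(\nu)$, and uses elimination theory / the closure theorem (adapting Franks' Lemma 54 and Proposition 55) to show the relevant loci in $G$ are affine subvarieties. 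Your argument instead fixes the single rational $\lambda$, uses the tensoring reduction from Proposition~\ref{prop:generalversion} to express $\alpha(f_{\pi(g)v}+b_{e^{\lambda\infty}})$ as the Kempf--Ness function of a vector $w_g$ depending polynomially on $g$, identifies unboundedness with membership of $w_g$ in the null-cone of the auxiliary representation, and then invokes Zariski-closedness of the null-cone plus irreducibility of the connected group $G$ so that $\Omega=\{g: w_g\notin{\cal N}\}$ is either empty or dense open; Lemma~\ref{lem:momentpolytope} and Theorem~\ref{thm:main} close the loop. This is cleaner and needs no explicit inequality bookkeeping, but it buys strictly less: it handles one $\lambda$ at a time (the open set $\Omega$ depends on $\lambda$), so it does not by itself deliver the stronger ``generic Borel polytope'' theorem or the convexity theorem, which require a single $g$ good for all $\lambda$ at once. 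You correctly flag the one delicate point---that $b_{e^{\lambda\infty}}$ (for $\lambda$ in the dominant chamber, not $-\lambda$) must be realized as the Kempf--Ness function of a \emph{fixed} pair; this is what Lemma~\ref{lem:b_-lambda_infty} provides, after conjugating $e^{\lambda\infty}$ to the opposite chamber (equivalently, passing to the dual highest weight $-w_0\nu$), and that realization is indeed independent of $v$ and of $g$, so $g\mapsto w_g$ is a morphism as you need.
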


We prove a slightly stronger statement from our formulation, which implies Theorems~\ref{thm:convexity_thm} and \ref{thm:shifting}.

\begin{Thm}
	$\overline{s \nabla^{\infty}f_v(M)} = i \mathfrak{t}^+ \cap B(f^{\infty}_{\pi(g)v})$ for generic $g \in G$.
\end{Thm}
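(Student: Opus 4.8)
The plan is to reduce the claimed equality to finitely many ``generic'' conditions on $g$, each of which is then settled by irreducibility of the flag variety $G/B$. First I would record, from Lemma~\ref{lem:momentpolytope} and the identity $i\mathfrak{t}^+\cap B(f^{\infty}_{\pi(h)v})=s(C\cap B(f^{\infty}_v))$ used in its proof (with $C$ the Weyl chamber of the flag attached to $h^{\dagger}$), that $\overline{s\nabla^{\infty}f_v(M)}=sB(f^{\infty}_v)=\bigcup_{h\in G}\bigl(i\mathfrak{t}^+\cap B(f^{\infty}_{\pi(h)v})\bigr)$, where each member is contained in $\overline{s\nabla^{\infty}f_v(M)}$, is a rational convex polytope (Proposition~\ref{prop:f_v^infty}), and is convex (Proposition~\ref{prop:C_cap_B(h)}). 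Since by Proposition~\ref{prop:C_cap_B(h)} each such polytope is an intersection of finitely many sets $s(C\cap B_{CF^{\infty}}(f^{\infty}_v))$, which are convex hulls of subsets of the fixed finite weight set $\Omega(\pi)$, only finitely many distinct polytopes occur in the union (all inside $\Conv\,\Omega(\pi)$). Hence it will be enough to produce a single $g$, lying outside a proper subvariety of $G$, such that $i\mathfrak{t}^+\cap B(f^{\infty}_{\pi(g)v})$ contains every vertex of each of these finitely many rational polytopes: by convexity it then contains the whole union, which gives ``$\supseteq$'', while ``$\subseteq$'' holds for every $g$ by Lemma~\ref{lem:momentpolytope}.

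Next, for a fixed (rational) vertex $\lambda$ I would study $\mathcal{O}_{\lambda}:=\{\mathcal{F}\in G/B : \lambda\cdot\mathcal{F}\in B(f^{\infty}_v)\}$, which is nonempty by the previous step. Writing $\lambda=\nu/\alpha$ with $\nu$ a weight and $\alpha\in\ZZ_{>0}$, and mimicking the proof of Proposition~\ref{prop:generalversion}, one has $\lambda\cdot\mathcal{F}\in B(f^{\infty}_v)$ iff $\inf_{x\in M}(\alpha f_v+b_{\nu\cdot\mathcal{F}})(x)>-\infty$ (Theorem~\ref{thm:main}); and by Lemma~\ref{lem:b_-lambda_infty} the convex function $\alpha f_v+b_{\nu\cdot\mathcal{F}}$ is the Kempf--Ness function of the representation $\Sigma:=\pi^{\otimes\alpha}\otimes\pi_{\nu'}$ (with $\nu'$ fixed, independent of $\mathcal{F}$) at the vector $V(\mathcal{F}):=v^{\otimes\alpha}\otimes w(\mathcal{F})$, where $\mathcal{F}\mapsto[w(\mathcal{F})]\in\mathbb{P}(\pi_{\nu'})$ is the orbit map onto the closed $G$-orbit of a highest-weight line. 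So, by Theorem~\ref{thm:main0}, $\lambda\cdot\mathcal{F}\in B(f^{\infty}_v)$ iff $0\notin\overline{\Sigma(G)V(\mathcal{F})}$. Since the null-cone of $\Sigma$ is Zariski-closed, its preimage $Z$ under the linear map $u\mapsto v^{\otimes\alpha}\otimes u$ is a closed cone, $\mathbb{P}(\pi_{\nu'})\setminus\mathbb{P}(Z)$ is Zariski-open, and its preimage under the morphism $\mathcal{F}\mapsto[w(\mathcal{F})]$ is exactly $\mathcal{O}_{\lambda}$; thus $\mathcal{O}_{\lambda}$ is Zariski-open in $G/B$. As $G$ is connected, $G/B$ is irreducible, so the nonempty open set $\mathcal{O}_{\lambda}$ is Zariski-dense.

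To finish, I would use that $g\mapsto\mathcal{F}_g$ — the flag whose Weyl chamber is $g^{\dagger}\cdot i\mathfrak{t}^+$ — is onto $G/B$ (because $\{g^{\dagger}:g\in G\}=G$), so for each of the finitely many vertices $\lambda$ the condition $\mathcal{F}_g\in\mathcal{O}_{\lambda}$ holds for all $g$ outside a proper (real-algebraic, owing to the antiholomorphic $\dagger$) subvariety of $G$. A generic $g$ lies in all of these, and then $i\mathfrak{t}^+\cap B(f^{\infty}_{\pi(g)v})=s\bigl(g^{\dagger}\cdot i\mathfrak{t}^+\cap B(f^{\infty}_v)\bigr)$ contains every vertex $\lambda$, hence — being convex — every polytope of the union, hence equals $\overline{s\nabla^{\infty}f_v(M)}$. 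Together with Proposition~\ref{prop:f_v^infty} this gives Theorem~\ref{thm:convexity_thm}, and together with Theorem~\ref{thm:main} it gives Theorem~\ref{thm:shifting}.

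The hard part will be the middle step: expressing $b_{\nu\cdot\mathcal{F}}$ as a factor of a Kempf--Ness function whose vector sweeps out the flag variety \emph{algebraically}, so that ``bounded below'' becomes a genuine Zariski-open condition on $\mathcal{F}$ and irreducibility of $G/B$ can be exploited. Making this precise requires handling the antiholomorphic Cartan involution $\dagger$ carefully — the cleanest route being to infer well-definedness and algebraicity of $\mathcal{F}\mapsto[w(\mathcal{F})]$ from the fact that $b_{\nu\cdot\mathcal{F}}$ itself is well defined, via Lemma~\ref{lem:b_-lambda_infty} (cf.\ also \cite[Section~3.6]{BFGOWW}) — and checking that tensoring with the fixed vector $v^{\otimes\alpha}$ preserves Zariski-openness of null-cone non-membership. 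Everything else (finiteness and rationality of the family of polytopes, convexity of $i\mathfrak{t}^+\cap B(f^{\infty}_{\pi(g)v})$) is already supplied by Propositions~\ref{prop:C_cap_B(h)} and \ref{prop:f_v^infty}.
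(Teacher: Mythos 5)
Your proof is correct in substance, and it is a genuinely different route from the paper's. The paper follows Franks \cite{Franks18}: it fixes the finite set $\Lambda$ of normal directions from Proposition~\ref{prop:f_v^infty}, encodes each inequality of $i\mathfrak{t}^+\cap B(f^{\infty}_{\pi(g)v})$ by a triple $(\nu,\alpha,d)$, and shows — via the closure theorem from elimination theory plus an Euclidean-closedness argument — that for each such triple the set of $g$ witnessing the inequality is either all of $G$ or a proper affine subvariety; a generic $g$ then simultaneously realizes the ``loosest'' inequality in each family. Your proof instead works \emph{vertex-by-vertex}: since the family of polytopes $i\mathfrak{t}^+\cap B(f^{\infty}_{\pi(h)v})$ is finite (by the bound inside $\Conv\Omega(\pi)$), it suffices to catch each rational vertex $\lambda$, and for a fixed $\lambda$ the condition ``$\lambda\cdot\mathcal{F}\in B(f_v^{\infty})$'' is, via the clearing-denominators and Lemma~\ref{lem:b_-lambda_infty} step already used in Proposition~\ref{prop:generalversion}, equivalent to a null-cone non-membership for a fixed representation $\Sigma$; the null-cone being Zariski-closed, irreducibility of $G/B$ does the rest. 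This avoids the closure theorem and the Euclidean-closedness argument entirely, and it makes the $G$-orbit structure do the work that the paper achieves by hand through the $D_{\nu,\alpha}(g)$ bookkeeping.

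One point deserves more care than you give it: the map $\mathcal{F}=[h]\mapsto[w(\mathcal{F})]$ is \emph{anti}holomorphic (concretely, $w([h])=\pi_{\mu}(h^{-\dagger})v_0$ for a fixed $v_0$ and the dominant $\mu=-w_0\nu$), so $\mathcal{O}_{\lambda}$ is not literally Zariski-open in $G/B$ — it is open in the conjugate variety. This is harmless for your purposes because the Cartan involution in $g\mapsto\mathcal{F}_g=[g^{\dagger}]$ cancels it out: the composite $g\mapsto w(\mathcal{F}_g)=\pi_{\mu}(g^{-1})v_0$ is a genuine morphism $G\to\pi_{\mu}$, so the set $\{g:\mathcal{F}_g\in\mathcal{O}_{\lambda}\}$ is the complement of a proper affine subvariety of $G$, exactly as needed. (You flag this issue at the end, but the ``cleanest route'' you sketch there — inferring algebraicity of $\mathcal{F}\mapsto[w(\mathcal{F})]$ directly from well-definedness of $b_{\nu\cdot\mathcal{F}}$ — would not by itself see the antiholomorphic twist; you should instead track the two $\dagger$'s and observe the cancellation, or equivalently pull everything back to $G$ before invoking irreducibility.) With this fixed, your argument delivers the theorem, and, as you note, both the convexity theorem and the shifting trick fall out as corollaries, just as in the paper.
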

Our proof is a direct adaptation of \cite[Lemma 54 and Proposition 55]{Franks18}.
\begin{proof}
Let  $\Lambda \subseteq i \mathfrak{t}^+$ be a finite set in Proposition~\ref{prop:f_v^infty}.  
	Let $g \in G$. 
	Then $i \mathfrak{t}^+ \cap B(f^{\infty}_{\pi(g)v})$ is 
	the set of $\lambda  \in i \mathfrak{t}^+$ satisfying
	\begin{equation}\label{eqn:inequality1}
	\langle \nu \cdot {\cal F}, \lambda \rangle \leq f_{\pi(g)v}^{\infty}(\nu \cdot {\cal F}) \quad (\nu \in \Lambda, {\cal F} \in G/B).
	\end{equation}
	We use the notation $\lambda  = \bar \lambda \cdot [k_0]$ 
	to deduce an explicit inequality description.
	Represent $\nu \cdot {\cal F}$ 
	as $\nu \cdot {\cal F} = \bar \nu \cdot [g^{-\dagger}h^{\dagger}k_0]$ for $h \in G$. 
	Then $\langle \nu \cdot {\cal F}, \lambda \rangle = \langle \bar \nu \cdot [g^{-\dagger}h^{\dagger}k_0],  \bar \lambda\cdot [k_0]  \rangle = \langle \bar \nu \cdot [k_0^{\dagger} g^{-\dagger}h^{\dagger}k_0],  \bar \lambda\cdot [I]  \rangle$, and 
	$f_{\pi(g)v}^{\infty}(\nu \cdot {\cal F}) = f_{\pi(g)v}^{\infty}(\bar \nu \cdot [g^{-\dagger}h^{\dagger}k_0]) = f_{\pi(h)v}^{\infty}(\nu)$, where $[I] = \{E_i\}$ is the standard flag.
		Let ${\cal U}^{g,h} = \{U_j^{g,h}\}$ be the flag generated by $ k_0^{\dagger}g^{-\dagger}h^{\dagger}k_0$. 
		By Lemma~\ref{lem:<>},  (\ref{eqn:inequality1}) is written as 
	\begin{equation}\label{eqn:inequality2}
\sum_{i,j=1}^{n} (\bar \lambda_i - \bar \lambda_{i+1}) (\bar \nu_j- \bar \nu_{j+1}) \dim E_i \cap U_j^{g,h}  \leq f_{\pi(h)v}^{\infty}(\nu) \quad (\nu \in \Lambda, h \in G).
	\end{equation}
	
	We next consider the quantities $f_{\pi(h)v}^{\infty}(\nu)$ and $\dim E_i \cap U_j^{g,h}$ 
	involving $h$. 
	By (\ref{eqn:f_v^infty(p)}), 
	the former quantity
	$f_{\pi(h)v}^{\infty}(\nu)$ 
    takes a value from 
	finite set $A_{\nu}  := \{ \trace \nu \omega \mid \omega \in \Omega(\pi) \}$.
	For $\alpha \in A_{\nu}$, let $G_{\nu,\alpha} \subseteq G$ be the affine subvariety 
	of  consisting of $h$ with $f_{\pi(h)v}^{\infty}(\nu) \leq \alpha$, which is 
	defined by algebraic conditions $(\pi(h)v)_\omega = 0$ for all $\omega \in \Omega(\pi)$ with $\trace \nu \omega > \alpha$.
    The latter quantity $\dim E_i \cap U_j^{g,h}$ takes a value in $\{0,1,2,\ldots,n\}$. 
	Let $D$ denote the set of all $n \times n$ matrices $d = (d_{ij})$ such that each entry $d_{ij}$ is one of $0,1,2,\ldots,n$, where
	the partial order $\leq$ on $D$ is defined by 
	$d \leq d'$ $\Leftrightarrow $ $d_{ij} \leq d_{ij}'$ $(\forall i,j)$.
	For $\nu \in \Lambda, \alpha \in A_{\nu}$, 
	let $D_{\nu,\alpha}(g) \subseteq D$ be the set of all $d=(d_{ij})$ such that
	there is $h \in G_{\nu,\alpha}$ such that $d_{ij} = \dim E_i \cap U_j^{g,h}$ for $1\leq i,j \leq n$, where $d_{nj} = d_{jn} = j$ holds for all $j \in [n]$.
	Let $\bar{D}_{\nu,\alpha}(g) \subseteq D_{\nu,\alpha}(g)$ denote the set of maximal members with respect to $\leq$.
	Then (\ref{eqn:inequality2}) is written as
	\begin{equation}\label{eqn:inequality3}
	\sum_{i,j=1}^n (\bar \lambda_i - \bar \lambda_{i+1}) (\bar \nu_j- \bar \nu_{j+1}) d_{ij}  \leq \alpha \quad (\nu \in \Lambda, \alpha \in A_{\nu}, d\in \bar{D}_{\nu,\alpha}(g)).
	\end{equation}
	
	Let $S_{\nu,\alpha,d}$ be the subvariety of $G \times G_{\nu,\alpha}$ 
	consisting of $g,h$ with $\dim E_{i} \cap U_j^{g,h} \geq d_{ij}$ for $i,j$, 
	which is defined by vanishing of subdeterminants of $k_0^{\dagger}g^{-\dagger}h^{\dagger}k_0$. Indeed,
	$\dim E_{i} \cap U_j^{g,h}$ is $j$ minus the rank of 
	lower left $(n-i) \times j$ submatrix
	$k_0^{\dagger}g^{-\dagger}h^{\dagger}k_0$. 
	Let $\pi$ be the projection $(g,h) \mapsto g$. We claim:
	\begin{itemize}
		\item[($*$)] $\pi (S_{\nu,\alpha,d})$ is an affine subvariety of $G$.
    \end{itemize}
    The proof is given in the end. Let $D^*_{\nu,\alpha} \subseteq D$ be the set of all maximal $d$ with 
    $\pi (S_{\nu,\alpha,d}) = G$.  Consider the set $Q^*$ of $\lambda \in i \mathfrak{t}^+$ satisfying
    \begin{equation}\label{eqn:inequality4}
    \sum_{i,j=1}^n (\bar \lambda_i - \bar \lambda_{i+1}) (\bar \nu_j- \bar \nu_{j+1}) d_{ij}  \leq \alpha \quad (\nu \in \Lambda, \alpha \in A_{\nu}, d\in D^*_{\nu,\alpha}).
    \end{equation}
    For $d\in D^*_{\nu,\alpha}$, there is $d' \in D_{\nu,\alpha}(g)$ with $d \leq d'$.
    That is, (\ref{eqn:inequality4}) is looser than (\ref{eqn:inequality3}). 
    Thus, $Q^*$ contains $i \mathfrak{t}^+ \cap B(f^{\infty}_{\pi(g)v})$ for every $g \in G$.
    Consider the finite union $H := \bigcup_{\nu \in \Lambda, \alpha \in A_{\nu}, d \in D \setminus D_{\nu,\alpha}^*} \pi(S_{\nu,\alpha,d})$, which is a proper subvariety of $G$.  
    Then we can choose a {\em generic} $g^* \in G \setminus H$. 
   For such $g^*$, it must hold $\bar D_{\nu,\alpha}(g^*) = D^*_{\nu,\alpha}$ for 
   all $\nu \in \Lambda, \alpha \in A_{\nu}$.
   This means $Q^* = i \mathfrak{t}^+ \cap B(f^{\infty}_{\pi(g^*)v}) = \bigcup_{g \in G} i \mathfrak{t}^+ \cap B(f^{\infty}_{\pi(g)v}) = \overline{s\nabla^{\infty}f_v(M)}$.

    Finally we verify ($*$). By the closure theorem (see \cite[Section 4.7, Theorem 7]{CoxLittleOShea}), 
    $\pi(G_{\nu,\alpha,d})$ is a constructible set, i.e., 
    it is an affine variety $Z_0$ minus an affine variety $Z'$.
    We show that $\pi(G_{\nu,\alpha,d})$ 
    is a closed set in the Euclidean topology, which implies
    that $\pi(G_{\nu,\alpha,d}) = Z_0$ is an affine variety.
    Consider a sequence $g_1,g_2,\ldots$ in  $\pi(G_{\nu,\alpha,d})$ 
    converging to $g \in G$.
    For each $i$, there is $h_k \in G$ with
    such that $f_{\pi(h_k)v}(\nu) \leq \alpha$ and
    $\dim E_i \cap U_j^{g,h_k} \geq d_{ij}$ $(i,j \in [n])$.
    These quantities are determined by $\bar \nu \cdot [g^{-\dagger} h_k^{\dagger} k_0] = g^{-\dagger} h^{\dagger}_k  \nu$.
    By Iwasawa decomposition, 
    $h_k$ can be chosen from the compact group $K$.
    By taking a subsequence, we may assume that $h_k$ converges to $h \in K$.
    From $h_k \in G_{\nu,\alpha}$ for each $k$,
    it is clear that $h \in G_{\nu,\alpha}$.
    As mentioned, the condition $\dim E_i \cap U_j^{g_k,h_k} \geq d_{ij}$ is 
    written as vanishing of subdeterminants of 
    $k_0^{\dagger}g^{-\dagger}_k h^{\dagger}_kk_0$. 
    These subdeterminants vanish in the limit $k_0^{\dagger}g^{-\dagger}h^{\dagger} k_0$ as well. 
    Then $\dim E_i \cap U_j^{g,h} \geq d_{ij}$.
    Thus $(g,h) \in S_{\nu,\alpha,d}$, and  $\pi(S_{\nu,\alpha,d})$ is closed.
\end{proof}
In particular, $B(f_v^{\infty})$ contains the moment polytope in a ``generic" chamber.
Thus, the moment polytope membership 
for a given vector $\lambda \in i \mathfrak{t}^+$  
also reduces, after taking generic $g \in G$, 
to the convex optimization problem on Euclidean building $CM^{\infty}$: 
\begin{equation}\label{eqn:opt_moment}
\mbox{inf.} \quad f_{\pi(g)v}^{\infty}(u) - \langle p, u \rangle \quad {\rm s.t.} \quad u \in  U,
\end{equation}
where $p := e^{\infty \lambda}  \in CM^{\infty}$ and $U$ is any convex neighborhood of $0$.

This gives rise to a challenging research problem to develop algorithms 
solving convex optimization problems (\ref{eqn:opt_nullcone}), (\ref{eqn:opt_moment}).
On a single Weyl chamber $C$ (or an apartment), 
it is a usual Euclidean convex optimization. 
However, at a boundary point $q$ of $C$, 
one have to search a descent direction from infinitely many Weyl chambers containing $q$. 
This seems impossible in principle. 
So one have to exploit and utilize special properties 
of the objective function, particularly, the recession function of the Kempf-Ness function, as in \cite{HamadaHirai}. Moreover, to keep variable 
$q = \lambda \cdot {\cal F}$, 
one should keep basis vectors of flag ${\cal F}$ with bounded bit-length.
This is also a highly nontrivial problem. 
A recent work~\cite{FranksSomaGoemans2022} 
for finding a violating vector subspace in (\ref{eqn:Franks}) 
may give hints toward this direction.

\section*{Acknowledgments}
The author thanks Hiroyuki Ochiai for helpful discussion, and thanks for Zhiyuan Zhan for corrections. The author also thanks Harold Nieuwboer and Michael Walter for discussion on the Legendre-Fenchel duality.
The work was partially supported by JST PRESTO Grant Number JPMJPR192A, Japan.


\begin{thebibliography}{1}
	\small
	\bibitem{BuildingBook}
	P. Abramenko and K. S. Brown, {\em Buildings---Theory and Applications.} 
	Springer, New York, 2008.
	
\bibitem{AGLOW}
Z. Allen-Zhu, A. Garg, Y. Li, R. Oliveira, 
and A. Wigderson, 
Operator scaling via geodesically convex optimization, invariant theory and polynomial identity testing. {\tt arXiv:1804.01076}, 2018, the conference version in STOC 2018.

\bibitem{AmariNagaoka} 
S. Amari and K. Nagaoka,
{\it Methods of Information Geometry.} 
American Mathematical Society, Providence, RI, 2000.

\bibitem{BHLTV2021}
R. Bergmann, R. Herzog, M. S.  Louzeiro, D. Tenbrinck, and
J. Vidal-N\'{u}\~{n}ez, 
Fenchel duality theory and a primal-dual algorithm on Riemannian
Manifolds. {\em Foundations of Computational Mathematics} {\bf 2} (2021), 
1465--1504, 2021.

\bibitem{BacakBook}
M. Ba\v c\'ak, {\em Convex Analysis and Optimization in Hadamard Spaces}. 
De Gruyter, Berlin, 2014.


\bibitem{BallmanBook}
W. Ballmann, {\em Lectures on Spaces of Nonpositive Curvature}. 
Birkh\"auser Verlag, Basel, 1995. 


\bibitem{BGS1985}
W. Ballmann, M. Gromov, and V. Schroeder
{\it Manifolds of Nonpositive Curvature}, Birkh\"auser, Boston MA, 1985.

\bibitem{BCCT}
J. Bennett, A. Carbery, M. Christ, and T. Tao,
The Brascamp-Lieb inequalities: finiteness, structure and extremals.
{\em Geometric and Functional Analysis} {\bf 17} (2007) 1343--1415.

\bibitem{Boumal}
N. Boumal,
{\em An Introduction to Optimization on Smooth Manifolds}. 
Cambridge University Press, Cambridge, 2023.

\bibitem{BrascampLieb}
H. Brascamp and E. Lieb, 
Best constants in Young's inequality, its converse, and its generalization to more than three functions.
{\em Advances in Mathematics}{\bf 20} (1976), 151--173.

\bibitem{BrHa}
M. R. Bridson and A. Haefliger, 
{\em Metric Spaces of Non-positive Curvature}. 
Springer-Verlag, Berlin, 1999.

\bibitem{Brion}
M. Brion, Sur l'image de l'application moment. 
In: M. -P. Malliavin (eds): {\em S\'{e}minaire d'alg\`ebre {P}aul {D}ubreil et {M}arie-{P}aule
{M}alliavin ({P}aris, 1986)}, 
{\em Lecture Notes in Mathematics} {\bf  1296}, Springer, Berlin (1987), pp 177--192.

\bibitem{BGOWW_tensor0}
P. B\"urgisser, A. Garg, R. Oliveira, 
M. Walter, and A. Wigderson,
Alternating minimization, scaling algorithms, and the null-cone problem from invariant theory, {\tt arXiv:1711.08039}, 2017,  
the conference version in ITCS 2018.


\bibitem{BFGOWW_tensor}
P. B\"urgisser, C. Franks, A. Garg, R. Oliveira, 
M. Walter, and A. Wigderson,
Efficient algorithms for tensor scaling, quantum marginals and moment polytopes. 
{\tt arXiv:1804.04739}, 2018,  
the conference version in FOCS 2018.

\bibitem{BFGOWW}
P. B\"urgisser, C. Franks, A. Garg, R. Oliveira, 
M. Walter, and A. Wigderson,
Towards a theory of non-commutative optimization: 
geodesic first and second order methods 
for moment maps and polytopes.  {\tt arXiv:1910.12375}, 2019,  
the conference version in FOCS 2019.

\bibitem{CoxLittleOShea}
D. A. Cox, J. Little, D. O'Shea,
{\em Ideals, Varieties, and Algorithms, 4th edition},
Springer, Cham, 2015.



\bibitem{Eberlien}
P. B. Eberlein,
{\em Geometry of Nonpositively Curved Manifolds.}
University of Chicago Press, Chicago, IL, 1996. 
%

\bibitem{FortinReutenauer04}
M. Fortin and C. Reutenauer, 
Commutative/non-commutative rank of linear
matrices and subspaces of matrices of low rank. 
{\em S\'eminaire Lotharingien de Combinatoire} {\bf 52} (2004), B52f.

\bibitem{Franks18}
C. Franks, Operator scaling with specified marginals. 
 {\tt arXiv:1801.01412}, 2018, the conference version in STOC 2018.

\bibitem{FranksSomaGoemans2022}
C. Franks, T. Soma, and M. X. Goemans, 
Shrunk subspaces via operator Sinkhorn iteration.
{\tt arXiv:2207.08311},
2022,  the conference version in SODA 2023.


\bibitem{FujiBook}
S. Fujishige,
{\it Submodular Functions and Optimization, 2nd Edition.} 
Elsevier, Amsterdam, 2005.


\bibitem{GGOW}
A. Garg, L. Gurvits, R. Oliveira, and A. Wigderson,
Operator scaling: theory and applications. 
{\em Foundations of Computational Mathematics} {\bf 20} (2020), 223--290.

\bibitem{GGOW_GAFA}
A. Garg, L. Gurvits, R. Oliveira, and A. Wigderson,
Algorithmic and optimization aspects of Brascamp-Lieb inequalities, via Operator Scaling.
{\em Geometric and Functional Analysis} {\bf 28} (2018) 100--145.

\bibitem{GuilleminSternberg}
V. Guillemin and S. Sternberg, 
Convexity properties of the moment mapping,
{\em Inventiones Mathematicae} {\bf 67} (1982) 491--513.


\bibitem{Gurvits04}
L. Gurvits, Classical complexity and quantum entanglement,
{\em Journal of Computer and System Sciences}  {\bf 69} (2004), 448--484. 


\bibitem{HamadaHirai}
M. Hamada and H. Hirai,
Computing the nc-rank via discrete convex optimization on CAT(0) spaces, 
{\em SIAM Journal on Applied Geometry and Algebra} {\bf 5} (2021), 455--478.

\bibitem{HH16L-convex}
H. Hirai, L-convexity on graph structures. 
{\em Journal of the Operations Research Society of Japan} {\bf 61} (2018), 71--109.

\bibitem{HNW23}
H. Hirai, H. Nieuwboer, and M. Walter, Interior-point methods on manifolds: theory and applications, {\tt arXiv:2303.04771}, 2023.


\bibitem{FundamentalsConvexAnalysis}
J.-B. Hiriart-Urruty and C. Lemar\'echal,
{\em Fundamentals of Convex Analysis.}
Springer-Verlag, Berlin, 2001.
	
\bibitem{IQS15a}
G. Ivanyos, Y. Qiao, and  K. V. Subrahmanyam,
Non-commutative Edmonds' problem and matrix semi-invariants.
{\em Computational Complexity} {\bf 26} (2017), 717--763. 





\bibitem{KLM2009JDG}
M. Kapovich, B. Leeb, and J. Millson,
Convex functions on symmetric spaces, side lengths of polygons and the stability inequalities for weighted configurations at infinity. 
{\em Journal of Differential Geometry} {\bf 81} (2009), 297--354.

\bibitem{KempfNess}
 G. Kempf and L. Ness, The length of vectors in representation spaces, 
In K. L\o nsted (ed.) {\em Algebraic Geometry (Summer Meeting, Copenhagen, August 7--12, 1978)},
 {\em Lecture Notes in Mathematics} {\bf 732}, Springer, Berlin, 1979, pp. 233--243.



\bibitem{KL2006}
B. Kleiner and B. Leeb, 
Rigidity of invariant convex sets in symmetric spaces. 
{\em Inventiones Mathematicae} {\bf 163} (2006), 657--676.

\bibitem{KanppLieGroup}
A. W. Knapp, 
{\em Lie Groups Beyond an Introduction, Second Edition}, Birkh\"auser, Boston, 2002.

\bibitem{Lieb}
E. Lieb,  
Gaussian kernels have only Gaussian maximizers. 
{\em Inventions Mathematicae} {\bf 102} (1990), 179--208. 

\bibitem{LBH2022}
M. S. Louzeiro, R. Bergmann, and R.  Herzog,  
Fenchel duality and a separation theorem on Hadamard manifolds. 
{\em SIAM Journal on Optimization} {\bf 32} (2022), 854--873.

\bibitem{Lovasz83}
L.  Lov\'asz,
Submodular functions and convexity. 
In A. Bachem, M. Gr\"otschel, and B. Korte (eds.):
{\it Mathematical Programming---The State of the Art} 
(Springer-Verlag, Berlin, 1983), 235--257. 
%



\bibitem{MurotaDCA}
K. Murota, {\it Discrete Convex Analysis.}
SIAM, Philadelphia, 2004. 

\bibitem{NessMumford}
L. Ness and D. Mumford, A stratification of the null cone via the moment map.
{\em American Journal of Mathematics} {\bf 106} (1984), 1281--1329.



\bibitem{Rockafellar}
R. T. Rockafellar,
{\it Convex Analysis}, Princeton University Press, NJ, 1970.
%



\bibitem{RothblumSchneider}
U. G. Rothblum and H. Schneider, 
Scalings of matrices which have prespecified row sums and column sums via optimization. 
{\em Linear Algebra and Its Applications} {\bf 114/115} (1989), 737--764.

\bibitem{Sakai}
T. Sakai, {\em Riemannian Geometry}, American Mathematical Society, Providence RI, 1996.

\bibitem{Schrijver}
A. Schrijver, {\em Combinatorial Optimization---Polyhedra and Efficiency.} Springer, Berlin, 2003.

\bibitem{Sinkhorn64}
R. Sinkhorn, A relationship 
between arbitrary positive matrices and
doubly stochastic matrices.
{\em Annals of Mathematics Statistics} {\bf 35} (1964), 876--879.


\bibitem{Wallach_GIT}
N. R. Wallach, {\em Geometric Invariant Theory.} 
Springer, Cham, 2017.

\bibitem{Woodward}
C. Woodward, Moment maps and geometric invariant theory, {\tt arXiv:0912.1132}, 2009.
\end{thebibliography}
\end{document}